\documentclass[11pt]{article}
\usepackage{enumerate}
\usepackage{amsmath}
\usepackage{amsthm}
\usepackage{amsfonts}
\usepackage{amssymb}
\usepackage[numbers]{natbib}
\usepackage{xcolor}
\usepackage{graphicx}
\usepackage{tikz}
\usetikzlibrary{matrix,arrows.meta}
\usepackage{fullpage}
\usepackage{xcolor}
\usepackage{hyperref}
\usepackage{bbm}

\newcommand*\samethanks[1][\value{footnote}]{\footnotemark[#1]}

\newcommand{\cP}{ \mathcal{P} }
\newcommand{\bfsigma}{ \mathbf{\sigma} }
\renewcommand{\Pr}{ \mathbb{P} }

\newtheorem{question}{Question}
\numberwithin{question}{section}

\newtheorem{corollary}[question]{Corollary}
\newtheorem{theorem}[question]{Theorem}
\newtheorem{proposition}[question]{Proposition}
\newtheorem{lemma}[question]{Lemma}
\newtheorem{claim}[question]{Claim}

\newtheorem{definition}[question]{Definition}

\newtheorem{remark}[question]{Remark}

\numberwithin{equation}{section}

\title{On first-order limiting probabilities for small permutation classes}
\date{}

\author{Victor Falgas--Ravry\thanks{Institutionen f\"or matematik och matematisk statistik, Ume\aa\text{ } Universitet, Sweden.\\	Emails: $\{$\texttt{victor.falgas-ravry}, \texttt{klas.markstrom}, \texttt{eero.raty}$\}$\texttt{@umu.se}. Research supported in part by a postdoctoral grant 213-0204 from the Olle Engkvist Foundation, and by Vetenskapsr{\aa}det grant VR 2025-03758 (E.R.), by a postdoctoral grant from the Osk. Huttunen
Foundation (E.R.), and by Vetenskapsr{\aa}det grant VR 2021-03687 (V.F.-R.).} \and Klas Markstr\"om\samethanks[1] \and Eero R\"aty\samethanks[1] \and Fiona Skerman\thanks{Institutionen f\"or matematik, Uppsala Universitet, Sweden. Email: \texttt{fiona.skerman@math.uu.se}. F.S. is supported by the Wallenberg AI, Autonomous Systems and Software Program (WASP) funded by the Knut and Alice Wallenberg Foundation.}}

\begin{document}
	\maketitle
	
\begin{abstract}
    Given a small permutation class $\cP$, we consider the set of possible limits of the probability $\Pr\left(\bfsigma_n \models \Psi\right)$ that a uniformly chosen random permutation $\bfsigma_n\in \cP_n$ satisfies a fixed first-order sentence in Theory of Two Orders (TOTO). In the case of small permutation classes with polynomial growth, we show this limit set is finite, and characterize the set explicitly. In particular, we prove that many natural small permutation classes of polynomial growth arising from permutation matrices satisfy a $0$-$1$ convergence law, and have an infinite model. 

    In the case of small permutation classes with exponential growth we show that the limit set is dense in $[0,1]$, thereby extending a result  of Albert--Bouvel--F\'eray--Noy, using different methods, to this range.
\end{abstract}

\section{Introduction}
One way to characterize the typical features of the elements of some class of  combinatorial objects is to draw such an element at random according to some probability measure, and study its properties. In particular, one may ask for the probability that such a randomly chosen element satisfies some property that can be described using a suitable version of first-order logic. Starting from the work of Glebskii, Kogan,  Liogon'kii and Talanov~\cite{glebskii1969range} and Fagin~\cite{fagin1976probabilities}, there have been extensive investigations of logical limit laws for random graph models. The monograph~\cite{spencer2013strange} by Spencer gives an excellent overview of the subject.

In this paper, we shall be interested in analogous questions for permutations drawn uniformly at random from a permutation class (that is, from sets of permutations $\cP$ that are closed under taking subpatterns, see Section~\ref{section: preliminaries} for precise definitions). Such questions have recently received much  attention and relatively little is known. The study of first-order logical limit laws for permutations in general was initiated by Foy and Woods~\cite{foy1990probabilities}, who showed that a permutation drawn \emph{uniformly at random} from $\mathfrak{S}_{n}$ does not satisfy a logical limit law. Their work was a strengthening of earlier work by Kaufmann and Shelah \cite{KAUFMANN1985285} who showed non-convergence for monadic second-order logic. Henceforth, we shall refer to a permutation drawn uniformly at random from a permutation class simply as a random permutation from that class.

In contrast to these older results it has recently been shown that for some non-trivial permutation classes there is a first-order limit law, and for some of them the set of limiting probabilities has also been described. In particular, both of these were established for 231-avoiding permutations in~\cite{albert2024logical}, and the first-order limit law was subsequently established for 321-avoiding permutations in~\cite{ozdemir2023first}, which up to isomorphism covers all classes avoiding a single pattern of length 3.    Another positive result was given in~\cite{braunfeld2021logical}, where it was shown that the class of layered permutations satisfies a limit law. They also conjectured that every proper permutation class satisfies a limit law.  Going in another direction, \cite{muller2023logical} identified for which parameter values the  non-uniform Mallows distribution on the set of all permutations has a limit law.

Rather than studying individual permutation classes, the aim of this paper is to consider the entire collection of what are known as \emph{small permutation classes}.  As shown by Marcus and Tardos~\cite{marcus2004excluded} for every proper permutation class there exists an (upper) growth rate $\tau\geq 1$ so that the number of permutations of order $n$ in the class grows like (at most) $\tau^{n+o(n)}$, with $\tau=1$ entailing polynomial growth in $n$.  It has been known for some time now (see the discussion in the next subsection) that a qualitative shift in the behaviour of permutation classes occurs at the growth rate $\kappa=2.205\ldots$, the unique positive root of the polynomial $1+2X^2-X^3$. A permutation class with growth rate smaller than this is called a small permutation class.  

In~\cite{eeroklas} the second and third authors of this paper show that every small permutation class has a bijection to a regular language which also acts as a bijection between the  first-order logics on either side. In fact, the encoding and the structural bijection extend to monadic second order logic as well. Together with some further structural knowledge of the regular language, this implies that  for every small permutation class, the probability of a first-order property holding for an order $n$ random permutation from that class converges to a limit as $n\rightarrow \infty$. See Theorem~\ref{theorem: eeroklas} for more precise statements of these results. This shows that the convergence results from~\cite{albert2024logical} and~\cite{ozdemir2023first} hold for all small permutation classes, and proves the conjecture from~\cite{braunfeld2021logical} mentioned above for all small permutation classes.

The broad aim of this paper is to determine the set of possible limit probabilities for small permutation classes. For each class of polynomial growth we will show that  the limit set is finite, and characterize those for which there is a $0$-$1$ law. In the latter case, we also show that there is an infinite model for the class, analogous to the infinite random graph in Fagin's work~\cite{fagin1976probabilities}.  For small permutation classes with superpolynomial growth we instead show that the  set of limiting probabilities is dense in $[0,1]$. To properly state our results and set them in their context, a fuller background is necessary; this is covered in the next subsection, with our own contributions  described in full in Section~\ref{subsection: results}.

\subsection{Background and motivation}

\subsubsection{Permutation classes}

Permutation classes have been extensively studied since the pioneering work of Knuth~\cite[Section 2.2.1]{knuth1968art} in the late 1960s. Suppose that a permutation class $\cP$ is \emph{proper}, in the sense that it is not the class of all permutations. What can one say about the growth-rate of $\vert \cP_n\vert =\vert \cP\cap\mathfrak{S}_{n}\vert $ as $n \rightarrow \infty$? The collection $\mathfrak{S}_{n}$ of all permutations of $[n]$ has size $n!$; however, the celebrated Stanley--Wilf conjecture held that for any proper permutation class the growth is at most exponential in $n$. Following work of many authors, notably Alon and Friedgut~\cite{alon2000number} and Klazar~\cite{klazar2000furedi}, the Stanley--Wilf conjecture was finally proved by Marcus and Tardos~\cite{marcus2004excluded} in 2004.

Given Marcus and Tardos's result, we have an exponential upper bound for the size of a proper permutation class.  When $ \limsup_{n\rightarrow \infty} \vert \cP_n\vert^{1/n} = \liminf_{n\rightarrow \infty} \vert \cP_n\vert^{1/n}$ we refer to their common value as the \emph{Stanley--Wilf limit} or \emph{growth rate} $\mathrm{gr}(\cP)$ of the permutation class $\cP$.

The Stanley--Wilf limit is conjectured to exist for all permutation classes $\cP$ (see e.g.~\cite[Conjecture 1.1]{vatter2015permutation}, though this appears to be an older folklore conjecture). A natural question to ask is then: which values can be attained as the Stanley--Wilf limit of some permutation class? When $\cP$ is the set $\mathrm{Av}(\pi)$ of all permutations avoiding the pattern $\pi$, the Stanley--Wilf limit exists, by a remark of Arratia~\cite{arratia1999stanley} in combination with \cite{marcus2004excluded}, but finding the value is hard even for simple patterns.

Given the difficulty of determining growth rates even for some seemingly simple permutation classes, it is natural to approach the problem from a slightly different angle and ask instead: given a (small) constant $c > 0$, what can be said about the structural properties of permutation classes with growth-rate less than $c$, or to determine whether $c$ is the Stanley--Wilf limit of some class? 
Questions of this type have seen significantly more progress than the ones concerned with determining the growth rates. One of the first results in this direction was obtained by Kaiser and Klazar~\cite{kaiser2002growth}, who showed in 2002 that if for some arbitrary $n$ we have $\vert \cP_n\vert < F_n$, where $F_n$ denotes the $n$-th Fibonacci number, then $\vert \cP_n\vert $ grows at most polynomially in $n$. Thus for all proper permutation classes $\cP$, the growth rate $gr(\cP)$ is either $0$, $1$ or at least $(1+\sqrt{5})/2$. They also obtained a precise classification of the possible growth rates/Stanley--Wilf limits below $2$ (which correspond to certain generalized Fibonacci numbers). 

After several improvements~\cite{huczynska2006grid, polygrowth}, a remarkable breakthrough was made by Vatter~\cite{vatter2011small}, who showed that a certain type of phase transition occurs at the growth-rate $\kappa \approx 2.205$, where $\kappa$ is the least positive root of $1+2X^2-X^3$. We say that a permutation class $\mathcal{P}$ is a \emph{small permutation class} if $\mathrm{\overline{gr}}(\mathcal{P}) < \kappa$. In his influential work, Vatter~\cite{vatter2011small} provided a rather specific structural classification for small permutation classes (which we will state in Section~\ref{sec: Inflations of geometric grid classes}), proved that there are only countably many small permutation classes, and also proved that there are already uncountably many permutation classes with growth-rate precisely $\kappa$ and  pairwise distinct generating functions.

Other natural phase transitions occurring at higher growth-rates were subsequently identified by Pantone and Vatter~\cite{pantonevatter}, and Vatter~\cite{vatter2010permutation}: Pantone and Vatter~\cite{pantonevatter} determined an explicit constant $\xi$ such that there are only countably many Stanley--Wilf limits below $\xi$, but such that for every $\varepsilon>0$ there are uncountably many Stanley--Wilf limits in every half-open interval $[\xi, \xi+\varepsilon)$. Vatter~\cite{vatter2010permutation}, on the other hand, showed that there exists a constant $\lambda \approx 2.48$ such that every real number greater than $\lambda$ is the Stanley--Wilf limit of some proper permutation class.

Vatter~\cite{vatter2011small} also conjectured that every small permutation class has a rational generating function. The conjecture was later proved by Albert, Ru{\v{s}}kuc, and Vatter~\cite{albert2015inflations}; in particular, their result implies that the Stanley--Wilf limit exists for every small permutation class. Recently, the second and third authors gave an alternative proof of this conjecture as a consequence of more general results~\cite{eeroklas}. 

There is a much broader literature on permutation classes and pattern-avoiding permutations, including their connections to other well-studied objects in combinatorics and computer science such as stacks, ordered graphs and matrices, and in this short introduction we can only touch upon the tip of the iceberg. We refer the reader to the monographs and surveys of Bona~\cite{bona2008combinatorics}, Kitaev~\cite{kitaev2010survey}, Linton~\cite{linton2010permutation}, Steingrimsson~\cite{steingrimsson2013some} and Vatter~\cite{vatter2015permutation} for much more on this rich area of combinatorics.

\subsubsection{Logical limit laws}
A key difference between the study of logical limit laws for random permutations and for random graphs is that unlike in the graph case, there are at least two genuinely different and very natural logics for permutations, each of which has its own advantages. The first logic appeared in~\cite{foy1990probabilities} and is now known as the Theory of Two Orders (abbreviated as TOTO). This logic concerns a set equipped with two independent linear orders. Informally speaking, one should view permutations in TOTO as a set of points in the plane with no two points placed on the same horizontal or vertical line, and the two orders can be chosen to be the usual orders comparing the x- and y-coordinates of the respective points. TOTO is generally more suitable for problems about pattern-avoidance in permutations, and is the logic we shall consider in this paper; see Section~\ref{subsection: permutations} for a more detailed introduction to TOTO. The second logic, which we will not define here,  Theory of One Bijection (TOOB), is more suitable when working with problems that concern the cycle-structure of permutations. For an excellent account of both of these, see e.g.\ the recent paper of Albert, Bouvel and F\'eray~\cite{albert2020two}.

For uniform random permutations Foy and Woods~\cite{Foy1990} showed that in general TOTO formulae do not satisfy a limit law.  In 2022 Braunfeld and Kukla~\cite{braunfeld2021logical} showed, among other results, that the class of so-called `layered' permutations  satisfies a first-order limit law in TOTO. They also made the rather strong conjecture that every proper permutation class satisfies a first-order limit law in TOTO. 

More recently, Albert, Bouvel, F\'eray and  Noy~\cite{albert2024logical} showed that a uniformly chosen random permutation from the class $\mathrm{Av}(231)$ of $231$-avoiding permutations satisfies a first-order logical limit law in TOTO. Furthermore, they showed that the set $\mathrm{Lim}(\mathrm{Av}(231))$ of limits of probabilities that a fixed TOTO sentence is satisfied is dense in $[0,1]$.  Shortly thereafter, \"Ozdemir~\cite{ozdemir2023first} obtained a first-order logical limit law in TOTO for $\mathrm{Av}(321)$; using symmetries, these results combined imply that all six of the permutation classes $\mathrm{Av}(\sigma)$, $\sigma \in \mathfrak{S}_{3}$, satisfy a first-order logical limit law.  

Recall that very recently the second and third authors proved that any small permutation class is first-order isomorphic to a regular language.  This means that there is a bijection from the permutation class to the language, which also extends to a bijection of sentences in first-order logic.  Together with further structural properties of this regular language,  they concluded that every small permutation class satisfies a first-order convergence law for sentences in TOTO. 
\begin{theorem}[See~\cite{eeroklas}] \label{theorem: eeroklas}
    Let $\mathcal{F}$ be a small permutation class. 
    \begin{enumerate}
        \item Then there exists a regular language $L_{\mathcal{F}}$ and a length-preserving bijection
        \[f_{\mathcal{F}}: L_{\mathcal{F}} \rightarrow \mathcal{F}\]
        satisfying the following property: for every first-order logic sentence $\Psi$ on $\mathcal{F}$ there exists a first-order logic  sentence $\Phi$ on $L_{\mathcal{F}}$ so that 
        \[\{w \in L_\mathcal{F}: w \models \Phi\} = \{w \in L_\mathcal{F}: f_\mathcal{F}(w) \models \Psi\}.\]
        The same conclusion also holds for sentences in monadic second-order logic.
        \item{(sketched)} The bijection $f_\mathcal{F}$ is in fact a monadic second-order isomorphism: that is, for every sentence $\Phi$ in monadic second-order logic on $L_\mathcal{F}$ there exists a corresponding sentence in monadic second-order logic on $\mathcal{F}$ for which the conclusion above holds. In addition, it is also possible to construct a regular language $L_\mathcal{F}'$ and a length-preserving bijection $f'_\mathcal{F}: L_\mathcal{F}' \rightarrow \mathcal{F}$ which is a first-order isomorphism. 
        \item For the last three assertions, suppose that $\mathcal{F}$ is not finite. There exist a positive real number $\alpha$, a real number $\beta \geq 1$, and a non-negative integer $c$ so that 
        \[\vert \mathcal{F}_n\vert = (\alpha+o(1))\cdot \beta ^n\cdot n^c.\]
        \item Given a monadic second-order sentence $\Psi$ on $\mathcal{F}$, write \[\mathcal{F}^\Psi_n := \{\sigma \in \mathcal{F}_n: \sigma \models \Psi\}.\]
        Then the sets $\mathcal{F}^\Psi_n$ have a rational generating function.
        \item For every first-order $\mathrm{TOTO}$ sentence $\Phi$ the limit  $\lim_{n\rightarrow \infty}\Pr(\pi \models \Phi)$ exists, where $\pi$  is taken uniformly at random from $\mathcal{F}_n$. 
    \end{enumerate}
\end{theorem}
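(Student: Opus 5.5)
The plan is to follow the route of \cite{eeroklas}, using Vatter's structural description of small permutation classes together with the rationality results of Albert, Ru\v{s}kuc and Vatter~\cite{albert2015inflations}.

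\textbf{Step 1: the encoding.} By Vatter's classification, recalled in Section~\ref{sec: Inflations of geometric grid classes}, every small class $\mathcal{F}$ is contained in a finite union of inflations of geometric grid classes by classes of bounded substitution depth. For geometric grid classes, the standard encoding writes a permutation as a word over a finite alphabet of cell labels, read in the order of a consistent orientation. This encoding is surjective, and a regular ``trace'' language of canonical representatives can be extracted from it. For inflations we iterate the construction: inflated blocks are themselves encoded and delimited by bracketing letters. Bounded depth keeps the alphabet finite, and the structure keeps the language regular. Choosing the lexicographically least preimage gives a length-preserving bijection $f_{\mathcal{F}}$.

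\textbf{Step 2: transferring formulas.} Each TOTO relation $x<_1 y$ and $x<_2 y$ on points of $f_{\mathcal{F}}(w)$ must be expressed as a formula on the positions of $w$. The horizontal and vertical order between two points depends only on their letters, their positions, and the letters between them, through the cell orientations. This dependence is definable by a formula that can be compiled from a finite automaton. It is certainly MSO-definable, and FO-definable after refining the encoding (with counter-free states), which is part 2. Substituting these formulas into $\Psi$ yields $\Phi$. For MSO we also quantify over sets of points, which correspond to sets of positions. The converse direction in part 2 needs MSO on permutations to simulate the position order of $w$; this works because the position order is MSO-definable from the two orders via the grid geometry.

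\textbf{Steps 3 and 4: counting.} By B\"uchi's theorem, $\{w\in L_\mathcal{F}: w\models \Phi\}$ is regular and hence has a rational generating function. This gives part 4, and applying it with $\Psi$ true gives rationality of $\sum|\mathcal{F}_n|x^n$. Part 3 then follows from partial fractions once we show that the dominant singularities reduce to a single pole $1/\beta$ of order $c+1$. Rational series can in general oscillate because of several poles of the same modulus and maximal order. The plan here is to show that the automaton for $L_\mathcal{F}$ can be taken to have aperiodic maximal strongly connected components. Failing that, we argue that permutation classes are closed under one-point deletion, so $|\mathcal{F}_n|$ is nearly monotone; this rules out oscillation by an $(n\bmod p)$ periodic factor.

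\textbf{Step 5, the main obstacle.} For part 5 we need $|\{w\in L: w\models\Phi\}|_n/|L_n|$ to converge. Both sequences are rational, but the ratio of two rational sequences can oscillate periodically. I would use the FO-isomorphic language $L'_\mathcal{F}$, so that $\Phi$ is first-order and its language is star-free relative to $L'$. Star-free languages are recognised by aperiodic monoids, and the aim is to show that the restriction to the dominant components of the $L'$ automaton carries no periodic behaviour. Then the leading asymptotics of both counts take the form $\gamma\beta^n n^c$, and the ratio converges to a limit. The delicate point is proving that the periodicity of a dominant component of $L'$ cannot be detected by an FO sentence. This relies on the fact that FO cannot count modulo $p$ in aperiodic settings, and on $\mathcal{F}$ being closed under patterns. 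This is where I expect most of the work to lie.
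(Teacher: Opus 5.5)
\textbf{Parts 3 and 5 are not proved, and the mechanisms you propose for them fail.} Note first that the paper does not prove this theorem. It is quoted from \cite{eeroklas}, with only the remark that convergence follows from the regular-language bijection ``together with some further structural knowledge of the regular language''. The paper's own argument for part 5 covers only polynomial growth, via Ehrenfeucht--Fra\"iss\'e games (Theorem~\ref{thm:poly growth}(i)). Your outline is plausible for parts 1 and 4; part 4 is indeed B\"uchi's theorem applied to the MSO translation, using length preservation. The problems with parts 3 and 5 are as follows.
\begin{itemize}
\item \emph{Part 3.} ``The automaton can be taken to have aperiodic dominant components'' is the claim itself, not an argument for it. The fallback is wrong on two counts. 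First, one-point deletion only gives $|\mathcal{F}_n|\le (n+1)|\mathcal{F}_{n+1}|$ for the extendable elements, which is a polynomial loss. Second, even exact monotonicity would not exclude a periodic leading coefficient when $\beta>1$: the nondecreasing sequence $a_{2m}=4^m$, $a_{2m+1}=3\cdot 4^m$ has $a_n/2^n$ alternating between $1$ and $3/2$.
\item \emph{Part 5.} Star-freeness neither prevents periodic counting nor stops FO from detecting the phase. The set of prefixes of $(ab)^\omega$ is a star-free language with exactly one word of each length. Yet the FO sentence ``the word is empty or its last letter is $b$'' has probability $1,0,1,0,\dots$ on it.
\end{itemize}
So aperiodicity of $L'_\mathcal{F}$ together with first-order definability of $\Phi$ cannot give convergence by themselves. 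What is needed is an additional structural property of the specific languages that arise from encoding small classes. Such a property must control the periods of the dominant strongly connected components, and how FO-definable sublanguages meet them, so that examples like the one above are excluded. Identifying and proving this property is the actual content of parts 3 and 5, and your plan does not contain it.

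Two smaller points.
\begin{itemize}
\item Delimiting inflated blocks by bracket letters destroys length preservation, on which parts 3--5 depend. Since the depth is bounded, you should instead decorate each point's letter with the set of depths at which it opens or closes a block.
\item In part 2, the converse translation must define, inside the permutation, both the canonical gridding and the position order of the canonical word. Letters from independent cells commute, so that order is not determined by ``the grid geometry''. Moreover, choosing the lexicographically least preimage is a global minimisation. It is not clear it is MSO-definable at all, since comparing cell counts is not an MSO operation. The ingredient \cite{eeroklas} supplies here is FO-definability of the left-greedy gridding (cf.\ Propositions~\ref{prop: eeroklas} and~\ref{prop: eeroklas second}). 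Your proposal does not account for this ingredient.
\end{itemize}
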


We are now ready to prove the main theorem. 

\subsection{Contributions of this paper}\label{subsection: results}
As we have seen, small permutation classes are of interest within the generally challenging theory of permutation classes. They are one of the few significant families of permutation classes for which researchers have been able to derive some general results; in particular small permutation classes have been shown to have a pleasant geometric structure. They are thus a natural first family to study and to try to understand more fully if one is to grapple with permutation classes in their full generality.

For small permutation classes in the polynomial growth regime, we show that the set of limiting probabilities is finite, and determine the set explicitly. As a byproduct of our proof, we also obtain a short alternative proof of the first-order convergence law from \cite{eeroklas} for permutation classes of polynomial growth. Before we state our result, let us note that, as a consequence of the work of Kaiser and Klazar~\cite{kaiser2002growth}, if $\mathcal{P}$ is a small permutation class with polynomial growth then there exist positive integers $b$ and $k$ such that $\left\vert \mathcal{P}_{n}\right\vert =b{n \choose k-1}+O\left(n^{k-2}\right)$ (see the discussion after~\eqref{eq: size of P}).
\begin{theorem}
	\label{thm:poly growth} Let $\mathcal{P}$ be a permutation class with polynomial growth for which  $\mathcal{P}_{n}\neq\emptyset$ for
	all $n \in \mathbb{Z}_{>0}$. Let $b$ and $k$ be positive integers such that $\left\vert \mathcal{P}_{n}\right\vert =b{n \choose k-1}+O\left(n^{k-2}\right)$. Then 
	\begin{enumerate}
		\item[(i)] for all first-order sentences $\Psi$ in TOTO, the probability $\Pr\left(\bfsigma_n \models \Psi\right)$ that a random permutation $\bfsigma_n$  from $\mathcal{P}_n$ satisfies $\Psi$ tends to a limit as $n \rightarrow \infty$ with convergence rate $O(n^{-1})$. 
		\item[(ii)] the set of limiting probabilities 
        \[\mathrm{Lim}\left(\mathcal{P}\right):=\] \[ \left\{\lim_{n\rightarrow \infty} \mathbb{P}(\bfsigma_n \models \Psi): \ \Psi \textrm{ first-order TOTO sentence},\ \bfsigma_n \textrm{ drawn uniformly at random from }\cP_n\right\} \]
        consists of all the fractions $\left\{ \frac{b'}{b}: \ b'\in\left[0,b\right]\cap\mathbb{Z}\right\} $, where the integer $b$ is defined as above. In particular, this set of limiting probabilities is always finite, and $\cP$ satisfies a $0$-$1$ law if and only if $b=1$.
	\end{enumerate}
\end{theorem}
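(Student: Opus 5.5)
Polynomial-growth classes have a known explicit structure, which I will use throughout. By Kaiser--Klazar and Huczynska--Vatter, every permutation in $\cP_n$ (for $n$ large) is an \emph{inflation} of one of finitely many \emph{peg permutations}. Concretely, a class of polynomial growth is a finite union of sets of the following form. Fix a finite skeleton (a "peg permutation" / monotone grid picture) $\tau$ with $m$ cells. Some cells are marked as increasing or decreasing runs. In such a set, every element is obtained by placing, in each marked cell, a monotone block of length $a_i\ge 0$; each unmarked cell is a single point or a fixed finite pattern. The length is $\sum a_i + \text{const}$.

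I will first reduce to a canonical representation in which each large permutation has a unique "generic" description. For $\sigma\in\cP_n$, call $\sigma$ \emph{generic} if it is represented by a skeleton with exactly $k-1$ free blocks, all of length at least $n^{1/2}$ (or any threshold tending to infinity), with no accidental merging of adjacent blocks. A counting argument shows that non-generic permutations number $O(n^{k-2})$: either they use fewer than $k-1$ free parameters, or some parameter is bounded. Moreover, the maximal-dimension skeletons contribute $\binom{n}{k-1}$ each up to lower order once overlaps are removed. So $b$ equals the number of distinct maximal skeletons $\tau_1,\dots,\tau_b$, after identifying skeletons that yield the same permutations generically. This identification step needs care: two skeletons that differ in how they merge adjacent same-direction blocks produce the same permutation. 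I will fix a normal form (maximally merged, with skeletons compared up to this merging), so that generic permutations of different $\tau_j$ are distinct and each generic permutation has a unique parameter vector.

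Part (i) then needs an Ehrenfeucht--Fra\"iss\'e argument. Fix a sentence $\Psi$ of quantifier depth $q$. A monotone run of length $a$ is, with respect to depth-$q$ TOTO games, indistinguishable from one of length $a'$ whenever $a,a'\ge 2^q$; this is the standard result for linear orders, and here both orders restrict to the same linear order on a run, up to reversal. I will compose this over the cells. The duplicator plays cellwise, and the skeleton relations between cells are determined by which cell a point lies in. This composition is the main technical step: the moves must respect both orders simultaneously across cells. I plan to handle it by noting that the TOTO structure of an inflation is a fixed quantifier-free interpretation of the disjoint union of the blocks, each block a linear order labelled by its cell. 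A Feferman--Vaught style composition for such sums then gives the claim. Consequently, for generic $\sigma$ arising from $\tau_j$, whether $\sigma\models\Psi$ depends only on $j$. Hence
\[\Pr(\bfsigma_n\models\Psi)=\frac{|\{j:\tau_j\models_\infty \Psi\}|\binom{n}{k-1}+O(n^{k-2})}{b\binom{n}{k-1}+O(n^{k-2})},\]
which converges to $b'/b$ at rate $O(n^{-1})$.

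For part (ii), the inclusion $\mathrm{Lim}(\cP)\subseteq\{b'/b\}$ is immediate from the above. For the reverse inclusion, I must show that for every subset $S\subseteq[b]$ there is a sentence true generically exactly on the skeletons in $S$. It suffices, by taking disjunctions, to find a sentence $\Psi_j$ separating $\tau_j$ from all the others. Distinct normal-form skeletons have distinct "long-block shapes", so I will let $\Psi_j$ be a first-order sentence asserting that the permutation contains a given pattern and avoids the others. Concretely, the sentence will assert the existence of points $x_1,\dots,x_{k-1}$, one chosen in each long block, together with the finitely many fixed points. It will describe their relative order and assert that between consecutive chosen points the structure matches $\tau_j$'s skeleton. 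It will also assert that the chosen blocks have size at least some constant $C$, which is first-order expressible via $C$ points in monotone position. One then has to check that no generic element of $\tau_{j'}$, $j'\neq j$, satisfies $\Psi_j$. Normal form guarantees this, since the pattern consisting of $C$ points from each block, for $C$ large, recovers the skeleton. The $0$-$1$ law claim follows at once: it holds if and only if the set $\{b'/b\}$ equals $\{0,1\}$, that is, if and only if $b=1$.
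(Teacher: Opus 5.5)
Your route differs from the paper's and can be completed, but as written it has two errors and leaves the load-bearing step unproved.

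\textbf{Two errors.} (1) The dimension is off by one. With $f$ free blocks and fixed total length you get $\binom{n}{f-1}+O(n^{f-2})$ permutations, so the maximal skeletons have $k$ free blocks, not $k-1$. With $k-1$ free blocks they would contribute only $\Theta(n^{k-2})$.

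(2) The genericity threshold must be a constant, not $n^{1/2}$ or ``any threshold tending to infinity''. The permutations of a maximal skeleton with some free block shorter than $t$ number $\Theta(t\,n^{k-2})$. So with $t=n^{1/2}$ your bound $O(n^{k-2})$ on non-generic permutations is false, and you only get rate $O(n^{-1/2})$. The Ehrenfeucht--Fra\"iss\'e step needs only blocks of length at least $2^q$, so take $t=\max(2^q,C_0)$ with $C_0$ a fixed constant.

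\textbf{The unproved step.} (3) Everything rests on your normal-form claim, for which you give no argument: the value of $b$, the disjointness of the generic families, the count $\binom{n}{k-1}+O(n^{k-2})$ per skeleton, and the separation in (ii).

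You must first define compactness precisely. No two entries that are adjacent in both position and value should combine into one monotone block of the correct direction when at least one of them is a free block. Single fixed points adjacent to a co-monotone free block are absorbed into it in the same way.

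Then prove uniqueness, for instance as follows. Let $C_0$ exceed $2$ plus the number of fixed points of every skeleton. Take any representation of $\sigma$ in a compact skeleton with all free blocks of length at least $C_0$.
\begin{itemize}
\item A monotone interval containing two points of a free block and leaving it forces a neighbouring block adjacent in position and value. That neighbour is either mergeable or has length one.
\item A monotone interval meeting each free block in at most one point has length at most $2$ plus the number of fixed points.
\end{itemize}
So the free blocks are exactly the maximal monotone intervals of $\sigma$ of length at least $C_0$, and $\sigma$ determines both the skeleton and the parameters. The paper gets this for free from the uniqueness of the left-greedy representation $\sigma^{\#}$, which is why Lemma~\ref{lemma: disjointness} is immediate there.

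You should also cite the finite-union structure correctly. Proposition~\ref{prop: polygrowth implies contained in geometric} only gives $\cP\subseteq\mathrm{Geom}(M)$. The decomposition into finitely many skeleton families is Kaiser--Klazar's canonical sets, or Homberger--Vatter's peg classes. The paper rederives it via popular sets and admissible sequences.

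\textbf{Comparison with the paper.} With these repairs, your part (i) is the paper's argument in other language. The cellwise game in the proof of Lemma~\ref{lemma: infinite model ER game} is exactly your Feferman--Vaught composition, since for a signed permutation matrix the relations between points in different cells are determined by the cells.

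The genuine difference is in (ii). The paper must read off the cell counts of the \emph{left-greedy} gridding inside a sentence. For this it imports the Markstr\"om--R\"aty definability result, Proposition~\ref{prop: eeroklas}, to build Lemma~\ref{lemma: indicator FOSs}. In your setting, once large-block representations are unique, the mere existence of one is canonical. So $\Psi_j$ can existentially quantify the endpoints $l_i,r_i$ of all blocks of $\tau_j$ and assert that:
\begin{itemize}
\item the sets $[l_i,r_i]_1$ partition the permutation in order;
\item each set is a monotone interval of the prescribed direction with at least $C_0$ points, or is the prescribed fixed point;
\item the sets are arranged in value as in $\tau_j$.
\end{itemize}
This is elementary and self-contained. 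Choosing ``one point in each long block'', as you wrote, does not pin the blocks down. A pattern-containment version would also need the uniqueness lemma, to show that distinct maximal skeletons generate distinct classes.

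So your approach replaces a definability theorem for the canonical gridding with a purely combinatorial uniqueness lemma. The price is that you must actually prove that lemma and cite the finite decomposition.
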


\noindent In the special case where $\cP$ is the geometric grid class of a signed permutation matrix (see Section~\ref{subsection: geom grid classes} for a formal definition of these terms), our methods allow us to construct an infinite TOTO model for the class $\cP$. 
\begin{theorem}\label{theorem: infinite model}
    Let $\mathcal{P}$ be the geometric grid class of a signed permutation matrix. Then there is a permutation $\mathfrak{P}$ on an infinite set of points that gives an infinite model for the set of TOTO axioms defining $\mathcal{P}$ and that is asymptotically almost surely TOTO first-order indistinguishable, as $n\rightarrow \infty$, from a permutation $\bfsigma_n$ drawn uniformly at random from $\cP_n$ (i.e.\ for any fixed $\Psi$, the probability that $\bfsigma_n \models \Psi$ converges to $1$ as $n\rightarrow \infty$ if $\mathfrak{P}\models \Psi$, and to $0$ otherwise).
\end{theorem}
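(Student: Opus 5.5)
The plan is to describe a typical $\bfsigma_n$ in the geometric grid class of a signed permutation matrix $M$, and then take $\mathfrak{P}$ to be a countable "saturated" version of that typical structure.

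\emph{Setup.} Let $M$ be a $t\times t$ signed permutation matrix, with nonzero entries in cells $C_1,\dots,C_t$. Each cell carries a segment of slope $\pm1$. A permutation in $\cP_n$ is obtained by placing $n$ points on these segments, distributed as $n_1+\dots+n_t=n$. Since the cells occupy distinct rows and columns, no point in one cell interacts with a point in another cell in either coordinate. Hence a permutation of $\cP_n$ is determined by the vector $(n_1,\dots,n_t)$, up to the finitely many coincidences caused by small or empty cells. So $\vert\cP_n\vert=\binom{n}{t-1}+O(n^{t-2})$, and $b=1$, which agrees with Theorem~\ref{thm:poly growth}. A uniform $\bfsigma_n$ therefore corresponds essentially to a uniform composition of $n$ into $t$ parts. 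Asymptotically almost surely every $n_i\to\infty$, i.e.\ $\min_i n_i\geq \omega(n)$ for some $\omega(n)\to\infty$.

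\emph{The model.} Define $\mathfrak{P}$ as the TOTO structure with $t$ blocks arranged as the pattern of $M$, in both the horizontal and vertical orders. Each block is a copy of the order type $\omega+\omega^*$: an increasing chain in both orders if the sign is $+$, and increasing in one order and decreasing in the other if the sign is $-$. The point is that $\omega+\omega^*$ is the limit theory of long finite linear orders: two finite linear orders of length at least $2^k$ are $k$-round Ehrenfeucht--Fra\"iss\'e equivalent, and any finite linear order of length at least $2^k$ is also $k$-equivalent to $\omega+\omega^*$. $\mathfrak{P}$ satisfies every TOTO axiom defining $\cP$: avoidance of a finite basis of patterns is a universal statement, and every finite substructure of $\mathfrak{P}$ is a finite permutation in $\cP$, since it is just finitely many points placed on the cells.

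\emph{Indistinguishability.} The main step is showing that $\bfsigma_n\equiv_k\mathfrak{P}$ whenever $\min_i n_i\geq 2^k$. To do this I would compose Ehrenfeucht--Fra\"iss\'e strategies cellwise. When Spoiler picks a point in block $i$, Duplicator answers in block $i$ according to a winning strategy for the single linear orders of block $i$. The two TOTO orders restricted to a cell are the same order or its reverse, and comparisons between points in different cells are determined solely by the fixed cell positions in $M$. So the combined partial map is a partial isomorphism of both orders. This is the standard Feferman--Vaught style argument for an ordered sum indexed by a fixed finite pattern, and I expect it to be the only delicate point. One must check that the cross-cell order relations really depend only on cell indices, which holds because a permutation matrix places each cell in its own row and column.

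\emph{Conclusion.} Fix a sentence $\Psi$ of quantifier rank $k$. Asymptotically almost surely $\min_i n_i\geq 2^k$, because the uniform composition has each part of order $n$ except with probability $O(n^{-1})$. On that event $\bfsigma_n\models\Psi$ if and only if $\mathfrak{P}\models\Psi$. Therefore $\Pr(\bfsigma_n\models\Psi)$ tends to $1$ or $0$ according to whether $\mathfrak{P}\models\Psi$, which is the claim of Theorem~\ref{theorem: infinite model}.
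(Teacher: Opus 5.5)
Your overall plan matches the paper's: an $\omega+\omega^*$ block in each cell (the paper's $F^{\infty}$), a cellwise Ehrenfeucht--Fra\"iss\'e game won by Duplicator once every cell has at least $2^k$ points (Lemma~\ref{lemma: infinite model ER game}), and a balls-and-bins count. The gap is in your Setup, and it carries into your Conclusion.

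You claim that distinct compositions $(n_1,\dots,n_t)$ give distinct permutations apart from $O(n^{t-2})$ exceptions coming from small cells. This is false for a general signed permutation matrix. Suppose two cells are diagonally adjacent with the same sign, so that together they form one monotone run. Then every way of splitting that run gives the same permutation. For $M_{1,1}=M_{2,2}=1$ one gets $\mathrm{Geom}(M)_n=\{12\cdots n\}$, so $\vert\cP_n\vert=1$, not $\binom{n}{1}+O(1)$. In such cases $\bfsigma_n$ is far from a uniform composition, and your inference that a.a.s.\ $\min_i n_i\geq 2^k$ has no support. What your EF argument actually needs is that a.a.s.\ $\bfsigma_n$ has \emph{some} $M$-gridding on the standard figure with at least $2^k$ points in every cell.

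This is exactly the step the paper deals with. It first assumes without loss of generality that $M$ contains no proper signed-permutation submatrix $M'$ with $\mathrm{Geom}(M')=\mathrm{Geom}(M)$. It then counts through the left-greedy representation $\sigma^\#$, whose cell counts are canonical, so that $\sigma\mapsto(\vert\sigma^\#_{i,\tau(i)}\vert)_i$ is injective (Lemma~\ref{lemma: balls and bins}).

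To repair your proof, make the same reduction and then justify near-injectivity for the reduced matrix. One way to do this:
\begin{itemize}
\item Each cell of an $M$-gridding is an interval of the permutation, since $M$ has one nonzero entry per row and column.
\item Suppose every cell of a given gridding has at least two points and no two cells of $M$ form a single monotone run. Then a monotone interval cannot meet two cells.
\item Hence any other $M$-gridding of the same permutation has the same cell counts.
\end{itemize}
After that, you have two options. Either build $\mathfrak{P}$ from the reduced matrix, as the paper does. Or note that a permutation whose reduced-matrix cells are all large can be regridded in the original $M$ with all cells large, by splitting the merged runs. With either choice, your cellwise EF argument goes through unchanged.
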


For small permutation classes in the exponential growth regime we show, analogously to the result from~\cite{albert2024logical} for $231$-avoiding permutations,  that the set of limiting probabilities is dense in $[0,1]$. However, our proof uses very different methods in order to handle the much more general setting we work in.
\begin{theorem}\label{thm:exp growth}
    Let $\mathcal{P}$ be a small permutation class with exponential growth. Then $\mathrm{Lim}(\mathcal{P})$ is a dense subset of $[0,1]$. 
\end{theorem}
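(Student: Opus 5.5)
Theorem~\ref{thm:exp growth} needs density only. So it suffices to build, for every target $p\in(0,1)$ and every $\varepsilon>0$, a first-order TOTO sentence $\Psi$ with $\lim_n \Pr(\bfsigma_n\models\Psi)$ within $\varepsilon$ of $p$. My plan is to get such sentences from a family of ``asymptotically independent local events'' whose probabilities can be tuned.

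\textbf{Step 1: structure.} I would use Vatter's structure theorem for small classes (Section~\ref{sec: Inflations of geometric grid classes}). It says that $\cP$ is contained in a finite-type inflation of a geometric grid class. Exponential growth forces some cell of the gridding to carry unboundedly long monotone runs with genuine freedom, i.e.\ an exponential number of choices.

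Equivalently, I could pass through Theorem~\ref{theorem: eeroklas}: $\cP$ is first-order isomorphic to a regular language $L_\cP$ with $|L_n|\sim\alpha\beta^n n^c$ and $\beta>1$. For a regular language of exponential growth, a uniform random word decomposes into a bounded number of pieces, each read along a strongly connected component of the minimal automaton. At least one component has spectral radius $\beta>1$, and inside it the random word behaves like a Markov chain with positive entropy.

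\textbf{Step 2: a tunable family of events.} Inside a dominant component I would pick a fixed finite word (factor) $u$ that appears with positive density. I would then define events such as ``the first occurrence of one of the factors $u_1,\dots,u_m$ is an occurrence of $u_1$'', or ``the leftmost block of a specified type has parity/length pattern $x$''. Being position-anchored to an endpoint, these events are first-order expressible on words, and they translate back to TOTO by Theorem~\ref{theorem: eeroklas}(1), or by its first-order-isomorphism variant in part (2).

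By mixing of the Markov chain, the limiting probability of such an event equals the stationary probability of the corresponding pattern. I would then choose families of independent-ish local events $E_1,\dots,E_r$ near the start of the word, each with limiting probability $q_j$ bounded away from $0$ and $1$. Boolean combinations of the $E_j$ then have limiting probabilities equal to sums of products of the $q_j$'s. Taking a long sequence of such events, say ``the $j$-th letter after a fixed anchor lies in set $S$'', gives enough flexibility: binary-expansion-type sums $\sum_{x\in A}\prod_j q_j^{x_j}(1-q_j)^{1-x_j}$ become dense once the $q_j$ are all bounded in $[\delta,1-\delta]$. The reason is that every atom of the Boolean algebra has probability at most $(1-\delta)^r\to0$, so sums of atoms approximate any $p$ to within that mesh.

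\textbf{Main obstacle.} The hard step is making these events well defined and asymptotically independent uniformly over $\cP$. There are two issues. The first is that the uniform measure on $\cP_n$ corresponds to the uniform measure on $L_n$ only through a length-preserving bijection; that is fine. The real issue is that the position anchor must be first-order definable and must land, with probability $1-o(1)$, inside an exponential-growth component rather than in a bounded polynomial part. I would handle this by anchoring at the end of the word. Since $\beta>1$ dominates, the final segment of a typical word lies in a top component with probability tending to some positive constant, and I would condition via first-order definable guards. Asymptotic independence of letters at distance $O(1)$ from the anchor then follows from Perron--Frobenius ergodicity (after taking aperiodicity into account by grouping letters into blocks of the period length). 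The remaining degenerate case is that the stationary letter distribution has a deterministic coordinate. I would resolve it by choosing as the $E_j$ events about length-$\ell$ factors, with $\ell$ large enough that two distinct factors are both positively likely; such factors exist precisely because entropy is positive.
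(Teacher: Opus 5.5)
\textbf{The gap is the translation of your word events back into TOTO sentences.} Theorem~\ref{theorem: eeroklas}(1) runs in the opposite direction. It turns each TOTO sentence $\Psi$ into a word sentence $\Phi$ with the same models under $f_{\cP}$. It says nothing about whether a given word sentence (``the letters after the anchor spell $x$'', ``the first occurrence of $u_1$ precedes that of $u_2$'') has the same models as some first-order TOTO sentence. A priori the corresponding set of permutations might be only MSO-definable, or not definable at all. Since $\mathrm{Lim}(\cP)$ records only limits of first-order TOTO sentences, your events contribute nothing to it without this converse.

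The only source for the converse is the first-order isomorphism $f'_{\cP}:L'_{\cP}\to\cP$ in part~(2), which is explicitly marked as only sketched. The paper's proof is built to avoid it: it uses only parts (3) and (5) of Theorem~\ref{theorem: eeroklas}. The whole of Section~4 exists to manufacture genuine TOTO sentences of small limiting probability that cover $\cP_n$. It locates a rich monotone, alternating or deep $\star$-structure in the left-greedy gridding, and writes TOTO formulae for it (Proposition~\ref{prop: isolated monotone FOS}, Lemmas~\ref{lemma: FOS alternating monotone sub intervals} and~\ref{lemma: FOS structure for stars}). It then proves via Lemma~\ref{lemma: square comparision} that deleting $L$ points is a structured, hence injective, deletion.

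Your ``main obstacle'' paragraph therefore misplaces the difficulty, and nothing in the proposal substitutes for this definability work. Your Step~1 sketch via Vatter's theorem is dropped before it does anything. The ``first-order definable guards'' for ``the suffix lies in a top component'' are also not available in general. First-order logic on words defines only star-free languages, so which states an automaton run passes through need not be first-order definable.

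\textbf{If you take part~(2) as given, the route works and is much shorter than the paper's, but the probabilistic half must be replaced.} As written it is false. Letters at bounded distance from each other are not asymptotically independent under a Markov (Parry) measure; ergodicity only decorrelates positions whose separation tends to infinity. Limits of endpoint-anchored events are also not stationary probabilities, because of conditioning by the initial and accepting states and by transient components.

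None of this is needed. Work with $L'=L'_{\cP}$ over its alphabet $\Sigma$, and a trim DFA with state set $Q$. Fix a word $w$ of length $r$. Every word of $L'_n$ ending in $w$ has the form $vw$ with $|v|=n-r$, and $v$ leads to some state $q$. Appending to $v$ a shortest accepting continuation from $q$, of length $\ell<|Q|$, injects these $v$ into $L'_{n-r+\ell}$. By part~(3), with $\beta>1$, the event ``the word ends in $w$'' has limiting probability at most $|Q|\beta^{|Q|-r}$, uniformly in $w$.

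Transfer these $|\Sigma|^r$ first-order word sentences to TOTO via $f'_{\cP}$. Their limits exist by part~(5), are all small, and the sentences cover $\cP_n$ for $n\ge r$. This is exactly the conclusion of Theorem~\ref{theorem: technical cover}, and the paper's prefix-disjunction argument then gives density.

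So your repaired route replaces the structured-deletion machinery with a one-line counting bound in the word model. The price is resting on the unproven first-order isomorphism. The paper pays with the heavy left-greedy analysis in order to rely only on the established parts of Theorem~\ref{theorem: eeroklas}.
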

\noindent Note that in this regime we rely on the convergence result from \cite{eeroklas} and our contribution is thus the density of the set $\mathrm{Lim}(\cP)$.

\section{Preliminaries}\label{section: preliminaries}
\subsection{Permutations and TOTO}\label{subsection: permutations}
We denote by $\mathfrak{S}_{n}$ the collection of permutations of $[n]:=\{1,2, \ldots, n\}$, and we write $\vert \sigma\vert =n$ if $\sigma \in \mathfrak{S}_{n}$. We often represent a permutation $\sigma \in \mathfrak{S}_{n}$  as the ordered sequence $\sigma(1)\sigma(2)\ldots \sigma(n)$. Given permutations $\tau\in \mathfrak{S}_{m}$ and $\sigma \in \mathfrak{S}_{n}$ for $n\geq m$, we say that $\tau$ is a \emph{subpattern} of $\sigma$ if there exists an order-preserving injection $f: \ [m]\rightarrow[n]$ such that for all $i,j\in [m]$,  $\tau(i)\leq \tau(j)$ if and only if $\sigma(f(i))\leq \sigma(f(j))$.  A \emph{permutation class} is a collection of permutations $\cP \subseteq \bigcup_{n\in \mathbb{N}} \mathfrak{S}_{n}$ which is closed under taking subpatterns. Given a fixed permutation $\tau$, we denote by $\mathrm{Av}(\tau)$ the permutation class of all \emph{$\tau$-avoiding permutations}, that is, the collection of permutations $\sigma$ not containing $\tau$ as a subpattern.

For a general background in first-order logic we refer the reader to \cite{RAUTENBERG}. As already mentioned we will be working with TOTO,  the Theory Of Two Orders.  In this theory, we think of permutations as finite sets of points in the plane with the additional property that no two points lie on the same horizontal or vertical line. Indeed, given such a set of points $\{(x_i, y_i): \ i \in [n]\}$ with $x_1<x_2<\ldots <x_n$, there is a unique associated permutation $\sigma \in\mathfrak{S}_{n}$ with $y_{\sigma^{-1}(1)}<y_{\sigma^{-1}(2)}<\ldots <y_{\sigma^{-1}(n)}$.  So, TOTO is concerned with sets equipped with two linear orders, one for each coordinate,  and isomorphism classes of finite TOTO models are in one-to-one correspondence with permutations, for a proof see e.g.~\cite[Proposition 4]{albert2020two}. 

Formally TOTO consists of a countably infinite number of variables $x_{1},x_{2},\dots$, and a set of relations\footnote{Elsewhere, the set of relations $R$ is often chosen to contain relations $\left\{<_1, <_2\right\}$. However, certain technical steps in our proof become slightly simpler if we work with the orders $\leq_1$ and $\leq_2$ instead, where $x \leq_1y$ if and only if $x <_1 y$ or $x = y$. These two versions of TOTO are equivalent.} $R=\left\{ \leq_{1},\le_{2}\right\} $ corresponding to the linear orders $\leq_1$ and $\leq_2$. An \textit{atomic formula} in TOTO is a formula of the form $x_{i}=x_{j}$, $x_{i}\leq_1 x_{j}$ or $x_i \leq_2 x_j$. The set $\mathrm{FO}$ of \textit{first-order formulae} in TOTO is the least collection of expressions satisfying the following:
 \begin{enumerate}[(i)]
 	\item all atomic formulae lie in $\mathrm{FO}$,
    \item $\bot$ and $\top$ are in $\mathrm{FO}$,
    \item if $p$ is in $\mathrm{FO}$, then $\neg p$ is in $\mathrm{FO}$,
 	\item if $p$ and $q$ are in $\mathrm{FO}$, then so is $\alpha(p,q)$, where $\alpha$ is any of the standard logical connectives of arity $2$ (for example, this means that $p \lor q$, $p \land q$, $p \rightarrow q$ are in $\mathrm{FO}$),
 	\item if $p\in\mathrm{FO}$ and $x$ is a free variable\footnote{Informally, $x$ is a \textit{free variable} in a sentence $p\left(x\right)$ if $p$ does not contain any quantifiers bounding the variable $x$.}  occurring in $p$,
 	then $\forall x: \ p\left(x\right)$ and $\exists x:\ p\left(x\right)$ also belong to $\mathrm{FO}$. 
 \end{enumerate}
 In addition, we say that $p \in \mathrm{FO}$ is a \emph{(first-order) sentence} if $p$ does not contain any free variables. We write $\mathrm{FOS} \subseteq \mathrm{FO}$ for the set of first-order sentences. 
 
 However, in our proofs it is more convenient to work directly with formulae that are written in \textit{disjunctive normal form}. A \textit{literal} $\ell$ is a formula that is either an atomic formula or the negation of an atomic formula. We say that a sentence $\Psi$ is written in \textit{disjunctive normal form }if $\Psi$ is given by
 \[
 Q_{1}x_{1}\dots Q_{k}x_{k}\left(\left(p_{1,1}\land p_{1,2}\land \dots\land p_{1,a_{1}}\right)\lor\left(p_{2,1}\land p_{2,2}\land\dots\land p_{2,a_{2}}\right) \lor \dots\lor\left(p_{b,1}\land p_{b,2}\land \dots\land p_{b,a_{b}}\right)\right),
 \]
 where each $Q_{i}\in\left\{ \forall,\exists\right\} $ is a quantifier,
 $x_{1},\dots,x_{k}$ are variables, and each $p_{i,j}=p_{i,j}\left(\underline{x}\right)$
 is an atomic formula or a negation of an atomic formula using (a subset of the) variables $x_{1},\dots,x_{k}$.

 Note that a sentence $\Psi$ as above is said to have \emph{quantifier depth} (see e.g.~\cite{albert2020two}) at most $k$. It is well-known that every sentence can be converted to one in disjunctive normal form, and thus is logically equivalent to a sentence in disjunctive normal form. 
 \begin{proposition}
 	\label{prop: every FOS can be written in DFN}  Given a first-order sentence $\Psi$, there exists a first-order sentence $\Psi_{DNF}$ that is written in disjunctive normal form, and for which $\Psi$ and $\Psi_{DNF}$ are logically equivalent. 
 \end{proposition}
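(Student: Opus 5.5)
The plan is the standard prenex-normal-form argument, followed by propositional normalisation of the quantifier-free matrix. Call the process in the first two steps \emph{prenexing}, and write $\mathrm{vars}(\Psi)$ for the variables occurring in $\Psi$.

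\textbf{Step 1 (renaming bound variables).} Rename bound variables so that distinct quantifier occurrences bind distinct variables, none of which clashes with another. Renaming a bound variable to a fresh one preserves logical equivalence: the semantics of $\exists x\, p(x)$ depends only on $p$ up to substitution of a fresh variable. This is a routine induction on formulae.

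\textbf{Step 2 (pulling quantifiers outward).} Argue by induction on the structure of formulae that every formula in $\mathrm{FO}$ is logically equivalent to one of the form $Q_1x_1\dots Q_kx_k\,\phi$ with $\phi$ quantifier-free. The cases are as follows.
\begin{enumerate}[(a)]
\item Atomic formulae, $\bot$ and $\top$ are already in this form with $k=0$.
\item Negation uses $\neg\forall x\,p \equiv \exists x\,\neg p$ and $\neg\exists x\,p\equiv\forall x\,\neg p$, applied repeatedly.
\item Binary connectives are first rewritten in terms of $\land,\lor,\neg$; this is possible since every binary Boolean connective is so expressible.
\item For $\land$ and $\lor$, use $(Qx\,p)\land q\equiv Qx\,(p\land q)$ and the analogous rule for $\lor$, valid when $x$ is not free in $q$. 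This is guaranteed by Step 1 together with the fresh renaming of the prenex forms obtained from the induction hypothesis.
\item The quantifier case $Qx\,p$ is immediate from the induction hypothesis applied to $p$.
\end{enumerate}

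One subtlety is the semantics on the empty structure. The identity $(\exists x\,p)\land q\equiv \exists x(p\land q)$ with $x$ not free in $q$ can fail on an empty domain when $q$ is a sentence. Here I use that TOTO models of interest are nonempty ($\mathcal{P}_n$ with $n\ge1$), or that first-order logic conventionally has nonempty domains. Under that assumption the equivalences are all valid.

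\textbf{Step 3 (normalising the matrix).} The quantifier-free matrix $\phi$ is a propositional combination of atomic formulae in the variables $x_1,\dots,x_k$. Push negations inward with De Morgan's laws and cancel double negations. Then distribute $\land$ over $\lor$, so that $\phi$ becomes a disjunction of conjunctions of literals. Equivalently, take the truth table of $\phi$ viewed as a Boolean function of its atoms, and form the disjunction over satisfying assignments of the conjunction of the corresponding literals.

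Two degenerate cases need care. If $\phi$ is unsatisfiable propositionally, replace it by $x_1=x_1\land\neg(x_1=x_1)$, or by $\bot$. If the prefix quantifies over a variable not occurring in the matrix, this is harmless: the definition of DNF only asks that the literals use a subset of $x_1,\dots,x_k$.

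The only genuinely delicate point is the side condition on free variables in Step 2, together with the empty-domain caveat. Both are handled by the fresh renaming in Step 1 and the nonempty-domain convention, so I expect no real obstacle. The result, $\Psi_{DNF}$, is a sentence because prenexing and propositional manipulation introduce no new free variables.
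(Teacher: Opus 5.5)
Your argument is correct. It is the standard proof (rename bound variables, prenex, then put the quantifier-free matrix into propositional DNF). The paper gives no proof at all and simply calls the fact well known, so there is nothing further to compare.

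One small slip is in your empty-domain remark. The identity $(\exists x\,p)\land q\equiv\exists x\,(p\land q)$ actually does hold on the empty domain, since both sides are false there. The prenexing rules that fail on the empty domain, when $x$ is not free in $q$, are $(\exists x\,p)\lor q\equiv\exists x\,(p\lor q)$ and $(\forall x\,p)\land q\equiv\forall x\,(p\land q)$. Your conclusion that the nonempty-domain convention is needed still stands.

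A related minor point concerns constant matrices. The paper's literals do not include $\bot$ or $\top$. So a constant matrix, and in particular a quantifier-free sentence (where $k=0$), should be handled by adding a dummy quantifier and using $x_1=x_1$ or $x_1=x_1\land\neg(x_1=x_1)$, rather than $\bot$.
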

\noindent For a more comprehensive background on the first-order logic TOTO, see~\cite{albert2020two, albert2024logical}. 

\subsection{Geometrically griddable permutation classes and geometric grid classes}	\label{subsection: geom grid classes} 
In this subsection, we define geometric grid classes and geometrically griddable classes of permutations, which play a central role in the theory of permutation classes with polynomial growth. 

Given a matrix $M$ with entries in $\{-1, 0, 1\}$, we shall consider two specific permutation classes that can be associated to it, namely the class of permutations induced by $M$-compatible gridded figures, denoted by $\text{Grid}\left(M\right)$, and the geometric grid class of $M$, which we denote by $\text{Geom}\left(M\right)$.  As we are primarily interested in  geometric grid classes, we present here a slightly simplified definition of the gridded figures treated by Albert, Ru{\v{s}}kuc and Vatter in~\cite{albert2015inflations}.

Given a matrix $M$, we index the rows from bottom to top, and the columns from left to right, and write $M_{i,j}$ for the entry in the $i^{th}$ column from the left and $j^{th}$ row from the bottom. This non-standard choice of indexing has a geometric motivation: given a unit square grid on the rectangular subset $\left[0,\ell\right]\times\left[0,m\right]$ of the plane $\mathbb{R}^2$, the unit square whose top-right corner is $\left(i,j\right)$ can be associated in a natural way with the entry $M_{i,j}$ of a matrix with $\ell$ columns and $m$ rows. In this spirit, we shall abuse notation and refer to such a matrix $M$ as an $\ell\times m$ matrix in the remainder of this paper, in order to better reflect the geometric nature of the structures we study.

To make this association formal, we let the standard $\ell\times m$-\emph{gridding}  $G_{\ell\times m}$ be the partition of the rectangle $R=\left[0,\ell \right]\times\left[0,m\right]$ into $\ell m$ interior-disjoint unit squares whose corners are the lattice points of $\mathbb{Z}^{2}$. We write $C_{i,j}$ for the unit square (or \emph{cell}) whose top-right corner is $\left(i,j\right)$. 
\begin{definition}
    An $\ell\times m$ \textit{gridded figure }$\left(F,G_{\ell\times m}\right)$ consists of the standard $\ell\times m$ gridding $G_{\ell\times m}$ together with a finite set of points $F\subseteq\left[0, \ell\right]\times\left[0,m\right]$ such that no two points of $F$ lie on the same horizontal or vertical line, and no point of $F$ lies on the grid-lines of $G_{\ell\times m}$.

\end{definition}
Clearly each gridded figure $\left(F,G_{\ell\times m}\right)$ induces a permutation $\sigma$: let $x_1<x_2<\ldots < x_{\vert F\vert}$ and $y_1<y_2<\ldots <y_{\vert F\vert}$ denote the $x$- and $y$- coordinates of points in $F$. Then there is a unique permutation $\sigma \in \mathfrak{S}_{\vert F\vert}$ such that $F=\{(x_i, y_{\sigma(i)}): \ i\in [\vert F\vert ] \}$. 

\begin{definition}
    We say that an $\ell\times m$ gridded figure $\left(F,G_{\ell\times m}\right)$ is \textit{compatible} with a matrix $M$ if $M$ is an $\ell \times m$ matrix with entries in $\{-1,0,1\}$ such that for each $\left(i,j\right)\in\left[\ell \right]\times\left[m\right]$ we have that
    \begin{align*}
    F\cap C_{i,j} \textrm{ is }\left\{\begin{array}{ll}
	   \text{empty } & \text{if }M_{i,j}=0,\\
      \text{a (possibly empty) increasing segment} & \text{if }M_{i,j}=1,\\
	   \text{a (possibly empty) decreasing segment} & \text{if }M_{i,j}=-1.
    \end{array}\right.
    \end{align*}
    Here we say that a set of points $\{(x_i, y_i): i \in [t]\}$ in $\mathbb{R}^2$ with $x_1<x_2<\ldots <x_t$ forms an \emph{increasing segment} if $y_1<y_2<\ldots <y_t$, and a \emph{decreasing segment} if $y_1>y_2>\ldots > y_t$.
\end{definition}

We say that two gridded figures $(F,G)$ and $(F',G)$ with the same gridding $G$ are \emph{equivalent} if they induce the same permutation $\sigma$, and for each $i \in [|\sigma|]$, the points $p_i \in F$ and $p'_i \in F'$ corresponding to the $i$-th point in $\sigma$ belong to the same cells in $G$. We write $\textrm{Grid}^{\#}\left(M\right)$ for the set of equivalence classes of all gridded figures $\left(F,G_{\ell\times m}\right)$ that are compatible with $M$. Further, we let $\textrm{Grid}\left(M\right)$ denote the set of
all permutations $\sigma$ induced by the gridded figures $\left(F,G_{\ell\times m}\right)$ which belong to any equivalence class contained in $\textrm{Grid}^{\#}\left(M\right)$. It is easily checked that $\mathrm{Grid}(M)$ is a permutation class.

The \emph{standard figure} of $M$ is the subset of $\left[0,\ell\right]\times\left[0,m\right]$ consisting of the union of (a) the increasing diagonals $\left(i-1,j-1\right)+\{(t,t): \ t\in (0,1)\}$ in the cells $C_{i,j}$ with $M_{i,j}=1$, and (b) the decreasing diagonals $\left(i-1,j-1\right)+\{(t,1-t): \ t\in (0,1)\}$ in the cells $C_{i,j}$ with $M_{i,j}=-1$. Note that the standard figure thus contains no point inside the cells $C_{i,j}$ for which $M_{i,j}=0$.
\begin{definition}
    Let $M$ be an $\ell\times m$ matrix and let $G$ denote the standard $\ell\times m$ gridding. We write $\textrm{Geom}^{\#}\left(M\right)$ for the set of equivalence classes in $\textrm{Grid}^{\#}\left(M\right)$ which contain a gridded figure $\left(F,G\right)$ as a representative for which $F$ is a finite subset of the standard figure of $M$. The \emph{geometric grid class} of $M$, denoted by $\textrm{Geom}\left(M\right)$, is the collection of all permutations $\sigma$ induced by the equivalence classes in $\textrm{Geom}^{\#}\left(M\right)$. 
\end{definition}
\noindent See Figure~\ref{fig: left greedy} 
for an example of a permutation in $\text{Geom}\left(M\right)$ for some $3\times 4$ matrix $M$. It is clear that $\textrm{Geom}\left(M\right)$ is a permutation class. We say that a permutation class $\mathcal{P}$ is \emph{geometrically griddable} if we have $\mathcal{P}\subseteq\text{Geom}\left(M\right)$
for some matrix $M$ whose entries are in the set $\left\{ -1,0,1\right\} $.

We trivially have $\text{Geom}^{\#}\left(M\right)\subseteq\text{Grid}^{\#}\left(M\right)$
and $\text{Geom}\left(M\right)\subseteq\text{Grid}\left(M\right)$.
However, these inclusions are not always equalities. Indeed,
as pointed out in~\cite{albert2013geometric}, for $M=\left(\begin{array}{rr}
	-1 & 1\\
	1 & -1
\end{array}\right)$, the permutation $2413$ lies in
$\textrm{Grid}(M)$ but not in $\textrm{Geom}(M)$.
However the authors of~\cite{albert2013geometric} showed one does get equality in certain special cases, which turn out to be exactly the ones needed for our purposes. Given an $\ell\times m$ matrix $M$, define the  \emph{row-column graph} $H_{M}$ to be the graph with vertices $V\left(H_{M}\right):=\left[\ell \right]\sqcup\left[m\right]$ and edges consisting of those pairs $ij$ for which $M_{i,j}\neq0$.  
\begin{proposition}[Albert, Atkinson, Bouvel, Ru{\v{s}}kuc and Vatter~\cite{albert2013geometric}]\label{prop: equality grid and geometric grid when HM is a forest}
	\label{thm:Forest theorem}If $H_M$ is a forest, then $\textrm{Grid}^{\#}\left(M\right)=\text{Geom}^{\#}\left(M\right)$ 	(and thus $\textrm{Grid}\left(M\right)=\text{Geom}\left(M\right)$). 
\end{proposition}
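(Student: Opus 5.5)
The plan is to take an arbitrary gridded figure $(F,G)$ compatible with $M$ and move every point onto the standard figure of $M$ without changing the cell of any point and without changing the induced permutation. This gives the nontrivial inclusion $\textrm{Grid}^{\#}(M)\subseteq\textrm{Geom}^{\#}(M)$; the reverse inclusion is trivial.

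\textbf{Reformulation.} A point of the standard figure in cell $C_{i,j}$ is determined by a parameter $t\in(0,1)$. Its coordinates are $x=i-1+t$, and $y=j-1+t$ if $M_{i,j}=1$ or $y=j-t$ if $M_{i,j}=-1$. Two points in different columns are already compared correctly horizontally by the gridlines, and two points in different rows are already compared correctly vertically. So the induced permutation and the cell of every point are preserved exactly when the following holds. We choose distinct parameters $t_p$ for $p\in F$ such that:
\begin{itemize}
\item for each column $i$, the order of the values $t_p$ over points of column $i$ agrees with the $x$-order in $F$;
\item for each row $j$, the order of the values $y(t_p)$ over points of row $j$ agrees with the $y$-order in $F$. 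Here $y(t)$ is increasing in $t$ on cells with entry $1$ and decreasing on cells with entry $-1$.
\end{itemize}
Within a single cell these requirements are consistent with each other, because $F\cap C_{i,j}$ is a monotone segment of the correct type. So row $j$'s order and column $i$'s order induce the same order on the $t$-values of the points in $C_{i,j}$.

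\textbf{Assigning parameters along the forest.} Vertices of $H_M$ are rows and columns, and each nonempty cell is an edge. Each vertex $v$ carries a prescribed linear order on the points lying in the cells incident to it. I would root each tree component of $H_M$ and process vertices in breadth-first order.
\begin{itemize}
\item At the root $v$, assign to all points in the cells at $v$ parameters in $(0,1)$ that realise $v$'s prescribed interleaving order. For example, use equally spaced values, or for a row use $t$ or $1-t$ according to the sign of each cell.
\item At a non-root vertex $w$ with parent edge $e$, the points of $e$ already have fixed parameters. The points in the other cells at $w$ have not been touched, because those cells are edges to children of $w$.
\item Insert values for these untouched points into the gaps between, below and above the already fixed values, so that the order at $w$ is realised. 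This is always possible because $(0,1)$ is dense. Since $w$'s order is a linear extension that agrees with the fixed order on $e$, there is no conflict. Applying $t\mapsto 1-t$ on decreasing cells of a row only reverses orders and does not affect the argument.
\end{itemize}
Because $H_M$ is a forest, every edge (cell) is assigned exactly once, by the endpoint nearer the root. At each vertex at most one incident cell has been fixed beforehand.

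\textbf{Main obstacle.} The step that needs care is the insertion step at $w$: one must check that realising an order relative to a single fixed chain of reals is always possible, while a second previously fixed chain could make it impossible. A second fixed chain is exactly what a cycle in $H_M$ would produce, and this is how $2413$ fails for the $2\times 2$ matrix above. Once all parameters are assigned, the new figure lies on the standard figure, is compatible with $M$, and is equivalent to $(F,G)$. Hence $\textrm{Grid}^{\#}(M)=\textrm{Geom}^{\#}(M)$, and taking induced permutations gives $\textrm{Grid}(M)=\textrm{Geom}(M)$.
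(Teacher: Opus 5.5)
Your argument is correct and complete. The three pieces together give an equivalent $M$-compatible representative on the standard figure: the reduction to choosing parameters $t_p$ whose order is prescribed separately at each row and column vertex, the within-cell consistency check, and the root-down assignment, in which each vertex meets at most one previously fixed chain, so density of $(0,1)$ suffices.

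The paper does not prove this statement itself; it quotes it from Albert, Atkinson, Bouvel, Ru\v{s}kuc and Vatter. Your breadth-first assignment is the standard leaf-peeling argument run in the opposite order: there one deletes a leaf cell of $H_M$, draws the rest on the standard figure, and reinserts the leaf's points into the gaps of the one row or column they share with the rest.
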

Another useful result, due to Huczynska and Vatter~\cite[Corollary 3.4]{huczynska2006grid}, tells us that every permutation class with polynomial growth is contained inside a geometric grid class with highly specific structure.

\begin{proposition}[Huczynska and Vatter]\label{prop: polygrowth implies contained in geometric}
	If $\mathcal{P}$ is a permutation class with polynomial growth, then there exists a signed permutation matrix $M$ such that $\mathcal{P}\subseteq \mathrm{Geom}(M)$.
\end{proposition}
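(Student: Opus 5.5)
Since this proposition is quoted from Huczynska and Vatter, I only outline how I would reprove it. There are three stages. First, show that polynomial growth rules out certain exponentially large families of patterns. Second, use the absence of some of these families to get a monotone gridding. Third, use the absence of the rest to refine the gridding until each row and each column of the gridding matrix contains at most one nonzero cell.

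\emph{Step 1 (forbidden families).} Several families of permutations generate classes of exponential growth:
\begin{itemize}
\item the sums $21\oplus 21\oplus\cdots\oplus 21$ and the skew sums $12\ominus\cdots\ominus 12$, since closing under subpatterns already gives Fibonacci-many permutations;
\item the parallel and wedge \emph{alternations}, meaning two monotone sequences interleaved in one direction, which give about $2^n$ interleavings;
\item the increasing and decreasing oscillations.
\end{itemize}
If $\cP$ contained arbitrarily long members of any one family, its downward closure would force $\vert\cP_n\vert\ge c^{n}$ for some $c>1$. That would contradict polynomial growth, which by Kaiser and Klazar~\cite{kaiser2002growth} means growth below $F_n$. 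So there is a constant $k$ such that $\cP$ contains no member of length greater than $k$ of any of these families.

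\emph{Step 2 (monotone griddability).} Because $\cP$ avoids long sums of $21$ and long skew sums of $12$, the standard griddability argument of Huczynska--Vatter applies. It is a Ramsey/greedy argument that covers each permutation by boundedly many monotone rectangles and extends their sides to grid lines. It yields a constant $N$ such that every $\sigma\in\cP$ admits a gridding with at most $N$ rows and columns, each cell monotone. In particular $\cP\subseteq\mathrm{Grid}(M')$ for some $\{0,\pm1\}$-matrix $M'$, and without loss of generality $M'$ has all entries nonzero.

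\emph{Step 3 (refinement).} This is the step I expect to be the main obstacle. Fix a gridded $\sigma$ and a column of the gridding. Its points lie in several monotone cells in different rows. Reading the column left to right, record which cell each point belongs to. Each change of cell between two consecutive monotone cells contributes to an alternation, and a long alternation is excluded by Step 1. Hence the column splits into at most $f(k,N)$ vertical strips, each meeting only one cell. Doing the same for rows and inserting the new grid lines gives a gridding of bounded size $N'$ in which every row and every column has at most one nonempty cell. So every $\sigma\in\cP$ lies in $\mathrm{Grid}(M_\sigma)$ for some partial signed permutation matrix $M_\sigma$ of size at most $N'$, and there are only finitely many such matrices. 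For each of them the row-column graph is a matching, hence a forest. Proposition~\ref{prop: equality grid and geometric grid when HM is a forest} then gives $\mathrm{Grid}(M_\sigma)=\mathrm{Geom}(M_\sigma)$. The delicate point is the bookkeeping: oscillations must be used to control points that straddle both a row split and a column split, so that refining columns does not destroy the bound obtained for rows.

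\emph{Step 4 (a single matrix).} Choose a signed permutation $\pi$ that contains every signed permutation of size at most $N'$ as a sub-pattern, with the signs carried along; one way is to take a superpattern of all permutations of length $N'$ and replace each entry by a short block containing both signs. Let $M$ be the corresponding signed permutation matrix. Every $M_\sigma$ is obtained from a submatrix of $M$ by deleting rows and columns and zeroing cells. Placing no points in the deleted cells shows $\mathrm{Geom}(M_\sigma)\subseteq\mathrm{Geom}(M)$. Therefore $\cP\subseteq\mathrm{Geom}(M)$.
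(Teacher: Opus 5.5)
The paper does not prove this proposition; it quotes it from Huczynska and Vatter~\cite{huczynska2006grid}, so your outline has to stand on its own. The overall architecture is the standard one, and Steps 1, 2 and 4 are sound. Step 3, however, has a genuine gap: its conclusion is false as stated.

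Here is a counterexample. Take an increasing cell $X=(c,r)$. Suppose row $r$, read bottom to top, contains some points of $X$, then points of a cell $Y$ in another column, then the rest of $X$. Suppose column $c$, read left to right, contains all of $X$ and then points of a cell $Z$ in another row. Your column refinement leaves all of $X$ in one vertical strip $V$. Your row refinement splits $X$ over two horizontal strips $H_1,H_3$, separated by the strip meeting $Y$. In the combined gridding, the column $V$ then has the two nonempty cells $V\cap H_1$ and $V\cap H_3$. You sense this difficulty (``bookkeeping''), but oscillations are not the missing ingredient. They are even redundant in Step 1, since a long increasing (decreasing) oscillation contains a long sum of $21$ (skew sum of $12$).

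What is missing is this observation. After both refinements, every strip, vertical or horizontal, meets exactly one original cell. So any further cut placed inside a strip separates only points of that one monotone cell, and cannot create a new multi-cell row or column elsewhere. Let $X$ be an original cell met by $p_X$ vertical and $q_X$ horizontal strips. Both families cut the monotone sequence of points of $X$ into consecutive intervals. Cut each of these strips at the breakpoints of the common refinement, which has at most $p_X+q_X-1$ intervals. Then every new row and every new column meets exactly one nonempty cell, and the total size is still bounded.

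Three smaller points:
\begin{enumerate}
\item When a column has three or more cells, the labels can cycle ($A,B,C,A,B,C,\dots$). So ``many changes of cell'' gives a long alternation between two fixed cells only after a pigeonhole step: between consecutive occurrences of the most frequent label, some other label recurs in many of the gaps.
\item Step 4 is simpler than you make it. The block-diagonal sum $M$ of the finitely many $M_\sigma$ is a signed permutation matrix with $\mathrm{Geom}(M_\sigma)\subseteq\mathrm{Geom}(M)$ for each $\sigma$. Your universal signed permutation also works.
\item The appeal to Kaiser--Klazar in Step 1 is unnecessary, since an exponential lower bound contradicts polynomial growth directly.
\end{enumerate}
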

\noindent We recall here that a signed permutation matrix $M$ is a square $m\times m$ matrix with entries from $\{-1, 0,1 \}$ such that there exists a permutation $\tau=\tau_M \in \mathfrak{S}_{m}$ with the property that $M_{i,j}\neq 0$ if and only if $j=\tau(i)$. Note in particular that for such a matrix $M$, the graph  $H_M$ is a matching, and hence the hypothesis in Proposition~\ref{prop: equality grid and geometric grid when HM is a forest} is satisfied.

In most of our arguments, we study permutations via gridded figures. Each gridded figure induces a permutation, as we have already remarked. However, for a given matrix $M$ there may be many fundamentally different non-equivalent $M$-compatible gridded figures giving rise to the same permutation, which   is a problem if we hope to use such gridded figures to count permutations. To resolve this difficulty, Albert, Atkinson, Bouvel, Ru{\v{s}}kuc and Vatter~\cite{albert2013geometric} introduced the \textit{left-greedy order}, a linear order providing a way for comparing two gridded figures which give rise to the same permutation. 
\begin{definition}\label{def: left greedy}
    Given a permutation $\sigma$ and a gridding $G$, we define the \emph{left-greedy order} $<_{LG}$ on equivalence classes of gridded figures with gridding $G$ that induce $\sigma$ by letting $(F, G)<_{LG}(F', G)$ if one of the following occurs:
    \begin{enumerate}[(a)]
	   \item there exists an index $i_0$ such that for all $i<i_0$ and for all $j$, the number of points of $F$ and $F'$ in the $(i,j)$-th cells of their respective griddings are equal, but there are strictly more points of $F$ than of $F'$ in the cells located in the $i_0$-th column;
	   \item there exist indices $i_0$ and $j_0$ so that for every $(i,j)$ with either $i < i_0$, or $i = i_0$ and $j < j_0$,  the number of points of $F$ and $F'$ in the $(i,j)^{th}$ cell of their respective griddings are equal, the number of points of $F$ and $F'$ in the cells located in the $i_0$-th column are equal,     
       but there are strictly more points of $F$ than of $F'$ in the $(i_0,j_0)$-th cell.
    \end{enumerate}
\end{definition}

\noindent Intuitively, $(F, G)<_{LG} (F', G)$ if $(F,G)$ is `more compressed towards the left, or equally left-compressed and more compressed towards the bottom' of their respective griddings -- see also Figure~\ref{fig: left greedy}. It is easily seen that for any fixed gridding $G$, $<_{LG}$ interplays well with our notion of equivalent griddings (e.g.\ if $(F, G)$ and $(F',G)$ are equivalent and $(F',G)<_{LG}(F'', G)$ then $(F,G)<_{LG}(F'',G)$), and thus $<_{LG}$ induces a linear order on $\textrm{Grid}^{\#}\left(M\right)$. For convenience, we write $(F,G) \leq_{LG} (F',G)$ if $(F,G) <_{LG} (F',G)$, or $(F,G)$ and $(F',G)$ are equivalent. 

We remark that $<_{LG}$ is easily seen to be a linear order. Indeed, given a gridded figure $(F,G)$, let $n_{i,j}$ denote the number of points in the $(i,j)$-th cell, and let $n_i$ denote the number of points in the $i$-th column. Then the left-greedy order simply coincides precisely with the descending lexicographic order on the tuples
\[(n_1,n_{1,1},\dots,n_{1,m},n_2,n_{2,1},\dots,n_{2,m},\dots,n_\ell,n_{\ell,1},\dots,n_{\ell,m}).\]
\noindent Observe that if $(F,G)$ and $(F',G)$ are two gridded figures with identical tuples, they are easily seen to be equivalent. Thus the fact that $<_{LG}$ is a linear order follows from the fact that the lexicographic order is a linear order.

\begin{figure}
    \centering
    \includegraphics[scale=0.7]{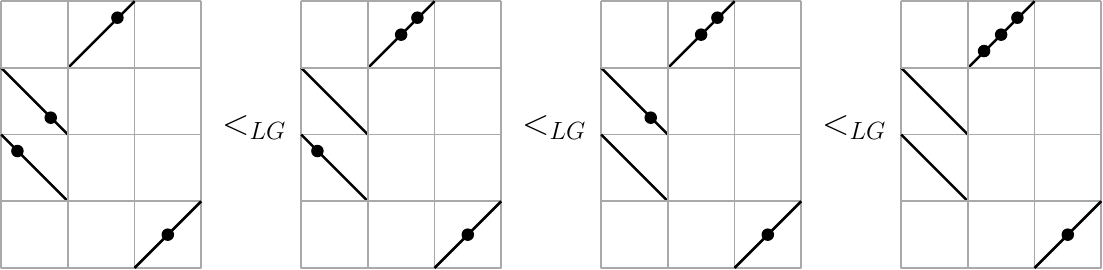}
     \caption{
     An example of gridded figures all inducing the permutation $\sigma=2341$ and their left-greedy ordering -- see Definition~\ref{def: left greedy}. Note these are all $M$-compatible for $M$ with $M_{3,1}=M_{2,4}=1$, $M_{1,2}=M_{1,3}=-1$ and $M_{i,j}=0$ otherwise. (Recall here our non-standard indexing convention for matrices.)
    }\label{fig: left greedy}
\end{figure}

Given a permutation $\sigma \in \mathrm{Grid}(M)$, we write $\sigma^{\#}$ for the $<_{LG}$-minimal  equivalence class of $M$-compatible gridded figures (with the standard gridding for $M$) that induce $\sigma$. In a slight abuse of notation, we may in certain cases also use $\sigma^{\#}$ to denote a representative from that equivalence class. When $M$ is clear from context, we refer to $\sigma^{\#}$ as the \emph{left-greedy representation} of $\sigma$. Note that  for every pair $(i,j)$, all gridded figures 
in $\sigma^{\#}$ have precisely the same number of points lying in the $(i,j)$-th cell of their gridding, and we denote this number by $\vert \sigma^{\#}_{i,j}\vert $.
\begin{definition}
    Given an $\ell\times m$ matrix $M$ with entries in $\{-1,0,1\}$ and another $\ell\times m$ matrix $A$ with entries in $\mathbb{Z}_{\geq 0}\cup\{\infty\}$, we write $\mathrm{Grid}^{\#}(M\vert A)$ for the collection of all equivalence classes that contain a representative gridded figure $(F,G)$ which is $M$-compatible and  satisfies $\vert F\cap C_{i,j}\vert \leq A_{i,j}$ for all pairs $(i,j)\in [\ell]\times [m]$. We denote by $\mathrm{Grid}(M\vert A)$ the collection of all permutations induced by the equivalence classes in $\mathrm{Grid}^{\#}(M\vert A)$, and refer to this permutation class as the \emph{$A$-restricted} grid class of $M$.

    We let $\mathrm{Geom}^{\#}(M\vert A)$ denote the collection of 
those equivalence classes that contain a representative $(F,G)$ which satisfies the conditions described above, and is also a subset of the standard figure of $M$. We let $\mathrm{Geom}(M\vert A)$ denote the set of permutations induced by such equivalence classes. We refer to the permutation class $\mathrm{Geom}(M\vert A)$ as the \emph{$A$-restricted} geometric grid class of $M$.
\end{definition}
We point out that the sets $\mathrm{Grid}^{\#}(M\vert A)$ coincide with the notion of \textit{canonical sets} introduced by Kaiser and Klazar~\cite{kaiser2002growth} (proof of Lemma 4.3). However, as specifying the sequence $A$ is often important for our purposes, we chose to introduce this notation for the canonical sets.

\subsection{Inflations of geometric grid classes}\label{sec: Inflations of geometric grid classes}
Next, we will define inflations of geometric grid classes, a generalization of the geometric grid classes based on  matrices $M$ having entries in $\{-1, 0, 1, \star\}$, satisfying certain additional properties.
\begin{definition}\label{def: suitable matrix}
	A matrix $M$ is \emph{suitable} if $M_{i,j}\in\left\{ -1,0,+1,\star\right\} $
	for all $i,j$, and $M$ satisfies the following conditions:
	\begin{enumerate}[(i)]
		\item if $M_{i,j}=\star$, then  
        $M_{i,j'}=0$ for all $j'\neq j$ and $M_{i',j}=0$ for all $i'\neq i$; 
		\item if there exist $j_1\neq j_2$ such that $M_{i,j_1},M_{i,j_2}\in\left\{ -1,+1\right\} $, 
		then for all $j'\notin\{j_1, j_2\}$ we have $M_{i,j'}=0$ and for all $i'\neq i$ we have $M_{i',j_1}=M_{i',j_2}=0$;
		\item Similarly, if there exists $i_1\neq i_2$ for which we have $M_{i_1,j},M_{i_2,j}\in\left\{ -1, +1\right\} $,
		then $M_{i',j}=0$ whenever $i'\not\in\left\{ i_1,i_2\right\} $, and $M_{i_1,j'}=M_{i_2,j'}=0$
		whenever $j'\neq j$. 
	\end{enumerate}
\end{definition}
\noindent In other words, $M$ is suitable if (i) $\star$-entries of $M$ are the only non-zero entries of $M$ in their row and column; (ii) if some column contains two non-zero entries, then these two entries are the only non-zero entries in that column, and the only non-zero entries in their respective rows; and (iii) if some row contains two non-zero entries, then these two entries are the only non-zero entries in that row and in their respective columns. To each such matrix $M$, we shall associate a certain class of griddable permutations. To define these properly, we need to make a number of technical definitions.

An \emph{interval} inside a permutation $\sigma$ is an integer interval $I=\left[a,b\right]\cap\mathbb{Z}_{>0}$ such that its image $\sigma\left(I\right):=\left\{ \sigma\left(i\right):\ i\in I\right\} $ under $\sigma$ is also an integer interval. If the restriction of $\sigma$ to $I$ is a monotone permutation, then we refer to $I$ as a \emph{monotone interval} of $\sigma$.
\begin{definition}
    Given a permutation $\tau \in \mathfrak{S}_k$ and permutations $\sigma_{1},\dots,\sigma_{k}$, we define the \emph{inflation} $\tau\left[\sigma_{1},\dots,\sigma_{k}\right]$ of $\tau$ by the $k$-tuple of permutations $(\sigma_{1},\ldots,\sigma_{k})$ to be the permutation obtained by replacing each entry $\tau\left(i\right)$ by an interval that is order-isomorphic to the permutation $\sigma_{i}$. 
\end{definition}
\noindent As an example to illustrate this definition, the inflation of $\tau=4213$ by $(132, 12, 1,231)$ is the permutation
\[
4213\left[132,12,1,231\right]=798\,23\,1\,564
\]
where the spaces in the expression on the right hand-side have been added to highlight the transitions between each of the four intervals in the inflation of $\tau$ --- see also Figure~\ref{fig:inflation-example} for a geometric representation of this inflation. 
\begin{figure}
    \centering
    \begin{tikzpicture}[scale=1.2]

        \draw[step=1,black,thin] (0,0) grid (4,4);

        \foreach \x/\y in {0.25/3.25, 0.50/3.75, 0.75/3.50}
            \fill[black] (\x,\y) circle (2pt);

        \foreach \x/\y in {1.33/1.33, 1.67/1.67}
            \fill[black] (\x,\y) circle (2pt);

        \fill[black] (2.50,0.50) circle (2pt);

        \foreach \x/\y in {3.25/2.50, 3.50/2.75, 3.75/2.25}
            \fill[black] (\x,\y) circle (2pt);
    \end{tikzpicture}
    \caption{Example of an inflation $4213\left[132,12,1,231\right]$}
    \label{fig:inflation-example}
\end{figure}
\noindent One may think of inflations in a natural way as permutations that can be decomposed into smaller  permutations.

\begin{definition}
	Given two permutation classes $\mathcal{P}$ and $\mathcal{Q}$, we
	define the \emph{inflation} $\mathcal{P}\left[\mathcal{Q}\right]$ of $\mathcal{P}$ by $\mathcal{Q}$
	to be:
	\[
	\mathcal{P}\left[\mathcal{Q}\right]:=\left\{ \tau\left[\sigma_{1},\dots,\sigma_{k}\right]:\ \tau\in\mathcal{P}_{k}, \ \sigma_{1},\ldots,\sigma_{k}\in\mathcal{Q}\right\} .
	\]
	Given an inflation of the form $\sigma=\tau\left[\sigma_{1},\dots,\sigma_{k}\right]\in\mathcal{P}\left[\mathcal{Q}\right]$ where $\tau \in \mathcal{P}$ and $\sigma_1, \ldots, \sigma_k \in \mathcal{Q}$, we say that $\tau\left[\sigma_{1},\dots,\sigma_{k}\right]$ is a \emph{$\mathcal{Q}$-decomposition} of $\sigma$, and we call $\tau$ a \emph{$\mathcal{P}$-profile} of the expression $\sigma = \tau\left[\sigma_{1},\dots,\sigma_{k}\right]$.
\end{definition}
It is easily checked that for any pair of permutation classes $\mathcal{P}, \mathcal{Q}$, the inflation $\mathcal{P}\left[\mathcal{Q}\right]$ also forms a permutation class. Also, note that a permutation $\sigma\in\mathcal{P}\left[\mathcal{Q}\right]$ may have multiple \emph{$\mathcal{Q}$}-decompositions and $\mathcal{P}$-profiles; there is no uniqueness implied in our definition. However, for reasons of convenience we include the notion of $\mathcal{P}$-profiles into inflations, instead of permutations themselves, as in our applications, we often care about a particular choice of inflation, and the profile of that particular inflation.  
Despite having non-uniqueness of \emph{$\mathcal{Q}$}-decompositions, one can get around this by using a decomposition analogue of the left-greedy ordering.
\begin{definition}
    Let $\mathcal{P}$ and $\mathcal{Q}$ be two permutation classes. Let $\tau\left[\sigma_{1},\dots,\sigma_{k}\right]$ and $\tau'\left[\sigma'_{1},\dots,\sigma'_{k'}\right]$ be two $\mathcal{Q}$-decompositions of the same permutation in $\mathcal{P}[\mathcal{Q}]$. We say that $\tau\left[\sigma_{1},\dots,\sigma_{k}\right]<_L\tau'\left[\sigma'_{1},\dots,\sigma'_{k'}\right]$ if one of the following holds:
    \begin{enumerate}[(a)]
        \item $\vert \tau \vert =k <k' =\vert \tau'\vert$;
        \item $k=k'$ and there exists $i_0\in [k]$ such that for all $i\in [i_0-1]$ we have $\vert \sigma_{i}\vert =\vert \sigma'_{i} \vert$ but $\vert \sigma_{i_0}\vert >\vert \sigma'_{i_0}\vert$.
    \end{enumerate}
\end{definition}
\noindent As an example for $\mathcal{Q}=\mathrm{Av}(12)$ and $\mathcal{P}$ being the set of all permutations, the $\mathcal{Q}$-decompositions of the permutation $1432$ satisfy \[ 12[1,321]<_L 132[1,21,1] <_L 132[1,1,21] <_L 1432[1,1,1,1]. \]
\noindent It is easy to verify that $<_{L}$ induces a  linear order. For every $\sigma \in \mathcal{P}[\mathcal{Q}]$, it is easily seen that there exists a unique $<_L$-minimal $\mathcal{Q}$-decomposition of $\sigma$. We refer to this $<_L$-minimal decomposition as the \emph{left-greedy decomposition}. Its uniqueness is a key fact we use in our arguments.

\begin{definition}
Given a permutation class $\mathcal{P}$, we define the sequence of permutation classes $\left(\mathcal{P}^{\left[d\right]}\right)_{d\in \mathbb{Z}_{\geq1}}$ 
inductively by setting $\mathcal{P}^{\left[1\right]}:=\mathcal{P}$
and $\mathcal{P}^{\left[d+1\right]}:=\mathcal{P}\left[\mathcal{P}^{\left[d\right]}\right]$. For notational convenience, we also define $\mathcal{P}^{\left[0\right]}:=\mathfrak{S}_1$, the family consisting of the single permutation on $1$ element.	
\end{definition}
\noindent In our arguments, we shall be naturally led to consider two particular families of inflations, namely inflations of the identity permutation and inflations of the monotone decreasing permutation.
\begin{definition}
	Given permutations $\sigma_1, \ldots, \sigma_k$, their \emph{direct sum} is the permutation $\sigma_1 \oplus \sigma_2\oplus \cdots \oplus \sigma_k:= 123\ldots k[\sigma_1, \sigma_2, \ldots , \sigma_k]$. Similarly, their \emph{skew sum} is the permutation $\sigma_1 \ominus \sigma_2\ominus \cdots \ominus \sigma_k:= k(k-1)(k-2)\ldots 21[\sigma_1, \sigma_2, \ldots , \sigma_k]$.
\end{definition} 
\begin{definition}
    The \emph{increasing oscillating sequence} is the integer sequence $(s_n)_{n\in \mathbb{Z}_{\geq 0}}$ defined by $s_{n}=n+4$ if $n$ is even and $s_n=n$ otherwise; thus its first few terms are $4, 1, 6, 3, 8, 5, \ldots$. We let $F_k:=\{(i, s_{i-1}): \ i \in [k]\}$, and denote by $\omega_k$ the permutation associated to $F_k$. We define the \emph{decreasing oscillating sequence} similarly by taking $D_k := \{(i, -s_{i-1}): \ i \in [k]\}$, and denoting by $\varpi_k$ the permutation associated to $D_k$.
    
    We let $\mathcal{O}_k$ denote the finite permutation class consisting of all subpermutations of $\omega_k$ and $\varpi_k$ and let $\widetilde{\mathcal{O}}_{k}$ denote the permutation class that is the union of $\mathcal{O}_k$, and the set containing all monotone increasing and decreasing permutations of finite length. 
\end{definition}
\begin{definition}
    Given  a suitable $\ell\times m$ matrix $M$, the standard $\ell \times m$ gridding $G$ and integers $d,k\in \mathbb{Z}_{\geq 0}$, we define $\mathrm{Geom}^{\#}(M)_{d,k}$ to be the collection of all equivalence classes which contain a gridded figure $(F,G)$ satisfying the following:
    \begin{enumerate}[(i)]
        \item if $M_{i,j}=0$ then $F\cap C_{i,j}=\emptyset$;
        \item if $M_{i,j}=+1$, we have $F\cap C_{i,j} \subseteq \left\{(i-1, j-1)+(t,t): \ t\in (0,1)\right\}$, while if $M_{i,j}=-1$ we have $F\cap C_{i,j} \subseteq \left\{(i-1, j-1)+(t,1-t): \ t\in (0,1)\right\}$;
       \item if $M_{i,j}=\star$, then $F\cap C_{i,j}$ induces a permutation from ${\widetilde{\mathcal{O}}_{k}}^{[d]}$.
    \end{enumerate}
    Finally we let $\mathrm{Geom}(M)_{d,k}$ denote the collection of all permutations induced by an equivalence class in $\mathrm{Geom}^{\#}(M)_{d,k}$.
\end{definition}
\noindent As Definition~\ref{def: left greedy} of the left-greedy order $<_{LG}$ was given for gridded figures in general, without any reference to a matrix, it applies in this setting also, and given $\sigma \in \mathrm{Geom}(M)_{d,k}$ we can again write $\sigma^{\#}$ for the $<_{LG}$-minimal  equivalence class in $\mathrm{Geom}^{\#}(M)_{d,k}$ that induces $\sigma$. With these notions in place, we can now state the fundamental theorem of Vatter~\cite[Theorem 5.4]{vatter2011small} on the structure of small permutation classes; we state it here in its reformulation by Albert, Ru{\v{s}}kuc and Vatter~\cite[Theorem 8.4]{albert2015inflations}.
\begin{proposition}[Vatter]\label{prop: small implies contained inside Geom (M) dk}
    Let $\cP$ be a small permutation class. Then there exists a suitable matrix $M$ and integers $d,k\in \mathbb{Z}_{\geq 0}$ such that $\cP\subseteq \mathrm{Geom}(M)_{d,k}$.
\end{proposition}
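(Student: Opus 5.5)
Since this proposition is quoted from Vatter~\cite[Theorem 5.4]{vatter2011small} in the form of~\cite[Theorem 8.4]{albert2015inflations}, the honest proof here is a citation. Still, this is the route I would take to reconstruct it. The basic tool is the substitution decomposition: every permutation is uniquely an inflation of a simple permutation, or of $12$ or $21$. This splits the problem in two. First, control the simple permutations lying in a small class $\cP$. Second, control how deeply inflations can nest.

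\emph{Step 1: excluded oscillations and monotone griddability.} Let $\mathrm{gr}(\cP)<\kappa$. I would first show that $\cP$ omits all sufficiently long increasing and decreasing oscillations $\omega_k,\varpi_k$, apart from a bounded number of exceptions. The method is a counting argument. If $\cP$ contained arbitrarily long oscillations, it would contain their downward closure. Inflating points of these oscillations, together with sums of such closures, produces classes whose generating functions have radius of convergence at most $1/\kappa$. This is where $1+2X^2-X^3$ enters, since the class of increasing oscillations inflated appropriately has exactly this growth rate. Similarly, $\cP$ cannot contain arbitrarily long sums $21\oplus 21\oplus\cdots$ mixed with skew structures beyond a bounded number of alternations.

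Next comes Huczynska--Vatter type grid analysis: a class is monotone griddable if and only if it avoids long sums of $21$ and skew sums of $12$. Applied to the simple permutations of $\cP$, and to the "non-inflated skeleton" of each element, it gives a gridding into boundedly many cells. Each cell is monotone or consists of a bounded oscillation structure.

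\emph{Step 2: from griddings to a suitable geometric matrix.} I would show that growth below $\kappa$ forces the row-column graph of the gridding matrix to be very sparse. No cycle can occur, since cycles give $\mathrm{Grid}\neq\mathrm{Geom}$ behaviour and growth at least $2$. Moreover, each row and column meets at most two nonzero cells, in the restricted pattern of Definition~\ref{def: suitable matrix}(ii)--(iii). Each forbidden configuration would be checked to contain a subclass of growth at least $\kappa$. With a forest, Proposition~\ref{thm:Forest theorem} converts the monotone gridding into a geometric one. The cells that are not monotone are exactly where bounded oscillations and their iterated inflations live. These cells must be isolated in their row and column, giving the $\star$ entries and condition (i).

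\emph{Step 3: bounding the inflation depth.} Within a $\star$ cell the contents are inflations of elements of $\mathcal{O}_k$ or monotone permutations, by the same. I would show that the depth of nesting is bounded by some $d$. Unbounded depth would again produce a subclass whose generating function has radius at most $1/\kappa$. This yields $\cP\subseteq\mathrm{Geom}(M)_{d,k}$.

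The main obstacle is the sharp threshold in Steps 1--2. One must enumerate all minimal obstructions and verify that each has growth rate at least $\kappa$ via rational generating function computations. This is the technical core of Vatter's paper, and in a write-up I would cite it rather than redo it.
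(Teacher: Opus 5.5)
Citing Vatter's theorem in the Albert--Ru\v{s}kuc--Vatter form is exactly what the paper does; it gives no proof of its own. As a proof, your proposal therefore agrees with the paper. The reconstruction you sketch, however, contains a step that fails, so it should not be presented as an outline of Vatter's argument.

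The main problem is Step~3: unbounded nesting depth does not force growth rate at least $\kappa$. Take the wedges $w_n=1\oplus(1\ominus(1\oplus(1\ominus\cdots)))$, i.e.\ $1\,n\,2\,(n-1)\,3\cdots$.
\begin{itemize}
\item The only nontrivial intervals of $w_n$ are final segments. So in any expression $w_n=\tau[\theta_1,\dots,\theta_r]$ with $\tau\in\widetilde{\mathcal{O}}_k$, all blocks except the last are single points, and $\tau$ is again a wedge.
\item Wedges of length $5$ are neither monotone nor contained in an oscillation. Hence each level of inflation peels off at most three points, and the depth of $w_n$ grows linearly in $n$.
\item Yet every $w_n$ lies in the grid class of one column with an increasing cell below a decreasing cell. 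That class has at most $2^n$ members of length $n$, so the class generated by the wedges has growth rate at most $2<\kappa$.
\end{itemize}
Wedges are separable, so a gridding read off from simple permutations never sees them. In your scheme they would land in a $\star$-cell of unbounded depth. The missing idea is that long alternations produced by nested inflations must themselves be absorbed into two-cell columns or rows of the geometric part. Only after this regridding is the depth bounded, so the gridding and the depth bound have to be handled together.

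The same example contradicts the last sentence of your Step~1. So do the layered permutations, which contain $21\oplus\cdots\oplus21$ of every length and have growth rate $2$: small classes can contain arbitrarily long sum/skew structure.

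Two smaller corrections:
\begin{itemize}
\item In Step~2, the inclusion $\cP\subseteq\mathrm{Grid}(M)$ gives no lower bound on $\mathrm{gr}(\cP)$. Sparsity must come from building a new gridding that $\cP$ genuinely fills. Also, ``growth at least $2$'' is no contradiction, since $2<\kappa$: a column with two monotone cells has growth rate $2$ and is permitted by suitability.
\item $\kappa$ is already the growth rate of the downward closure of the increasing oscillations itself. Its sum-indecomposables are exactly the oscillations, which gives denominator $1-2x-x^3$. So excluding long oscillations is immediate and involves no inflation; in any case, inflated classes need not lie in $\cP$.
\end{itemize}
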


\section{The polynomial growth regime}	


\subsection{0-1 laws and  infinite TOTO models for small geometric grid classes}

We begin our investigations of the polynomial growth regime by proving Theorem~\ref{theorem: infinite model}. We shall in fact prove a slightly more general version of Theorem~\ref{theorem: infinite model}, Theorem~\ref{theorem: infinite model, technical version}, for reasons that will become clear in subsequent sections, where we shall combine it with results on the structure of general permutation classes with polynomial growth to prove Theorem~\ref{thm:poly growth}.

We now set things up for the statement of Theorem~\ref{theorem: infinite model, technical version}. Let $M$ be a signed $m\times m$ permutation matrix, and let $\tau \in \mathfrak{S}_m$ be the associated permutation. Given a non-empty set $I \subseteq [m]$, let $\mathbf{a}\in {\left(\mathbb{Z}_{\geq 0}\right)}^{[m]\setminus I}$ be a non-negative integer-valued vector with entries indexed by $[m]\setminus I$.

Set $F^{\infty}$ to be the infinite collection of points in $[0,m]^2$ given by the intersection of the standard figure of $M$ with the infinite collection of vertical lines $\{ x= i-1/2 \pm 1/n:\ n \in \mathbb{Z}_{\geq 3}\}_{i\in I}$ and  the $\sum_{i\in [m]\setminus I}a_i$ lines $\{ x= i-1 +\frac{n}{a_i+1}: \ n\in [a_i]\}_{i\in [m]\setminus I}$. (Thus the intersection $(F^{\infty})_{i,\tau(i)}$ of $F^{\infty}$ with the cell $C_{i,\tau(i)}$ is infinite if $i\in I$, and has size precisely $a_i$ otherwise.) We let $\mathfrak{P}$ denote the permutation induced by the infinite point-set $F^{\infty}$.  
\begin{claim}
   The set $F^{\infty}$ has the following key property: for every point $p\in F^{\infty}$, either $p$ is the $\leq_1$-least or $\leq_1$-greatest element of $F^{\infty}$, or it has a unique $<_1$-predecessor and a unique $<_1$-successor in $F^{\infty}$. 
\end{claim}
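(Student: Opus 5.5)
The plan is to reduce the claim to a statement about the set $X\subseteq(0,m)$ of $x$-coordinates of points of $F^{\infty}$, and then check that statement column by column.

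First, $\leq_1$ on $F^{\infty}$ is just the order of $x$-coordinates. Each of the vertical lines in the definition meets the standard figure of the signed permutation matrix $M$ in exactly one point, because column $i$ contains exactly one non-zero cell $C_{i,\tau(i)}$, whose diagonal is the graph of a function of $x$ on $(i-1,i)$. So the map $p\mapsto x(p)$ is an order isomorphism from $(F^{\infty},\leq_1)$ onto $(X,\leq)$, where
\[
X=\bigcup_{i\in I}\left\{i-\tfrac12\pm\tfrac1n:\ n\in\mathbb{Z}_{\geq 3}\right\}\ \cup \bigcup_{i\in[m]\setminus I}\left\{i-1+\tfrac{n}{a_i+1}:\ n\in[a_i]\right\}.
\]
It therefore suffices to show that every $x\in X$ is either the least or greatest element of $X$, or has an immediate predecessor and an immediate successor in $X$.

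Second, write $X=\bigcup_i X_i$ with $X_i=X\cap(i-1,i)$. Each $X_i$ is either finite or, for $i\in I$, equal to $\{i-\frac12\pm\frac1n: n\geq 3\}$. The latter is order-isomorphic to $\mathbb{N}\sqcup\mathbb{N}^{\mathrm{op}}$: it has a least element $i-\frac12-\frac13=i-\frac56$ and a greatest element $i-\frac16$. Its only accumulation point is $i-\frac12$, which is not in $X$. Within $X_i$, the point $i-\frac12-\frac1n$ has successor $i-\frac12-\frac1{n+1}$ and, for $n\geq 4$, predecessor $i-\frac12-\frac1{n-1}$; the symmetric statement holds on the right side. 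Finite nonempty $X_i$ trivially have neighbours inside $X_i$ except at their endpoints.

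Third, the blocks $X_i$ lie in disjoint consecutive intervals. So any point of $X$ that is not an endpoint of its own block has both neighbours inside its block. The least point of $X_i$ has as predecessor the greatest point of the nearest nonempty block to its left, and this block exists unless the point is the global minimum. The symmetric argument handles successors. This uses only that every nonempty block has both a least and a greatest element, which the second step established. Since $I\neq\emptyset$, $X$ is nonempty, so the global minimum and maximum exist and are exactly the exceptional points allowed by the claim.

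The main obstacle is the second step: one has to check that the infinite blocks have endpoints and that $i-\frac12$ is excluded, since otherwise some point would lack an immediate neighbour. This holds because $n\geq 3$ gives the extreme values $\frac56$ and $\frac16$ within the cell, and because the term $\pm\frac1n$ never vanishes.
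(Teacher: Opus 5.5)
Your proof is correct and follows essentially the same route as the paper. Both arguments work strip by strip: each nonempty strip has a least and a greatest point (at $x=i-\frac56$ and $x=i-\frac16$ when $i\in I$), non-extremal points have explicit neighbours inside their strip, and strips are glued via their endpoints. Your explicit reduction to the set of $x$-coordinates, using that each vertical line meets the standard figure of a signed permutation matrix exactly once, just makes precise a step the paper leaves implicit.
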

\begin{proof} Indeed, we observe first of all that for every $i \in [m]$, the intersection  of $F^{\infty}$ with the infinite rectangular strip $R_i=\{(x,y): \ i-1\leq x\leq i\}$ is either empty (if $i\in [m]\setminus I$ and $a_i=0$) or has a unique $\leq_1$-minimal and a unique $\leq_1$-maximal element: if $i\in [m]\setminus I$, this is clear from the fact $F^{\infty}\cap R_i$ contains a finite number $a_i$ of points , while if $i \in I$ we have that the $\leq_1$-extremal points in $F^{\infty}\cap R_i$ are the unique points of $F^{\infty}$ on the lines $x=i-5/6$ and $x=i-1/6$.

If $i\in [m]\setminus I$, this easily implies all of the finitely many points in $F^{\infty}\cap R_i$ either have a unique $\leq_1$-predecessor and unique $<_1$-successor or are $\leq_1$-extremal in $F^{\infty}$. On the other hand if $i\in I$, then every point $p$ in $F^{\infty}\cap R_i$ has an $x$-coordinate of the form either $i-5/6$ (in which case it is $\leq_1$-minimal in $F^{\infty}\cap R_i$ and its $\leq_1$-successor in $F^{\infty}$ has $x$-coordinate $i-3/4$) or $i -1/2-1/n$ for some $n\geq 4$ (in which case its $\leq_1$-predecessor and  $\leq_1$-successor in $F^{\infty}$ have $x$-coordinates $i-1/2-1/(n-1)$ and $i-1/2-1/(n+1)$ respectively), or $i-1/2+1/n$ for some $n\geq 3$ (which we can deal with similarly). Together with the existence of unique $\leq_1$-extremal points in $F^{\infty}\cap R_i$ for every $i\in [m]$, this establishes our claim.
\end{proof}
Fix some arbitrary constant $T_0\geq 3$, and let $\mathcal{F}_{I, \mathbf{a}}= \mathcal{F}_{I, \mathbf{a}, T_0}$ denote the collection of permutations $\sigma$ in $\mathrm{Geom}(M)$ such that $\vert \sigma^{\#}_{i,\tau(i)}\vert$ is equal to $a_i$ if $i\in [m]\setminus I$, and is at least $T_0$ otherwise (we suppress the dependence on $T_0$ when $T_0$ is clear from context). Note that $\mathcal{F}_{I, \mathbf{a}}$ is not a permutation class (it is not closed under taking patterns), and that by properties of the left-greedy order, $\mathcal{F}_{I, \mathbf{a}}$ is either empty or infinite.  We let $\left(\mathcal{F}_{I, \mathbf{a}}\right)_n$ denote the order $n$ permutations in $\mathcal{F}_{I, \mathbf{a}}$. The main result in this section is the following theorem, which states that $\mathfrak{P}$ is an infinite asymptotically TOTO-first order indistinguishable model for $\mathcal{F}_{I, \mathbf{a}}$.
\begin{theorem}\label{theorem: infinite model, technical version}
If $\mathcal{F}_{I, \mathbf{a}}=\mathcal{F}_{I, \mathbf{a}, T_0}$ is infinite then the permutation $\mathfrak{P}$ is an infinite model for the set of TOTO axioms defining $\mathcal{F}_{I, \mathbf{a}}$, and is asymptotically TOTO first-order indistinguishable from $\mathcal{F}_{I, \mathbf{a}}$; in particular, for any fixed $\Psi\in \mathrm{FOS}$, and for $\sigma_n$ drawn uniformly at random from $\left(\mathcal{F}_{I, \mathbf{a}}\right)_n$ we have
\[  \mathbb{P}( \sigma_n \models \Psi) = \begin{cases} 1-O(n^{-1}) & \mbox{ if $\mathfrak{P}\models \Psi$}\\
O(n^{-1}) & \mbox{otherwise}.\end{cases}\]
\end{theorem}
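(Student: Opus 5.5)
We plan to prove the theorem with an Ehrenfeucht--Fra\"iss\'e game argument. The point is that a permutation in $\mathcal{F}_{I,\mathbf{a}}$ is determined, up to TOTO-isomorphism, by its left-greedy cell counts. Since $M$ is a signed permutation matrix, $H_M$ is a matching. So by Proposition~\ref{prop: equality grid and geometric grid when HM is a forest} every $\sigma\in\mathcal{F}_{I,\mathbf{a}}$ is induced by a gridded figure on the standard figure of $M$ with $|\sigma^\#_{i,\tau(i)}|$ points in cell $C_{i,\tau(i)}$. In each occupied cell the points form a monotone run that is an interval, both in position and in value, and these blocks are arranged according to $\tau$. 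Thus $\sigma=\tau[\iota_1,\dots,\iota_m]$, where $\iota_i$ is increasing or decreasing of length $n_i$ according to the sign of $M_{i,\tau(i)}$. Here $n_i=a_i$ for $i\notin I$ and $n_i\ge T_0$ for $i\in I$. The infinite permutation $\mathfrak{P}$ has exactly the same shape, with each block for $i\in I$ replaced by a monotone chain of order type $\omega+\omega^*$ (by the Claim, a discrete order with endpoints).

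\textbf{Step 1 (game).} Fix quantifier depth $k$. We show that for any two such structures whose block sizes agree for $i\notin I$, and whose block sizes for $i\in I$ are all at least $2^{k+1}$ or infinite of type $\omega+\omega^*$, Duplicator wins the $k$-round EF game. Within each block both orders restrict to the same linear order or its reverse. Between distinct blocks all comparisons are fixed by $\tau$ and $M$, and do not depend on which points within the blocks are chosen. Hence a partial map that preserves blocks, and preserves the within-block $\le_1$ order, is a partial isomorphism for both $\le_1$ and $\le_2$. So Duplicator answers each move inside the corresponding block. There she uses the classical strategy for discrete linear orders with endpoints, preserving distances up to threshold $2^{k-r}$ in round $r$. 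This strategy works for finite chains of length $\ge 2^{k+1}$ and for $\omega+\omega^*$, since the latter is a discrete order with endpoints whose finite initial and final segments are arbitrarily long. By the Claim this is the only property of $\mathfrak{P}$ we need. Also, $\mathfrak{P}$ satisfies every universal TOTO axiom (forbidden pattern) of $\mathcal{F}_{I,\mathbf{a}}$, since each finite subset of $\mathfrak{P}$ induces a permutation of $\mathrm{Geom}(M)$. Combined with Step 1, this also makes $\mathfrak{P}$ a model of the first-order theory in the sense stated.

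\textbf{Step 2 (counting).} By the left-greedy description, $(\mathcal{F}_{I,\mathbf{a}})_n$ injects into tuples $(n_i)_{i\in I}$ with $n_i\ge T_0$ and $\sum_{i\in I} n_i=n-\sum a_i$. The image consists of all such tuples except those that violate left-greedy minimality. We need to check that the set of allowed tuples has size $\Theta(n^{|I|-1})$. This uses the hypothesis that $\mathcal{F}_{I,\mathbf{a}}$ is infinite: minimality failures only occur on boundary hyperplanes where some blocks can be merged, so they cost at most $O(n^{|I|-2})$. Meanwhile, tuples with some $n_i<2^{k+1}$ (for $i\in I$) number $O(n^{|I|-2})$. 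Hence, with probability $1-O(n^{-1})$, $\sigma_n$ is $k$-equivalent to $\mathfrak{P}$, which gives the displayed dichotomy.

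\textbf{Main obstacle.} The delicate step is Step 2. We must show that the left-greedy constraint does not kill a positive proportion of the tuples; equivalently, that infinitude of $\mathcal{F}_{I,\mathbf{a}}$ forces $\Theta(n^{|I|-1})$ valid tuples. We would first characterise when two adjacent monotone blocks can be merged into one (same sign, and adjacent in both position and value under $\tau$). Left-greedy minimality then forbids only configurations in which a block could shift points leftward. We expect this to hold for all large tuples once it holds for one, or else never, since the left-greedy order depends only on the pattern of which blocks are nonempty.
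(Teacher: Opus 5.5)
This is essentially the paper's argument: Duplicator plays the $k$-round Ehrenfeucht--Fra\"iss\'e game cell by cell with the $2^{k-r}$ gap-preserving strategy (the Claim about unique $<_1$-predecessors and successors in $F^{\infty}$ is exactly what lets $\omega+\omega^*$ be treated like a long finite chain), the model property comes from every finite subset of $\mathfrak{P}$ inducing a permutation of $\mathrm{Geom}(M)$, and the probability bound comes from the balls-and-bins count of Lemma~\ref{lemma: balls and bins}. The step you flag is the one that lemma only outlines via Kaiser--Klazar (it implicitly treats every distribution of points as left-greedy), and it can be closed by upward closure: suppose $\sigma^\#$ has cell-count vector $v$ with $v_j\geq 1$, and suppose some figure of the permutation obtained by lengthening block $j$ by one beats $v+e_j$, first at column $p$; deleting a point of block $j$ from that figure (in column $j$ if $j<p$, otherwise in a column $q>p$ if there is one, otherwise in column $p$, which then holds at least $v_p+2$ points) gives a figure of $\sigma$ beating $v$, a contradiction, so a single element of $\mathcal{F}_{I,\mathbf{a}}$ generates a translated orthant of them and hence $\vert(\mathcal{F}_{I,\mathbf{a}})_n\vert=\binom{n}{\vert I\vert-1}+O(n^{\vert I\vert-2})$.
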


\noindent Let $k$ be fixed. Consider a first-order TOTO sentence $\Psi\in \mathrm{FOS}$ of the form $Q_1x_1 \ \ldots Q_k x_k \Psi_1(\mathbf{x})$, where $Q_i\in \{\forall, \exists\}$ for all $i\in [k]$, $\mathbf{x}=(x_1, \ldots, x_k)$ and $\Psi_1(\mathbf{x})$ is the quantifier-free part of $\Psi$. In the key lemma below, which is the heart of the proof of Theorem~\ref{theorem: infinite model, technical version}, we show that $F^{\infty}$ satisfies $\Psi$ if and only if $F'$ also does, where $F'$ is any $M$-compatible finite gridded figure having a sufficiently large (but finite) number of points in the cell $C_{i,\tau(i)}$ if $i\in I$, and having exactly $a_i$ points  in that cell otherwise. In other words, the first order TOTO sentence $\Psi$ cannot tell $F^{\infty}$ and $F'$ apart.
\begin{lemma}\label{lemma: infinite model ER game}
Let $F'$ be an $M$-compatible finite gridded figure such that $F'_{i,\tau(i)}:= F'\cap C_{i,\tau(i)}$ satisfies $\vert F'_{i,\tau(i)}\vert \geq 2^{k}$ if $i\in I$, and $\vert F'_{i,\tau(i)}\vert =a_i$ if $i\in[m]\setminus I$. Then $F^{\infty}\models \Psi$ if and only if $F'\models \Psi$.
\end{lemma}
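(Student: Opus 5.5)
We will prove this with a $k$-round Ehrenfeucht--Fra\"iss\'e game played between the two TOTO structures $F^{\infty}$ and $F'$. It suffices to show that Duplicator has a winning strategy in the $k$-round game. By the standard EF theorem, the two structures then agree on all sentences of quantifier depth at most $k$, and in particular on $\Psi$.

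The first step is to reduce the two-order structure to a one-dimensional one. Since $M$ is a signed permutation matrix, the nonempty cells $C_{i,\tau(i)}$ occupy distinct columns and distinct rows. Within a cell, both orders $\leq_1$ and $\leq_2$ are determined by the position along the diagonal: they agree if $M_{i,\tau(i)}=1$ and are reversed if $M_{i,\tau(i)}=-1$. Between points in different cells $i\neq i'$, the $\leq_1$-comparison is determined by the comparison of $i$ and $i'$, and the $\leq_2$-comparison by that of $\tau(i)$ and $\tau(i')$.

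It follows that a partial map between the two figures is a partial isomorphism for $(\leq_1,\leq_2,=)$ as soon as two conditions hold. First, it sends each point of cell $i$ to a point of the same cell $i$. Second, within each cell, it preserves the $\leq_1$-order (which then also gives the correct $\leq_2$-order).

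So the structure is essentially the disjoint union, indexed by cell, of $m$ linear orders $L_1,\dots,L_m$. Here $L_i$ is finite of size $a_i$ for $i\notin I$ in both structures. For $i\in I$, $L_i$ has size at least $2^k$ in $F'$, and in $F^{\infty}$ it is the order type $\omega+\omega^*$ described in the Claim.

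Duplicator's strategy is the product of independent strategies, one for each cell. When Spoiler plays in cell $i$ in one structure, Duplicator answers in cell $i$ in the other, following a winning strategy for the $k$-round game on the pair of linear orders $(L_i^{\infty},L_i')$. Each cell receives at most $k$ moves in total, so it suffices to have a $k$-round winning strategy for each single cell.

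For $i\notin I$ the two orders are identical finite orders, and Duplicator copies Spoiler's moves. For $i\in I$ we invoke the classical fact that two discrete linear orders with endpoints, each of size at least $2^k$ (possibly infinite), are $k$-equivalent. The Claim establishes exactly that $L_i^{\infty}$ is discrete with endpoints, i.e.\ every non-extremal point has an immediate predecessor and an immediate successor.

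The standard proof of this fact maintains the following invariant after $r$ rounds. Consider the chosen points together with the two endpoints, and look at consecutive pairs among them. For each such pair, the distances in the two orders are either equal, or both at least $2^{k-r}$. Here distance means the number of steps through successors, which may be infinite in $L_i^{\infty}$ between points in the $\omega$ part and points in the $\omega^*$ part. The invariant holds initially because both orders have size at least $2^k$. It is maintained by the usual halving argument: if Spoiler plays within short distance of an existing point, Duplicator copies the exact offset; otherwise Duplicator plays near the middle of the corresponding long gap. Discreteness is what makes "copy the offset" well defined in $F^{\infty}$.

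The main obstacle is to verify the invariant carefully when a gap is infinite. To handle this, we treat every gap in $F^{\infty}$ between a point of the $\omega$ part and a point of the $\omega^*$ part as having length $\infty$, which exceeds every $2^{k-r}$. We then check that a move in such an infinite gap always admits a response in a gap of length at least $2^{k-r}$ in $F'$, and conversely. A secondary point is that we must also check that the cell-wise partial isomorphisms assemble correctly under $=$ and across cells; this follows from the reduction in the first step.
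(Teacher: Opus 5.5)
Your overall plan matches the paper's proof: split the game cell by cell (using that $M$ is a signed permutation matrix), copy moves in the cells $i\notin I$, and run a gap-halving argument in the cells $i\in I$, with the Claim supplying the discreteness and endpoints that make ``copy the offset'' meaningful in $F^{\infty}$. The classical fact you cite is true. However, the invariant you sketch to prove it has an off-by-one error, and it bites exactly at the boundary the hypothesis allows. You pin both endpoints and measure gaps in successor steps, so the initial gap in $F'_{i,\tau(i)}$ has length $|F'_{i,\tau(i)}|-1$. This equals $2^k-1<2^k$ when the cell has exactly $2^k$ points, so your claim that the invariant holds at $r=0$ is false. It also cannot be restored after one move. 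Suppose Spoiler opens in $F^{\infty}$ with the $2^{k-1}$-th successor of the minimum, which lies at infinite distance from the maximum. To keep both new gaps at least $2^{k-1}$, Duplicator would need a point of $F'_{i,\tau(i)}$ at distance at least $2^{k-1}$ from both endpoints, i.e.\ a total length of at least $2^k$.

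The paper avoids this by measuring gaps as cardinalities of half-open intervals. With $P=F^{\infty}_{i,\tau(i)}$ and $Q=F'_{i,\tau(i)}$, it uses $P_0=\{x:\ x<_1 x_1\}$, $P_j=\{x:\ x_j\le_1 x<_1 x_{j+1}\}$ and $P_r=\{x:\ x_r\le_1 x\}$ (and likewise for $Q$). These partition the cell, so the starting condition is exactly $|Q|\ge 2^k$ against $|P|=\infty$. Alternatively, you can keep your formulation but require only the threshold $2^{k-r}-1$ for the (at most two) gaps adjacent to an endpoint. This suffices because the endpoints never have to be matched at the end of the game, and the halving step still propagates. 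With either repair your argument goes through.
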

\noindent Lemma~\ref{lemma: infinite model ER game} is proved by playing an instance of the \emph{Ehrenfeucht--Fra\"iss\'e game} from first-order logic, which we now recall. In our setting, the $k$-round Ehrenfeucht--Fra\"iss\'e game is a two-player game played in $k$ rounds on a pair of gridded figures $(P, G_{m\times m})$ and $(Q, G_{m\times m})$ (we omit $G_{m\times m}$ from the notation in what follows). 
 The two players are called Spoiler and Duplicator. In round~$i$, Spoiler picks either a point $p_i\in P$ or a point $q_i\in Q$. If Spoiler picked $p_i\in P$, then Duplicator responds by picking some $q_i\in Q$, while if Spoiler picked $q_i\in Q$, then Duplicator responds by picking some $p_i \in P$, and round $i$ concludes. Thus at the end of round $i$, the two players have together specified points $p_i\in P$ and $q_i \in Q$. The game ends at the conclusion of round $k$. Duplicator wins if the ordered $k$-tuples of points $(p_1,\ldots,p_k)$ from $P$  and $(q_1,\ldots,q_k)$ from $Q$ satisfy the same internal TOTO relations --- that is, for every $i\in \{1,2\}$ and $j,j'\in [k]$, $p_j\leq_i p_{j'}$ holds if and only if $q_j\leq_i q_{j'}$ holds --- and otherwise Spoiler wins.

It is well-known, see e.g.~\cite{MR126370}, that if Duplicator has a winning strategy for a given $k$ then the two permutations induced by the point-sets $P$ and $Q$ satisfy the same set of first-order formulae of quantifier depth $k$ --- and in particular $P\models \Psi$ if and only if $Q\models \Psi$, whence the connection to Lemma~\ref{lemma: infinite model ER game}. The existence of such a winning strategy for Duplicator is folklore; for the sake of completeness and readers less familiar with Ehrenfeucht--Fra\"iss\'e games, we include a proof of this (and thus of Lemma~\ref{lemma: infinite model ER game}) in the appendix.
\begin{corollary}\label{cor: zero one law when enough points}
Let $\Psi$ be as above. Either every $\sigma \in \mathcal{F}_{I, \mathbf{a}}$ with $\vert \sigma^{\#}_{i,\tau(i)}\vert\geq 2^k$ for all $i\in I$ satisfies $\sigma \models \Psi$ or none of them does.\  \qedsymbol
\end{corollary}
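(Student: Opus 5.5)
The plan is to deduce the corollary directly from Lemma~\ref{lemma: infinite model ER game}, by using the infinite figure $F^{\infty}$ as a common reference point. Fix $\sigma \in \mathcal{F}_{I, \mathbf{a}}$ with $\vert \sigma^{\#}_{i,\tau(i)}\vert\geq 2^k$ for all $i\in I$. I will show that $\sigma \models \Psi$ if and only if $F^{\infty}\models \Psi$. The right-hand side does not depend on $\sigma$, so either every such $\sigma$ satisfies $\Psi$ or none does.

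To carry this out, take $F'$ to be a representative gridded figure of the left-greedy class $\sigma^{\#}$. Since $\sigma\in \mathrm{Geom}(M)$, such a representative is $M$-compatible, and by the definition of $\vert \sigma^{\#}_{i,j}\vert$ it has exactly $\vert \sigma^{\#}_{i,\tau(i)}\vert$ points in the cell $C_{i,\tau(i)}$. Because $M$ is a signed permutation matrix, all other cells are empty. Membership of $\sigma$ in $\mathcal{F}_{I, \mathbf{a}}$ gives $\vert F'_{i,\tau(i)}\vert=a_i$ for $i\in[m]\setminus I$, and our additional hypothesis gives $\vert F'_{i,\tau(i)}\vert\geq 2^k$ for $i\in I$. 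Thus $F'$ satisfies all the hypotheses of Lemma~\ref{lemma: infinite model ER game}, and hence $F'\models\Psi$ if and only if $F^{\infty}\models \Psi$.

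It remains to observe that $F'\models \Psi$ is the same statement as $\sigma\models\Psi$. Satisfaction of a TOTO sentence by a gridded figure refers only to the two orders $\leq_1,\leq_2$ on its point set, and $F'$ with these orders is isomorphic to the permutation $\sigma$ that it induces. TOTO sentences are invariant under isomorphism, so the equivalence follows.

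I do not expect a real obstacle here. The only care point is the bookkeeping that the left-greedy representative indeed has the prescribed cell counts and is $M$-compatible, and both facts are immediate from the definitions.
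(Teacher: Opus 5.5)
Your proposal is correct and is essentially the paper's argument: the paper gives this corollary as an immediate consequence of Lemma~\ref{lemma: infinite model ER game}, by comparing each such $\sigma$, via a representative of $\sigma^{\#}$, with the fixed infinite figure $F^{\infty}$. The details you check — that the representative is $M$-compatible with the prescribed cell counts, and that it satisfies the same sentences as the permutation it induces — are exactly what the paper leaves implicit.
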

\noindent The next ingredient we need in the proof of Theorem~\ref{theorem: infinite model, technical version} is the following statement regarding the number of permutations in $\left(\mathcal{F}_{I, \mathbf{a}}\right)_n$.
\begin{lemma}\label{lemma: balls and bins}
Let $ k>0$ be fixed. We have $\vert \left(\mathcal{F}_{I, \mathbf{a}}\right)_n\vert= \binom{n}{\vert I\vert -1} +O\left(n^{\vert I\vert -2}\right)$. Further, all but an $O(n^{-1})$ proportion of $\sigma \in \left(\mathcal{F}_{I, \mathbf{a}}\right)_n$ satisfy $\vert \sigma^{\#}_{i,\tau(i)}\vert\geq 2^{ k}$.
\end{lemma}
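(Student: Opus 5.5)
The plan is to reduce counting $(\mathcal{F}_{I,\mathbf{a}})_n$ to counting the cell-occupation vectors of left-greedy representations, and then to count those by a stars-and-bars argument. Since $M$ is a signed permutation matrix, the row-column graph $H_M$ is a matching. Proposition~\ref{prop: equality grid and geometric grid when HM is a forest} therefore gives $\mathrm{Grid}^{\#}(M)=\mathrm{Geom}^{\#}(M)$. An equivalence class in $\mathrm{Grid}^{\#}(M)$ is determined by its occupation vector $(n_{i,\tau(i)})_{i\in[m]}$, because each cell holds a monotone segment of prescribed direction and the relative positions of the cells are fixed by $\tau$. Conversely, every vector with $\sum_i n_{i,\tau(i)}=n$ is realized. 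Since $\sigma\mapsto\sigma^{\#}$ is injective, $\vert(\mathcal{F}_{I,\mathbf{a}})_n\vert$ equals the number of vectors $\mathbf{n}$ that satisfy three conditions: $n_i=a_i$ for $i\notin I$, $n_i\ge T_0$ for $i\in I$, $\sum n_i=n$, and in addition the class with vector $\mathbf{n}$ is the $<_{LG}$-minimal gridding of the permutation it induces.

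\emph{Upper bound.} Drop the minimality condition. The number of vectors with the first three properties is at most the number of compositions of $n-\sum_{i\notin I}a_i$ into $\vert I\vert$ nonnegative parts, namely $\binom{n-A+\vert I\vert-1}{\vert I\vert-1}=\binom{n}{\vert I\vert-1}+O(n^{\vert I\vert-2})$, where $A=\sum_{i\notin I}a_i$.

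\emph{Lower bound, the main obstacle.} We must show that all but $O(n^{\vert I\vert-2})$ of these vectors are left-greedy. A non-minimal vector $\mathbf{n}$ has a $<_{LG}$-smaller gridding $\mathbf{n}'$ of the same permutation, so some points migrate between cells. Because the cells lie in distinct rows and columns, a point can only be regridded into a neighbouring cell if it lies at the boundary of its segment and is compatible there. More concretely, if two cells $C_{i,\tau(i)}$ and $C_{i',\tau(i')}$ can exchange points, then in the induced permutation their union must form a single monotone run near the shared corner. I would first try to argue the following local statement: when every cell of $I$ has at least $T_0\ge3$ points (or, more robustly, large occupation), any alternative gridding differs from $\mathbf{n}$ only by moving a bounded number of points between cells that are adjacent in both orders and carry the same sign. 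This uses the fact that the permutation restricted to a large cell is a long monotone interval, which pins that cell down up to $O(1)$ boundary points.

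Granting this local statement, each permutation with all $I$-cells large has only $O(1)$ griddings. Moreover, the set of vectors that fail to be minimal must have some coordinate $n_i$ with $i\in I$ lying within $O(1)$ of a threshold determined by the fixed data. For instance, when two mergeable cells are in $I$, only a bounded shift of the split is possible, so non-minimality forces a pair of coordinates to be determined up to $O(1)$ choices by the others. A cleaner way to obtain the bound is as follows: mergeable groups of consecutive same-sign cells induce the same permutation under many splits, and minimality selects exactly one split per group. This would contradict the count being $\binom{n}{\vert I\vert-1}$ unless $\mathcal{F}_{I,\mathbf{a}}$ is empty. The resolution is that the condition $\vert\sigma^{\#}_{i}\vert\ge T_0$ on the left-greedy representation excludes such merges: in the left-greedy choice the earlier cell absorbs everything, which would make the later cell empty, so merged configurations simply do not lie in $\mathcal{F}_{I,\mathbf{a}}$. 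Either way, the exceptional vectors have one coordinate confined to $O(1)$ values, and hence number $O(n^{\vert I\vert-2})$. I expect this step, making precise which regriddings are possible, to be the hardest part. I would handle it via the boundary-point analysis together with the hypothesis that $\mathcal{F}_{I,\mathbf{a}}$ is nonempty.

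\emph{Second assertion.} The vectors having some $n_i<2^k$ with $i\in I$ number at most $\vert I\vert\cdot 2^k\cdot\binom{n}{\vert I\vert-2}=O(n^{\vert I\vert-2})$, since fixing one coordinate leaves a composition into $\vert I\vert-1$ parts. Dividing by $\vert(\mathcal{F}_{I,\mathbf{a}})_n\vert=\Theta(n^{\vert I\vert-1})$ shows that this is an $O(n^{-1})$ proportion. The case $\vert I\vert=1$ is handled the same way, with the count equal to $1$ for large $n$ and no exceptions once $n-A\ge 2^k$.
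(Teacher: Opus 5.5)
Your upper bound and your treatment of the second assertion are fine; on those points you follow the paper's balls-and-bins outline. The gap is the lower bound, which you flag but do not prove. You need all but $O(n^{\vert I\vert-2})$ of the admissible occupation vectors to be left-greedy, and the route you sketch does not deliver this.

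\begin{itemize}
\item Your ``local statement'' is false as stated, because whole blocks can migrate. For example, take $\tau=132$ with $M_{2,3}=M_{3,2}$ and $n_2=0$. A block of any size in $C_{3,2}$ can then be regridded into the empty cell $C_{2,3}$, so an unbounded number of points moves.
\item Even granting some bounded description of the regriddings, you give no argument excluding minimality being decided by a comparison between two large coordinates. Such a comparison would remove a positive proportion of the vectors.
\item The conclusion that ``the exceptional vectors have one coordinate confined to $O(1)$ values'' is asserted, not derived.
\item The lemma is false without the nonemptiness of $\mathcal{F}_{I,\mathbf{a}}$, so that hypothesis must enter through a concrete mechanism. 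Saying you will use it is not enough.
\end{itemize}

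The missing idea is that left-greedy minimality is monotone in each coordinate. Write $\mathbf n$ for an occupation vector; $\mathbf n$ is minimal iff no $\mathbf n'$ inducing the same permutation satisfies $\mathbf n'>_{\mathrm{lex}}\mathbf n$.

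\emph{Claim:} if $n_i\geq 2$ and $\mathbf n$ is not minimal, then neither is $\mathbf n-e_i$.

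Take such an $\mathbf n'$, with first difference at index $i_0$. Let $s_i=1+\sum_{u<i}n_u$, and delete the point at position $p=s_i+1$ (the second point of block $i$) from both griddings. If this point lies in column $c$ of $\mathbf n'$, you obtain griddings $\mathbf n-e_i$ and $\mathbf n'-e_c$ of the same permutation. It remains to check that $\mathbf n'-e_c>_{\mathrm{lex}}\mathbf n-e_i$.
\begin{itemize}
\item If $c=i$, the comparison is unchanged.
\item If $c<i$, then $\sum_{u\le c}n'_u\geq s_i+1$ and $\sum_{u\le c}n_u\le s_i-1$. This forces $i_0\le c$. If $i_0<c$, the entries up to $i_0$ are untouched. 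If $i_0=c$, then $n'_c-n_c\geq 2$, so $n'_c-1>n_c$.
\item If $c>i$, then $\sum_{u\le i}n'_u\le s_i<\sum_{u\le i}n_u$. This rules out $i_0\geq i$, so $i_0<i$ and the comparison at $i_0$ is untouched.
\end{itemize}

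Hence if $\mathbf m$ is minimal and $m_i\geq 1$, then $\mathbf m+e_i$ is minimal. Nonemptiness of $\mathcal{F}_{I,\mathbf a}$ supplies one minimal $\mathbf m$ in the region. Then every $\mathbf n\geq\mathbf m$ that agrees with $\mathbf a$ off $I$ is minimal and lies in $\mathcal{F}_{I,\mathbf a}$. There are $\binom{n-\vert\mathbf m\vert+\vert I\vert-1}{\vert I\vert-1}=\binom{n}{\vert I\vert-1}-O(n^{\vert I\vert-2})$ such vectors. This gives the lower bound and also settles the case $\vert I\vert=1$.

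The paper's own proof does not make this step explicit either. It simply treats all distributions as realized, by appeal to Kaiser--Klazar and ``properties of the left-greedy order.''
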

\begin{proof}
The proof is similar to that of~\cite[Lemma 4.3]{kaiser2002growth} (see also the discussion between Lemma~4.3 and Theorem~4.4 in that paper), so we only give an outline here. A permutation $\sigma \in \mathcal{F}$ is entirely determined by the number of points $\vert \sigma^{\#}_{i,\tau(i)}\vert$ of $\sigma^{\#}$ belonging to each of the cells $C_{i,\tau(i)}$, $i\in [m]$. For $i\in [m]\setminus I$, this number is fixed and equal to $a_i$. A standard balls-and-bins counting argument shows that the number of ways of distributing the remaining $n'=n-\sum_{i\in [m]\setminus I}a_i$ points of $\sigma^{\#}$ over the $\vert I\vert$ remaining cells is $\binom{n'+\vert I\vert -1}{\vert I\vert-1}=\binom{n}{\vert I\vert -1}+O(n^{\vert I\vert-2})$. Further, a balls-and-bins argument combined with a union bound implies that all but at most a $O(n^{-1})$ proportion of these partitions assign at least $2^k$ points to each of the cells $C_{i,\tau(i)}$, $i\in I$. The lemma follows.
\end{proof}
\begin{proof}[Proof of Theorem~\ref{theorem: infinite model, technical version}]
Since $\mathfrak{P}$ is a subset of the standard figure of $M$,  contains exactly $a_i$ points in cell $C_{i,\tau(i)}$ for each $i\in [m]\setminus I$ and at least $T_0$ points in cell $C_{i,\tau(i)}$ for each $i\in I$, $\mathfrak{P}$ exhibits precisely the same finite patterns as $\cP=\mathrm{Geom}(M)$. It is thus an infinite model for the set of TOTO axioms defining $\mathcal{F}_{I, \mathbf{a}}$.

Consider now a fixed $\Psi\in \mathrm{FOS}$ of the form $Q_1x_1 \ldots Q_kx_k \Psi_1(\mathbf{x})$. 
By Lemma~\ref{lemma: balls and bins}, there is at most a $O(n^{-1})$ proportion of $\sigma \in \left(\mathcal{F}_{I, \mathbf{a}}\right)_n$ such that $\vert \sigma^{\#}_{i,\tau(i)}\vert<2^{ k}$ for some $i \in I$. By Lemma~\ref{lemma: infinite model ER game}, all other permutations $\sigma \in \left(\mathcal{F}_{I, \mathbf{a}}\right)_n$ satisfy $\sigma \models \Psi$ if and only if $\mathfrak{P}\models \Psi$. The theorem follows.
\end{proof}
\noindent We now get Theorem~\ref{theorem: infinite model} as an easy corollary of Theorem~\ref{theorem: infinite model, technical version} and the properties of the left-greedy order.
\begin{proof}[Proof of Theorem~\ref{theorem: infinite model}]
Let $M$ be a signed $m\times m$ permutation matrix, and let $\tau\in \mathfrak{S}_m$ be the associated permutation. Assume without loss of generality that $M$ does not contain as a proper submatrix a signed permutation matrix $M'$ such that $\mathrm{Geom}(M)=\mathrm{Geom}(M')$. By properties of the left-greedy order, this is easily seen to imply that for every fixed $T_0>0$, there exist $\sigma\in \mathrm{Geom}(M)$ with $\vert \sigma^{\#}_{i,\tau(i)}\vert \geq T_0$ for all $i\in [m]$. Setting $I=[m]$, $\mathbf{a}=()$ and $\mathcal{P}=\mathrm{Geom}(M)$, Theorem~\ref{theorem: infinite model} follows immediately from Theorem~\ref{theorem: infinite model, technical version}.
\end{proof}
\begin{remark}
Let $M$ be a signed permutation matrix not containing a proper signed permutation submatrix $M'$ with $\mathrm{Geom}(M)=\mathrm{Geom}(M')$. Given some gridded configuration $\sigma \in \mathrm{Geom}(M)$, any non-empty cell $C$ of $M$, and any integer $k\in \mathbb{Z}_{>0}$, it is fairly easy to construct a first-order sentence in TOTO which can be used to `identify' both the $k$ $\leq_1$-least and the $k$  $\leq_1$-greatest points in $\sigma^{\#}\cap C$, and to ensure that these sets do not overlap.

 Thus any infinite point-set ${(F')}^{\infty}$ providing a putative infinite model for $\mathrm{Geom}(M)$ must contain in every cell $C$ an infinite $<_1$-increasing sequence of points $(x_n)_{n\in \mathbb{Z}_{>0}}$ and an infinite  $<_1$-decreasing sequence of points $(y_n)_{n\in \mathbb{Z}_{>0}}$ with $x_{n}<_1y_{n'}$ for all $n,n'\in \mathbb{Z}_{>0}$. In particular, up to translations ${(F')}^{\infty}$ must contain as a subset a copy of the infinite point-set $F^{\infty}$ we constructed.
\end{remark}
\noindent We showed in Theorem~\ref{theorem: infinite model}, which corresponded to the case $I=[m]$, $\mathbf{a}=()$ of Theorem~\ref{theorem: infinite model, technical version}, that geometric grid classes have infinite models which with high probability are indistinguishable from random permutations of large order, and in particular must satisfy an asymptotic $0$-$1$ law.

What would happen instead if we considered a permutation subclass of $\mathrm{Geom}(M)$ containing the union of several families of the form $\mathcal{F}_{I, \mathbf{a}}$ for some proper non-empty subsets $I\subsetneq[m]$ and non-empty $\mathbb{Z}_{\geq 0}$-valued vectors $\mathbf{a}$?  Clearly, this will yield new permutation classes of polynomial growth, with a significantly more complicated structure.  Depending on the relative orders of their constituent parts, such classes need no longer satisfy a $0$-$1$ law; however, as we shall show, there can only be a finite set of limiting probabilities for TOTO sentences in this more general setting --- and what is more, this more general setting in fact covers all permutation classes with polynomial growth.

In order to establish this, we shall use the rich theory on the structure of small permutation classes, which is the topic of the next subsection, Section~\ref{subsection: structure of polygrowth classes}, before essentially revisiting the arguments from this subsection but in the context of general permutation classes with polynomial growth in Section~\ref{subsection: logical limit laws poly growth}.

\subsection{The structure of permutation classes with polynomial growth}\label{subsection: structure of polygrowth classes}
In this section, we introduce the necessary structural results on permutation classes with polynomial growth from \cite{kaiser2002growth}, and restate them within the terminology used in this paper. Throughout this section, let $\mathcal{P}$ be a permutation class with polynomial growth. By Proposition~\ref{prop: polygrowth implies contained in geometric}, there exists an $m\times m$ signed permutation matrix $M$ such that $\mathcal{P}\subseteq \mathrm{Geom}(M)$. Our main goal is to understand, on an abstract level, for which matrices $A$ the reverse inclusion $\mathrm{Geom}(M\vert A)\subseteq \mathcal{P}$ holds. To this end, let $\tau=\tau_M$ denote the permutation of $[m]$ associated with the (signed) permutation matrix~$M$. 

\begin{definition}
A set $I\subseteq [m]$ is \emph{popular} if for all $t\in \mathbb{Z}_{>0}$ there exists a permutation $\sigma \in \mathcal{P}$ such that $\vert \sigma^{\#}_{i, \tau(i)}\vert \geq t$ for all indices $i\in I$.
\end{definition}
\begin{remark}
	Since $\mathcal{P}_n\neq \emptyset$ for all $n$ and $\mathcal{P} \subseteq \mathrm{Geom}(M)$, it must be the case that there is at least one non-empty popular subset of $[m]$.
\end{remark}
\begin{definition}
Given a set $I$, we say that a sequence $\mathbf{a}\in (\mathbb{Z}_{\geq 0})^{[m]\setminus I}$ is \emph{$I$-admissible} if for every $t\in \mathbb{Z}_{>0}$ there exists a permutation $\sigma \in \mathcal{P}$ such that $\vert \sigma^{\#}_{i, \tau(i)}\vert \geq t$ for all indices $i\in I$ and $\vert \sigma^{\#}_{i, \tau(i)}\vert =a_i$ for all other indices $i\in [m]\setminus I$. We denote the (possibly empty) collection of $I$-admissible sequences by $A_I$.
\end{definition}
\begin{remark} Clearly, $A_I\neq \emptyset$ implies that $I$ is a popular set. Also, note that $A_I$ (viewed as a collection non-negative integer sequences) need not be a down-set, since the left-greedy order is not necessarily stable under the deletion of points from gridded figures. 
\end{remark}	

\begin{lemma}\label{lemma: inclusion-maximal popular sets}
Let $I\subseteq [m]$ be an inclusion-maximal popular set. Then the set $A_I$ is finite and non-empty.
\end{lemma}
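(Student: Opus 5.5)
Both non-emptiness and finiteness follow from a pigeonhole-and-compactness argument using the maximality of $I$.

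\emph{Non-emptiness.} Since $I$ is popular, for each $t\in\mathbb{Z}_{>0}$ we can pick $\sigma^{(t)}\in\mathcal{P}$ with $\vert (\sigma^{(t)})^{\#}_{i,\tau(i)}\vert\geq t$ for all $i\in I$. We claim the numbers $\vert (\sigma^{(t)})^{\#}_{j,\tau(j)}\vert$ for $j\in[m]\setminus I$ are bounded uniformly in $t$. Suppose not. Then, passing to a subsequence, some $j\notin I$ has $\vert (\sigma^{(t)})^{\#}_{j,\tau(j)}\vert\to\infty$. In that case $I\cup\{j\}$ is popular, because for every $t'$ some $\sigma^{(t)}$ with $t\geq t'$ has at least $t'$ points in every cell indexed by $I\cup\{j\}$. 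This contradicts inclusion-maximality. Hence there is a constant $B$ with all these entries lying in $[0,B]$. Since $[0,B]^{[m]\setminus I}$ is finite, pigeonhole gives a single vector $\mathbf{a}$ attained by $\sigma^{(t)}$ for infinitely many $t$. This $\mathbf{a}$ is $I$-admissible: for a given $t'$, pick one of those $\sigma^{(t)}$ with $t\geq t'$. So $A_I\neq\emptyset$.

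\emph{Finiteness.} Suppose $A_I$ were infinite. Then its elements cannot all lie in a bounded box, so we can pick $\mathbf{a}^{(s)}\in A_I$ and a coordinate $j\notin I$ (fixed, after passing to a subsequence) with $a^{(s)}_j\geq s$. For each $t$, take $s=t$. Admissibility of $\mathbf{a}^{(t)}$ gives $\sigma\in\mathcal{P}$ with at least $t$ points in every cell indexed by $I$ and exactly $a^{(t)}_j\geq t$ points in cell $C_{j,\tau(j)}$. So $I\cup\{j\}$ is popular, again contradicting maximality.

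The argument is essentially routine. The only point needing care is that the cell counts $\vert\sigma^{\#}_{i,\tau(i)}\vert$ are well defined through the unique left-greedy representation $\sigma^{\#}$. Both popularity and admissibility are phrased in terms of these counts, so no property of $<_{LG}$ beyond uniqueness is needed (in particular, we never need $A_I$ to be a down-set). The main obstacle, such as it is, is the bounded-coordinates step in the non-emptiness part: there we have to extract a single witness sequence and make the subsequence argument precise. Both parts are the same diagonal argument.
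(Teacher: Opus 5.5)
Your proof is correct, and it is essentially the route the paper intends: the paper gives no argument of its own and only defers to the proof of Lemma~4.3 of Kaiser and Klazar~\cite{kaiser2002growth}, which this kind of pigeonhole/compactness argument on the cell counts $\vert\sigma^{\#}_{i,\tau(i)}\vert$ carries out. Your version makes both halves explicit: maximality of $I$ bounds the off-$I$ counts along any sequence witnessing popularity (giving non-emptiness by pigeonhole), and an unbounded coordinate $j$ among infinitely many admissible vectors would make $I\cup\{j\}$ popular (giving finiteness).
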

\begin{proof}
The proof follows analogously from the proof of Lemma 4.3 in \cite{kaiser2002growth}. 
\end{proof}

For each non-empty set $I\subseteq [m]$ which is not popular, 
let $t_I$ be the minimal integer such that
 there is no permutation $\sigma \in \mathcal{P}$ with $\vert \sigma^{\#}_{i, \tau(i)}\vert \geq t_I$ for all $i \in I$. Further, for every non-empty set $I\subseteq [m]$ which is popular and inclusion-maximal, let $t_I$ be the minimal positive integer such that 
  for every permutation $\sigma \in \mathcal{P}$, if $\vert \sigma^{\#}_{i, \tau(i)}\vert \geq t_I$ for all $i\in I$ then the sequence $(\vert \sigma^{\#}_{i, \tau(i)}\vert)_{i\in [m]\setminus I}$ must belong to $A_I$. (This follows from a standard compactness argument, we omit the details.)  Set 
\begin{align}\label{eq:T}T:=1+ \max_{I\subseteq [m]: \ \textrm{$I$ not popular}}t_I \ + \max_{I\subseteq [m]: \ \textrm{$I$ $\subseteq$-maximal \& popular}}\left(t_I +\max_{\mathbf{a}\in A_I}\| \mathbf{a}\|_{\infty} \right).\end{align} 
Given an inclusion-maximal popular set $I$ and an $I$-admissible sequence $\mathbf{a}\in A_I$, let $\mathcal{P}_{I, \mathbf{a}}$ denote the collection of all permutations $\sigma\in \mathcal{P}$ with 
$\vert \sigma^{\#}_{i, \tau(i)}\vert \geq T$  for all $i\in I$ and $\vert \sigma^{\#}_{i, \tau(i)}\vert =a_i$ for all other indices $i\in [m]\setminus I$. 
\begin{lemma}\label{lemma: disjointness}
Let $I, I'$ be inclusion-maximal popular sets, and let $\mathbf{a}\in A_I$, $\mathbf{a'}\in A_{I'}$ be $I$- and $I'$-admissible sequences respectively. If $(I, \mathbf{a})\neq (I', \mathbf{a'})$, then $\mathcal{P}_{I, \mathbf{a}}\cap \mathcal{P}_{I', \mathbf{a}'}=\emptyset$.
\end{lemma}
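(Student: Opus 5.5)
We argue by contradiction: suppose some $\sigma\in \mathcal{P}_{I,\mathbf{a}}\cap \mathcal{P}_{I',\mathbf{a}'}$ exists. Since $\sigma^{\#}$ is a single, well-defined left-greedy representation, the cell counts $\vert\sigma^{\#}_{i,\tau(i)}\vert$ are determined by $\sigma$ alone. Membership in $\mathcal{P}_{I,\mathbf{a}}$ then forces
\[\{i:\ \vert\sigma^{\#}_{i,\tau(i)}\vert\geq T\}=I.\]
Indeed, for $i\in I$ the count is at least $T$ by definition. For $i\in[m]\setminus I$ the count equals $a_i\leq \max_{\mathbf{a}\in A_I}\|\mathbf{a}\|_\infty<T$, using the definition of $T$ in~\eqref{eq:T}, which exceeds every entry of every admissible sequence.

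The same argument applied to $\mathcal{P}_{I',\mathbf{a}'}$ gives $\{i:\ \vert\sigma^{\#}_{i,\tau(i)}\vert\geq T\}=I'$, so $I=I'$. Once $I=I'$, both memberships prescribe the counts on $[m]\setminus I$: they equal $a_i$ and also $a'_i$, hence $\mathbf{a}=\mathbf{a}'$. This contradicts $(I,\mathbf{a})\neq(I',\mathbf{a}')$.

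The only point needing care is the strict inequality $a_i<T$ for all $\mathbf{a}\in A_I$ and $i\notin I$. This needs $A_I$ to be finite, so that the maximum in~\eqref{eq:T} exists; finiteness is exactly Lemma~\ref{lemma: inclusion-maximal popular sets}. The summand $1$ and the nonnegative $t_I$ terms in~\eqref{eq:T} then give $T>\|\mathbf{a}\|_\infty$. Inclusion-maximality is not used otherwise, and there is no real obstacle beyond checking this bound.
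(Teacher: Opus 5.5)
Your proof is correct and is essentially the paper's argument: both rest on $a_i<T$ for every $i\notin I$, so the cells of $\sigma^{\#}$ containing at least $T$ points are exactly those indexed by $I$, which forces $I=I'$ and then $\mathbf{a}=\mathbf{a}'$. The only difference is in the case $I\neq I'$, where the paper uses inclusion-maximality to pick some $i\in I'\setminus I$. Your symmetric characterization of $I$ makes that step unnecessary, so maximality enters only through the definition of $T$ and the finiteness of $A_I$.
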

\begin{proof}
If $I=I'$ and $\mathbf{a}\neq \mathbf{a}'$ (which in particular implies $I\neq [m]$), then $\mathcal{P}_{I, \mathbf{a}}$ and $\mathcal{P}_{I, \mathbf{a}'}$ are clearly disjoint (consider the number of points of a putative $\sigma^{\#} \in \mathcal{P}_{I, \mathbf{a}}\cap \mathcal{P}_{I, \mathbf{a'}}$ falling in cell $C_{i,\tau(i)}$ for some $i\notin I$ with $a_i\neq a'_i$).

We may thus assume $I\neq I'$. Suppose $\sigma \in \mathcal{P}_{I, \mathbf{a}}\cap \mathcal{P}_{I', \mathbf{a}'}$. By inclusion-maximality, we have $I'\setminus I\neq \emptyset$. Now for every $i\in I'\setminus I$, we have by definition of $\mathcal{P}_{I, \mathbf{a}}$ that $\vert \sigma^\#_{i, \tau(i)}\vert =a_i<T$, contradicting the fact that $\sigma \in \mathcal{P}_{I', \mathbf{a}'}$.
\end{proof}
\noindent The size of  $\left(\mathcal{P}_{I, \mathbf{a}}\right)_n:= \mathcal{P}_n\cap \mathcal{P}_{I, \mathbf{a}}$  can be estimated by standard combinatorial arguments.
\begin{lemma}\label{lemma: size of inclusion-max pop sets}
    Let $I$ be an inclusion-maximal popular set. Then $\left\vert\left(\mathcal{P}_{I,\mathbf{a}}\right)_{n}\right\vert=\binom{n}{\left\vert I\right\vert-1}+O\left(n^{\left\vert I\right\vert -2}\right)$.
\end{lemma}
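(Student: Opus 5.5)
The plan is to sandwich $\left(\mathcal{P}_{I,\mathbf{a}}\right)_n$ between two sets whose sizes are computed by the balls-and-bins count of Lemma~\ref{lemma: balls and bins}. The upper bound is immediate. Every $\sigma\in\left(\mathcal{P}_{I,\mathbf{a}}\right)_n$ lies in $\mathrm{Geom}(M)$, has $\vert\sigma^{\#}_{i,\tau(i)}\vert=a_i$ for $i\notin I$ and at least $T$ points in each cell indexed by $I$. It is determined by the vector of cell counts of $\sigma^{\#}$, since a left-greedy representation in a signed permutation matrix class is determined by its cell counts. Hence
\[\left\vert\left(\mathcal{P}_{I,\mathbf{a}}\right)_n\right\vert\le\left\vert\left(\mathcal{F}_{I,\mathbf{a},T}\right)_n\right\vert\le\binom{n'+\vert I\vert-1}{\vert I\vert-1}=\binom{n}{\vert I\vert-1}+O\left(n^{\vert I\vert-2}\right),\]
where $n'=n-\sum_{i\notin I}a_i$.

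For the lower bound, I will show that for all but $O(n^{\vert I\vert-2})$ distributions of the $n'$ points over the cells $C_{i,\tau(i)}$, $i\in I$, the resulting configuration is realised by some $\sigma\in\mathcal{P}_{I,\mathbf{a}}$. Concretely, I aim to prove: for every vector $(c_i)_{i\in I}$ with all $c_i\ge T'$, for a suitable constant $T'\ge T$, the gridded figure on the standard figure with $c_i$ points in $C_{i,\tau(i)}$ for $i\in I$ and $a_i$ points elsewhere induces a permutation $\sigma$ that lies in $\mathcal{P}$, whose left-greedy representation $\sigma^{\#}$ has exactly these cell counts, and hence lies in $\mathcal{P}_{I,\mathbf{a}}$. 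Vectors with some $c_i<T'$ number $O(n^{\vert I\vert-2})$, so this gives the matching lower bound. Distinct count vectors give distinct $\sigma$, because the counts are read off from $\sigma^{\#}$.

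The main obstacle is this realisation step, because the left-greedy representation is not stable under adding or deleting points. I would proceed in three steps.

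\begin{enumerate}
\item Use $I$-admissibility of $\mathbf{a}$ to obtain, for arbitrarily large $t$, some $\rho\in\mathcal{P}$ with $\vert\rho^{\#}_{i,\tau(i)}\vert\ge t\ge \max_i c_i$ for $i\in I$ and $\vert\rho^{\#}_{i,\tau(i)}\vert=a_i$ otherwise.
\item Delete points from the interiors of the $I$-cells of $\rho^{\#}$, keeping the cells nonempty, until the counts are $(c_i)$. The resulting permutation $\sigma$ lies in $\mathcal{P}$ because $\mathcal{P}$ is a class, and its gridded figure is still $M$-compatible.
\item Check that the left-greedy representation of $\sigma$ has counts $(c_i)_{i\in I}$ and $\mathbf{a}$ off $I$.
\end{enumerate}

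For the third step, consider $\sigma^{\#}$. Its counts in the cells of $I$ are all at least roughly $T'-\max\|\mathbf{a}\|_\infty$, since a left-greedy regridding can only move a bounded number of points between cells, namely points near cell boundaries. Choosing $T'\ge T+\max\|\mathbf{a}\|_\infty$ makes these counts at least $t_I$. The definition of $t_I$ then forces the off-$I$ counts of $\sigma^{\#}$ to form a sequence in $A_I$, which is finite by Lemma~\ref{lemma: inclusion-maximal popular sets}.

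The remaining difficulty is to pin that sequence down as exactly $\mathbf{a}$. I would argue via Kaiser--Klazar's canonical sets, as in the proof of \cite[Lemma 4.3]{kaiser2002growth}. The property of being left-greedy is determined by finitely many constraints involving only the points near the boundaries of cells and the small cells. Deleting interior points of large cells affects none of these constraints. Therefore the gridding inherited from $\rho^{\#}$ remains left-greedy.
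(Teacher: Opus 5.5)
Your route is the paper's. Its proof of this lemma is just the balls-and-bins count from the proof of Lemma~\ref{lemma: balls and bins}, using that $\sigma$ is determined by the cell counts of $\sigma^{\#}$. Your upper bound is fine, and so are your steps 1--2, which amount to the inclusion $\mathrm{Geom}(M\vert A_{\mathbf a})\subseteq\cP$ of \eqref{eq: contain restricted geometric grid}. The paper leaves implicit the lower bound: that almost every count vector with large $I$-entries and off-$I$ part $\mathbf a$ is the left-greedy vector of the permutation it induces. You rightly isolate this as the crux, but step 3 does not prove it.

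The principle that a left-greedy regridding moves only boundedly many points is false in general. If $\tau(i+1)=\tau(i)+1$ and $M_{i,\tau(i)}=M_{i+1,\tau(i+1)}=1$, the boundary between these two cells slides freely, and the left-greedy representation moves \emph{all} points of cell $i+1$ into cell $i$, however many there are. So your lower bound on the $I$-counts of $\sigma^{\#}$ is unjustified. Even granting it, $t_I$ only puts the off-$I$ part in $A_I$, not equal to $\mathbf a$. Everything therefore rests on your final assertion, that deleting points from the large cells of $\rho^{\#}$ preserves left-greediness. ``Finitely many constraints near the boundaries'' does not establish this, precisely because a competing regridding need not be local.

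The assertion is true, and a lifting argument in the spirit of Lemma~\ref{lemma: square comparision} proves it.
\begin{enumerate}
\item[(1)] For a signed permutation matrix, every cell with at least two points is a monotone interval of the permutation. Maximal monotone intervals of length at least $2$ (blocks) are pairwise disjoint, and each is a union of consecutive whole cells.
\item[(2)] Which cells form a block depends only on the support, the signs, and which cells are singletons. So the block structure is unchanged by your deletions as long as the $I$-cells keep at least $2$ points.
\item[(3)] In $\rho^{\#}$, each $p\in I$ is the only cell of its block $K_p$ whose sign equals the direction of $K_p$. Otherwise, shifting one point into the earlier such cell gives a $<_{LG}$-smaller figure of $\rho$. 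By (2), this transfers to your reduced figure $F_c$.
\item[(4)] Take $c_p>m$ for $p\in I$, and suppose $F_d<_{LG}F_c$ also induces $\sigma$, with first differing column $i_0$. Some cell of $d$ inside $K_p$ has at least two points, hence the right sign. Add the $\vert\rho^{\#}_{p,\tau(p)}\vert-c_p$ deleted points to the first right-signed cell $q(p)$ of $d$ in $K_p$. This lengthens the same monotone interval, so it gives a gridding $F_e$ of $\rho$.
\item[(5)] Cells of index below $i_0$ coincide in $F_d$ and $F_c$. Hence $q(p)<i_0$ exactly when $p<i_0$, and then $q(p)=p$. Also $q(i_0)=i_0$ if $i_0\in I$, since column $i_0$ of $d$ contains that of $F_c$.
\item[(6)] So $e$ agrees with the counts of $\rho^{\#}$ before column $i_0$ and exceeds them at $i_0$, i.e.\ $F_e<_{LG}\rho^{\#}$. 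This contradicts the left-greediness of $\rho^{\#}$.
\end{enumerate}
With $T'=\max(T,m+1)$ your argument is then complete.
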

\begin{proof}
Follows directly from the proof of Lemma~\ref{lemma: balls and bins}.
\end{proof}
Given an inclusion-maximal popular set $I$ and an $I$-admissible sequence $\mathbf{a}\in A_I$, let  $A_{\mathbf{a}}$ denote the $m \times m$ matrix with $A_{i,\tau(i)}=\left\{ \begin{array}{ll}a_i & i\notin I\\
	\infty &  i\in I
\end{array}\right.$ and $A_{i,j}=0$ for all $i\in [m]$ and $j\neq \tau(i)$. Then it follows from the definition of popular sets and the fact that $\mathcal{P}$ is a permutation class that
\begin{align}\label{eq: contain restricted geometric grid}
\mathcal{P}_{I,\mathbf{a}}\subseteq \mathrm{Geom}(M\vert A_{\mathbf{a}}) \subseteq \mathcal{P}.
\end{align}
Now let $\mathcal{Q}:= \bigcup_{I: \textrm{ $\subseteq$-maximal popular}}\  \left( \bigcup_{\mathbf{a}\in A_I}\mathcal{P}_{I, \mathbf{a}}\right)$, and let $P\in [m]$ denote the size of a largest popular set. We say that a popular set is \textit{size-maximal} if its size equals $P$, and we note that size-maximal popular sets are trivially inclusion-maximal. It follows from Lemmas~\ref{lemma: disjointness} and~\ref{lemma: size of inclusion-max pop sets} that
\begin{align}\label{eq: size of Q}
	\vert\mathcal{Q}_n\vert = b\binom{n}{P-1}+O(n^{P-2})
\end{align}
where $b:=\sum_{I\ \textrm{size-maximal}} \vert A_I\vert$ is strictly positive (by Lemma~\ref{lemma: inclusion-maximal popular sets}). As we show below, the set $\mathcal{Q}_n\subseteq \mathcal{P}_n$ contains most of $\mathcal{P}_n$.
\begin{lemma}\label{lemma: approx P by Q}
    We have $\vert \left(\mathcal{P}\setminus \mathcal{Q}\right)_n\vert =O(n^{P -2})$.
\end{lemma}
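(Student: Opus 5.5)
We classify each $\sigma \in \mathcal{P}_n\setminus\mathcal{Q}_n$ by its \emph{large set} $L(\sigma):=\{i\in[m]: \vert\sigma^{\#}_{i,\tau(i)}\vert\geq T\}$. Then we count permutations with a given large set by balls-and-bins, using that $\sigma$ is determined by its cell counts $(\vert\sigma^{\#}_{i,\tau(i)}\vert)_{i\in[m]}$, as in the proof of Lemma~\ref{lemma: balls and bins}.

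\emph{Step 1 (non-popular large sets).} If $L(\sigma)$ is non-empty and not popular, then $\vert \sigma^{\#}_{i,\tau(i)}\vert\geq T> t_{L(\sigma)}$ for all $i\in L(\sigma)$. By definition of $t_{L(\sigma)}$ no permutation of $\mathcal{P}$ has this property, a contradiction. Since $n\geq mT$ forces $L(\sigma)\neq\emptyset$, we conclude that $L(\sigma)$ is popular for all large $n$.

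\emph{Step 2 (popular but not size-maximal).} Fix a popular $J$ with $\vert J\vert\leq P-1$. The permutations with $L(\sigma)=J$ have at most $T^{m-\vert J\vert}$ choices for the cell counts outside $J$. The remaining at most $n$ points are spread over $\vert J\vert$ cells in $O(n^{\vert J\vert-1})=O(n^{P-2})$ ways. Summing over the finitely many $J$ gives $O(n^{P-2})$.

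\emph{Step 3 (size-maximal large sets).} Now suppose $L(\sigma)=I$ with $\vert I\vert = P$, so $I$ is inclusion-maximal popular. Since $T\geq t_I$, the definition of $t_I$ gives $(\vert\sigma^{\#}_{i,\tau(i)}\vert)_{i\notin I}=:\mathbf{a}\in A_I$, and hence $\sigma\in\mathcal{P}_{I,\mathbf{a}}\subseteq\mathcal{Q}$. So no element of $\mathcal{P}\setminus\mathcal{Q}$ has a size-maximal large set. Combining the three steps, $\vert(\mathcal{P}\setminus\mathcal{Q})_n\vert = O(n^{P-2})$.

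Step 3 needs the compactness fact defining $t_I$, which the paper asserts. The only subtle point is that $L(\sigma)$ might properly contain a popular set while failing to be popular itself, but Step 1 already rules out every non-popular large set, so nothing more is needed there. I expect the main obstacle to be the bookkeeping that $\sigma$ is determined by its cell counts. This holds because the left-greedy representative in $\mathrm{Geom}(M)$ with a signed permutation matrix is determined by those counts, as used in Lemma~\ref{lemma: balls and bins}, so the injection into count vectors is what makes the balls-and-bins bounds valid.
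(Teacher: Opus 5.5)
Your proof is correct and follows the paper's argument. The paper likewise uses the construction of $T$ (your Steps 1 and 3) to conclude that for $\sigma\notin\mathcal{Q}$ at most $P-1$ cells of $\sigma^{\#}$ can contain $T$ points, and then applies the same balls-and-bins count with a union bound to get $\binom{m}{P-1}T^{m-P+1}\binom{n+P-3}{P-2}=O(n^{P-2})$; your version just spells out the step the paper leaves implicit, namely that any large set must be popular because $T$ exceeds $t_J$ for every non-popular $J$.
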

\begin{proof}
By construction of $T$, if $\sigma \in \left(\mathcal{P}\setminus \mathcal{Q}\right)_n$, then at most $P-1$ of the cells of its left-greedy decomposition $\sigma^{\#}$ can contain $T$ points. Indeed, otherwise $\sigma$ would have to belong to $\mathcal{P}_{I, \mathbf{a}}$ for some size-maximal popular set $I$ and $\mathbf{a}\in A_I$. By a standard balls-and-bins argument and a union bound, it follows that $\vert \left(\mathcal{P}\setminus \mathcal{Q}\right)_n\vert \leq \binom{m}{P-1}T^{m-P+1 }\binom{n+P-3}{P -2}=O(n^{P-2})$.
\end{proof}
\noindent One may observe that taken together, \eqref{eq: size of Q} and Lemma~\ref{lemma: approx P by Q} have the following instant corollary:
\begin{align}\label{eq: size of P}
\vert\cP_n\vert =b\binom{n}{P-1}+O(n^{P-2})       
\end{align}
\noindent We note, however, that in their 2002 paper Kaiser and Klazar~\cite{kaiser2002growth} obtained the much stronger statement that for every permutation class $\mathcal{P}$ with polynomial growth, there exist strictly positive integers $P_1, P_2, n_0$, and integers $a_{i,j}$ for $0\leq i \leq P_1$ and $0\leq j \leq P_2$ such that for every $n \geq n_0$ we have 
\[\vert \mathcal{P}_n\vert = \sum_{j= 0}^{P_2}\sum_{i=0}^{P_1}a_{i,j} \binom{n-i}{j}.\]
Thus~\eqref{eq: size of P} is by no means new, but it follows as a by-product of the new work in this subsection, namely the combinatorial interpretation we have derived for the leading order coefficient $b$ in~\eqref{eq: size of Q} in terms of inclusion-maximal popular sets of maximal order. This combinatorial interpretation, together with the facts we established  regarding the family $\mathcal{Q}$, namely that it covers all but a $O(n^{-1})$ proportion of $\mathcal{P}_n$ and that it decomposes into a disjoint union of well-behaved subfamilies $\mathcal{P}_{I,\boldsymbol{a}}$, is  crucial for our work in the next subsection. Since our aims differ from~\cite{kaiser2002growth} we have also deviated slightly from their terminology and adapted the notation  to our purposes.


\subsection{General logical limit laws in the polynomial growth regime}\label{subsection: logical limit laws poly growth}
Let $T'\in \mathbb{Z}_{\geq 0}$. Given an inclusion-maximal popular set $I$ and an $I$-admissible sequence $\mathbf{a}\in A_I$, let $\mathcal{P}^{T'}_{I, \mathbf{a}}$ denote the collection of all permutations $\sigma \in\mathcal{P}_{I, \mathbf{a}}$ with  $\vert \sigma^{\#}_{i, \tau(i)}\vert \geq T+T'$ for each $i\in I$, where $T$ is the constant defined in equation~\eqref{eq:T}. Our main tool in proving Theorem~\ref{thm:poly growth} is the following lemma, which follows easily from Lemma~\ref{lemma: infinite model ER game}. 
\begin{lemma}\label{lemma: Psi 0-1 satisfied by super restricted geom grid}
	Let $\Psi\in \mathrm{FOS}$. Then there exists $T'\in \mathbb{Z}_{\geq 0}$ such that for all inclusion-maximal sets $I$ and all $\mathbf{a}\in A_I$, either all permutations in $\mathcal{P}^{T'}_{I, \mathbf{a}}$ satisfy $\Psi$ or none of them does.
\end{lemma}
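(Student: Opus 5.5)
The plan is to reduce directly to Lemma~\ref{lemma: infinite model ER game}. First, by Proposition~\ref{prop: every FOS can be written in DFN}, I will write $\Psi$ in disjunctive normal form $Q_1x_1\ldots Q_kx_k\Psi_1(\mathbf{x})$, so that $\Psi$ has quantifier depth at most $k$. Then I will choose $T'$ large enough that $T+T'\geq 2^k$; for instance $T':=\max(0,2^k-T)$. This choice depends only on $\Psi$, via $k$, and on the constant $T$ from~\eqref{eq:T}. It is therefore uniform over all inclusion-maximal popular sets $I$ and all $\mathbf{a}\in A_I$, and there are only finitely many such pairs in any case, by Lemma~\ref{lemma: inclusion-maximal popular sets}.

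Next, I fix an inclusion-maximal popular set $I$ and some $\mathbf{a}\in A_I$, and take any $\sigma\in\mathcal{P}^{T'}_{I,\mathbf{a}}$. I consider its left-greedy representation $\sigma^{\#}$. Since $\mathcal{P}\subseteq \mathrm{Geom}(M)$, Proposition~\ref{prop: equality grid and geometric grid when HM is a forest} lets me take a representative gridded figure $F'$ of $\sigma^{\#}$ that is $M$-compatible; it may even be chosen inside the standard figure. By the definition of $\mathcal{P}^{T'}_{I,\mathbf{a}}$, this figure satisfies $|F'_{i,\tau(i)}|\geq T+T'\geq 2^k$ for $i\in I$ and $|F'_{i,\tau(i)}|=a_i$ for $i\notin I$. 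These are exactly the hypotheses of Lemma~\ref{lemma: infinite model ER game}, applied with the same $I$ and $\mathbf{a}$. That lemma gives $\sigma\models\Psi$ if and only if $F^{\infty}\models\Psi$, where $F^{\infty}=F^{\infty}_{I,\mathbf{a}}$ is the infinite point set built from $I$ and $\mathbf{a}$. Here I use that $\sigma$ is the permutation induced by $F'$, and that TOTO satisfaction depends only on the induced permutation.

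Since the right-hand side of this equivalence does not depend on $\sigma$, either every $\sigma\in\mathcal{P}^{T'}_{I,\mathbf{a}}$ satisfies $\Psi$ (when $F^{\infty}\models\Psi$) or none does, which is the claim.

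The only point needing care is whether Lemma~\ref{lemma: infinite model ER game} applies to an arbitrary $M$-compatible finite figure with the prescribed cell counts. It is stated for exactly that setting, so nothing about left-greedy minimality or membership in $\mathcal{P}$ is needed beyond fixing the cell sizes. I therefore expect no real obstacle: the step is essentially bookkeeping, checking that the threshold $T+T'$ dominates $2^k$ and that the vector $\mathbf{a}$ used to define $F^\infty$ matches the one defining $\mathcal{P}_{I,\mathbf{a}}$.
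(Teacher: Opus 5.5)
Your proposal is correct and is the paper's argument: the paper also just invokes Lemma~\ref{lemma: infinite model ER game} with a cell-size threshold of at least $2^k$. The paper takes $T'\geq 2^k$; your choice $T+T'\geq 2^k$ works equally well, since cells of $\sigma^{\#}$ indexed by $I$ have at least $T+T'$ points for $\sigma\in\mathcal{P}^{T'}_{I,\mathbf{a}}$. Your appeal to Proposition~\ref{prop: equality grid and geometric grid when HM is a forest} is unnecessary, because every representative of $\sigma^{\#}$ is $M$-compatible by definition, but it does no harm.
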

\begin{proof}
Immediate from Lemma~\ref{lemma: infinite model ER game} by taking $T'\geq 2^k$, where $k$ is the quantifier depth 
in $\Psi$.
\end{proof}
\noindent In order to prove that every fraction claimed in Theorem~\ref{thm:poly growth} can be achieved as a limiting probability, we also need the next lemma, which in some sense, states that there exists an `indicator function-like'  first-order sentence for each of the sets $\mathcal{P}^{T'}_{I, \mathbf{a}}$, allowing us to tell them apart. 
\begin{lemma}\label{lemma: indicator FOSs}
Fix a constant $C$. For every size-maximal popular set $I$ and every $\mathbf{a} \in A_I$, there exists a first-order sentence $\Psi_{\mathbf{a},I}$ such that $\sigma \models \Psi_{\mathbf{a},I}$ if and only if $\sigma^\#$ contains exactly $\mathbf{a}_i$ points or at least $C$ points in its $(i,\tau(i))$-th cell depending on whether $i \in I^c$ or $i \in I$ respectively. 
\end{lemma}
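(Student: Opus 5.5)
Lemma~\ref{lemma: indicator FOSs} asks for a sentence that describes cell occupancies of the left-greedy representation, so the plan is to read those occupancies off from TOTO-definable structure in $\sigma$ itself. Write $n_i:=\vert\sigma^{\#}_{i,\tau(i)}\vert$. The key fact is that $\sigma^{\#}$ is determined by the vector $(n_1,\dots,n_m)$. A permutation $\sigma\in\mathrm{Geom}(M)$ is a concatenation, in $\leq_1$ order, of $m$ monotone blocks. Block $i$ has length $n_i$, is increasing or decreasing according to the sign of $M_{i,\tau(i)}$, and the blocks are arranged vertically according to $\tau$.

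The sentence will therefore be a disjunction over occupancy patterns. For each candidate vector $\mathbf{n}$ with $n_i=a_i$ for $i\notin I$ and $n_i\geq C$ for $i\in I$, we need a sentence expressing that $\sigma$ admits an $M$-gridding with those occupancies and that no $<_{LG}$-smaller gridding exists. Since only finitely many values matter, we truncate each $n_i$ for $i\in I$ at a threshold $N\geq C$, where $N$ is a large constant to be chosen. Each $n_i$ for $i\in I$ is then replaced by the type ``$\geq N$'', giving finitely many truncated patterns.

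Second, we write a first-order formula $\Gamma_{\mathbf{n}}$ with quantifier depth bounded in terms of $m$ and $N$, expressing that ``$\sigma$ has an $M$-compatible gridding whose truncated occupancy pattern is $\mathbf{n}$''. We quantify the block boundaries $u_1\leq_1 v_1<_1 u_2\leq_1 v_2<\dots$, one leftmost and one rightmost point for each non-empty block. We then assert the following:
\begin{enumerate}[(1)]
\item every point lies $\leq_1$-between some $u_i$ and $v_i$;
\item inside each block, every pair of points is ordered monotonically with the correct sign;
\item for $i\neq i'$, points of block $i$ lie entirely above or below points of block $i'$, as dictated by $\tau$;
\item block $i$ has exactly $a_i$ points for $i\notin I$, which is expressible by quantifying $a_i$ distinct points plus a universal clause;
\item block $i$ has at least $N$ points for $i\in I$.
\end{enumerate}
Because $M$ is a signed permutation matrix, these conditions characterise $M$-gridded figures, by Proposition~\ref{prop: equality grid and geometric grid when HM is a forest}.

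Third, we express left-greedy minimality, which is the main obstacle. A competing gridding that is $<_{LG}$-smaller may have occupancies differing from $\sigma^{\#}$ only in large cells, where truncation loses information. We handle this by noting that the lexicographic comparison is decided at the first index where the two occupancy vectors differ. Minimality can then be stated directly: there are no alternative boundary points $u'_j,v'_j$ defining a valid $M$-gridding that is $<_{LG}$-smaller. The comparison of two griddings given by quantified boundaries is first-order. Comparing cell sizes $n_j$ and $n'_j$ does not require counting, because the $j$-th cells of the two griddings are $\leq_1$-intervals. The lexicographic comparison over the column tuple reduces to comparing boundary positions: after $j-1$ columns agree, ``more points in column $j$'' is equivalent to $v'_j<_1 v_j$, and the same reasoning applies in cell order.

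Hence ``$\sigma^{\#}$ is given by boundaries $\mathbf{u},\mathbf{v}$'' is the first-order statement $\Gamma(\mathbf{u},\mathbf{v})\wedge\neg\exists\mathbf{u}'\mathbf{v}'\,(\Gamma(\mathbf{u}',\mathbf{v}')\wedge \mathrm{Lex}(\mathbf{u}',\mathbf{v}';\mathbf{u},\mathbf{v}))$, where $\Gamma$ is the formula from the second step and $\mathrm{Lex}(\mathbf{u}',\mathbf{v}';\mathbf{u},\mathbf{v})$ expresses that the gridding with boundaries $\mathbf{u}',\mathbf{v}'$ is $<_{LG}$-smaller. We set $\Psi_{\mathbf{a},I}:=\exists\mathbf{u}\mathbf{v}$ of this formula, conjoined with the size conditions (4)--(5) on the blocks of $\mathbf{u},\mathbf{v}$ with $N=C$. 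Empty blocks are handled by a finite disjunction over which blocks are empty. With this choice $\Psi_{\mathbf{a},I}$ holds exactly when the left-greedy cells have the prescribed sizes, so no truncation issue remains.
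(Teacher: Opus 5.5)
Your argument is correct, but it takes a different route from the paper.

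\textbf{The paper's route.} The proof is essentially two lines. It sets $I'=\{i:\ i\in I \text{ or } a_i>0\}$. It then invokes Proposition~\ref{prop: eeroklas}, imported from Markstr\"om--R\"aty, to get a formula $\psi_{I'}$ saying that $I'$ is exactly the set of non-empty cells of $\sigma^\#$, which also names the $\leq_1$-extremal points $x_{1,0,i,\tau(i)},x_{1,1,i,\tau(i)}$ of each such cell. Finally, Lemma~\ref{lemma: modifications} appends ``exactly $a_i$'' or ``at least $C$'' points in $[x_{1,0,i,\tau(i)},x_{1,1,i,\tau(i)}]_1$. All the work of pinning down the left-greedy representation is outsourced to an external result. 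That result is proved for arbitrary suitable matrices and $\mathrm{Geom}(M)_{d,k}$, where the task is genuinely harder.

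\textbf{Your route.} You reprove the needed special case from scratch, using three facts about a signed permutation matrix:
\begin{itemize}
\item an $M$-gridding is determined by at most $2m$ boundary points;
\item your conditions (1)--(3) characterise such griddings;
\item $<_{LG}$ collapses to the descending lexicographic order on $(n_1,\dots,n_m)$, equivalently on the cumulative counts $n_1+\dots+n_j$, and these are compared via the $\leq_1$-positions of boundary points.
\end{itemize}
Left-greedy minimality then becomes a single negated existential over competing boundary tuples. This buys a self-contained, elementary proof of exactly what is needed, at the cost of some bookkeeping.

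\textbf{Points to tidy.}
\begin{itemize}
\item Inside the negated clause, $\Gamma$ must consist of (1)--(3) only, with no occupancy constraints, so that it ranges over all $M$-griddings. Your final paragraph implies this, but it should be stated.
\item With $\mathrm{Lex}(\mathbf{u}',\mathbf{v}';\mathbf{u},\mathbf{v})$ meaning that the primed gridding is $<_{LG}$-smaller, the test at the first differing column $j$ is $v_j<_1 v'_j$. Make sure your inequality points that way.
\item When blocks may be empty, compare the last point of the union of blocks $1,\dots,j$ in each gridding rather than $v_j$ itself. Use the convention that ``no such point'' precedes every point.
\item The truncation at a threshold $N$ in your first step plays no role in the final construction and can be dropped.
\end{itemize}
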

\noindent As the proof of Lemma~\ref{lemma: indicator FOSs} is constructive and the techniques used in the proof differ from the ones mostly used in this section, we defer it to the Appendix. Given Lemmas~\ref{lemma: Psi 0-1 satisfied by super restricted geom grid} and~\ref{lemma: indicator FOSs}, it is straightforward to derive zero--one logical limit laws for the sets $\mathcal{P}_{I, \mathbf{a}}$, which combined with the results of the previous subsection yields Theorem~\ref{thm:poly growth}. We fill out the details below.
\begin{proof}[Proof of Theorem~\ref{thm:poly growth}]
Fix $\Psi\in \mathrm{FOS}$. Let $T'=T'(\Psi)\in \mathbb{Z}_{\geq 0}$ be such that the conclusion of Lemma~\ref{lemma: Psi 0-1 satisfied by super restricted geom grid} holds. Let $P$ denote the size of a largest popular subset of $[m]$, and 
$b$ as defined after~\eqref{eq: size of Q}. Let $b_{\Psi}$ denote the number of pairs $(I, \mathbf{a})$ such that $I$ is a popular set of size $P$, $\mathbf{a}\in A_I$ and every permutation $\sigma \in \mathcal{P}^{T'}_{I, \mathbf{a}}$ satisfies $\psi$; note this quantity is well-defined by Lemma~\ref{lemma: Psi 0-1 satisfied by super restricted geom grid} and our choice of $T'$, and that $b_{\Psi}\in [0, b]\cap\mathbb{Z}$. By a standard balls-and-bins argument, we have that $\left\vert\left(\mathcal{P}_{I, \mathbf{a}} \setminus \mathcal{P}^{T'}_{I, \mathbf{a}}  \right)_n \right\vert =O(n^{ \vert I\vert -2})$ and hence $\vert (\mathcal{P}^{T'}_{I, \mathbf{a}}  )_n \vert =\binom{n}{\vert I\vert -1}+ O(n^{\vert I\vert -2})$.

Combining this information with Lemma~\ref{lemma: approx P by Q}, 
equality~\eqref{eq: size of Q} and the disjointness of the $(\mathcal{P}^{T'}_{I, \mathbf{a}}  )_n$ following from Lemma~\ref{lemma: disjointness}, we have with probability $1-O(n^{-1})$ a uniformly chosen random permutation  $\sigma \in \mathcal{P}_n$ lies inside the union of the $b$ pairwise disjoint sets $(\mathcal{P}^{T'}_{I, \mathbf{a}}  )_n$, where $I$ is a popular set of size $P$ and $\mathbf{a}\in A_I$. Since each of these sets $(\mathcal{P}^{T'}_{I, \mathbf{a}}  )_n$ has size $\binom{n}{\vert I\vert -1}+ O(n^{\vert I\vert -2})$, and since precisely  $b_{\Psi}$ of them consist solely of permutations satisfying $\psi$ and $(b-b_{\Psi})$ of them consist solely of permutations not satisfying $\psi$, it follows that the probability $\sigma \models \Psi$ is precisely $b_{\Psi}/b+O(n^{-1})$. This completes the proof of Theorem~\ref{thm:poly growth} part (i), and also establishes half of part (ii), namely that $\mathrm{Lim}(\mathcal{P})\subseteq \{b'/b: \ b'\in [0,b]\cap \mathbb{Z}\}$.

To complete the proof of part (ii), we shall apply Lemma~\ref{lemma: indicator FOSs}. Let $S$ denote the set of pairs $(\mathbf{a},I)$ with $\mathbf{a} \in A_I$ and $I$ a size-maximal popular set. Recall that we have $b = \vert S\vert $, and that $\vert \mathcal{P}_n \setminus \bigcup_{(\mathbf{a},I) \in S} \mathcal{P}^{T'}_{I, \mathbf{a}}\vert  = o\left(\vert\mathcal{P}_n\vert\right)$ for any fixed integer $T'$. Let $T'$ be a positive integer chosen such that the conclusion of Lemma~\ref{lemma: Psi 0-1 satisfied by super restricted geom grid} is satisfied. Given $b' \in [0,b] \cap \mathbb{Z}$, let $S'$ denote an arbitrary subset of $S$ of size $b'$, and let $\Psi_{b'} = \bigvee_{(\mathbf{a},I) \in S'} \Psi_{\mathbf{a},I}$. Using Lemmas~\ref{lemma: indicator FOSs} with $C = T+T'$ and \ref{lemma: disjointness}, it is easy to verify that we have $\lim_{n\rightarrow \infty} \mathbb{P}\left(\sigma_n \models \Psi_{b'}\right) = \frac{b'}{b}$, as required. Thus $\mathrm{Lim}(\mathcal{P})= \{b'/b: \ b'\in [0,b]\cap \mathbb{Z}\}$, as claimed.
\end{proof}
\section{The exponential growth regime}
Throughout this section, let $\cP$ denote a small permutation class with exponential growth. Markstr\"om and R\"aty proved in~\cite{eeroklas} that such a permutation class satisfies a TOTO logical limit law, i.e.\ that for every $\Psi \in \mathrm{FOS}$ we have $\mathbb{P}\left(\bfsigma_n \models \Psi\right)$ converging to a limit as $n\rightarrow \infty$, where $\bfsigma_n$ denotes a random permutation from $\cP_n$, see Theorem~\ref{theorem: eeroklas}. Our goal in this section is to prove that the set of limiting probabilities $\mathrm{Lim}(\cP)$ is dense in $[0,1]$. We shall do this by proving the following technical covering result, which easily implies Theorem~\ref{thm:exp growth}:
\begin{theorem}\label{theorem: technical cover}
    Let $\varepsilon>0$ be fixed. Then there exist $N_1, N_2 \in \mathbb{Z}_{\geq 0}$ and first-order sentences $\Psi_{1}, \Psi_2, \ldots ,\Psi_{N_1}$ such that 
    \begin{enumerate}[(i)]
        \item (small probability) $\lim_{n \rightarrow \infty}\mathbb{P}(\bfsigma_n \models \Psi_i) <\varepsilon$ for all $i \in [N_1]$, where $\bfsigma_n$ is a  random permutation from $\cP_n$;
        \item (covering property) for all $n\geq N_2$, every permutation in $\cP_n$ satisfies some $\Psi_i$, $i\in [N_1]$.
    \end{enumerate} 
\end{theorem}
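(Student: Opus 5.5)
We need to prove Theorem~\ref{theorem: technical cover} for small classes of exponential growth. The plan is to use the structure from Proposition~\ref{prop: small implies contained inside Geom (M) dk} together with the encoding of Theorem~\ref{theorem: eeroklas}, and to find a first-order-definable local statistic whose distribution, for a random permutation, is spread out over many values, each taken with limiting probability less than $\varepsilon$.

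\textbf{Choice of statistic.} Exponential growth forces unboundedly many ``choices'' to be made independently along the permutation. Via the bijection $f_{\mathcal{P}}$ to the regular language $L_{\mathcal{P}}$ of Theorem~\ref{theorem: eeroklas}, exponential growth means the minimal automaton for $L_{\mathcal{P}}$ has a strongly connected component with at least two distinct cycles. This gives two distinct words $u,v$ that can be freely interleaved inside that component, and the typical word spends linear length there.

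I would define the statistic through the first symbols of the word. Fix a large $r$. Let $\Psi_{w}$ express ``the encoding word begins with prefix $w$'', for $w$ ranging over the finitely many words of length $r$ that occur as prefixes; this is first-order on words. By part 1 of Theorem~\ref{theorem: eeroklas}, applied in the isomorphic direction (part 2), each such sentence transfers to a TOTO sentence. Alternatively, one could work directly in TOTO with the $\leq_1$-first $r$ points of $\bfsigma_n$: their relative pattern, together with how they sit in the gridding, is expressible in first order.

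\textbf{Covering and smallness.} Covering (ii) is then immediate: every $\sigma\in\mathcal{P}_n$ with $n\geq r$ has some length-$r$ prefix, so it satisfies some $\Psi_w$. The substance is (i), namely that every prefix has limiting probability below $\varepsilon$. By the rational generating function of part 4 and a transfer-matrix computation, $\lim\Pr(\text{prefix } w)$ equals the weight of the path $w$ in the automaton, taken with respect to the Perron eigenvector of the dominant component.

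The main obstacle is that the dominant component need not be reachable near the start. The language might, for example, be $a^*\cdot(\text{rich part})$, in which case the prefix is almost surely $a^r$ and the limit is $1$. To fix this, I would replace ``prefix'' by a canonically located window: the first $r$ letters after the first entry into a dominant strongly connected component. Both this position and the window are first-order definable, because a regular language's decomposition into components is given by a star-free condition. Whether it is star-free is the delicate point; I expect one must choose the encoding $L'_{\mathcal{F}}$ from part 2, or use the aperiodicity supplied by the FO isomorphism. Inside a dominant component with two distinct cycles, the probability of any specific length-$r$ continuation decays like $\rho^{r}$ for some $\rho<1$, since the Markov chain induced by the Perron vector has positive entropy. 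Choosing $r$ large then makes every window's limiting probability less than $\varepsilon$.

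If several dominant components exist, or the walk could stay in transient parts, those events are handled by $O(1)$ extra sentences. Their probabilities either tend to $0$ or are absorbed into finitely many cases, each of which is refined by the same window statistic.
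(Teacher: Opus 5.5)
\textbf{There is a gap in the step your argument relies on.} You need ``the first $r$ letters after the first entry into a dominant strongly connected component'' to be first-order definable, and this is not proved. The reason you give is also false. First-order logic on words defines exactly the star-free (aperiodic) languages, and entry of the automaton's run into a given component is in general not star-free, even relative to the language. Take
\[
\{a^jbx:\ j \text{ even},\ x\in\{b,c\}^*\}\cup\{a^jbx:\ j \text{ odd},\ x\in b^*\}.
\]
Its dominant component is entered exactly when $j$ is even. Both $a^jb^m$ and $a^{j+1}b^m$ lie in the language, and a fixed first-order sentence cannot separate them once $j$ is large (Ehrenfeucht--Fra\"iss\'e). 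Nothing in Theorem~\ref{theorem: eeroklas} makes an automaton for $L':=L'_{\mathcal P}$ counter-free; the first-order isomorphism only transfers sentences. Your last paragraph, on several dominant components and transient parts, has the same defect: the case-splitting sentences must also be first-order, and must still cover everything for (ii). So, as written, (i) is not established. Your alternative via the pattern of the $\leq_1$-first $r$ points fails as well. Take the permutations with at most one descent, the geometric grid class of two increasing cells in one row. There those points are asymptotically almost surely an increasing run inside the first cell.

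\textbf{The detour is unnecessary, because the obstacle that prompted it is not real.} In $a^*\cdot R$ with $R$ of growth rate $\beta>1$, each forced leading $a$ costs a factor $\beta$. So the prefix $a^r$ has limiting probability of order $\beta^{-r}$, not $1$, and your first idea already works. In general, the finitely many residuals $w^{-1}L'$ each have a shortest representative; let $K$ bound their lengths. For $|w|=r$, pick $u$ with $u^{-1}L'=w^{-1}L'$ and $|u|\leq K$. Then $wv\mapsto uv$ injects the length-$n$ words of $L'$ beginning with $w$ into $L'_{n-r+|u|}$. Part 3 therefore bounds the limiting probability by $\beta^{K-r}$, with $\beta>1$ by exponential growth.

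Let $\Psi_w$ be the TOTO sentence corresponding under $f'_{\mathcal P}$ to ``the word begins with $w$''. You do need $L'$ here: part 1 only translates TOTO sentences into word sentences, and $f_{\mathcal P}$ is only an MSO isomorphism. The sentences $\Psi_w$ with $|w|=r$ cover $\mathcal P_n$ for $n\geq r$. Their limits exist by part 5, and they are below $\varepsilon$ once $r$ is large.

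\textbf{Comparison with the paper.} The repaired argument is a genuinely different and much shorter proof. The map $wv\mapsto uv$ is the word-side analogue of the paper's structured deletion functions (compare Lemma~\ref{lemma: injective}). It is trivial here because residuals are canonical. In TOTO, by contrast, the paper needs Lemmas~\ref{lemma: substitutions inside iteratively inflated alternating sequences} and~\ref{lemma: square comparision} to make deletion commute with $\Pi^\#$. The price of your route is reliance on the first-order isomorphism in part 2 of Theorem~\ref{theorem: eeroklas}, which is only sketched. The paper's proof avoids it. It uses only parts 3 and 5 and the definability results of Propositions~\ref{prop: eeroklas} and~\ref{prop: eeroklas second}, built on Vatter's structure theorem.
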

\begin{proof}[Proof of Theorem~\ref{thm:exp growth} from Theorem~\ref{theorem: technical cover}]
    Given any real $r\in (0,1)$ and any $\varepsilon > 0$, let $\Psi_1, \ldots, \Psi_{N_1}\in \mathrm{FOS}$ be a family of first-order TOTO sentences satisfying the conclusion of Theorem~\ref{theorem: technical cover}. Define $\Phi_i := \bigvee_{j=1}^{i} \Psi_j$, and let $\Phi_0$ be false, and let $i\geq 0$ be the largest index such that $\lim_{n \rightarrow \infty} \mathbb{P}(\bfsigma_n \models \Phi_i) < r$. Note that by condition (ii) $\lim_{n \rightarrow \infty} \mathbb{P}(\bfsigma_n \models \Phi_{N_1}) = 1$, and thus we must have $i < N_1$. Then by condition (i) and a union bound,
    $$\lim_{n \rightarrow \infty} \mathbb{P}(\bfsigma_n \models \Phi_{i+1}) \leq \lim_{n \rightarrow \infty} \mathbb{P}(\bfsigma_n \models \Phi_i) + \lim_{n \rightarrow \infty} \mathbb{P}(\bfsigma_n \models \Psi_{i+1}) < r+ \varepsilon,$$
    and by monotonicity we certainly have $\lim_{n \rightarrow \infty} \mathbb{P}(\bfsigma_n \models \Phi_{i+1}) \geq r$. This immediately implies that $\mathrm{Lim}(\cP)$ is dense in $[0,1]$.
\end{proof}
\noindent  In the remainder of the section, we may thus focus on proving Theorem~\ref{theorem: technical cover}. Before outlining our strategy, we introduce some useful notions and notation.

Given a permutation $\sigma \in \mathrm{Geom}(M)_{d,k}$, we write $ \sigma^{\#}_{i,j}$, $ \sigma^{\#}_{i, -}$ and $\sigma^{\#}_{-,j}$ for the set of points of $\sigma^\#$ that lie inside the $(i,j)$-th cell, the $i$-th column of cells and the $j$-th row of cells of the standard $\ell\times m$ gridding, respectively. Analogously, we write $(F,G)\in \sigma^{\#}$ as a shorthand for the fact that the gridded figure $(F,G)$ lies in the equivalence class of $\sigma^{\#}$, and we write $F_{i,j}$, $F_{i,-}$, $F_{-,j}$ for the collection of points of $F$ in the  $(i,j)$-th cell, the $i$-th column of cells and the $j$-th row of cells of the gridding $G$, respectively.

Given a permutation $\sigma$ and points $a,b\in \sigma$ with $a\leq_1 b$, we write $[a,b]_1$ for the collection of $x\in \sigma$ with $a\leq_1 x\leq_1 b$. We define $[a,b]_2$ similarly with respect to $\leq_2$ instead of $\leq_1$, and extend these notions analogously to gridded figures and their equivalence classes. Further, we write $a_{1,i}$ for the $i$-th successor of $a$ in the $\leq_1$-order (if it exists), that is to say $a_{1,0}=a$ and for $i\geq 1$, $a_{1,i}$ is the point such that $[a, a_{1,i}]_1$ contains exactly $i+1$ points (if such a point exists). Finally, given a first-order sentence $\Psi$, we write $\cP^\Psi$ for the set of permutations $\sigma \in \cP$ that satisfy $\Psi$. 
\subsection{Strategy}\label{subsection: strategy}
\noindent The challenge in the proof of Theorem~\ref{theorem: technical cover} is to manage the balancing act between on the one hand choosing first-order sentences having probability less than $\varepsilon$ of being satisfied (as in condition (i)), and on the other hand ensuring that the number $N_1$ of sentences we need to cover $\cP_n$ (as in condition (ii)) remains bounded as a function of $\varepsilon$.

As the starting point for our argument, we recall Vatter's fundamental result (Proposition~\ref{prop: small implies contained inside Geom (M) dk}) that there exists a suitable matrix $M$ and $d,k\in \mathbb{Z}_{\geq 0}$ such that $\cP \subseteq \mathrm{Geom}(M)_{d,k}$. In the remainder of this section we assume that $M$ is an $\ell\times m$ matrix, and let $G$ denote the standard $\ell\times m$ gridding. 

Given $\sigma \in\cP_n$, we consider $\sigma^{\#}$, the $<_{LG}$-minimal equivalence class of gridded figures $(F,G) \in \mathrm{Geom}^{\#}(M)_{d,k}$ inducing $\sigma$. By the pigeon-hole principle, for $n$ sufficiently large, some row or column in $G$ must contain `many' points of $\sigma^{\#}$. (The reason we consider full rows or columns rather than individual cells is that otherwise we may miss some essential information about the permutation $\sigma$, such as e.g.\ oscillation patterns due to the presence of two non-empty cells in the same column.)

As a consequence of the following result of Markstr\"{o}m and R\"{a}ty  (Proposition~\ref{prop: eeroklas} below), given a row or column of $M$ one may construct a first-order sentence $\widetilde{\Psi}$ such that $\sigma\in \cP$ satisfies $\sigma \models \widetilde{\Psi}$ if and only if the specified row or column is `rich in points' in $\sigma^{\#}$. Going over all rows and columns, the collection of such $\widetilde{\Psi}$ will certainly satisfy the covering condition (ii) from the conclusion of Theorem~\ref{theorem: technical cover} for all $n$ sufficiently large. However these first-order sentences $\widetilde{\Psi}$ will likely not satisfy condition (i), so we shall need to `refine' them further to produce suitable first-order sentences having a smaller probability of being satisfied.

To achieve this, we shall decompose them into first-order sentences for which certain  `structured deletion functions' exist. This is the heart of our proof, and we will now outline our approach to it in the case of columns (the argument for rows follows similarly).

Fix constants $C_0$ and $L_0$ and let $(i,-)$ be a fixed column in $M$. 
We shall consider first-order sentences $\Psi$ that are of the form $\Psi = \exists x: \psi (x)$, satisfying the following conditions. 
\begin{enumerate}
    \item For every $\sigma$ with $\sigma \models \Psi$, there exists a unique $a$ for which $\sigma \models \psi(a)$, and this unique $a$ corresponds to the $C_0$-th least point in $\sigma^\#_{i,-}$ with respect to the $\leq_1$-order. 
    \item There exists a constant $C_1$ depending only on $C_0$, 
    $L_0$, $d$ and $k$ so that the points in $[a_{1,0}, a_{1,C_1-1}]_1$ are all contained in 
    $\sigma^\#_{i,-}$
    and furthermore satisfy certain additional (somewhat technical) structural properties that we do not specify at this stage. 
\end{enumerate}
Now, suppose that $\Psi$ satisfies the conditions described above, and let $\cP^{\Psi}$ denote the set of permutations in $\cP$ that satisfy $\Psi$. Let $s$ denote a constant chosen so that $1 \leq s \leq C_1 - L_0 - 1$. We define a \textit{deletion function} $f_{\Psi}: \cP^{\Psi}_n \rightarrow \cP_{n-L_0}$  to be the function that removes the points in $[a_{1,s},a_{1,s+L_0-1}]_1$ from $\sigma$. Further we also define the \textit{gridded deletion function} $f_{\Psi}^{\#}: \cP^\Psi \rightarrow \text{Geom}^\#(M)_{d,k}$ as follows: given $\sigma \in \cP^\Psi$, we write $\sigma^\#$ for the left-greedy decomposition of $\sigma$. Let $(F,G)$ denote a representative in the equivalence class of $\sigma^\#$. We then set $f^\#_{\Psi}(\sigma)$ to be the equivalence class containing the gridded figure $(F',G)$, where $F'$ is obtained by deleting the points (corresponding to) $[a_{1,s},a_{1,s+L_0-1}]_1$ from $F$. It is easy to check that the function $f_{\Psi}^\#$ is well-defined, as the definition is independent of the choice of representative $(F,G)$. 

Write $\pi^\#: \text{Geom}^\#(M)_{d,k} \rightarrow \text{Geom}(M)_{d,k}$ for the natural projection map which removes the grid-lines, and write $\Pi^\#:\text{Geom}(M)_{d,k} \rightarrow \text{Geom}^\#(M)_{d,k}$ for the function mapping $\sigma$ to  $\sigma^\#$, the left-greedy minimal equivalence class of $M$-compatible gridded figures inducing $\sigma$ . 
\begin{definition}\label{def: structured deletion}
    We say that a deletion function $f_{\Psi}$ is a \textit{structured deletion function} if the gridded deletion function $f_{\Psi}^{\#}$ is an injection, and if for every $\sigma \in \cP^\Psi$, we have 
    \begin{align} \label{eq: structured}
        \Pi^\#(f_\Psi(\sigma)) = f_{\Psi}^\#(\sigma).
    \end{align}
\end{definition}
\noindent  All this may at a first glance seem rather abstruse and unnecessarily complicated, and the reader may wonder: why are structured deletion functions useful? 
From Definition~\ref{def: structured deletion}, and more specifically the injectivity of gridded deletion functions and the crucial equality~\eqref{eq: structured}, it is not hard to show that  structured deletion functions themselves are injective (Lemma~\ref{lemma: injective}). Combining this with the facts that we are free to choose the parameter $L_0$ and that the permutation class $\cP$ has exponential growth, we may ensure that the condition (i) holds for $\Psi$: we have
\begin{align*}
    \vert \cP^{\Psi}_n\vert =\vert f_{\Psi}(\cP^{\Psi}_ n)\vert \leq \vert \cP_{n-L_0}\vert \leq c^{-L_0}\vert \cP_n\vert
\end{align*}
for some constant $c>1$, so taking $L_0$ sufficiently large as a function of $\varepsilon$ we can ensure $\mathbb{P}(\bfsigma_n\models \Psi)<\varepsilon$.

We will use such structured first-order sentences $\Psi$ equipped with deletion functions to `refine' our previously discussed collection of first-order sentences $\widetilde{\Psi}$ about rich rows or columns, breaking down each $\cP^{\widetilde{\Psi}}$ into a union of smaller pieces $\cP^{\Psi}$, each satisfying condition (i).

What remains to be done then is to construct these $\Psi$ and the associated deletion functions, and show that the latter are indeed structured deletion functions. At this stage, we shall say rather vaguely that the first-order sentences $\Psi$ and the structured point-set $[a_{1,0},a_{1,{C_1-1}}]_1$ will arise from certain finite patterns occurring in a fixed column. In general, it will be fairly straightforward to show that the gridded deletion function $f^{\#}_{\Psi}$ is injective. It will however be much more challenging to show that the crucial equality~\eqref{eq: structured} holds. 

To overcome this difficulty, we shall ensure that the structured set of $\leq_1$-consecutive points $a_{1,0}, a_{1,1},\dots,a_{1,C_1-1}$ that can be uniquely identified by $\Psi$ inside every $\sigma \in \cP^\Psi$ contains $C_1$ points, of which only $L_0$ (which will be much smaller than $C_1$) are deleted by $f_{\Psi}$, thus ensuring that some amount of structure is preserved after the deletion. In particular, this remaining structure ensures first of all that the points $a_{1,s-1}$ and $a_{1,s+L_0}$ which precede and succeed the deleted points with respect to the $\leq_1$-order are contained in the same column in the left-greedy decomposition of $f_{\Psi}(\sigma)$, and secondly that the `deleted points' can be added back to this column 
to recover a gridded figure that represents $\sigma$. 

Unavoidably, the description and discussion of deletion functions above may feel rather vague at this stage, so before we dive into the details let us mention an alternative perspective on what we aim to achieve with Definition~\ref{def: structured deletion} in terms of a `commutative' diagram, see Figure~\ref{fig:fig 2}.

Let $(F_{0,0},G)$ be an arbitrary representative of the equivalence class of $\sigma^\#$, and let $(F_{1,0},G)$ denote an arbitrary representative of the equivalence class $f_\Psi^{\#}(\sigma)$, and for convenience write $\theta := f_{\Psi}(\sigma)$. Let $(F_{1,1},G)$ be a gridded figure from the equivalence class of the left-greedy representation of $f_\Psi^{\#}(\sigma)$, and recall from above that it is possible to add back the deleted points in some column to obtain a gridded figure $(F_{0,1},G)$ which induces the permutation $\sigma$. 

We now record some important properties of these gridded figures that can be used to deduce the condition~\eqref{eq: structured}; the formal argument is presented in Lemma~\ref{lemma: square comparision}. As in each instance, we use the standard gridding $G$, we omit it below; also for simplicity's sake we only describe what happens in the case where we have a rich column (the case of rich rows can be dealt with analogously). 
\begin{enumerate}
    \item The gridded figures $F_{0,0}$ and $F_{1,0}$ are somewhat `similar', in the sense that $F_{1,0}$ is obtained from $F_{0,0}$ by deleting points from at most two cells (which are located in the same column), and the same holds for the pair $F_{0,1}$ and $F_{1,1}$. 
    \item If points are deleted from two distinct cells in the same column, 
    then the number of points in the upper cell of $F_{0,0}\setminus F_{1,0}$ is the same as the number of points in the upper cell of $F_{0,1}\setminus F_{1,1}$, and similarly the number of points  in the lower cell of $F_{0,0}\setminus F_{1,0}$ is the same as the number of points in the lower cell of $F_{0,1}\setminus F_{1,1}$.
    \item The gridded figures $F_{0,0}$ and $F_{0,1}$ both represent $\sigma$, and we have $F_{0,0} \leq_{LG} F_{0,1}$ as $F_{0,0}$ is a representative of the equivalence class of  $\sigma^\#$. Similarly, the gridded figures $F_{1,0}$ and $F_{1,1}$ both represent $\theta$, and we have $F_{1,1} \leq_{LG} F_{1,0}$. 
\end{enumerate}
Thus, within the columns of Figure~\ref{fig:fig 2} the gridded figures are somewhat similar in structure, while in both rows we know the status of the left-greedy order, and their directions are not aligned. A rather tedious technical argument (Lemma~\ref{lemma: square comparision}) can be used to prove that actually both of the left-greedy relations must be equalities, and in particular, that both $F_{1,0}$ and $F_{1,1}$ represent the same equivalence class, which is equivalent to the key equality~\eqref{eq: structured}. To carry out this technical argument, we need to add several minor assumptions, such as requiring that the differences in points between both pairs correspond to the same labeled set of points (that is, the points inserted back are inserted in their original gaps - in our application this  condition is trivially satisfied as the set of deleted points is an interval which is re-added into the appropriate gap). In addition, we need to ensure that within each cell of the gridded figure, we are never deleting or inserting the least point under the order $\leq_1$. 

Having outlined our approach we build up the necessary machinery to carry it out. In the following subsection, we recall some results from~\cite{eeroklas},  and then move on to proving Lemmas~\ref{lemma: injective} and \ref{lemma: square comparision}. In Section~\ref{subsection: properties of permutations in Okd} we derive certain structural properties of the permutation class $\widetilde{\mathcal{O}}_{k}^{[d]}$ needed for our proofs. We then split our argument into three distinct parts: dealing with monotone sub-intervals (Section~\ref{subsection: monotone intervals}) dealing with columns and rows containing two non-zero entries (Section~\ref{subsection: alternating intervals}) and dealing with $\star$-entries (Section~\ref{subsection: star entries}). Finally, we combine all these parts together and prove Theorem~\ref{theorem: technical cover} in Section~\ref{subsection: putting it together}. 

\begin{figure}
    \centering
    \begin{tikzpicture}
\matrix (m) [
  matrix of nodes,
  nodes={minimum width=1cm, minimum height=1cm, align=center},
  row sep=3mm,
  column sep=3mm
] {
  Both gridded figures of $\sigma$ & $F_{0,0}$ & $\leq_{LG}$ & $F_{0,1}$ \\
    & $\cup$ &  & $\cup$ \\
  Both gridded figures of $\theta$  & $F_{1,0}$ & $\geq_{LG}$  & $F_{1,1}$ \\
};

\draw[->, bend right=30]
  (m-1-2) to node[midway, left] {$f_{\Psi}$} (m-3-2);

\draw[<-, out=20, in=160]
  (m-1-2) to node[midway, above] {$\Pi^{\#}$} (m-1-4);

\draw[->, bend right=30]
  (m-3-2) to node[midway, below] {$\Pi^{\#}$} (m-3-4);

\draw[<-, bend left=30]
  (m-1-4) to node[midway, right] {Adding back points} (m-3-4);

\draw[->, out=-20, in=160]
  (m-1-2) to node[midway, above left] {$f_{\Psi}^{\#}$} (m-3-4);
\end{tikzpicture}
    \caption{The containment relation within each column holds in the following strong sense: for each column, the point sets in each cell of the gridded figures in the bottom row are contained in the point sets of the gridded figures in the top row. Moreover, this inclusion is an equality for all but at most two cells, which must lie in the same column of the gridding $G$.}
    \label{fig:fig 2}
\end{figure}
\subsection{Setting up}

The following useful result due to the second and third authors~\cite{eeroklas} provides us a way to use first-order sentences to describe the condition that a certain row or column is `rich in points'. While such sentences are not too hard to construct, we will not state them explicitly at this point, but defer their (technical) statements to the subsections dealing with the various subcases of our argument, and defer their (tedious) construction to the Appendix. 
\begin{proposition}[Markstr\"om--R\"aty] \label{prop: eeroklas}
    Let $M$ be a suitable matrix, $d$ and $k$ be positive integers, and let $I$ be a set of positions of non-zero entries of $M$. There exists a first-order formula $\psi_I(x_{u,v,i,j}: u\in\{1,2\}, v\in\{0,1\}, (i,j)\in I)$ such that, for every $\sigma\in\mathrm{Geom}(M)_{d,k}$, we have $\sigma\models\psi_I(a_{u,v,i,j}: u\in\{1,2\}, v\in\{0,1\}, (i,j)\in I)$ if and only if $I$ is precisely the set of non-empty cells of $\sigma^\#$ and, for every $(i,j)\in I$ and $u\in\{1,2\}$, the points $a_{u,0,i,j}$ and $a_{u,1,i,j}$ are respectively the $\leq_u$-least and $\leq_u$-greatest points of $\sigma^\#_{i,j}$.
\end{proposition}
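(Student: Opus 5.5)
The plan is to obtain $\psi_I$ as a conjunction of two parts. The first is a formula $\theta_{I,G}(\mathbf{x})$ asserting that the $4|I|$ named points, together with the cell assignment they implicitly define, describe a genuine $M$-compatible gridded figure of $\sigma$ in $\mathrm{Geom}^{\#}(M)_{d,k}$ whose non-empty cells are exactly $I$. The second asserts that no $<_{LG}$-smaller gridding of $\sigma$ exists.

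\emph{Step 1: defining cells from extremal points.} Given candidate values $a_{u,v,i,j}$, put a point $z$ in cell $(i,j)$ if $a_{1,0,i,j}\leq_1 z\leq_1 a_{1,1,i,j}$ and $a_{2,0,i,j}\leq_2 z\leq_2 a_{2,1,i,j}$. This is first-order in $\mathbf{x}$ and $z$. We then write a formula asserting the following properties of these sets.
\begin{enumerate}[(a)]
\item The sets partition $\sigma$.
\item The cell boxes are arranged consistently with the grid. Every cell in column $i$ lies $\leq_1$-before every cell in column $i'>i$, and similarly for rows with $\leq_2$. This is a finite Boolean combination of comparisons among the named points.
\item Each cell has the right content. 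For $M_{i,j}=\pm1$ the cell is increasing or decreasing, which takes one universal quantifier pair. For $M_{i,j}=\star$ the cell induces a permutation of $\widetilde{\mathcal O}_k^{[d]}$.
\end{enumerate}
For (c) in the $\star$ case, I would use that $\widetilde{\mathcal O}_k^{[d]}$ is a permutation class defined by finitely many forbidden patterns; Vatter's classes of this kind are finitely based, and I would check this or else argue via the finite depth $d$ of substitution. Each forbidden pattern can then be excluded by a relativized existential formula.

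For the geometric condition in cells of $\pm1$ type, I would rely on the structure of suitable matrices. Each row or column has at most two nonzero entries forming an isolated block, so $H_M$ restricted to the $\pm1$ entries is a forest of paths of length $\le2$. The forest theorem, Proposition~\ref{prop: equality grid and geometric grid when HM is a forest}, then lets me replace ``subset of standard figure'' by ``compatible gridded figure''. The $\star$ cells are isolated, so they do not interact with this argument.

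\emph{Step 2: left-greedy minimality.} Write $\Theta_J(\mathbf{y})$ for the formula from Step~1 for an arbitrary set $J$. Then
\[\psi_I(\mathbf{x}):=\Theta_I(\mathbf{x})\wedge\bigwedge_{J}\neg\exists \mathbf{y}\,\bigl(\Theta_J(\mathbf{y})\wedge \mathrm{Less}(\mathbf{y},\mathbf{x})\bigr),\]
where $J$ ranges over the finitely many sets of nonzero positions of $M$. Here $\mathrm{Less}$ must express that the gridding given by $\mathbf{y}$ is $<_{LG}$-smaller than the one given by $\mathbf{x}$.

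This is the main obstacle, because $<_{LG}$ compares cell \emph{cardinalities}, which are not first-order. I would get around this with the column structure. Column $1$ holding more points in gridding $\mathbf{y}$ is equivalent to the $\leq_1$-greatest point of column $1$ under $\mathbf{y}$ being $\leq_1$-after the corresponding point under $\mathbf{x}$, since columns are $\leq_1$-intervals and are therefore $\leq_1$-initial segments in lexicographic order. More generally, equal counts in columns $1,\dots,i-1$ mean equal column boundaries, which are expressible by comparing named extremal points. Within a column, the cells of a $\pm1$ column with two entries are $\leq_2$-intervals of that column, ordered bottom to top. So ``more points in the bottom cell'' becomes a comparison of that cell's $\leq_2$-greatest point, restricted to the column. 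Empty cells are handled by the case split over $J$. Every clause of Definition~\ref{def: left greedy} is therefore a finite Boolean combination of comparisons, which makes $\mathrm{Less}$ quantifier-free in $\mathbf{x},\mathbf{y}$.

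Correctness then follows. If $\sigma\models\psi_I(\mathbf{a})$, the named points define a gridding of $\sigma$ in $\mathrm{Geom}^{\#}(M)_{d,k}$ with no smaller competitor, so by uniqueness this gridding is $\sigma^\#$. Conversely, the extremal points of $\sigma^\#$ satisfy $\psi_I$.
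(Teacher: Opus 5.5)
The paper gives no proof of this proposition. It quotes it from \cite{eeroklas} and uses it as a black box (the appendix only builds further formulae on top of it), so there is no in-paper argument to compare yours with. Judged on its own, your two-step construction is a correct proof: you define a candidate gridding from $4|I|$ named extremal points, then forbid an $<_{LG}$-smaller competitor. The two facts it hinges on both hold.
\begin{itemize}
\item In any $M$-compatible gridding, a cell is exactly the intersection of the $\leq_1$- and $\leq_2$-intervals spanned by its extremal points, because columns are $\leq_1$-intervals and rows are $\leq_2$-intervals.
\item In a suitable matrix every column has at most two non-zero entries, and if it has two they are alone in their rows. Hence each clause of the lexicographic comparison defining $<_{LG}$, once the earlier columns agree as sets, reduces to comparing a column's $\leq_1$-maximum or a lower cell's $\leq_2$-maximum.
\end{itemize}
So for each pair $(I,J)$ your $\mathrm{Less}$ really is a quantifier-free Boolean combination, with the obvious case distinctions for empty columns. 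It only has to be correct on tuples satisfying $\Theta_I(\mathbf{x})\wedge\Theta_J(\mathbf{y})$.

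When writing it up, add three small things.
\begin{itemize}
\item $\Theta_I$ must also say that each named point lies in its own box. Otherwise the $a_{u,v,i,j}$ need not be the extremal points of the sets they define.
\item Proposition~\ref{prop: equality grid and geometric grid when HM is a forest} is stated only for $\{0,\pm1\}$-matrices, so you need the easy extension to isolated $\star$-cells. Alternatively, place the points on the diagonals by hand: every component of $H_M$ is a path with at most two edges, and $\star$-cells share no row or column with other non-zero cells.
\item Your finite-basis claim for $\widetilde{\mathcal O}_k^{[d]}$ is true. Every simple permutation of an inflation $\mathcal{C}[\mathcal{D}]$ lies in $\mathcal{C}\cup\mathcal{D}$, so this class has only finitely many simple permutations, and such a class is finitely based by a theorem of Albert and Atkinson.
\end{itemize}
Your fallback for the third point is just as easy and more self-contained. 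For a profile in $\mathcal O_k$, quantify over the at most $2k$ block endpoints and recurse. For a long monotone profile, use that $\sigma$ is a direct (resp.\ skew) sum of members of a pattern-closed class if and only if each of its sum- (resp.\ skew-) indecomposable components is one. These components are first-order definable, since two points lie in the same sum component exactly when no $\leq_1$-cut between them separates everything before it $\leq_2$-below everything after it.
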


We now move on to proving some important preliminary results, which give some motivation for considering structured deletion functions. 
\begin{lemma} \label{lemma: injective}
    Let $\Psi$ be a first-order sentence, and let $f: \mathcal{P}^{\Psi}_n \rightarrow \mathcal{P}_{n-L}$ be a structured deletion function. Define the particular constant $\gamma := \frac{1+\sqrt{5}}{2}$. Then $f$ is an injection, and we have $\lim_{n \rightarrow \infty} \mathbb{P}(\sigma_n \models \Psi) \leq \gamma^{-L}$. 
\end{lemma}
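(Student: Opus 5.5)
Injectivity of $f$ comes from the structured-deletion properties. Then a counting bound, with the growth rate of $\cP$ bounded below by $\gamma$, gives the probability estimate.

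\emph{Injectivity.} Suppose $\sigma,\sigma'\in\cP^\Psi_n$ with $f(\sigma)=f(\sigma')$. Applying $\Pi^\#$ to both sides and using the key equality~\eqref{eq: structured} gives
\[
f^\#_\Psi(\sigma)=\Pi^\#(f(\sigma))=\Pi^\#(f(\sigma'))=f^\#_\Psi(\sigma').
\]
Since $f^\#_\Psi$ is injective by Definition~\ref{def: structured deletion}, the equivalence classes $\sigma^\#$ and $(\sigma')^\#$ coincide. Applying the projection $\pi^\#$ then yields $\sigma=\pi^\#(\sigma^\#)=\pi^\#((\sigma')^\#)=\sigma'$. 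Hence $f$ is injective.

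\emph{Counting.} Injectivity gives $\vert\cP^\Psi_n\vert\le\vert\cP_{n-L}\vert$, so
\[
\Pr(\bfsigma_n\models\Psi)\le \frac{\vert\cP_{n-L}\vert}{\vert\cP_n\vert}.
\]
By Theorem~\ref{theorem: eeroklas}(3), $\vert\cP_n\vert=(\alpha+o(1))\beta^n n^c$. The ratio therefore tends to $\beta^{-L}$ as $n\to\infty$, since $(n-L)^c/n^c\to1$ for fixed $L$. Moreover, the limit of $\Pr(\bfsigma_n\models\Psi)$ exists by Theorem~\ref{theorem: eeroklas}(5), so $\lim_{n\to\infty}\Pr(\bfsigma_n\models\Psi)\le\beta^{-L}$.

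\emph{Bounding $\beta$.} It remains to show $\beta\ge\gamma$. Because $\cP$ has exponential growth, $\beta>1$; otherwise $\vert\cP_n\vert$ would be polynomial. By the Kaiser--Klazar dichotomy recalled in the introduction, any permutation class whose growth is not polynomial satisfies $\vert\cP_n\vert\ge F_n$ for all $n$, so its growth rate is at least the golden ratio $\gamma=(1+\sqrt5)/2$. Thus $\beta\ge\gamma$ and $\beta^{-L}\le\gamma^{-L}$, which is the claimed bound.

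The only delicate point is identifying $\beta$ with the Stanley--Wilf limit and invoking the Kaiser--Klazar threshold. This is immediate from the asymptotic form in Theorem~\ref{theorem: eeroklas}(3), which gives $\beta=\lim_n\vert\cP_n\vert^{1/n}$. The rest is formal.
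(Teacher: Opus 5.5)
Your proposal is correct and follows the paper's proof essentially verbatim. Injectivity comes from applying $\Pi^\#$ and using equality~\eqref{eq: structured} together with injectivity of $f^\#_\Psi$; the probability bound comes from $\vert\cP^\Psi_n\vert\le\vert\cP_{n-L}\vert$, the asymptotics in Theorem~\ref{theorem: eeroklas}(3), and the Kaiser--Klazar bound $\beta\ge\gamma$ (your detour through $\sigma^\#$ and $\pi^\#$ is harmless but unnecessary, since $f^\#_\Psi$ is defined on $\cP^\Psi$ itself and its injectivity gives $\sigma=\sigma'$ directly).
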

\begin{proof}
    Let $f^\#$ denote the gridded version of $f$, and recall that by Definition~\ref{def: structured deletion} $f^\#$ must be an injection. If $f(\sigma_1) = f(\sigma_2)$ for some $\sigma_1,\sigma_2 \in \mathcal{P}^\Psi_n$, then by~\eqref{eq: structured}, we have
    \[f^\#(\sigma_1) = \Pi^\#(f(\sigma_1)) = \Pi^\#(f(\sigma_2)) = f^\#(\sigma_2),\]
    whence $\sigma_1=\sigma_2$ by injectivity of $f^{\#}$. Thus $f$ must be injective as well. Hence, we conclude that  
    \[\lim_{n \rightarrow \infty} \mathbb{P}(\sigma_n \models \Psi)  = \lim_{n\rightarrow \infty} \frac{\vert \cP^\Psi_n\vert}{\vert \cP_n\vert}\leq \lim_{n\rightarrow \infty} \frac{\vert \cP_{n-L} \vert}{\vert \cP_n\vert} \leq \gamma^{-L},\]
    by using the third claim in Theorem~\ref{theorem: eeroklas} and recalling that $\cP$ has growth rate at least $\gamma$ by the results of Kaiser and Klazar~\cite{kaiser2002growth}. 
\end{proof}
As the statement of Lemma~\ref{lemma: square comparision} is rather technical, we refer the reader to Figure~\ref{fig:fig 2} which provides a rough overview of the statement. 
\begin{lemma}\label{lemma: square comparision}
Let $M$ be a suitable matrix. Suppose that $(F_{u,v},G)$,
for $(u,v)\in\{0,1\}\times\{0,1\}$ are gridded
figures in $\mathrm{Geom}(M)^\#_{d,k}$ such that $(F_{0,0},G)$ and $(F_{0,1},G)$ induce
$\sigma$, while $(F_{1,0},G)$ and $(F_{1,1},G)$ induce $\theta$. In addition, assume that $(F_{0,0},G)$ belongs to the equivalence class of the left-greedy representation $\sigma^\#$. 

Let $\mathcal{S}$ and $\mathcal{S'}$ denote collections of cells, all of which correspond to non-zero entries of $M$, taking one of the following three forms:
    \begin{enumerate}
        \item[(a)] $\mathcal{S} = \{(i,j)\}$ and $\mathcal{S'} = \{(i',j')\}$, or
        \item[(b)] $\mathcal{S} = \{(i,j_0),(i,j_1)\}$ and $\mathcal{S'} = \{(i',j'_0),(i',j'_1)\}$ with $j_0 < j_1$ and $j'_0 < j'_1$, or
        \item[(c)] $\mathcal{S} = \{(i_0,j),(i_1,j)\}$ and $\mathcal{S'} = \{(i'_0,j'),(i'_1,j')\}$ with $i_0 < i_1$ and $i'_0 < i'_1$.
    \end{enumerate}
\noindent Suppose that the gridded figures satisfy the following conditions: 
    \begin{align} \label{eq: most cells equal 1}
        (F_{0,0})_{u,v} &= (F_{1,0})_{u,v} \text{ for all } (u,v) \not \in \mathcal{S}, \\
 \label{eq: most cells equal 2}
        (F_{0,1})_{u,v} &= (F_{1,1})_{u,v} \text{ for all } (u,v) \not \in \mathcal{S'}, \\
  \label{eq: inclusions 1}
        (F_{1,0})_{u,v} &\subseteq (F_{0,0})_{u,v} \text{ for all } (u,v) \in \mathcal{S}, \text{ and }\\
    \label{eq: inclusions 2}
        (F_{1,1})_{u,v} &\subseteq (F_{0,1})_{u,v} \text{ for all } (u,v) \in \mathcal{S'}. 
    \end{align}
\noindent Label the points of $F_{0,0}$ and $F_{0,1}$ by their
corresponding entries in the permutation $\sigma$ (i.e.\  the $i$-th least point in the $\leq_1$-order is labeled $(i, \sigma(i))$), and give $F_{1,0}$ and
$F_{1,1}$ the labels inherited through the above inclusions.

Suppose that under the inheritance of labels, $F_{1,0}$ and $F_{1,1}$ have the same label set. Further, suppose that in each of the cells $(u,v) \in \mathcal{S}$ the $\leq_1$-least point of $(F_{0,0})_{u,v}$ also belongs to $(F_{1,0})_{u,v}$, and that for every $(u,v) \in \mathcal{S'}$ the $\leq_1$-least point of $(F_{0,1})_{u,v}$ also belongs to $(F_{1,1})_{u,v}$. Furthermore, suppose the following are satisfied:
        \begin{align} \label{eq: three conditions 1}
             & \text{if in case (a), then } |(F_{0,1})_{i',j'}| - |(F_{1,1})_{i',j'}| = |(F_{0,0})_{i,j}| - |(F_{1,0})_{i,j}|; \\
             & \text{if in case (b), then } |(F_{0,1})_{i',j'_s}| - |(F_{1,1})_{i',j'_s}| = |(F_{0,0})_{i,j_s}| - |(F_{1,0})_{i,j_s}| \text{ for } s\in \{0,1\};\label{eq: three conditions 2}\\
             & \text{if in case (c), then } |(F_{0,1})_{i'_s,j'}| - |(F_{1,1})_{i'_s,j'}| = |(F_{0,0})_{i_s,j}| - |(F_{1,0})_{i_s,j}| \text{ for } s\in \{0,1\}.\label{eq: three conditions 3}
        \end{align}
 Finally, suppose that we have $(F_{1,1},G) \leq_{LG} (F_{1,0},G)$, and recall that $(F_{0,0},G) \leq_{LG} (F_{0,1},G)$ since $(F_{0,0},G)$ belongs to the equivalence class $\sigma^\#$. Then $(F_{1,0},G)$ and $(F_{1,1},G)$ belong to the same
equivalence class in $\mathrm{Geom}(M)^\#_{d,k}$.
\end{lemma}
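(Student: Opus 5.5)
The plan is to argue with the count-tuples that encode the left-greedy order. For a gridded figure $F$, write $\nu(F)=(n_1,n_{1,1},\dots,n_{1,m},\dots,n_\ell,n_{\ell,1},\dots,n_{\ell,m})$. Recall that $<_{LG}$ is the descending lexicographic order on $\nu$, and that two figures with the same tuple that induce the same permutation are equivalent. It therefore suffices to show $\nu(F_{1,0})=\nu(F_{1,1})$.

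\emph{Step 1: transfer the tuples.} Define the deletion vectors $\delta:=\nu(F_{0,0})-\nu(F_{1,0})$ and $\delta':=\nu(F_{0,1})-\nu(F_{1,1})$. By \eqref{eq: most cells equal 1}--\eqref{eq: inclusions 2}, $\delta$ is supported on the cells of $\mathcal S$ and the columns containing them, and $\delta'$ is supported likewise on $\mathcal S'$. By \eqref{eq: three conditions 1}--\eqref{eq: three conditions 3} the two vectors have the same nonzero values, but possibly at different positions.

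The first key claim is $\mathcal S=\mathcal S'$, so that $\delta=\delta'$. Then $\nu(F_{1,0})-\nu(F_{1,1})=\nu(F_{0,0})-\nu(F_{0,1})$. The lexicographic inequality $\nu(F_{0,0})\ge\nu(F_{0,1})$, which says $F_{0,0}\le_{LG}F_{0,1}$, then transfers to $\nu(F_{1,0})\ge\nu(F_{1,1})$, that is, $F_{1,0}\le_{LG}F_{1,1}$. Combined with the hypothesis $F_{1,1}\le_{LG}F_{1,0}$, this gives equality.

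\emph{Step 2: prove $\mathcal S=\mathcal S'$.} This is where the label hypotheses enter. $F_{1,0}$ and $F_{1,1}$ carry the same label set, and $F_{0,0}$, $F_{0,1}$ both carry all labels of $\sigma$. Hence the deleted label sets coincide: $D:=F_{0,0}\setminus F_{1,0}=F_{0,1}\setminus F_{1,1}$ as labelled points.

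Next I would use the least-point hypothesis. In $F_{0,0}$ each deleted point $p\in D$ lies in a cell $c\in\mathcal S$ whose $\le_1$-least point $q$ survives, so $q\notin D$. I would then like to argue that $q$ is in the same cell as $p$ in $F_{0,1}$. The idea is that $q$ survives into $F_{1,0}$ and $F_{1,1}$, and the surviving points of the two figures induce the same permutation $\theta$ with the same labels. I would try to show that the cell assignment of $q$ and the relative position of $p$ (between $q$ and later points of the same cell, and monotone with them) force $p$ into the cell of $q$ in $F_{0,1}$ as well.

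This step is the main obstacle. In general the griddings $F_{1,0}$ and $F_{1,1}$ may assign surviving points to different cells, so I would not try to prove $q$'s cell is literally fixed. Instead I would split into cases (a), (b) and (c) and use suitability of $M$. In case (a) a lone non-zero cell in its column is determined by the column containing the label. In cases (b) and (c) the two cells form an alternating pair in one column or row, and the counts in \eqref{eq: three conditions 2}--\eqref{eq: three conditions 3} pin down which deleted points go up and which go down.

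\emph{Fallback if $\mathcal S\ne\mathcal S'$.} I would compare directly. Suppose $\mathcal S$ lies strictly left of $\mathcal S'$ (by column, or by row within a column). Then at the first differing coordinate, $F_{0,0}$ having more points than $F_{0,1}$ translates, after subtracting $\delta$ and $\delta'$, into an inequality for $F_{1,0}$ versus $F_{1,1}$ in the same direction, unless the deletions exactly cancel the difference. Because the least points are never deleted, the affected cells stay nonempty. I would then aim to show that the resulting figure obtained by moving the deleted points contradicts the left-greedy minimality of $F_{0,0}$. The inequality checks themselves are routine lexicographic bookkeeping; the delicate part is this cell-identification argument.
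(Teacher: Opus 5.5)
Your Step 1 is correct and matches how the paper finishes (its case $i=i'$): once the deletion vectors coincide, translation-invariance of the lexicographic order turns $F_{0,0}\le_{LG}F_{0,1}$ into $F_{1,0}\le_{LG}F_{1,1}$, and equal count tuples for figures inducing $\theta$ give equivalence. The gap is Step 2, which is the real content of the lemma, and the route you sketch for it cannot work. $\mathcal S=\mathcal S'$ does not follow from the label and least-point hypotheses; it follows only from the two left-greedy inequalities. For example, take $M$ the $2\times 2$ identity, $\sigma=1234$, $F_{0,0}$ with all points in $C_{1,1}$, $F_{0,1}$ with $1$ in $C_{1,1}$ and $2,3,4$ in $C_{2,2}$, and $D=\{3\}$. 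Every hypothesis holds except $F_{1,1}\le_{LG}F_{1,0}$, yet $\mathcal S=\{(1,1)\}\neq\{(2,2)\}=\mathcal S'$. In particular, your claim that in case (a) the cell is ``determined by the column containing the label'' is false: label $3$ lies in column $1$ of $F_{0,0}$ and in column $2$ of $F_{0,1}$. Your fallback does turn to the LG hypotheses, but it stalls exactly at ``unless the deletions exactly cancel the difference'', and the suggested remedy of moving deleted points to contradict minimality of $F_{0,0}$ is never carried out.

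The missing idea is a surviving witness point next to $D$. Let $h=\min(i,i')$. Columns $<h$ are untouched in both pairs, so a first count difference there would show up identically in $(F_{0,0},F_{0,1})$ and $(F_{1,0},F_{1,1})$. That would contradict one of the two LG inequalities, so all four figures have equal counts in columns $<h$. Now let $a$ be the $\le_1$-predecessor of $\min_{\le_1}D$ in $\sigma$. The $\le_1$-least point of the relevant cell survives, and columns are $\le_1$-intervals. Hence $a$ exists, $a\notin D$, and $a$ lies in column $i$ of $F_{0,0}$ and $F_{1,0}$, and in column $i'$ of $F_{0,1}$ and $F_{1,1}$. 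Within each pair, column $h$ starts at the same position of the induced permutation. So if $i<i'$, column $i$ of $F_{1,0}$ contains $a$ while column $i$ of $F_{1,1}$ does not, which gives $F_{1,0}<_{LG}F_{1,1}$; no cancellation issue arises, because you compare the $\theta$-pair directly. If $i'<i$, the same reasoning gives $F_{0,1}<_{LG}F_{0,0}$. Both contradict the hypotheses, so $i=i'$. In case (b), suitability then forces $j_s=j'_s$, since a column with two non-zero entries has no others, and your Step 1 applies. This is the real use of the least-point hypothesis: it produces a surviving point adjacent to $D$ in the same column, not merely non-empty cells.
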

\begin{proof}
We only present the proof when the assumptions of case (b) hold, as the other cases follow by using very similar arguments. For convenience, we drop the grid $G$ from the notation. Viewing $\sigma$ as a labeled permutation, let $D$ be the labeled set of points removed from $F_{0,0}$ to obtain $F_{1,0}$. Thus by our assumption $D$ is also the labeled set of points removed from $F_{0,1}$ to obtain $F_{1,1}$. Note that the result follows trivially if $D =\emptyset$, so we may assume that $D$ is non-empty. 

Suppose for the sake of contradiction that we have $F_{1,1}<_{LG} F_{1,0}$. Write $h = \min(i,i')$, and observe that all differences between the pairs $(F_{0,0},F_{1,0})$ and $(F_{0,1},F_{1,1})$ occur only in columns whose index is at least $h$. Note that we must have $|(F_{1,0})_{u,v}| = |(F_{1,1})_{u,v}|$ for every $u < h$, as otherwise the first difference between such cell counts witnessing $F_{1,1}<_{LG}F_{1,0}$ would immediately translate to the pair $(F_{0,0},F_{0,1})$ giving $F_{0,1}<_{LG}F_{0,0}$, which is a contradiction. Analogously we must have $|(F_{0,1})_{u,v}| = |(F_{0,0})_{u,v}|$ for every $u< h$. In particular, we have 
\begin{equation} \label{eq: all equal}
        |(F_{0,0})_{u,v}| = |(F_{0,1})_{u,v}| = |(F_{1,0})_{u,v}| = |(F_{1,1})_{u,v}| \text{ for all $(u,v)$ with $u < h$.}
\end{equation}
Recall we identify points in our gridded figures with the corresponding elements of $\sigma$.
Let $a$ denote the largest point in $\sigma$ with respect to the $\leq_1$-order that is not in $D$, and is smaller than all the points in $D$ with respect to the order $\leq_1$. Note that $a$ exists, and by our assumptions $a$ belongs to the $i$-th column of both $F_{0,0}$ and $F_{1,0}$, and to the $i'$-th column of both $F_{0,1}$ and $F_{1,1}$. 

If $i < i'$, we note that $a$ belongs to the $i$-th column of $F_{1,0}$ but $a$ is not contained in the $i$-th column of $F_{1,1}$, which together with equation~\eqref{eq: all equal} implies that $F_{1,0} <_{LG} F_{1,1}$ (as the first condition of Definition~\ref{def: left greedy} is satisfied with $i_0 = i$), which is a contradiction. If $i' < i$, we obtain the contradiction $F_{0,1} <_{LG} F_{0,0}$ by noting that the first condition of Definition~\ref{def: left greedy} is satisfied with $i_0 = i'$ (again,  by considering the location of the point $a$).

Finally consider the case when $i' = i$. Arguing similarly as above, we must have 
$j_0=j'_0$ and $j_1=j'_1$. Since all the deleted points $D$ are contained in the $i$-th column,  \eqref{eq: three conditions 2} implies that 
\begin{equation}\label{eq: counts equal}
    |(F_{0,0})_{u,v}| = |(F_{0,1})_{u,v}| \text{ and }|(F_{1,1})_{u,v}| = |(F_{1,0})_{u,v}|
\end{equation}
for every $(u,v)$ with $u \leq i$. Since we assumed that $F_{1,1}<_{LG}F_{1,0}$, it follows that this relation must be witnessed at a column or inside a column whose index is strictly greater than $i$. However, using the fact that $(F_{0,0})_{u,v} = (F_{1,0})_{u,v}$ and $(F_{0,1})_{u,v} = (F_{1,1})_{u,v}$ holds for all $u >i$, this immediately leads to the contradiction $F_{0,1}<_{LG}F_{0,0}$. Thus  $F_{1,1}$ and $F_{1,0}$ must belong to the same equivalence class.
\end{proof}

\subsection{Properties of permutations in $\widetilde{\mathcal{O}}_k^{[d]}$}\label{subsection: properties of permutations in Okd}
\begin{definition}\label{monotone bipartition}
    Given a permutation $\sigma \in \mathfrak{S}_n$ and a set $I$ of consecutive points $a_1,\dots,a_\ell$ with respect to the $\leq_1$-order,  we say that $I$ can be partitioned into monotone segments if there exists a bipartition $I = A \sqcup B$ so that both $A$ and $B$ are monotone subpermutations of $\sigma$, and for all $a \in A$ and $b \in B$ we have $b <_2 a$. We say that $I$ can be partitioned into monotone segments with respect to the bipartition $I = A \sqcup B$ if the sets $A$ and $B$ satisfy the properties described above. 
\end{definition}
\begin{definition}\label{def: alternation}
    Given a permutation $\sigma \in \mathfrak{S}_n$ and a set of $\leq_1$-consecutive points $I$ inside $\sigma$
    that can be partitioned into monotone segments with respect to a bipartition $I = A \sqcup B$, 
    we define the number of alternations in $I$ with respect to $A \sqcup B$ to be the number of indices $i \in [|I|-1]$ for which $a_i$ and $a_{i+1}$ are not contained in the same part ($A$ or $B$) of the bipartition. Finally, we define the number of alternations in $I$ as the maximum number of alternations with respect to any such bipartition $A \sqcup B$ of $I$. 
\end{definition}
\noindent It is not too hard to check that if $I$ can be partitioned into monotone segments in multiple ways, then the number of alternations with respect to these bipartitions can differ by at most one. In the following Lemma, we prove that any $I$ that is contained in an element $\sigma \in \widetilde{\mathcal{O}}_{k}^{[d]}$ cannot contain too many alternations. 
\begin{lemma}\label{lemma: bound on number of alternations in O_k^d}
    For all $d,k\in\mathbb{Z}_{>0}$ there exists $D_{1}=D_{1}\left(d\right)\in\mathbb{Z}_{>0}$
    such that the following holds. Suppose that $\sigma\in\widetilde{\mathcal{O}}_{k}^{\left[d\right]}$ and that $I$ consists of consecutive points $a_1, \dots,a_\ell$ with respect to the $\leq_1$-order in $\sigma$ so that $I$ can be partitioned into monotone segments. Then $I$ contains fewer than $D_{1}$ alternations. 
\end{lemma}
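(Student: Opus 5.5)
We argue by induction on $d$; the constant $D_1(d)$ will grow with $d$ but will not depend on $k$ or on $\sigma$. The key observation is that any set $I$ with many alternations contains a long \emph{oscillation}. Indeed, if $I=A\sqcup B$ with $A,B$ monotone and $B<_2 A$, then $2r$ alternations produce points $b_1<_1 a_1<_1 b_2<_1 a_2<_1\cdots$ with all $b$'s below all $a$'s. Restricting to such a subsequence gives a pattern of length about $r$ that is $\leq_1$-interleaved between two monotone sequences separated vertically.

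\textbf{Base case $d=1$.} Here $\sigma\in\widetilde{\mathcal{O}}_k$, so either $\sigma$ is monotone or $\sigma\in\mathcal{O}_k$. A monotone $\sigma$ admits at most one alternation, since the $\leq_2$-separation forces $B$ to be an initial or final $\leq_1$-segment (up to the sign of monotonicity). For $\sigma\in\mathcal{O}_k$, a subpattern of $\omega_k$ or $\varpi_k$, we check directly from the increasing oscillating sequence that every point has degree at most two in the inversion graph. A two-monotone-sequence interleaving with many alternations and $B<_2A$ forces a configuration incompatible with this. For instance, when $A$ and $B$ are both increasing, the pattern $b_1 a_1 b_2 a_2 b_3$ with all $b<_2a$ makes $a_1$ inverted with $b_2$ and $b_3$, and $a_2$ with $b_3$. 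With about ten alternations one obtains a point of inversion-degree at least three, or a crossing structure such as $2413$/$3142$ chains that oscillations do not contain in this arrangement. So $D_1(1)$ is a small absolute constant. This requires a short finite case check over the four monotonicity types of $(A,B)$.

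\textbf{Inductive step.} Write $\sigma=\pi[\sigma_1,\dots,\sigma_r]$ with $\pi\in\widetilde{\mathcal{O}}_k$ and $\sigma_t\in\widetilde{\mathcal{O}}_k^{[d-1]}$. The blocks are $\leq_1$-intervals, so $I$ meets them in consecutive pieces. Call a block \emph{internal} if $I$ meets it but the block does not contain $I$.
\begin{itemize}
\item Each piece $I\cap\sigma_t$ inherits a partition into monotone segments, so by induction it contributes fewer than $D_1(d-1)$ alternations.
\item Alternations occurring between consecutive pieces correspond to alternations of the pattern obtained by choosing one representative per block. This pattern lives in $\pi$, up to the issue that a block may contain points of both $A$ and $B$.
\end{itemize}
To handle the mixed blocks, observe that a block is an interval, so it is $\leq_2$-contiguous. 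Since all of $B$ lies below all of $A$, at most one block can straddle the $\leq_2$-threshold between $B$ and $A$ in a way that yields a mixed block with further blocks on both sides. The remaining blocks are pure, and their representatives give a monotone-segment partition inside $\pi$ with the same count of between-block alternations. That count is bounded by $D_1(1)$. Summing the contributions gives the recursion $D_1(d)\le (D_1(1)+3)(D_1(d-1)+1)$.

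\textbf{Main obstacle.} The bound above still depends on the number of pieces, which could be large. The worry is that many pure blocks, each with a few internal alternations, might appear in one $I$. We fix this by noting that if $I$ passes through many blocks in $\pi$, then the block representatives form a long monotone-segment partition in $\pi$. Moreover, each block that has an internal alternation is itself mixed, and there is at most one such block. Hence only $O(1)$ blocks carry internal alternations, and the total stays bounded in terms of $d$ alone. Making this ``at most one mixed block'' claim precise is where the contiguity of intervals and the strict vertical separation $B<_2A$ must be used carefully, and it is the step to verify first.
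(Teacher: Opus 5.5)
Once your ``main obstacle'' fix is in place, this is essentially the paper's proof. Both arguments use induction on $d$ and a direct check for $\widetilde{\mathcal{O}}_k$, together with the observation that only a block meeting both $A$ and $B$ can carry internal alternations, while representatives of the pure blocks give a monotone-segment partition of a subpattern of the profile; so the bound grows additively (the paper takes $D_1(d)=7d$), and your multiplicative recursion is not needed. The step you flag is immediate: two mixed blocks would be disjoint value-intervals, so an $A$-point of the lower one would lie below a $B$-point of the upper one, contradicting $B<_2A$.
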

\begin{proof}
    As $\widetilde{\mathcal{O}}_k^{[d]}$ is closed under taking subpatterns, we may for simplicity assume that $I$ coincides with $\sigma$. Let $A \sqcup B$ be an arbitrary bipartition of $I$ satisfying the conditions of Definition~\ref{monotone bipartition}. We prove by induction on $d\geq1$ that we may take $D_{1}\left(d\right)= 7d$. It is easy to check that any subpattern of either increasing or decreasing oscillating sequence contains at most 5 alternations. 
    
    For the base case $d=1$, observe that if $\sigma$ is a monotone permutation,  it can contain at most one alternation, as we cannot ever swap twice between $A$ and $B$. Otherwise, $\sigma$ must be a subpattern of the increasing or decreasing oscillating sequence, and hence contains at most $5$ alternations. Thus the base case holds. 
    
    For the inductive step, suppose that our inductive statement holds up to some $d\geq1$, and consider $\sigma=\tau\left[\theta_{1},\dots,\theta_{\left|\tau\right|}\right]\in\widetilde{\mathcal{O}}_{k}^{\left[d+1\right]}$, where $\tau\in\widetilde{\mathcal{O}}_{k}$ and $\theta_{1},\dots,\theta_{\left|\tau\right|}\in\widetilde{\mathcal{O}}_{k}^{\left[d\right]}$. We split the proof into subcases depending on whether $\tau$ is a monotone permutation or not. We slightly abuse the notation, and say that an index $i$ belongs to the permutation $\theta_{j}$ if the corresponding point $\left(i,\sigma\left(i\right)\right)$ belongs to $\theta_{j}$ in the drawing $\sigma=\tau\left[\theta_{1},\dots,\theta_{\left|\tau\right|}\right]$. We say that an alternation is internal if both of its points are contained in the same $\theta_{i}$, and external otherwise. 

    We prove that the number of external alternations must always be at most $7$. Since for each $a \in A$ and $b\in B$ we have $\sigma(b) < \sigma(a)$, there can be at most one index $i \in [|\tau|]$ for which $\theta_i$ contains points from both $A$ and $B$. This block can contribute at most 2 external alternations. 

    Let $\tau'$ denote the permutation obtained from $\tau$ by removing its $i$-th point. For each $j \in \tau'$ we may  assign a label $A$ or $B$ based on whether all the points in the block $\theta_j$ belong to $A$ or $B$. Thus, the number of other external alternations of $\sigma$ is clearly bounded above by the alternations of $\tau'$ with respect to this particular partition $A \sqcup B$, and thus the number is at most $5$ as $\tau'$ is monotone, a subpattern of $\omega_k$ or a subpattern of $\varpi_k$. Thus, the total number of external alternations is at most $7$. 

    \noindent \textbf{Case 1, $\tau$ is a monotone permutation:} let us assume that $\tau$ is monotone increasing (the case where it is monotone decreasing follows analogously). Since $\sigma\left(a\right)>\sigma\left(b\right)$ for all $a\in A$ and $b\in B$, it follows that there exists an index $i\in\left[\left|\tau\right|\right]$ so that every $\ell$ which belongs to $\theta_{j}$ for some $j>i$ is contained in $A$, and every $\ell$ which belongs to $\theta_{j}$ for some $j<i$ is contained in $B$. Thus, every internal alternation must be contained in $\theta_{i}$, and the external alternations can only occur between $\theta_{i-1}$ and $\theta_{i}$, or between $\theta_{i}$ and $\theta_{i+1}$. Thus, there are at most $2$ external alternations. Since $\theta_{i}\in\widetilde{\mathcal{O}}_{k}^{\left[d\right]}$, the induction hypothesis implies that $\sigma$ contains less than $D_{1}\left(d\right)$ internal alternations, and thus the total number of alternations is less than $D_{1}\left(d\right)+2\leq D_{1}\left(d+1\right)$,
    as required. 

    \noindent \textbf{Case 2, $\tau$ is a subpattern of $\omega_{k}$ or $\varpi_k$:}  let $a_{m}\in A$ be chosen so that $\sigma\left(a_{m}\right)\leq\sigma\left(a\right)$ for all $a\in A$, and let $i\in\left[\left|\tau\right|\right]$ be chosen so that $a_{m}\in\theta_{i}$. Since $\sigma\left(a\right)>\sigma\left(b\right)$ for all $a\in A$ and $b\in B$, we conclude that for all $j\neq i$, every index of a point in $\theta_{j}$ is contained in $A$ if $\tau\left(j\right)>\tau\left(i\right)$, and every index of a point in $\theta_{j}$ is contained in $B$ if $\tau\left(j\right)<\tau\left(i\right)$. Thus, as before, all internal alternations must be contained inside $\theta_{i}$, and the inductive hypothesis implies that the number of internal alternations in $\theta_{i}$ is at most $D_{1}\left(d\right)-1$. By the earlier observation, the number of external alternations is at most $7$. Thus the number of  alternations is at most 
    \[\left(D_{1}\left(d\right)-1\right) + 7 = 7(d+1) - 1 = D_{1}\left(d+1\right)-1.
    \]
\end{proof}
Our next lemma is a crucial ingredient when we construct the gridded figure $F_{0,1}$ corresponding to $\sigma$ which is the `lift' of  $F_{1,1}$ in our subsequent applications of Lemma~\ref{lemma: square comparision}. For convenience, we slightly extend the notation and define $\widetilde{\mathcal{O}}_{k}^{\left[0\right]} = \{1\}$ to be the permutation class that contains only one permutation, namely the singleton permutation. With respect to this notation we certainly have $\widetilde{\mathcal{O}}_{k}^{\left[1\right]} = \widetilde{\mathcal{O}}_{k}\left[\widetilde{\mathcal{O}}_{k}^{\left[0\right]}\right]$. The reason for introducing this notion is that we want to be able to apply Lemma~\ref{lemma: substitutions inside iteratively inflated alternating sequences} for $d' = 0$ as well. 
\begin{lemma}\label{lemma: substitutions inside iteratively inflated alternating sequences}
    For all $d,k\in\mathbb{Z}_{>0}$, there exists $D_{2}=D_{2}\left(d,k\right)\in\mathbb{Z}_{>0}$ such that the following holds. Suppose $\sigma\in\widetilde{\mathcal{O}}_{k}^{\left[d\right]}$ contains an interval $I$ such that the restriction $\sigma\vert_{I}$ of $\sigma$ to $I$ is order-isomorphic to a permutation $\mu=\tau\left[\theta_{1},\dots,\theta_{r}\right]$, where $\tau$ is a monotone permutation of length $r\geq D_{2} $ and $\theta_{1},\dots,\theta_{r}\in\mathcal{\widetilde{O}}_{k}^{\left[d'\right]}$ for some $0\leq d'\leq d-1$, and for which $\tau\left[\theta_{1},\dots,\theta_{r}\right]$ is also the left-greedy decomposition of $\mu$ when we view $\mu$ as an element of $\mathcal{\widetilde{O}}_{k}^{\left[d'+1\right]}$. Given arbitrary positive integer $r'$ and permutations $\phi_{1},\dots, \phi_{r'}\in\mathcal{\widetilde{O}}_{k}^{\left[d'\right]}$, write $\tau'$ for the monotone permutation of length $r'$ with the same monotonicity as $\tau$ and set $\mu'=\tau'\left[\phi_{1},\dots,\phi_{r'}\right]$. Then we may replace the copy of $\mu$ in $\sigma$ induced by the interval $I$ by a copy of $\mu'$ to obtain a new permutation $\sigma'$ which still belongs to $\mathcal{\widetilde{O}}_{k}^{\left[d\right]}$. 
\end{lemma}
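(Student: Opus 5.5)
We argue by induction on $d$, following the recursive structure of $\widetilde{\mathcal{O}}_{k}^{[d]}=\widetilde{\mathcal{O}}_{k}\bigl[\widetilde{\mathcal{O}}_{k}^{[d-1]}\bigr]$. Write $\sigma=\rho[\eta_1,\dots,\eta_s]$ with $\rho\in\widetilde{\mathcal{O}}_k$ and $\eta_j\in\widetilde{\mathcal{O}}_k^{[d-1]}$, taking the left-greedy decomposition. The key dichotomy is how the interval $I$ sits relative to the blocks $\eta_j$. Either $I$ is contained in a single block $\eta_j$, or $I$ meets at least two blocks. In the first case, $I$ is an interval of $\eta_j$. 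If $d'\le d-2$, we apply the induction hypothesis inside $\eta_j\in\widetilde{\mathcal{O}}_k^{[d-1]}$ (with $D_2(d,k)\ge D_2(d-1,k)$), replace $\mu$ by $\mu'$ there, and re-inflate $\rho$. If $d'=d-1$, then $\mu\in\widetilde{\mathcal{O}}_k^{[d]}$ sits inside an element of $\widetilde{\mathcal{O}}_k^{[d-1]}$. We then need a small separate observation: a long monotone inflation whose left-greedy decomposition has depth $d'+1$ cannot occur inside a class of depth $d'$ unless it is itself replaceable. Simplest is to avoid this by noting that $\mu'$ belongs to $\widetilde{\mathcal{O}}_k^{[d'+1]}$, and arguing directly at the appropriate level.

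\textbf{Main case: $I$ meets several blocks.} Since $I$ is an interval of $\sigma$, the set of indices $J=\{j:\eta_j\cap I\neq\emptyset\}$ is an interval of $\rho$ in the sense of position and value. Blocks strictly inside $J$ are entirely contained in $I$, and only the two end blocks can be partially covered. Now $\rho$ is either monotone, or a subpattern of $\omega_k$ or $\varpi_k$. Oscillating patterns have no nontrivial intervals of length more than $2$ (they are essentially simple apart from small ends), so in that case $|J|\le 2$. We treat it by splitting $I$ into at most two pieces lying in single blocks, and using the monotone structure of $\mu$ to see that each piece is itself a long monotone inflation, to which the first case applies. If $\rho$ is monotone, then $\sigma$ restricted to the union of the blocks in $J$ is a monotone inflation. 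By left-greedy minimality of $\mu$'s decomposition, and the fact that $r\ge D_2$ is large, the monotonicity of $\tau$ must agree with that of $\rho$. Here we would use Lemma~\ref{lemma: bound on number of alternations in O_k^d}-type counting, or simply the fact that a long increasing sequence of blocks cannot sit in a decreasing inflation beyond two blocks, to force that agreement. Then $\sigma$ is itself a $\rho$-type monotone sum, and $\mu$ occupies a run of consecutive summands, possibly cutting two end blocks. Replacing that run by $\phi_1,\dots,\phi_{r'}$ gives $\sigma'=\rho'[\ldots]$ with $\rho'$ monotone of the same type. We need each new block to lie in $\widetilde{\mathcal{O}}_k^{[d-1]}$, which holds since $\phi_i\in\widetilde{\mathcal{O}}_k^{[d']}\subseteq\widetilde{\mathcal{O}}_k^{[d-1]}$ (the classes are nested because $\widetilde{\mathcal{O}}_k$ contains $1$). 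The partially cut end blocks remain in $\widetilde{\mathcal{O}}_k^{[d-1]}$ by closure under patterns.

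\textbf{Main obstacle.} The delicate point is the boundary blocks. When $I$ cuts an end block $\eta_j$, the part of $\mu$ lying inside $\eta_j$ must be peeled off and handled recursively. This is where the left-greedy hypothesis on $\mu$ and the threshold $D_2$ are needed: they guarantee that the blocks $\theta_i$ are aligned with, or finer than, the $\eta_j$, rather than straddling them in an incompatible way. We would take $D_2(d,k)$ to be something like $D_2(d-1,k)+2\cdot(\text{max length of elements of }\mathcal{O}_k)+3$, so that after discarding the at most two end blocks and any oscillating part, a long monotone stretch remains at a single level. The base case $d=1$ is immediate: there $d'=0$ and each $\theta_i=1$, so $\mu$ and $\mu'$ are monotone and $\sigma$ is monotone or oscillating. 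Long monotone intervals occur only in monotone $\sigma$, and replacing one by another monotone run of the same type keeps $\sigma$ monotone.
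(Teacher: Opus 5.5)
Your treatment of a monotone profile $\rho$ is correct, and cleaner than the paper's. An interval meeting two blocks of a decreasing inflation is skew-decomposable, while $\mu=\theta_1\oplus\cdots\oplus\theta_r$ is sum-decomposable, so $\rho$ and $\tau$ have the same direction. Then $I\cap\eta_a$ and $I\cap\eta_b$ split off as summands of the end blocks, and inserting $\phi_1,\dots,\phi_{r'}$ as new summands works for every $d'$, with no largeness assumption on $r$. The single-block case with $d'\le d-2$ is also a correct use of induction. The single-block case with $d'=d-1$, which you leave vague, is simply vacuous: $\mu\in\widetilde{\mathcal{O}}_k^{[d-1]}$ would make $1[\mu]$ its left-greedy decomposition, contradicting $r\ge 2$.

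The genuine gap is the case of a non-monotone profile $\rho$ (so $|\rho|\le k$).

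\emph{The claim $|J|\le 2$ is false.} The subsequence $4,1,6,8,10,12$ of the increasing oscillating sequence gives $213456\in\mathcal{O}_k$ for $k\ge 9$, and its last four entries form an interval. So $I$ may meet many blocks of a non-monotone profile.

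\emph{The inductive hypothesis is not verified.} Saying ``each piece is a long monotone inflation, to which the first case applies'' does not establish what the induction needs. The hypothesis concerns the \emph{left-greedy} decomposition of the piece $\sigma|_{I\cap\eta_j}$, and a piece may even contain only part of $\theta_1$. You flag this as the main obstacle but give no argument, and it is the real content of the lemma.

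\emph{How the paper resolves it.}
\begin{itemize}
\item If $d'=d-1$, no block $\eta_j$ contains $\theta_i\oplus\theta_{i+1}$, since merging them would give a decomposition of $\mu$ with $r-1$ blocks. Hence $r\le 2|\rho|-1\le 2k-1$, contradicting $r\ge 3k$.
\item If $d'\le d-2$ and $I$ meets at least three blocks, some block lies entirely inside $I$. It can be replaced outright by $\mu'\in\widetilde{\mathcal{O}}_k^{[d'+1]}\subseteq\widetilde{\mathcal{O}}_k^{[d-1]}$, after which the rest of $I$ is deleted.
\item Only when $I$ meets exactly two blocks must one recurse into a piece. One piece contains roughly $r/2$ whole $\theta$'s, and its left-greedy profile must be increasing with at least about that many blocks. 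Otherwise, either two consecutive $\theta$'s lie in a single block of depth $d'$, or splicing the piece's left-greedy blocks into $\tau[\theta_1,\dots,\theta_r]$ yields a decomposition of $\mu$ with fewer than $r$ blocks, contradicting minimality.
\end{itemize}
Because of this halving, $D_2$ must roughly double per level, as in $D_2^*(d,d',k)=2D_2^*(d-1,d',k)+k$. Your additive choice of $D_2$ is too small even for the two-block case.
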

We remark here that we often apply the result to a tuple $(\phi_1, \ldots, \phi_{r'})$ of the form 
\[(\theta_1,\dots,\theta_s,\varphi_1,\dots,\varphi_t,\theta_{s+1},\dots,\theta_r)\] --- that is, we re-insert some of the deleted permutations $\varphi_1,\dots,\varphi_{t}$ at the appropriate gap in the middle of the direct sum.
\begin{proof}
    To simplify the presentation, we assume that $\tau$ is a monotone increasing permutation, as the case when $\tau$ is decreasing follows analogously. For any given $d,k\in\mathbb{Z}_{>0}$, and fixed $d'\in\left\{ 0,\dots,d-1\right\} $, we claim that $D_{2}^{*}\left(d,d',k\right)=k\cdot\left(2^{d+1-d'}-1\right)$ satisfies the required properties, and in particular, we may choose $D_{2}\left(d,k\right)=k\cdot2^{d+1}$. Suppose for the sake of contradiction that a tuple $\left(d,d',k\right)$ violating the conditions exists for this particular choice of $D_{2}^{*}\left(d,d',k\right)$, and choose a pair $\left(d,d'\right)$ among those for which a counterexample exists that minimizes $d-d'$, and among those with the same minimum value we choose one with minimal $d$.
    
    Let $\sigma\in\mathcal{\widetilde{O}}_{k}^{\left[d\right]}$ and $I$ be an interval chosen so that they satisfy the conditions of the lemma, but violate its conclusion. Let $\tau_{\sigma}\in\mathcal{\widetilde{O}}_{k}$ and $\rho_{1},\dots,\rho_{|\tau_{\sigma}|}\in\mathcal{\widetilde{O}}_{k}^{\left[d-1\right]}$ be chosen so that $\tau_{\sigma}\left[\rho_{1},\dots,\rho_{|\tau_{\sigma}|}\right]$ is the left-greedy decomposition of $\sigma \in \mathcal{\widetilde{O}}_{k}^{\left[d\right]}$. We also write $a$ and $b$ for the greatest and least indices chosen respectively so that every point in the interval $I$ is contained among the permutations $\rho_{a},\dots,\rho_{b}$. Since $I$ is an interval, it follows that all the points in $\rho_{a+1},\dots,\rho_{b-1}$ are contained in $I$. 

    Our proof proceeds in two phases. In the first phase, we insert all the points in $\mu'$ in a suitable gap in the middle of the interval $I$. To execute this step, we need to split into several cases depending on the size of $b-a$ and on how large $d-d'$ is. This execution is often either very simple due to the properties of the permutation class $\mathcal{\widetilde{O}}_{k}^{\left[d''\right]}$, or via a (somewhat technical) application of the inductive hypothesis. In the second phase, we simply delete all the points which used to belong to the interval $I$ --- here we are using the fact that permutation classes are closed under taking subpatterns. 
    
    We now give a more detailed informal description of our proof techniques for the first phase in various subcases. When $d'=d-1$, the fact that $\tau\left[\theta_{1},\dots,\theta_{r}\right]$ is the left-greedy decomposition of $\mu$ can be used to prove that two of the permutations $\theta_{i}$ cannot both be fully contained inside the same permutation $\rho_{j}$. As a consequence, it is not too hard to check that $\tau_{\sigma}$ must be monotone, and must have the same direction of monotonicity as $\tau$. Thus, it is fairly easy to insert the points of $\mu'$ inside $I$ by simply increasing the length of $\tau_{\sigma}$. This is handled in Case 1.

    When $d'\leq d-2$, the structure of the proof depends on the size of $b-a$, breaking down into several subcases. If $b\geq a+2$, i.e.\ if there exists at least one $\rho_{a+1}$ which is fully contained in $I$, the argument becomes easy. As we trivially have $\mu'\in\widetilde{\mathcal{O}}_{k}^{\left[d'+1\right]}\subseteq\mathcal{\widetilde{O}}_{k}^{\left[d-1\right]}$,
    we take a slight short-cut: we may simply replace $\rho_{a+1}$ with $\mu'$, causing the deletion of the points of $I \cap \rho_{a+1}$ already at this stage. We then proceed to delete the remaining points of $I$, which completes the proof. On the other hand if $b=a$, it is not too hard to conclude that the pair $\rho_{a}$ and $I$ would give a contradiction for the smaller pair $\left(d-1,d'\right)$. Finally, in the case when $b=a+1$, we note that at least one of $\rho_{a}$ or $\rho_{b}$ must fully contain many of the permutations $\theta_{i}$, in fact at least $D_{2}^{*}\left(d-1,d',k\right)$ of them. Thus, we may use the fact that the lemma holds for $\left(d-1,d',k\right)$ to construct such a permutation $\sigma'$. While this inductive approach is rather clear by itself, the actual construction is unfortunately fairly technical, due to the fact that we want to embed the permutations $\phi_{i}$ somewhat in the middle of the interval. We end up initially constructing slightly larger permutations by using the minimality assumption, within which we find the necessary building blocks to obtain the permutation $\sigma'$ as a subpermutation. Having thus outlined our approach, we now fill in the details.

    \noindent \textbf{Case 1, $d'=d-1$: }
    we then have $D_2^*(d,d-1,k) = 3k$. If $d = 1$, this implies the permutation $\sigma$ and the interval $\mu$ must both be monotone permutations with the same direction of monotonicity, and the result follows trivially. Assume therefore $d>1$.
    
\begin{claim}\label{claim: no rich index}
   There is no  $i\in\left[\left|\tau_{\sigma}\right|\right]$ such that $\rho_{i}$ contains all the points in both $\theta_{j}$ and $\theta_{j+1}$.
\end{claim}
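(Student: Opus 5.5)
We argue by contradiction against the left-greedy minimality of the decomposition $\tau\left[\theta_{1},\dots,\theta_{r}\right]$ of $\mu$. Suppose some $\rho_i$ contains all points of both $\theta_j$ and $\theta_{j+1}$. Since $I$ is an interval of $\sigma$, $\rho_i$ is an interval of $\sigma$, and $\theta_j,\theta_{j+1}$ are consecutive blocks in the increasing sum $\mu=\theta_1\oplus\cdots\oplus\theta_r$. The plan is to exhibit a second decomposition of $\mu$ as an element of $\widetilde{\mathcal{O}}_k^{[d'+1]}=\widetilde{\mathcal{O}}_k^{[d]}$ (recall $d'=d-1$) that is strictly $<_L$-smaller. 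This contradicts the assumption that the given decomposition is left-greedy.

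\textbf{Step 1.} Consider $J:=\rho_i\cap I$. This is an intersection of two intervals of $\sigma$, hence an interval. It contains $\theta_j\cup\theta_{j+1}$.

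\textbf{Step 2.} Describe $J$ inside $\mu$. Because $\mu$ is a direct sum of the $\theta$'s and $J$ is an interval containing two consecutive whole blocks, $J$ is a union of the form (tail of $\theta_{s}$) $\cup\ \theta_{s+1}\cup\dots\cup\theta_{t-1}\ \cup$ (head of $\theta_t$), with $s\le j<j+1\le t$. One would like $J$ to be exactly a union of whole blocks. If it is not, we enlarge or shrink $J$ to the union $\theta_j\oplus\theta_{j+1}$ directly. The sub-pattern $\sigma|_{\theta_j\cup\theta_{j+1}}$ is a pattern of $\rho_i\in\widetilde{\mathcal{O}}_k^{[d-1]}$, and that class is pattern-closed. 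So $\theta_j\oplus\theta_{j+1}\in\widetilde{\mathcal{O}}_k^{[d-1]}=\widetilde{\mathcal{O}}_k^{[d']}$.

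\textbf{Step 3.} Merge the two blocks. Replace the pair $\theta_j,\theta_{j+1}$ by the single block $\theta_j\oplus\theta_{j+1}$, giving
\[
\mu=\tau''\left[\theta_1,\dots,\theta_{j-1},\theta_j\oplus\theta_{j+1},\theta_{j+2},\dots,\theta_r\right],
\]
where $\tau''$ is increasing of length $r-1$. Since $\tau''$ is monotone, it lies in $\widetilde{\mathcal{O}}_k$, and all blocks lie in $\widetilde{\mathcal{O}}_k^{[d']}$. This is therefore a legitimate $\widetilde{\mathcal{O}}_k^{[d']}$-decomposition of $\mu$. Its profile has length $r-1<r$, so by clause (a) of the definition of $<_L$ it is strictly smaller. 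This contradicts left-greediness.

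The only point needing care is Step 2: we must ensure that the merged block genuinely belongs to $\widetilde{\mathcal{O}}_k^{[d']}$. This is where the hypothesis that $\rho_i$ contains both blocks is used, through pattern-closure of $\widetilde{\mathcal{O}}_k^{[d-1]}$. Once that is checked, the rest is immediate. The case where $\tau$ is decreasing is identical with skew sums.
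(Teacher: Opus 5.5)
Your argument is correct and is the paper's proof: pattern-closure of $\widetilde{\mathcal{O}}_k^{[d-1]}$ puts $\theta_j\oplus\theta_{j+1}$ in $\widetilde{\mathcal{O}}_k^{[d']}$, and merging the two blocks gives a decomposition of $\mu$ whose profile has length $r-1$, which contradicts left-greediness by clause (a). The discussion of $J=\rho_i\cap I$ in Steps 1--2 is unnecessary, since the argument only uses the subpattern induced on $\theta_j\cup\theta_{j+1}$.
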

\begin{proof}
Indeed, if $\theta_{j}\oplus\theta_{j+1}$ is a subpattern of $\rho_{i}$ for some $i$, then $\theta_{j}\oplus\theta_{j+1}\in\widetilde{\mathcal{O}}_{k}^{\left[d-1\right]}$ as $\widetilde{\mathcal{O}}_{k}^{\left[d-1\right]}$ is closed under taking subpatterns. However, this contradicts the fact that $\tau\left[\theta_{1},\dots,\theta_{r}\right]$ is the left-greedy decomposition of $\mu$ when $\mu$ is viewed as an element of $\widetilde{\mathcal{O}}_{k}^{\left[d'+1\right]}=\widetilde{\mathcal{O}}_{k}^{\left[d\right]}$. 
\end{proof}

We now split into subcases based on the structure of the permutation $\tau_{\sigma}$. 

    First suppose that $\left|\tau_{\sigma}\right|\leq k$. Since there are at most $\left|\tau_{\sigma}\right|-1$ indices $j\in\left[r\right]$ for which $\theta_{j}$ is not fully contained in one of the permutations $\rho_{i}$, we conclude that $r\leq2\left|\tau_{\sigma}\right|-1\leq2k-1<D_2^*(d,d-1,k)$, which contradicts the fact that $r\geq D_2^*(d,d-1,k)$. Thus we cannot have $\left|\tau_{\sigma}\right|\leq k$. 

    We may therefore assume that  $\left|\tau_{\sigma}\right|>k$, and hence that $\tau_{\sigma}$ is a monotone permutation. Since $r \geq D_2^*(d,d-1,k) \geq 3$, we conclude $\theta_1$ cannot contain any points in $\rho_b$, as else both $\theta_{r-1}$ and $\theta_r$ would be contained in $\rho_b$, contradicting Claim~\ref{claim: no rich index}. Thus $\tau_{\sigma}$ must in fact be a monotone permutation, which is easily seen to have the same direction of monotonicity as $\tau$, and we must have $b \geq a+1$. 
    
    Let $\tau^*$ denote the monotone increasing permutation of length $|\tau_\sigma| + r'$, and let $\sigma''$ denote the permutation 
    \[
        \sigma''=\tau^{*}\left[\rho_{1},\dots,\rho_{a},\phi_{1},\dots,\phi_{r'},\rho_{a+1},\dots,\rho_{\left|\tau_{\sigma}\right|}\right].
    \]
     Note that each of the permutations in the inflation given above is trivially contained in $\widetilde{\mathcal{O}}_{k}^{\left[d-1\right]}$. Since the original interval $I$ contains points from both $\rho_a$ and $\rho_{a+1}$, the interval corresponding to $\mu'$ is indeed embedded inside $I$. Deleting all the points from $\sigma''$ contained in the original interval $I$ gives us a permutation $\sigma'$ satisfying the required properties. Thus $\sigma$ satisfies the conclusion of the Lemma, which is a contradiction.

    \noindent \textbf{Case 2, $d'<d-1$ and $b=a$:} It is straightforward to check that in this case, the permutation $\rho_{a}\in\widetilde{\mathcal{O}}_{k}^{\left[d-1\right]}$ and the interval $I\subseteq\rho_{a}$ yields a contradiction for the pair $\left(d-1,d'\right)$ as well, thus contradicting the minimality of $d-d'$. 

    At this stage, we remark that whenever $\tau_{\sigma}$ is monotone decreasing, we must have $b = a$ and thus Case 2 applies. Indeed, given an arbitrary point $p\in I$, let $i$ and $j$ be chosen so that $p$ belongs to both $\theta_{i}$ and $\rho_{j}$. Note that for any $i'<i$, every point $q$ in $\theta_{i'}$ satisfies $q<_{1}p$ and $q<_{2}p$ as $\tau$ is monotone increasing. Thus the condition $q<_{1}p$ implies that we must have $q\in\rho_{j'}$ for some $j'\leq j$, but the condition $q<_{2}p$ implies that we must have $q\in\rho_{j'}$ for some $j\leq j'$. Thus every $\theta_{i'}$ with $i'<i$ must be fully contained in $\rho_{j}$. An analogous argument shows that in fact every $\theta_{i'}$ with $i'>i$ is also contained in $\rho_{j}$, and thus $I$ is contained in $\rho_{j}$. Thus whenever $\tau_{\sigma}$ is decreasing, we must have $a = b$ and hence Case 2 applies. 

    \noindent  \textbf{Case 3, $d'<d-1$ and $b\geq a+2$:} 
    Since $d'<d-1$, we have $\mu' \in \widetilde{\mathcal{O}}_{k}^{\left[d'+1\right]} \subseteq \widetilde{\mathcal{O}}_{k}^{\left[d-1\right]}$. Consider the permutation $\sigma''$ given by 
    \[\sigma'' = \tau_{\sigma}\left[\rho_1,\dots,\rho_{a},\mu',\rho_{a+2},\dots,\rho_{|\tau_\sigma|}\right].\]
    Since $b \geq a+2$, we have deleted those points of $I$ which belong to $\rho_{a+1}$, and replaced them with points in $\mu'$ --- and in particular, the added points of $\mu'$ must be located somewhere in the interior of the old interval $I$, regardless of the profile  $\tau_{\sigma}$. Since $\mu' \in \widetilde{\mathcal{O}}_{k}^{\left[d-1\right]}$, we have $\sigma'' \in \widetilde{\mathcal{O}}_{k}^{\left[d\right]}$. Considering the subpattern $\sigma'$ of $\sigma''$ obtained by deleting all the remaining points of $\sigma''$ (i.e.\ the points not corresponding to $\mu'$) which belong to $I$ we conclude that $\sigma$ satisfies the conclusion of the Lemma, which is a contradiction.    

    \noindent \textbf{Case 4, $d'<d-1$ and $b=a+1$:} since $\tau$ is increasing, $I$ is an interval, and $\rho_a$ contains a point from $\theta_1$ while $\rho_{a+1}$ contains a point from $\theta_r$,  we must have $\tau_{\sigma}\left(a+1\right)=\tau_{\sigma}\left(a\right)+1$. Let $i_{a}$ denote the largest index chosen so that $\theta_{i_{a}}$ contains a point that belongs to $\rho_{a}$.

    Since $r \geq D_2^*(d,d',k) = 2D_2^*(d-1,d',k) +k \geq 2D_2^*(d-1,d',k) +1$, we conclude that either $i_a \geq D_2^*(d-1,d',k) + 1$, or $r- i_a \geq D_2^*(d-1,d',k) + 1$ --- note that by the choice of $i_a$, it is evident that $\rho_{a+1}$ fully contains at least $r-i_a$ of the permutations $\theta_i$.

    We split the proof into two subcases based on whether $i_a$ is large or not. If $i_a$ is large, we use the minimality of the counterexample to perform the desired addition inside $I \cap \rho_a$. Note that $I \cap \rho_a$ can still be written as a direct sum $\theta_1 \oplus \dots \oplus \theta_{i_a-1} \oplus (\theta_{i_a} \cap \rho_a)$ of $i_a$ permutations in $\widetilde{\mathcal{O}}_{k}^{\left[d'\right]}$. However, it is not guaranteed that this is the left-greedy decomposition of $I \cap \rho_a$ when viewed as a permutation in $\widetilde{\mathcal{O}}_{k}^{\left[d'+1\right]}$. Despite this, an easy argument ensures that the left-greedy decomposition of $I \cap \rho_a$ is a direct sum of at least $i_a-1$ permutations, thus allowing us to apply our inductive hypothesis. A similar argument can be used when $r - i_a$ is large, although the exact details are somewhat different. We separate the remainder of the proof into subcases.

    \noindent  \textbf{Case 4.1, $i_a \geq D_2^*(d-1,d',k) + 1$:}
    write  $\tau''[\varphi_1,\dots,\varphi_{|\tau''|}]$ for the left-greedy decomposition of  $\rho_a \cap I$ when viewed as a permutation in $\widetilde{\mathcal{O}}_{k}^{\left[d'+1\right]}$.
    \begin{claim} \label{claim: leftgreedy_a_a+1_1}
        The left-greedy decomposition of $\rho_a \cap I$ as a permutation in $\widetilde{\mathcal{O}}_{k}^{\left[d'+1\right]}$ is a direct sum of at least $i_a - 1$ permutations in $\widetilde{\mathcal{O}}_{k}^{\left[d'\right]}$. 
    \end{claim}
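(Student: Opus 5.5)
The plan is to derive the claim from the left-greedy minimality of $\tau[\theta_1,\dots,\theta_r]$ as a decomposition of $\mu$. The idea is that a decomposition of $\rho_a\cap I$ into few blocks can be transplanted into a decomposition of $\mu$ into fewer than $r$ blocks. Write $\rho_a\cap I=\tau''[\varphi_1,\dots,\varphi_q]$ for the left-greedy decomposition, with $q=|\tau''|$, and suppose for contradiction that $q\le i_a-2$.

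\textbf{Step 1 (restrict the blocks).} Let $X:=\theta_1\cup\dots\cup\theta_{i_a-1}$; this is an initial segment of $\rho_a\cap I$ in both $\leq_1$ and $\leq_2$, because $\tau$ is increasing. Each $\varphi_s$ is an interval of $\rho_a\cap I$, so each non-empty $\varphi_s\cap X$ is an interval of $X$. Moreover $\varphi_s\cap X$ is a subpattern of $\varphi_s$, so it lies in $\widetilde{\mathcal{O}}_k^{[d']}$ because this class is closed under patterns. The non-empty restrictions $\varphi_{s_1}\cap X,\dots,\varphi_{s_{q'}}\cap X$ therefore give a decomposition of $X=\theta_1\oplus\dots\oplus\theta_{i_a-1}$ with $q'\le q\le i_a-2$ blocks. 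Its profile is a subpattern $\pi$ of $\tau''$, and hence lies in $\widetilde{\mathcal{O}}_k$.

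\textbf{Step 2 (the profile $\pi$ is increasing).} Because $X$ is a direct sum of at least two non-empty parts, $\pi$ is sum-decomposable; let $\theta_1$ and $\theta_2\oplus\dots\oplus\theta_{i_a-1}$ be the relevant parts. I would argue as in the remark after Case 2: if a block meets two consecutive $\theta_j,\theta_{j+1}$ at points $p<_1 p'$, then $p<_2p'$. Every block lying $\leq_1$-between these two points must then lie between them in $\leq_2$ too, so the interval structure forces the block sequence to be $\leq_1$- and $\leq_2$-monotone in the same direction. A non-monotone member of $\mathcal{O}_k$, that is a pattern of $\omega_k$ or $\varpi_k$, is not a direct sum of two intervals compatible with this forcing. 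So either $\pi$ is increasing, or we may coarsen to an increasing profile without increasing the number of blocks. This is the step I expect to be the main obstacle. I would first try a direct case check on patterns of $\omega_k$ and $\varpi_k$: any non-monotone one is sum-indecomposable apart from trivial end pieces. That leaves only increasing $\pi$ up to at most a bounded merging, which only decreases $q'$.

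\textbf{Step 3 (transplant into $\mu$).} With $\pi$ increasing of length $q'$, the permutation $\mu$ decomposes as the increasing inflation
\[\mu=12\cdots(q'+r-i_a+1)\bigl[\varphi_{s_1}\cap X,\dots,\varphi_{s_{q'}}\cap X,\theta_{i_a},\dots,\theta_r\bigr].\]
All its blocks lie in $\widetilde{\mathcal{O}}_k^{[d']}$ and its profile is monotone, so this is a valid decomposition of $\mu$ in $\widetilde{\mathcal{O}}_k^{[d'+1]}$. It has $q'+r-i_a+1\le r-1<r$ blocks. By clause (a) of the definition of $<_L$, this decomposition is $<_L$-smaller than $\tau[\theta_1,\dots,\theta_r]$, contradicting that the latter is left-greedy. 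Hence $q\ge i_a-1$.

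Finally, the profile $\tau''$ itself is a direct sum: it has at least $i_a-1$ blocks and, by the argument of Step 2, is monotone increasing. This gives the claim as stated.
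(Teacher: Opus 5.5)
\textbf{There is a genuine gap in your Step 2.} Step~1 is correct. Step~3 is the right transplant; it is essentially the paper's, since the paper keeps the whole decomposition of $\rho_a\cap I$ and adds $\rho_{a+1}\cap\theta_{i_a}$ as a block, while you keep its restriction to $X$ and add all of $\theta_{i_a}$, and the count $\le r-1$ is the same. The problem is Step~2, which you flag as the main obstacle but do not close.

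\begin{itemize}
\item[(1)] The case check you propose rests on a false assertion. Non-monotone members of $\mathcal{O}_k$ can be sum-decomposable in a non-trivial way: for $k\ge 6$, the points $1,2,5,6$ of $\omega_6$ induce $2143=21\oplus 21$.
\item[(2)] Your fallback, ``coarsen to an increasing profile without increasing the number of blocks'', only helps if the merged blocks still lie in $\widetilde{\mathcal{O}}_k^{[d']}$. A priori a merged block is only in $\widetilde{\mathcal{O}}_k^{[d'+1]}$. Without this, Step~3 does not produce a valid decomposition of $\mu$.
\item[(3)] Your final sentence has the same problem. Step~2 concerns the restricted profile $\pi$ under the contradiction hypothesis, so it says nothing about $\tau''$ once $q\ge i_a-1$ is known. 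Yet monotonicity of $\tau''$ is exactly what is needed afterwards, when the lemma is reapplied inside $\rho_a$.
\end{itemize}
You also never use the Case~4.1 hypothesis $i_a-1\ge D_2^*(d-1,d',k)\ge 3k$, which is the clue to what is missing.

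\textbf{The paper's fix.} It uses the fact that every non-monotone member of $\widetilde{\mathcal{O}}_k$ has length at most $k$. Suppose $\tau''$ is decreasing or has length at most $k$. Since $\theta_1\oplus\dots\oplus\theta_{i_a-1}$ with $i_a-1\ge 3k$ sits inside $\rho_a\cap I$, two consecutive $\theta_j,\theta_{j+1}$ must lie inside a single block $\varphi_s$. For decreasing $\tau''$ they all lie in one block; for $|\tau''|\le k$ this follows by pigeonhole. Then $\theta_j\oplus\theta_{j+1}\in\widetilde{\mathcal{O}}_k^{[d']}$ by pattern-closure, contradicting left-greediness of $\tau[\theta_1,\dots,\theta_r]$. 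Hence $\tau''$ is increasing, and the transplant gives $|\tau''|\ge i_a-1$. The same pigeonhole would also repair your Step~2 for $\pi$.

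\textbf{An alternative fix along your own lines.} Your coarsening idea can be made rigorous:
\begin{itemize}
\item[(a)] Split the profile into its sum-indecomposable components.
\item[(b)] An inflation of a sum-indecomposable permutation of length at least $2$ is again sum-indecomposable.
\item[(c)] Such an inflation therefore cannot straddle a cut between consecutive summands $\theta_j$ (or $\theta_{i_a}\cap\rho_a$). It is thus a pattern of a single summand, and so lies in $\widetilde{\mathcal{O}}_k^{[d']}$.
\end{itemize}
This gives an increasing decomposition with at most as many blocks, and strictly fewer when the profile is not increasing. That justifies Step~3, and applied to $\tau''$ it shows $\tau''$ is increasing. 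You need to supply one of these two ingredients.
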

    \begin{proof}
        We first exclude the case that the profile of $\rho_a \cap I$ is not monotone increasing. This follows by using the same argument from Case 1 which proved that rich indices cannot exist. First suppose that the profile $\tau''$ of $\rho_a \cap I$ is either decreasing or has size at most $k$. Since $\theta_1 \oplus \dots \oplus \theta_{i_a-1}$ is a subpermutation of $\rho_a \cap I$ and $i_a-1 \geq D_2^*(d-1,d',k) \geq 3k$, it follows that there exists $i$ and $j$ so that all the points in both $\theta_i$ and $\theta_{i+1}$ are contained in $\varphi_j$. Indeed, if $\tau''$ is decreasing, then all the $\theta_i$'s must be contained in $\varphi_1$, and if $|\tau''| \leq k$, the statement follows easily from the pigeonhole principle. In either case it follows that $\theta_i \oplus \theta_{i+1}$ is a subpattern of a permutation in $\widetilde{\mathcal{O}}_{k}^{\left[d'\right]}$ and hence itself must be in $\widetilde{\mathcal{O}}_{k}^{\left[d' \right]}$. This contradicts the fact that $\tau[\theta_1,\dots,\theta_r]$ is the left-greedy decomposition of $\mu$. Thus $\tau''$ must be monotone increasing. 

        Suppose for the sake of contradiction that $|\tau''| \leq i_a -2$. Note that $\mu$ can be rewritten as a direct sum as 
        \begin{equation} \label{eq: direct sum split}
            (\rho_a \cap I) \oplus (\rho_{a+1} \cap I) = \varphi_1\oplus \dots \oplus\varphi_{|\tau''|} \oplus (\rho_{a+1} \cap \theta_{i_a}) \oplus \theta_{i_a+1} \oplus \dots \oplus \theta_r.
        \end{equation}
        Note that each block in the right-hand side is contained in $\widetilde{\mathcal{O}}_{k}^{\left[d'\right]}$, and there are $|\tau''| + (r-i_a+1) \leq r-1$ of them. This contradicts the fact that $\theta_1\oplus \dots \oplus \theta_r$ is the left-greedy decomposition of $\mu$.
    \end{proof}
\noindent We thus have $|\tau''| \geq i_a-1$.  We now perform the necessary addition by applying the statement of the Lemma to the interval $\rho_a \cap I$ inside $\rho_a$. Indeed, since the profile of $\rho_a \cap I$ is a monotone permutation of length at least $i_a-1 \geq D_2^*(d-1,d',k)$ by Claim~\ref{claim: leftgreedy_a_a+1_1} and the minimality of our putative counterexample, the statement of the Lemma applies. Hence, we conclude that the permutation $\rho_a'$ obtained by replacing the interval $\rho_a \cap I$ in $\rho_a$ with $\mu'$ is contained in $\widetilde{\mathcal{O}}_{k}^{\left[d-1\right]}$. Let $\rho_{a+1}'$ denote the permutation obtained by deleting all the points from $\rho_{a+1}$ which belong to $I$. Then $\sigma'$ given by 
    \[\sigma' = \tau_{\sigma}\left[\rho_1,\dots,\rho_{a-1},\rho_a',\rho_{a+1}',\rho_{a+2},\dots,\rho_{|\tau_\sigma|}\right]\] 
    satisfies the required conditions. Hence $\sigma$ satisfies the conclusion of the Lemma, which is the required contradiction. This completes the proof of Case 4.1. 
    
    \noindent  \textbf{Case 4.2, $i_a \leq D_2^*(d-1,d',k)$:}
    similarly to the previous case, we write  $\tau''[\varphi_1,\dots,\varphi_{|\tau''|}]$ for the left-greedy decomposition of  $\rho_{a+1} \cap I$ when viewed as a permutation in $\widetilde{\mathcal{O}}_{k}^{\left[d'+1\right]}$. Since $r \geq D_2^*(d,d',k) \geq 2D_2^*(d-1,d',k) +1$, it follows that $r-i_a \geq D_2^*(d-1,d',k) +1$. 
    \begin{claim}\label{claim: leftgreedy_a_a+1_2}
        The left-greedy decomposition of $\rho_{a+1} \cap I$ as a permutation in $\widetilde{\mathcal{O}}_{k}^{\left[d'+1\right]}$ is a direct sum of at least $r-i_a$ permutations in $\widetilde{\mathcal{O}}_{k}^{\left[d'\right]}$. 
    \end{claim}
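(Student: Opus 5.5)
The plan mirrors the proof of Claim~\ref{claim: leftgreedy_a_a+1_1}, with the roles of $\rho_a$ and $\rho_{a+1}$ exchanged. Write $\rho_{a+1}\cap I = \tau''[\varphi_1,\dots,\varphi_{|\tau''|}]$ for its left-greedy decomposition in $\widetilde{\mathcal{O}}_{k}^{[d'+1]}$. By the choice of $i_a$, the permutations $\theta_{i_a+1},\dots,\theta_r$ are all fully contained in $\rho_{a+1}$. Since $\tau$ is increasing and $I$ is an interval, $\rho_{a+1}\cap I$ consists of $(\rho_{a+1}\cap\theta_{i_a})\oplus\theta_{i_a+1}\oplus\dots\oplus\theta_r$. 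Here $\rho_{a+1}\cap\theta_{i_a}$ is a (possibly empty) subpattern of $\theta_{i_a}$, so it lies in $\widetilde{\mathcal{O}}_{k}^{[d']}$. In particular $\theta_{i_a+1}\oplus\dots\oplus\theta_r$ is a subpermutation of $\rho_{a+1}\cap I$, and the number of its summands satisfies $r-i_a\ge D_2^*(d-1,d',k)+1\ge 3k$.

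\emph{Step 1: the profile $\tau''$ is monotone increasing.} Suppose instead that $\tau''$ is decreasing or has size at most $k$. If it is decreasing, every increasing block structure must sit inside $\varphi_1$. If $|\tau''|\le k$, pigeonhole applies: at most $|\tau''|-1$ of the $\theta_i$ ($i>i_a$) are split between different $\varphi_j$'s, and $r-i_a\ge 3k$. In either case some $\varphi_j$ fully contains both $\theta_i$ and $\theta_{i+1}$ for some $i>i_a$. Then $\theta_i\oplus\theta_{i+1}\in\widetilde{\mathcal{O}}_{k}^{[d']}$, since the class is closed under subpatterns. Merging these two blocks gives a decomposition of $\mu$ with fewer blocks, which contradicts left-greediness of $\tau[\theta_1,\dots,\theta_r]$. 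Hence $\tau''$ is an increasing monotone permutation of size greater than $k$.

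\emph{Step 2: counting.} Suppose for contradiction that $|\tau''|\le r-i_a-1$. Rewrite $\mu$ as
\[
\mu=(\rho_a\cap I)\oplus(\rho_{a+1}\cap I)=\theta_1\oplus\dots\oplus\theta_{i_a-1}\oplus(\theta_{i_a}\cap\rho_a)\oplus\varphi_1\oplus\dots\oplus\varphi_{|\tau''|}.
\]
This uses $\tau_\sigma(a+1)=\tau_\sigma(a)+1$, which makes $\mu$ the direct sum of its two parts; moreover the part $\rho_a\cap I$ is exactly $\theta_1\oplus\dots\oplus\theta_{i_a-1}\oplus(\theta_{i_a}\cap\rho_a)$. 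Every block on the right lies in $\widetilde{\mathcal{O}}_{k}^{[d']}$, and there are $i_a+|\tau''|\le r-1$ of them. This contradicts minimality (condition (a) of $<_L$) of the left-greedy decomposition of $\mu$. Therefore $|\tau''|\ge r-i_a$.

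\emph{Main obstacle.} The delicate point is the edge case $\theta_{i_a}\cap\rho_a=\emptyset$, or $\theta_{i_a}\cap\rho_{a+1}=\emptyset$. In that case one must check that the empty block is simply dropped: the count then only improves, so the argument still works. The other check is that $\rho_a\cap I$ and $\rho_{a+1}\cap I$ really form a direct sum in the stated order. This follows from $\tau_\sigma(a+1)=\tau_\sigma(a)+1$ together with the increasing direction, as already established at the start of Case~4.
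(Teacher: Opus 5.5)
Your proof is correct and follows essentially the same route as the paper. You first rule out a non-increasing profile $\tau''$ by the merging argument of Claim~\ref{claim: leftgreedy_a_a+1_1}, then rewrite $\mu=\theta_1\oplus\dots\oplus(\rho_a\cap\theta_{i_a})\oplus\varphi_1\oplus\dots\oplus\varphi_{|\tau''|}$ and invoke $<_L$-minimality to get $i_a+|\tau''|\ge r$; the edge case $\theta_{i_a}\cap\rho_a=\emptyset$ you flag cannot occur, since $i_a$ is defined so that $\theta_{i_a}$ meets $\rho_a$.
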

    \begin{proof}
        The proof is very similar to the proof of Claim~\ref{claim: leftgreedy_a_a+1_1}. Indeed, the fact that $r-i_a \geq D_2^* (d-1,d',k)$ implies that the profile $\tau''$ must be monotone increasing. By rewriting the right-hand side of equation~\eqref{eq: direct sum split} as 
        \[
        (\rho_a \cap I) \oplus (\rho_{a+1} \cap I) = \theta_1 \oplus \dots \oplus (\rho_{a} \cap \theta_{i_a}) \oplus \varphi_1\oplus \dots \oplus\varphi_{|\tau''|},
        \]
    we analogously conclude that we must have $i_a + |\tau''| \geq r$. Thus the claim follows. 
    \end{proof}
    We now perform the necessary addition by applying the statement of the Lemma to the interval $\rho_{a+1} \cap I$ inside $\rho_{a+1}$. Analogously to Case 4.1, since the profile of $\rho_{a+1} \cap I$ is a monotone permutation of length at least $r-i_a > D_2^*(d-1,d',k) $ by Claim~\ref{claim: leftgreedy_a_a+1_2} and the minimality of our putative counterexample, the statement of the Lemma applies. Hence, we conclude that the permutation $\rho_{a+1}'$ obtained by replacing the interval $\rho_{a+1} \cap I$ in $\rho_{a+1}$ with $\mu'$ is contained in $\widetilde{\mathcal{O}}_{k}^{\left[d-1\right]}$. Let $\rho_{a}'$ denote the permutation obtained by deleting all the points from $\rho_{a}$ which belong to $I$. Then $\sigma'$ given by 
    \[\sigma' = \tau_{\sigma}\left[\rho_1,\dots,\rho_{a-1},\rho_a',\rho_{a+1}',\rho_{a+2},\dots,\rho_{|\tau_\sigma|}\right]\] 
    satisfies the required conditions. Hence $\sigma$ satisfies the conclusion of the Lemma, which is the required contradiction. This completes the proof of Case 4.2, and thus the proof of Case 4. 
\end{proof}
\noindent We record below the special case $d' = 0$ of Lemma~\ref{lemma: substitutions inside iteratively inflated alternating sequences}, which will allow us to extend monotone intervals inside cells corresponding to $\star$-entries. 

\begin{lemma}\label{lemma: can replace monotone intervals by monotone intervals in O_k^d}
    For all $d, k\in \mathbb{Z}_{>0}$, there exists $D_3 = D_3(d,k)\in \mathbb{Z}_{>0}$ such that for every $\sigma \in\widetilde{\mathcal{O}}_{k}^{[d]}$, if $\sigma$ contains a monotone interval $I$ of length at least $D_3$, then the permutation $\sigma'$ obtained from $\sigma$ by replacing the monotone interval $I$ in $\sigma$ with another monotone interval $J$ with the same monotonicity as $I$ satisfies $\sigma'\in \widetilde{\mathcal{O}}_{k}^{[d]}$, regardless of the length of $J$.
\end{lemma}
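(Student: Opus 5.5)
The plan is to derive this as the special case $d'=0$ of Lemma~\ref{lemma: substitutions inside iteratively inflated alternating sequences}, as the sentence preceding the statement indicates. We take $D_3(d,k):=D_2(d,k)$. Let $I$ be a monotone interval of $\sigma$ with $|I|\geq D_3$, say increasing; the decreasing case is symmetric. Then $\sigma\vert_I$ is order-isomorphic to $\mu=\tau[1,1,\dots,1]$, where $\tau$ is the increasing permutation of length $r=|I|\geq D_2$ and every $\theta_i=1$ lies in $\widetilde{\mathcal{O}}_k^{[0]}=\{1\}$. Since $d\geq 1$, the requirement $0\leq d'\leq d-1$ holds for $d'=0$.

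The one hypothesis that needs checking is that $\tau[1,\dots,1]$ is the left-greedy decomposition of $\mu$ when $\mu$ is viewed as an element of $\widetilde{\mathcal{O}}_k^{[1]}=\widetilde{\mathcal{O}}_k[\widetilde{\mathcal{O}}_k^{[0]}]$. Every decomposition in $\widetilde{\mathcal{O}}_k[\{1\}]$ has all blocks equal to the singleton. So the profile must be $\mu$ itself, and the decomposition $\mu[1,\dots,1]$ is unique. In particular it is $<_L$-minimal. Note that we use the class $\widetilde{\mathcal{O}}_k^{[1]}$ here, not the class $\widetilde{\mathcal{O}}_k^{[d]}$ of $\sigma$, and this is exactly how the lemma is phrased.

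Now let $J$ be any increasing monotone interval, of arbitrary length $r'\geq 1$. It equals $\mu'=\tau'[1,\dots,1]$ with $\tau'$ increasing of length $r'$ and $\phi_i=1\in\widetilde{\mathcal{O}}_k^{[0]}$. Lemma~\ref{lemma: substitutions inside iteratively inflated alternating sequences} then shows that replacing the copy of $\mu$ induced by $I$ with a copy of $\mu'$ gives a permutation $\sigma'\in\widetilde{\mathcal{O}}_k^{[d]}$.

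If the replacement is allowed to be empty ($r'=0$), the conclusion is immediate instead: deleting $I$ yields a subpattern of $\sigma$, and $\widetilde{\mathcal{O}}_k^{[d]}$ is a permutation class, hence closed under taking subpatterns.

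There is no real obstacle here. The only point needing a moment's care is the left-greedy hypothesis, and the uniqueness of decompositions over the singleton class disposes of it.
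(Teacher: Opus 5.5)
Your proposal is correct and is essentially the paper's proof: the paper also obtains this as the case $d'=0$ of Lemma~\ref{lemma: substitutions inside iteratively inflated alternating sequences}, taking the internal constant $D_3(d,k)=D_2^*(d,0,k)=k(2^{d+1}-1)$ from that lemma's proof, whereas your choice $D_3=D_2(d,k)$ relies only on the lemma's statement and works equally well. Your explicit check of the left-greedy hypothesis (every decomposition over $\widetilde{\mathcal{O}}_k[\widetilde{\mathcal{O}}_k^{[0]}]$ has singleton blocks, so it is unique) and your separate treatment of an empty $J$ via closure under subpatterns fill in details the paper leaves implicit.
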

\begin{proof}
    We may simply take $D_3(d,k) = D_2^*(d,0,k)$, and apply Lemma~\ref{lemma: substitutions inside iteratively inflated alternating sequences} with $d' = 0$. 
\end{proof}

\subsection{Constructing structured deletion functions}

By the pigeon-hole principle, for every sufficiently large permutation $\sigma\in\cP\subseteq\mathrm{Geom}(M)_{d,k}$, the equivalence class $\sigma^{\#}$ of its left-greedy minimal representations as gridded figures contains at least one of the following: a long monotone sub-interval in a single cell, a long consecutive block of points consisting of alternating monotone permutations (corresponding to points in two cells of same column), a long consecutive block of values consisting of alternating monotone permutations (corresponding to points in cells of same row), or many points in a cell indexed by $\star$ corresponding to an iterated inflation of depth at most $d$. Our overall strategy in Subsections \ref{subsection: monotone intervals}-\ref{subsection: star entries} corresponding to these respective cases remains the same, with technical differences arising from where exactly the structured part lies in the left-greedy representation of a permutation in $\cP$. In all three cases, we start by constructing a first-order sentence which can be used to identify a sufficiently long structured  sub-interval of the permutation, which  in the first two cases corresponds to a monotone or an alternating monotone sub-interval. We then delete a large number of points from the middle of the interval: the reason for choosing the deleted points to be located in the middle of the interval is to ensure that the deletion respects the left-greedy representation and hence induces a structured deletion function; the technical part of this step is provided by Lemma~\ref{lemma: substitutions inside iteratively inflated alternating sequences} or \ref{lemma: can replace monotone intervals by monotone intervals in O_k^d} respectively.

For those permutations which contain many  points in cells indexed by $\star$, we need one additional ingredient. As such points correspond to an iterated inflation of depth at most $d$, we start by backtracking the inflation process, choosing at each depth a sufficiently large block to dive into and reach the next depth. As an outcome, we either end up with a monotone subpermutation (which are the only sufficiently long permutations of depth 1, i.e.\ elements of $\widetilde{\mathcal{O}}_{k}^{[1]}$), or we end up with a long monotone inflation whose long 'initial segment'  consists of `short' permutations. We can apply our deletion strategy to such objects, and delete a number of consecutive points from the middle of such a monotone inflated block. To carry out our strategy, we construct a first-order sentence which keeps track both of the backtracking process within the inflation structure, and of properties of an initial segment of the desired long monotone inflation. In some sense, a lot of the heavy lifting will already be done when we define both the backtracking process and the suitable properties that a long monotone inflation of small permutations must satisfy; these definitions are, however, somewhat technical. 
\subsubsection{Monotone sub-intervals}\label{subsection: monotone intervals}
In this subsection, we deal with the permutations whose left-greedy decomposition contains a long monotone sub-interval in a single cell, and whose left end-point can be determined by using a variable that occurs in a first-order sentence. For this purpose, we introduce the following definition. (Recall that given $a\in \sigma^{\#}$, $a_{1,i}$ is the $i$-th successor of $a$ in $\sigma^{\#}$ with respect to the $\leq_1$-order.)
\begin{definition}\label{def: monotone subpermutations}
    Let $C_0$ and $C_1$ be positive constants. We say that a first-order sentence $\Psi$ contains a $(C_0,C_1)$-structured monotone sub-interval in the $(i,j)$-th cell if the following conditions are satisfied: 
    \begin{enumerate}[(i)]
        \item there exists a first-order formula $\psi(x)$ so that we have $\Psi = \exists x: \psi(x)$, and for any $\sigma \models \Psi$ there exists a unique point $a$ in $\sigma$ for which we have $\sigma \models \psi (a)$; 
        \item the unique point $a$ is the $C_0$-th least point in $\sigma^\#_{i,-}$ with respect to the $\leq_1$-order;
        \item the points in $[a_{1,0}, a_{1,C_1-1}]_1$ all belong to $\sigma^\#_{i,j}$, and form a monotone sub-interval of $\sigma$. 
    \end{enumerate}
\end{definition}

Our main goal in this subsection is to prove that for any constant $L$, there exists a constant $C_1$ so that for any $C_0$ and for any first-order sentence $\Psi$ that contains a $(C_0,C_1)$-structured monotone sub-interval, there exists a structured deletion function which removes $L$ points. 
\begin{proposition}
\label{prop: structured deletion for monotone sub intervals}

    Let $L_1$ and $C_0$ denote given positive integers. Let $C_1 = \ell m \cdot D_3(d,k) + L_1 + 1$,  where $D_3(d,k)$ is the constant given by Lemma~\ref{lemma: can replace monotone intervals by monotone intervals in O_k^d}. Suppose that $\Psi$ is a first-order sentence which contains a $(C_0,C_1)$-structured monotone sub-interval in the $(i,j)$-th cell. Define $s_1$ to be the constant given by $s_1 = \ell m \cdot D_3(d,k)$, and let $f_\Psi$ denote the deletion function which removes the points $a_{1,s_1},\dots,a_{1,s_1+L_1-1}$. Then $f_{\Psi}$ is a structured deletion function. 
\end{proposition}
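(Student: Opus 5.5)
Following Definition~\ref{def: structured deletion}, we must prove two things: that the gridded deletion function $f^\#_\Psi$ is injective, and that $\Pi^\#(f_\Psi(\sigma)) = f^\#_\Psi(\sigma)$ for every $\sigma\in\cP^\Psi$. For the latter we apply Lemma~\ref{lemma: square comparision} in case (a) with $\mathcal{S}=\mathcal{S}'=\{(i,j)\}$.

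\textbf{Injectivity.} Let $(F,G)\in\sigma^\#$. The image $f^\#_\Psi(\sigma)$ is a gridded figure in which the cell $(i,j)$ still contains the monotone $\leq_1$-consecutive run $a_{1,0},\dots,a_{1,s_1-1},a_{1,s_1+L_1},\dots,a_{1,C_1-1}$. Condition (ii) of Definition~\ref{def: monotone subpermutations} says $a$ is the $C_0$-th least point of the column $i$. Hence the gap is located at the $(C_0+s_1)$-th position of column $i$, and this position is read off from the gridded image alone. We recover $\sigma^\#$ by reinserting $L_1$ points into that gap, placed on the diagonal of cell $(i,j)$ with the same monotonicity. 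This inverts $f^\#_\Psi$ on $\cP^\Psi$, so $f^\#_\Psi$ is injective.

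\textbf{The commutation identity.} Set $\theta=f_\Psi(\sigma)$, $F_{0,0}=F$, $F_{1,0}=F\setminus D$ with $D=\{a_{1,s_1},\dots,a_{1,s_1+L_1-1}\}$, and let $F_{1,1}$ be a representative of $\theta^\#$. By minimality, $F_{1,1}\leq_{LG}F_{1,0}$. The key step is to locate the surviving points $a_{1,0},\dots,a_{1,s_1-1},a_{1,s_1+L_1},\dots$ inside $F_{1,1}$.

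Among the $s_1=\ell m\, D_3(d,k)$ points preceding the gap, pigeonhole gives a single cell $(i',j')$ of $F_{1,1}$ containing at least $D_3(d,k)$ consecutive ones, in particular $a_{1,s_1-1}$. Here I would argue directly that the points immediately before and after the gap ($a_{1,s_1-1}$ and $a_{1,s_1+L_1}$) lie in the same cell of $F_{1,1}$ and form a monotone interval there, so that $D$ can be reinserted. The argument uses two facts: these points form a monotone interval of $\theta$, and in a suitable matrix a cell of a $\pm1$ entry sharing its column or row with another cell forces alternation patterns.

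If $(i',j')$ is a $\pm1$ cell, reinsertion on the diagonal is immediate. If it is a $\star$ cell, the block is a monotone interval of length at least $D_3(d,k)$ inside a permutation of $\widetilde{\mathcal O}_k^{[d]}$. Lemma~\ref{lemma: can replace monotone intervals by monotone intervals in O_k^d} then lets us lengthen it by the $L_1$ points of $D$ in their original gap and remain in $\widetilde{\mathcal O}_k^{[d]}$. Either way we obtain a gridded figure $F_{0,1}\in\mathrm{Geom}^\#(M)_{d,k}$ inducing $\sigma$, with $F_{0,1}\setminus F_{1,1}=D$ inside the single cell $(i',j')$.

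\textbf{Applying Lemma~\ref{lemma: square comparision}.} Conditions \eqref{eq: most cells equal 1}--\eqref{eq: inclusions 2} and \eqref{eq: three conditions 1} hold by construction, since both differences equal the same labelled set $D$ of $L_1$ points, so the label sets agree. The cellwise $\leq_1$-least points are never deleted: $D$ lies strictly after $s_1\ge1$ points of its cell in $F_{0,0}$, and likewise in $F_{0,1}$, because $a_{1,s_1-1}$ shares its cell and precedes $D$. The lemma then gives $F_{1,0}\sim F_{1,1}$, which is \eqref{eq: structured}.

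\textbf{Main obstacle.} The hard step is showing that the points flanking the gap remain in one common cell of $\theta^\#$ and still admit reinsertion. The left-greedy regridding of $\theta$ could in principle split the monotone run across cells, or shift it into a neighbouring column or row. The slack $\ell m\,D_3(d,k)$ is designed for this: with only $\ell m$ cells, some cell must receive a long consecutive stretch. I expect an additional argument to be needed to ensure that this stretch can be taken adjacent to the gap. If it cannot, I would instead choose the pigeonholed block and reinsert $D$ at its end, adjusting the labels accordingly.
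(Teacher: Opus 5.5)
Your skeleton matches the paper's. You prove injectivity of $f^\#_\Psi$ by reading off the position of the gap in column $i$. You then apply Lemma~\ref{lemma: square comparision}(a) to $\sigma^\#$, $f^\#_\Psi(\sigma)$, $\theta^\#$ and a lift of $\theta^\#$ obtained by reinserting $D$, with Lemma~\ref{lemma: can replace monotone intervals by monotone intervals in O_k^d} handling $\star$-cells. However, the step you flag as the main obstacle is the heart of the proof, and your proposal does not supply it.

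The pigeonhole sentence does not give what you claim. Among the $s_1=\ell m\,D_3(d,k)$ surviving points before the gap, pigeonhole only yields some cell of $\theta^\#$ containing at least $D_3(d,k)$ of them. It does not give that they are consecutive, nor that this cell contains $a_{1,s_1-1}$ or $a_{1,s_1+L_1}$. The two facts you cite cannot force the flanking points into one cell either: that $J=I\setminus D$ is a monotone interval of $\theta$, and alternation constraints in shared rows or columns. A general gridding of $\theta$ can split a monotone interval across cells; two isolated increasing cells $(1,1)$ and $(2,2)$ already allow this. So the conclusion must come from the left-greedy minimality of $\theta^\#$, which you use only as the hypothesis $F_{1,1}\leq_{LG}F_{1,0}$ of the square lemma. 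Your fallback of reinserting $D$ at the end of the pigeonholed block is also incompatible with Lemma~\ref{lemma: square comparision}. That lemma needs $D$ to go back into its original gap, so that $F_{1,0}$ and $F_{1,1}$ carry the same labels.

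The missing idea is an exchange argument against minimality of $\theta^\#$.
\begin{itemize}
\item Let $(i_1,j_1)$ be the lexicographically least cell such that $\theta^\#_{i_1,j_1}$ contains at least two points of $J$ if $M_{i_1,j_1}=\pm1$, or at least $D_3(d,k)$ points of $J$ if $M_{i_1,j_1}=\star$. It exists because $|J|=\ell m\,D_3(d,k)+1$.
\item Suppose some point of $J$ lay in a cell lexicographically after $(i_1,j_1)$. Delete those points and lengthen the monotone run of $J$ in $(i_1,j_1)$ by the same number of points: along the diagonal for a $\pm1$ cell, by Lemma~\ref{lemma: can replace monotone intervals by monotone intervals in O_k^d} for a $\star$ cell.
\item The result is a gridded figure of $\theta$ that is strictly $<_{LG}$-smaller than $\theta^\#$, a contradiction. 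So no point of $J$ lies after $(i_1,j_1)$.
\item Every earlier cell holds fewer than two (resp.\ fewer than $D_3(d,k)$) points of $J$. This pins the gap between $a_{1,s_1-1}$ and $a_{1,s_1+L_1}$ to the run of $J$ in $(i_1,j_1)$.
\item Hence $D$ can be reinserted there, in its original gap, without touching the $\leq_1$-least point of that cell.
\end{itemize}
Only with this in hand are the hypotheses of Lemma~\ref{lemma: square comparision}(a) verified.
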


Combining Proposition~\ref{prop: structured deletion for monotone sub intervals} with the following Proposition allows us to easily deal with those permutations $\sigma$ which contain many points in a cell corresponding to an isolated $\pm 1$-entry. 
\begin{proposition} \label{prop: isolated monotone FOS}
	Let $(i,j)$ be chosen so that $M_{i,j} \in \{\pm 1\}$, and that it is the only non-zero entry in its respective row and column. Let $C_1$ be an arbitrary positive integer. Then there exists a first-order formula $\psi_{i,j}(x)$ so that we have $\sigma \models \psi_{i,j}(a)$ if and only if $a$ is the least point in $\sigma^\#_{i,j}$ with respect to $\leq_1$-order, and we have $|\sigma^\#_{i,j}| \geq C_1$.  
\end{proposition}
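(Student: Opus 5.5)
We build $\psi_{i,j}$ directly from the formula of Proposition~\ref{prop: eeroklas}. For each set $J$ of positions of non-zero entries of $M$ with $(i,j)\in J$, that proposition gives a formula $\psi_J$ in the variables $x_{u,v,i',j'}$, $(i',j')\in J$. Satisfying $\psi_J$ at a tuple says exactly two things: $J$ is the set of non-empty cells of $\sigma^\#$, and the tuple lists the $\leq_u$-extremal points of each cell of $\sigma^\#$. We then set
\[
\psi_{i,j}(x) := \bigvee_{J \ni (i,j)} \exists \mathbf{y}\, \Big( \psi_J(\mathbf{y}) \wedge x = y_{1,0,i,j} \wedge \chi_{C_1}(y_{1,0,i,j}, y_{1,1,i,j}) \Big),
\]
where $\mathbf{y}$ is the tuple of the remaining variables of $\psi_J$. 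The formula $\chi_{C_1}(p,q)$ will say that the $\leq_1$-interval $[p,q]_1$ contains at least $C_1$ points. Concretely, it is $\exists z_1\ldots z_{C_1}$ with the $z_t$ pairwise distinct and $p\leq_1 z_t \leq_1 q$ for all $t$. The disjunction over $J$ is finite, since $M$ is a fixed matrix. The disjuncts are mutually exclusive, since $\psi_J$ pins down the exact set of non-empty cells.

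The first key step is to justify that counting points in $[p,q]_1$ really counts $|\sigma^\#_{i,j}|$. Here we use the hypothesis that $M_{i,j}$ is the only non-zero entry in column $i$. This means that every point of $\sigma^\#$ whose $x$-coordinate lies in column $i$ of the gridding lies in cell $(i,j)$. Hence the $\leq_1$-interval between the $\leq_1$-least and $\leq_1$-greatest points of $\sigma^\#_{i,j}$ is precisely $\sigma^\#_{i,j}$: any point $z$ with $p\leq_1 z\leq_1 q$ has $x$-coordinate inside column $i$. Note also that the left-greedy representation is fixed by $\sigma$, so $\sigma^\#_{i,j}$ is a well-defined set of points of $\sigma$.

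Next we verify both directions. Suppose $\sigma\models\psi_{i,j}(a)$. Then some disjunct holds with a witnessing tuple. By Proposition~\ref{prop: eeroklas}, $a$ is the $\leq_1$-least point of $\sigma^\#_{i,j}$ and $y_{1,1,i,j}$ is the $\leq_1$-greatest point. Since $\chi_{C_1}$ holds, the previous step gives $|\sigma^\#_{i,j}|\geq C_1$. Conversely, suppose $a$ is the $\leq_1$-least point of $\sigma^\#_{i,j}$ and $|\sigma^\#_{i,j}|\geq C_1$. Take $J$ to be the actual set of non-empty cells of $\sigma^\#$; it contains $(i,j)$ because $C_1\geq 1$. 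Instantiate $\mathbf{y}$ with the true extremal points, and take the $z_t$ to be $C_1$ distinct points of the cell. This satisfies the disjunct indexed by $J$.

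The only real obstacle is whether Proposition~\ref{prop: eeroklas} is stated for the right class. It is stated for all $\sigma\in\mathrm{Geom}(M)_{d,k}$, which contains $\cP$, so it applies. The isolation in the row is not even needed for this counting argument; only the isolation in the column is used. The row isolation could be used symmetrically, with $\leq_2$ in place of $\leq_1$, if we preferred.
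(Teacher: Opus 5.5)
Your proposal is correct and follows the paper's proof essentially step for step. The paper also takes a finite disjunction, over sets $J\ni(i,j)$, of the formulae $\psi_J$ from Proposition~\ref{prop: eeroklas} to pin down the $\leq_1$-extremal points of $\sigma^\#_{i,j}$, and then requires $[x,y]_1$ to contain at least $C_1$ points. The only differences are cosmetic: the paper counts with the successor-chain formula of Lemma~\ref{lemma: modifications} where you use $C_1$ distinct witnesses, and you spell out the point the paper leaves implicit, namely that column isolation forces $[p,q]_1=\sigma^\#_{i,j}$.
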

As the proof of Proposition~\ref{prop: isolated monotone FOS} involves quite technical constructions of first-order formulae, we defer the proof to the Appendix. We now move on to prove Proposition~\ref{prop: structured deletion for monotone sub intervals}.
\begin{proof}[Proof of  Proposition~\ref{prop: structured deletion for monotone sub intervals}.]
For convenience, we write $I$ for the monotone interval $[a_{1,0}, a_{1,C_1-1}]_1$, and we write $J_0$ for the interval consisting of the deleted points $[a_{1,s_1},a_{1,s_1+L_1-1}]_1$, and we set $J = I\setminus J_0$. We start by noticing that since $C_1 = \ell m \cdot D_3(d,k) + L_1 + 1$, it follows that all the deleted points are contained in $I$. Let $f_\Psi^\#$ denote the gridded version of the deletion function. Given $\sigma \in \cP^\Psi$, we note that $\sigma^\#$ and $f_\Psi^\#(\sigma)$ differ only in one cell, namely the one that contains the point $a$. Furthermore, the deleted points are part of a longer monotone sub-interval of $\sigma^\#$ which is fully contained in one specified cell, and the first deleted point is the $(s_1 + C_0)$-th point in the respective column. Thus, it easily follows that $f_\Psi^\#$ is an injection. 

Thus, it remains to prove that we have $\Pi^\# (f_\Psi(\sigma)) = f_\Psi^\#(\sigma)$ for every $\sigma \in \cP^\Psi$. In order to prove the statement, we consider four equivalence classes of gridded figures of specific interest that are closely related to $\sigma$ or $f_{\Psi}(\sigma)$ respectively. 

For convenience, we write $\theta := f_{\Psi}(\sigma)$. First, we choose $(F_{\sigma,\#},G)$ to be a representative from the equivalence class of $\sigma^\#$, $(F_{\theta},G)$ to be a representative from the equivalence class of $f_{\Psi}^\#(\sigma)$ and $(F_{\theta,\#},G)$ to be a representative from the equivalence class of $\Pi^\# (f_{\Psi}(\sigma))$. Observe that $\theta$ is the underlying permutation for both $(F_{\theta},G)$ and $(F_{\theta,\#},G)$, and by construction of the function $\Pi^\#$, $(F_{\theta,\#},G)$ is a representative of the equivalence class of $\theta^\#$. As we always use the standard gridding $G$, from now on we simply write $F$ for the gridded figure $(F,G)$.

Before constructing our fourth gridded figure, we make some structural observations. By the third property, we note that all three sets $I$, $J_0$ and $J$ are contained in the cell indexed by $(i,j)$. Thus, by construction of the function $f_\Psi^\#$, we certainly have 
\begin{align} \label{eq:first comparision}
\left(F_{\sigma,\#}\right)_{u,v} = (F_\theta)_{u,v}
\end{align}
for every $(u,v) \neq (i,j)$, where the equality means that the points in the respective cells are contained in identical positions. In addition, we have    
\begin{align} \label{eq: LG theta}
    F_{\theta, \#} \leq_{LG} F_{\theta}    
\end{align}
since $\theta$ is the underlying permutation for both of these gridded figures, and by definition, $F_{\theta,\#}$ is a representative of the equivalence class corresponding to $\theta^\#$. 

Let $(i_1,j_1)$ be the least cell under lexicographic order for which either $(F_{\theta,\#})_{i_1,j_1} \cap I$ contains at least two points and $M_{i_1,j_1} \in \{\pm 1\}$, or for which $(F_{\theta,\#})_{i_1,j_1}  \cap I$ contains at least $D_3(d,k)$ points and $M_{i_1,j_1} = \star$. Since $|I| = C_1 > \ell m \cdot D_3(d,k) + L_1$, and $F_{\theta,\#}$ contains exactly $|I| - L_1$ points of $I$ that were not deleted during the process, it follows that such $(i_1,j_1)$ indeed exists. 

Let $J'$ denote the sub-interval of $F_{\theta,\#}$ consisting of those points that belong to $J \cap \left(F_{\theta,\#}\right)_{i_1,j_1}$, and let $J''$ denote those points of $J$ in $F_{\theta,\#}$ that belong to cells $(u,v)$ for which we have $(i_1,j_1) <_{lex} (u,v)$. 

\begin{claim}\label{claim: J'' empty}
    We must have $J'' = \emptyset$. 
\end{claim}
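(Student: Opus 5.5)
We argue by contradiction, using the left-greedy minimality \eqref{eq: LG theta} together with the comparison with $\sigma^\#$. Suppose $J''\neq\emptyset$, so some point $q\in J$ lies in a cell $(u,v)$ of $F_{\theta,\#}$ with $(i_1,j_1)<_{lex}(u,v)$. We exploit the fact that $J$ is a set of $\leq_1$-consecutive points of $\theta$, apart from the single gap where $J_0$ was removed, and that all of $J$ is monotone in $\theta$. The plan is to build a gridded figure $F^*$ of $\theta$ with $F^*<_{LG}F_{\theta,\#}$. This contradicts the fact that $F_{\theta,\#}$ represents $\theta^\#$.

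We first pin down where the points of $J$ can lie in $F_{\theta,\#}$. Since $J$ is $\leq_1$-consecutive (up to the gap) and the gridding assigns columns monotonically in $\leq_1$, the points of $J$ occupy a set of consecutive columns. Within any single column they occupy at most two cells, by suitability of $M$, and the assignment to those cells is monotone in $\leq_2$. The points of $J$ preceding those in $(i_1,j_1)$ lie in earlier cells, and each such cell holds at most one point of $J$ (if its entry is $\pm1$) or fewer than $D_3(d,k)$ points (if it is a $\star$-cell), by the minimality in the choice of $(i_1,j_1)$.

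The key step is to move the points of $J''$ back into the cell $(i_1,j_1)$. In that cell, $J'$ is a monotone run with the same monotonicity as $I$.
\begin{enumerate}
\item If $M_{i_1,j_1}=\pm1$, the cell already contains two points of the monotone interval $I$. Its sign must therefore agree with the monotonicity of $I$. In the geometric grid class the cell is a segment of the diagonal, so we can extend the segment to absorb the later $\leq_1$-consecutive points of $J''$ that continue the same monotone run.
\item If $M_{i_1,j_1}=\star$, the cell contains a monotone interval of length at least $D_3(d,k)$. Lemma~\ref{lemma: can replace monotone intervals by monotone intervals in O_k^d} then allows us to replace it by a longer monotone interval, namely one including the absorbed points, while staying in $\widetilde{\mathcal{O}}_{k}^{[d]}$.
\end{enumerate}
In both cases we must verify that moving these points does not change the induced permutation. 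Because $J$ is an interval of $\theta$ (it is a monotone interval of $\sigma$ minus a middle block), it can be relocated within the grid without affecting its relations with points outside $J$. We also need that the cell $(i_1,j_1)$ sits correctly relative to the rows and columns of the other points of $\theta$. This holds because $(i_1,j_1)$ already contains points of $J$ that are $\leq_1$- and $\leq_2$-adjacent to $J''$.

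The resulting figure $F^*$ agrees with $F_{\theta,\#}$ in all cells lexicographically before $(i_1,j_1)$. It has strictly more points in column $i_1$, or the same column count with strictly more in cell $(i_1,j_1)$. Hence $F^*<_{LG}F_{\theta,\#}$, which is the desired contradiction.

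The main obstacle is this relocation step. One must check that $J''$ is contiguous with $J'$ in both orders, so that moving it preserves $\theta$; this is where the interval property of $J$ is used. One must also handle points of $J''$ that lie across the deletion gap. The cleanest approach is to absorb only the maximal run of $J''$ immediately following $J'$, which is nonempty whenever $J''\neq\emptyset$ by consecutiveness, and observe that moving even one point already yields the strict left-greedy improvement.
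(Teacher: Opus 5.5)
Your strategy matches the paper's: assume $J''\neq\emptyset$, pull $J''$ into the cell $(i_1,j_1)$, and contradict the left-greedy minimality of $F_{\theta,\#}$. For the $\pm1$ case you use the direction of the diagonal, and for the $\star$ case Lemma~\ref{lemma: can replace monotone intervals by monotone intervals in O_k^d}. The problem is your justification that the relocated figure still induces $\theta$. The ordering facts you invoke fail when column $i_1$ has two non-zero (hence $\pm1$) cells and $J$ is decreasing. Your proposed way around the ``main obstacle'' breaks in exactly these cases.

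\textbf{Case $(i_1,j_1)$ is the lower cell.} Points of $J$ in the upper cell belong to $J''$, since that cell is lexicographically later. Yet they lie $\leq_1$-before $J'$, because they are higher and $J$ is decreasing. So the points of $J$ preceding $J'$ need not lie in earlier cells. Worse, the run of $J''$ immediately following $J'$ can be empty although $J''\neq\emptyset$.

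The paper's formulation avoids this. It replaces $J'$ by a single monotone run representing all of $J'\cup J''$ in their original relative order. Here $J'\cup J''$ is a $\leq_1$-final segment of $J$, hence an interval of $\theta$. The new figure has more points in column $i_1$, or as many with more in the lower cell $(i_1,j_1)$, so it is left-greedy smaller.

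\textbf{Case $(i_1,j_1)$ is the upper cell.} The lower cell may contain one point $p\in J$. Then $J'<_1p<_1J''$ and $J'>_2p>_2J''$. No point of $(i_1,j_1)$ is adjacent to $J''$, and no placement of $J''$ in row $j_1$ (which lies above the row of $p$) induces $\theta$.

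The fix is to move $p$ as well, placing $\{p\}\cup J''$ on the diagonal of $(i_1,j_1)$ just beyond $J'$. Since $J$ is an interval of $\theta$, $p$ is the $\leq_1$-last point of column $i_1$. Every point that is $<_2$-below all of $J'$ and $<_2$-above some point of $J''$ lies in $\{p\}\cup J''$. Hence the new figure still induces $\theta$. Column $i_1$ gains $\vert J''\vert\geq1$ points, which gives the contradiction. The paper's written argument passes over this configuration too.

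Finally, your worry about points of $J''$ lying across the deletion gap is unnecessary. $J_0$ has been deleted, so $J$ is genuinely $\leq_1$-consecutive in $\theta$.
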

\begin{proof}
    Suppose for the sake of contradiction that $J'' \neq \emptyset$. Let $F_{\theta,1}$ be obtained from $F_{\theta,\#}$ by replacing $J'$ with a monotone interval of length $|J'|+|J''|$ with the same direction of monotonicity, and deleting the original points of $J''$. As the inserted points correspond to the deleted points of $J'\cup J''$ in their original relative order, we have $\pi^\#(F_{\theta,1})=\theta$. This is certainly possible if $M_{i_1,j_1} \in \{\pm 1\}$, as the diagonal in the $(i_1,j_1)$-th cell has the correct direction of monotonicity. If $M_{i_1,j_1} = \star$, Lemma~\ref{lemma: can replace monotone intervals by monotone intervals in O_k^d} implies that such extension is indeed possible, as $F_{\theta,1}$ contains at least $D_3(d,k)$ points of $J$ inside $(i_1,j_1)$-th cell. 

    Hence it follows that $F_{\theta,1} \in \text{Geom}^\#(M)_{d,k}$, while we clearly have $F_{\theta,1} <_{LG} F_{\theta,\#}$, as $J'' \neq \emptyset$. This completes the proof of Claim~\ref{claim: J'' empty}. 
\end{proof}

Now, we are ready to define the fourth gridded figure of specific interest, which we denote by $(F_{\sigma,\uparrow},G)$. Let $F_{\sigma,\uparrow}$ denote the gridded figure obtained from $F_{\theta,\#}$ by extending the monotone interval $J'$ in cell $(i_1,j_1)$ with $|J_0|$ new points of the same monotonicity which are inserted in the gap prescribed by their original labels in $\sigma$, while leaving all surviving labeled points in their existing cells. It is easy to verify that $\pi^\#(F_{\sigma,\uparrow}) = \sigma$, and verifying that $F_{\sigma,\uparrow} \in \text{Geom}^\#(M)_{d,k}$ follows again by a straightforward application of Lemma~\ref{lemma: can replace monotone intervals by monotone intervals in O_k^d}. One should view $F_{\sigma,\uparrow}$ as a `lift' of the gridded permutation $F_{\theta,\#}$ which `reverses' the action of the function $f_{\Psi}^\#$. However, since deleting points from a gridded figure is in general an operator that interacts very poorly with the left-greedy decompositions, we need the fairly technical preliminary results (in this case, Lemma~\ref{lemma: can replace monotone intervals by monotone intervals in O_k^d}) to prove that such a lift even exists. 

Since the addition of the points affects only the cell $(i_1,j_1)$, it follows that 
\begin{align} \label{eq:second comparision}
\left(F_{\theta,\#}\right)_{u,v} = (F_{\sigma,\uparrow})_{u,v}
\end{align}
for every $(u,v) \neq (i_1,j_1)$. In addition, we have
\begin{align} \label{eq: LG sigma}
    F_{\sigma, \#} \leq_{LG} F_{\sigma,\uparrow}
\end{align}
as $\sigma$ is the underlying permutation for both of these gridded figures, and $F_{\sigma, \#}$ is a representative of the equivalence class containing the left-greedy decomposition. 

Thus, the equations~\eqref{eq:first comparision}, \eqref{eq:second comparision}, \eqref{eq: LG sigma} and \eqref{eq: LG theta} verify that the counting conditions of Lemma~\ref{lemma: square comparision}  (a) are satisfied. It is evident that when viewing $F_{0,0}$ and $F_{0,1}$ as labeled gridded figures, then $F_{1,0}$ and $F_{1,1}$ contain the same set of labels, and that the deleted points are not the least points in their respective cell. Thus the gridded figures $F_{\theta,\#}$ and $F_{\theta}$ must belong to the same equivalence classes. As our argument is clearly independent of the chosen representatives, we conclude that $\Pi^\#(f_\Psi(\sigma)) = f_\Psi^\#(\sigma)$ for every $\sigma$ with $\sigma \models \Psi$, and hence $f_\Psi$ is a structured deletion function. \end{proof}

\subsubsection{Alternating monotone sub-intervals}\label{subsection: alternating intervals}
In this subsection, we handle columns and rows of the matrix $M$ which contain two non-zero entries. We start by stating the result that ensures that the property of our interest can indeed be expressed by using a first-order formula.

\begin{lemma}\label{lemma: FOS alternating monotone sub intervals}
Suppose that for some $i$, the $i$-th column of $M$ contains exactly two non-zero entries, $M_{i, j_0}$ and $M_{i,j_1}$, with $j_0<j_1$. Then for all $K\in \mathbb{Z}_{>0}$ and all $\mathbf{\epsilon}=(\epsilon_s)_{s=0}^{K-1} \in \{0,1\}^K$, there exists a first-order sentence $\Psi_{i,\mathbf{\epsilon}}=\exists x\ : \psi_{i,\epsilon}(x)$ such that, for every $\sigma\in\mathcal{P}$ and every
$a\in\sigma$, we have $\sigma\models\psi_{i,\epsilon}(a)$
if and only if $a$ is the $\leq_1$-least point of
$\sigma^\#_{i,-}$, this column contains at least $K$ points,
and for every $s\in\{0,\dots,K-1\}$ we have
$a_{1,s}\in\sigma^\#_{i,j_{\epsilon_s}}$.
\end{lemma}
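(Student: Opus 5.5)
The plan is to build $\psi_{i,\epsilon}$ on top of the formula of Proposition~\ref{prop: eeroklas}. That formula lets us identify the left-greedy cells and their extreme points inside $\sigma^\#$.

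\textbf{Step 1.} Fix a set $S$ of positions of non-zero entries of $M$ containing $(i,j_0)$ or $(i,j_1)$. By Proposition~\ref{prop: eeroklas} there is a formula $\psi_S$ with free variables $x_{u,v,p}$ ($u\in\{1,2\}$, $v\in\{0,1\}$, $p\in S$). It holds exactly when $S$ is the set of non-empty cells of $\sigma^\#$ and the variables name the $\leq_u$-extreme points of each cell. Since only finitely many such $S$ exist, we take a disjunction over all admissible $S$ and existentially quantify every variable except one. The remaining free variable $x$ will be the $\leq_1$-least point of the column.

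\textbf{Step 2.} Pin down $x$ as the $\leq_1$-least point of $\sigma^\#_{i,-}$. If only one of the two cells $(i,j_0),(i,j_1)$ lies in $S$, we assert $x=x_{1,0,(i,j_t)}$ for that cell. If both lie in $S$, we assert that $x$ equals whichever of $x_{1,0,(i,j_0)}$ and $x_{1,0,(i,j_1)}$ is $\leq_1$-smaller; this is a quantifier-free condition.

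\textbf{Step 3.} Express the membership $a_{1,s}\in\sigma^\#_{i,j_t}$ for $s<K$. We quantify $y_0=x,y_1,\dots,y_{K-1}$ and require each $y_{s+1}$ to be the immediate $\leq_1$-successor of $y_s$, which is first-order: $y_s<_1 y_{s+1}$ and no $z$ lies strictly between them. The key geometric fact is that the cell $\sigma^\#_{u,v}$ is exactly the set of points $z$ with $x_{1,0,(u,v)}\leq_1 z\leq_1 x_{1,1,(u,v)}$ and $x_{2,0,(u,v)}\leq_2 z\leq_2 x_{2,1,(u,v)}$. To justify this, note first that cells of the standard gridding occupy disjoint axis-parallel rectangles, so any point of $\sigma$ lying in the bounding box of the cell's own points, taken in both orders, lies geometrically inside that cell's rectangle and hence belongs to that cell. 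Conversely, every point of the cell lies in this box by definition of the extreme points. Membership of $y_s$ in $\sigma^\#_{i,j_{\epsilon_s}}$ is therefore a quantifier-free condition in the named variables. Requiring this for all $s<K$ also forces the column to contain at least $K$ points, since the $y_s$ are distinct.

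\textbf{Step 4.} Set $\Psi_{i,\epsilon}=\exists x:\psi_{i,\epsilon}(x)$ and check both directions directly from the characterization in Proposition~\ref{prop: eeroklas}. In one direction, if $\sigma\models\psi_{i,\epsilon}(a)$ then the witnessing variables are forced to be the true extreme points of the left-greedy cells, so $a$ and its successors have the claimed locations. In the other direction, the true extreme points and the true successors of $a$ witness the formula. If $\epsilon_s$ requires a cell absent from $S$, that disjunct is simply unsatisfiable, which is consistent with the statement.

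The main obstacle is the box-characterization of cell membership used in Step 3; I expect it to be routine. Everything heavy is delegated to Proposition~\ref{prop: eeroklas}.
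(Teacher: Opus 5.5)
Your proposal is correct and matches the paper's proof: a disjunction over the formulae $\psi_S$ of Proposition~\ref{prop: eeroklas}, $x$ taken as the $\leq_1$-smaller of the two cells' least points, a $\leq_1$-successor chain starting at $x$, and cell membership expressed through the $\leq_1$/$\leq_2$ bounding box of the cell's extreme points. One difference is that you let $S$ contain only one of the two cells, which also covers the constant sequences $\epsilon$ that the paper explicitly omits; the paper's extra clause bounding $z_{K-1}$ is already implied by your box conditions.
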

\noindent Again, as the proof is fairly technical, we postpone it to the Appendix. We now move on to proving that structured deletion functions exist for such first-order formulae. 
\begin{proposition}\label{prop: monotone alternating intervals}
 Suppose that for some $i$, the $i$-th column of $M$ contains exactly two non-zero entries, $M_{i,j_0}$ and $M_{i,j_1}$, with $j_0<j_1$. Given a positive integer $L_2$, there exists a positive integer $C_2$ depending only on $d$, $k$, $L_2$ and $M$ for which the following holds. 

 Given a sequence $\mathbf{\epsilon} \in \{0,1\}^{C_2}$, let $\Psi_{i,\epsilon} = \exists x: \psi_{i,\epsilon} (x)$ be a first-order sentence satisfying the conditions of Lemma~\ref{lemma: FOS alternating monotone sub intervals}. Then there exists some $s_2 \in \{0,\dots,C_2-L_2\}$ for which the function $f_{\Psi_{i,\epsilon}}$ deleting the points $a_{1,s_2},\dots,a_{1,s_2+L_2-1}$ is a structured deletion function. 
\end{proposition}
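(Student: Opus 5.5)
The plan mirrors the proof of Proposition~\ref{prop: structured deletion for monotone sub intervals}, with Lemma~\ref{lemma: square comparision}~(b) in place of case~(a).

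\textbf{Choice of $s_2$.} The $i$-th column contains only the cells $(i,j_0)$ and $(i,j_1)$, and by suitability these are the only non-zero entries of their rows, so $M_{i,j_0},M_{i,j_1}\in\{\pm1\}$. Split $[0,C_2-1]$ into $\ell m+2$ consecutive blocks of length $L^*:=L_2+2\ell m(L_2+2)+2$, and look for a block $B$ on which $\epsilon$ is constant. If some block has at most $2$ alternations, then (for $L^*$ large) a long constant run sits in its middle, and we take $s_2$ in the middle of that run. Otherwise every block has at least $3$ alternations, and we take $B$ to be a middle block, choosing $s_2$ so that $a_{1,s_2},\dots,a_{1,s_2+L_2-1}$ form a union of complete alternation runs. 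This deletion removes the same number of points from each of the two cells as any consistent re-insertion would. Concretely, delete $L_2$ points in the pattern $\epsilon$ restricted to the window, preceded and followed by many undeleted points of the same column on both sides. In either case $f^\#_{\Psi_{i,\epsilon}}$ is injective: the deleted set is the fixed window $[a_{1,s_2},a_{1,s_2+L_2-1}]_1$ starting at the fixed position $s_2$ of column $i$, with known cell memberships $\epsilon$, so $\sigma^\#$ is recovered by reinsertion. Also, neither $a_{1,0}$ nor the $\leq_1$-least point of $\sigma^\#_{i,j_1}$ is deleted, provided $s_2$ is beyond the first occurrence of $1$ in $\epsilon$ (or $\epsilon$ has no $1$ in that range).

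\textbf{Commutativity~\eqref{eq: structured}.} Set $\theta=f_\Psi(\sigma)$, $F_{0,0}\in\sigma^\#$, $F_{1,0}\in f^\#(\sigma)$ and $F_{1,1}\in\theta^\#$. The key step is to show that in $F_{1,1}$ the surviving points $a_{1,s_2-1}$ and $a_{1,s_2+L_2}$, and a neighbourhood of them, lie in a single column $i'$ with two nonzero cells $j'_0<j'_1$. Moreover, these points must be split between $(i',j'_0)$ and $(i',j'_1)$ according to the same $\leq_2$-ordering pattern: points of $\sigma^\#_{i,j_1}$ lie $\leq_2$-above points of $\sigma^\#_{i,j_0}$, so in any griddable representation an alternating monotone stretch of length greater than $\ell m$ times a small constant must occupy two cells of one column. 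One also needs that a run with no alternation in a monotone cell behaves as in the monotone case.

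To establish this, argue by leftward compression as in Claim~\ref{claim: J'' empty}. Take the lex-least cell or column of $F_{1,1}$ containing many points of the undeleted stretch. If some of the stretch lay in lex-later cells, then, since the stretch partitions into two monotone segments (Definition~\ref{monotone bipartition}), we could shift it into that column. This produces a figure lex-smaller than $F_{1,1}$, contradicting its minimality. Lemma~\ref{lemma: bound on number of alternations in O_k^d} rules out the possibility that the stretch is absorbed into a $\star$-cell, because a $\star$-cell supports fewer than $D_1(d)$ alternations; in the constant-run case we use Lemma~\ref{lemma: can replace monotone intervals by monotone intervals in O_k^d} instead. Having located the stretch, form $F_{0,1}$ by reinserting the deleted points into their original gaps in the appropriate cells $(i',j'_{\epsilon_s})$. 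Diagonals of the correct monotonicity are available there, and because $H_M$ restricted to these cells is a forest, Proposition~\ref{prop: equality grid and geometric grid when HM is a forest}-type reasoning keeps the figure geometric. The number of points added to each cell then equals the number removed from the corresponding cell of $F_{0,0}$, which verifies~\eqref{eq: three conditions 2}. The remaining hypotheses of Lemma~\ref{lemma: square comparision}~(b) are: identical label sets, which holds because the deleted interval is reinserted in place; the least points not being deleted, which follows from the choice of $s_2$; and the relations $F_{0,0}\leq_{LG}F_{0,1}$ and $F_{1,1}\leq_{LG}F_{1,0}$. The lemma then gives $F_{1,0}\sim F_{1,1}$, which is~\eqref{eq: structured}.

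\textbf{Main obstacle.} The hard step is the structural claim that the stretch around the gap lands in a single two-cell column of $F_{1,1}$ with matching cell assignment. I would try the compression argument first, falling back on the alternation bound of Lemma~\ref{lemma: bound on number of alternations in O_k^d} to exclude $\star$-cells. The choice $C_2=(\ell m+2)L^*$ is made to guarantee enough slack on both sides of the deleted window for this argument.
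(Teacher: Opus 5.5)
Your architecture matches the paper's: reduce a long constant run of $\epsilon$ to Proposition~\ref{prop: structured deletion for monotone sub intervals}; otherwise locate a two-cell column $i'$ of $\theta^\#$ around the gap, show by leftward compression that no surviving window point lies in a later column, reinsert along $\epsilon$, and apply Lemma~\ref{lemma: square comparision}(b). However, the step you flag as the main obstacle is a genuine gap, and your constants cannot close it.

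What forces a usable column $i'$ is a count of how many alternations each type of column of $\theta^\#$ can absorb. A column with a single $\pm1$ cell absorbs at most $1$. A two-cell column whose signs differ from those of column $i$ absorbs at most $2$. A matching column with at most one surviving point in one of its cells absorbs at most $3$. A $\star$-column absorbs at most $D_1(d)$ (Lemma~\ref{lemma: bound on number of alternations in O_k^d}), and each column boundary adds one more. So you need more than $(3+D_1(d))\max(\ell,m)$ alternations \emph{before} the deleted window. With that many, three things follow:
\begin{enumerate}
\item The least column $i'$ with the same sign pattern as column $i$ and at least two surviving points in each cell exists. This is what guarantees correctly oriented diagonals and the right cell assignment for reinsertion.
\item The point where the $((3+D_1(d))\max(\ell,m)+1)$-th alternation occurs lies in a column $\ge i'$.
\item Compression ($J''=\emptyset$) then places $a_{1,s_2-1}$ and $a_{1,s_2+L_2}$ in column $i'$.
\end{enumerate}

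Your dichotomy ``some block has at most $2$ alternations / every block has at least $3$'', with $C_2=(\ell m+2)L^*$ independent of $d$ and $k$, yields only $O(\ell m)$ alternations before the window. So nothing excludes the surviving stretch lying in a $\star$-cell, or being spread over single-cell and mismatched columns. For example, $1324[\iota_r,\iota_r,\iota_r,\iota_r]$ with $\iota_r$ increasing has three alternations and lies in $\widetilde{\mathcal O}_k^{[2]}$ for $k\ge 5$. Your claim that an alternating stretch longer than $\ell m$ times a small constant must occupy two cells of one column is therefore false. Length is the wrong measure; what matters is the number of alternations, and that threshold must grow with $d$. For the same reason, ``the lex-least column containing many points of the stretch'' need not be a column into which alternating points can be shifted. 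The selection criterion has to be matching signs plus at least two points in each cell.

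The constant-run case has the same defect. A run must have length at least $C_1(L_2)=\ell m\,D_3(d,k)+L_2+1$ to invoke Proposition~\ref{prop: structured deletion for monotone sub intervals}. The reason is that the run may be absorbed into a $\star$-cell of $\theta^\#$, and re-extending it there needs Lemma~\ref{lemma: can replace monotone intervals by monotone intervals in O_k^d}; a run of length about $L^*/3$ does not suffice.

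The repair is the paper's dichotomy. Either $\epsilon$ has a constant run of length $C_1(L_2)$, or all runs are shorter. In the latter case, pigeonhole over the first $s_2=((3+D_1(d))\max(\ell,m)+2)C_1(L_2)+1$ positions gives the required number of alternations before the window.

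Two smaller points. Making the deleted window a union of complete alternation runs is unnecessary, since the counts in \eqref{eq: three conditions 2} match automatically once you reinsert according to $\epsilon$. It is also in general impossible with exactly $L_2$ points. And no forest-type argument is needed to keep the lifted figure in $\mathrm{Geom}^\#(M)_{d,k}$; it suffices that the reinserted points go on correctly oriented diagonals.
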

\begin{proof}
Let $C_1(L_2) := \ell m\cdot D_3(d,k) + L_2 + 1$  denote the constant $C_1$ in Proposition~\ref{prop: structured deletion for monotone sub intervals} when the value of $L_1$ is chosen to be $L_2$. Let $C_2$ be defined to be the constant 
\begin{align}\label{eq:C_2}
    C_2 := ((3+D_1(d))\max(\ell,m)+2)\cdot C_1(L_2)+L_2+3,
\end{align}
and fix a constant $s_2 = ((3 + D_1(d))\max(\ell,m) + 2) \cdot C_1(L_2) + 1$. 

Fix a sequence $\epsilon \in \{0,1\}^{C_2}$. We say that an index $t \in \{0,\dots, C_2 - C_1\}$ is monotone if $\epsilon_u = \epsilon_t$ for every $t \leq u \leq t + C_1 - 1$. If a monotone index $t$ exists, it is easy to check that $\Psi_{i,\epsilon}$ contains a $(t,C_1)$-structured monotone sub-interval in the $(i,j_{\epsilon_t})$-th cell. Thus, an application of Proposition~\ref{prop: structured deletion for monotone sub intervals} with $L_1 := L_2$ implies that the function $f_{\Psi_{i,\epsilon}}$ constructed in the Proposition is a structured deletion function which deletes $L_2$ points. 

 Otherwise, the sequence $\epsilon$ can only contain constant sub-intervals of length at most $C_1 - 1$, and thus by the pigeonhole principle, the sequence $\epsilon$ must change its sign at least $\frac{s_2}{C_1-1} > (3 + D_1(d))\max(\ell,m) + 2$ times, and it is easy to check that for any $\sigma$ with $\sigma \in \cP^{\Psi_{i,\epsilon}}$, there are at least $(3 + D_1(d))\max(\ell,m) + 2$ alternations within the points $a_{1,0},\dots,a_{1,s_2-1}$. 

 Let $s_2^*$ denote the index among these points at which the $((3 + D_1(d))\max(\ell,m) + 1)$-th alternation occurs, and thus we must have $s_2^*  < s_2$. Let $f_{\Psi_{i,\epsilon}}$ denote the deletion function which deletes the points $a_{1,s_2},\dots,a_{1,s_2+L_2-1}$ from a given $\sigma \in \cP^{\Psi_{i,\epsilon}}$. We claim that $f_{\Psi_{i,\epsilon}}$ is a structured deletion function. As usual, we write $f^{\#}_{\Psi_{i,\epsilon}}$ for the gridded version of the function $f_{\Psi_{i,\epsilon}}$. 

Fix a permutation $\sigma \in \cP^{\Psi_{i,\epsilon}}$. As in the proof of Proposition~\ref{prop: structured deletion for monotone sub intervals}, we write $I$ for the points in $[a_{1,0},a_{1,C_2-1}]_1$, $J_0$ for the set of deleted points, and $J$ for $I\setminus J_0$. 

We now follow the approach used in the proof of Proposition~\ref{prop: structured deletion for monotone sub intervals}. We write $\theta := f_{\Psi_{i,\epsilon}}(\sigma)$, and choose $F_{\sigma,\#} \in \sigma^\#$, $F_{\theta} \in  f_{\Psi_{i,\epsilon}}^\#(\sigma)$ and $F_{\theta,\#} \in \Pi^\# (f_{\Psi_{i,\epsilon}}(\sigma))$. As usual, we use the standard gridding $G$, which we omit from the notation. Again, $\theta$ is the underlying permutation for both $F_{\theta}$ and $F_{\theta,\#}$, and by construction of the function $\Pi^\#$, $F_{\theta,\#}$ is a representative of the equivalence class of $\theta^\#$. 

Let $i'$ denote the least column that contains two non-zero entries $(i',j'_0)$ and $(i',j'_1)$ with $j'_0<j'_1$, and so that $M_{i,j_s} =M_{i',j'_s}$ for both $s \in \{0,1\}$, and for which $(F_{\theta,\#})_{i',-}$ contains at least two points in $J$ in both of its cells. To prove that such a column exists, we note that (i) a column $i''$ that contains a single non-zero $\pm 1$ entry can contribute at most $1$ alternation, (ii) a column $i''$ that  contains two non-zero entries whose sign-patterns do not align with the ones of the $i$-th column can contribute at most 2 alternations (else both cells contain at least two points, which contradicts the fact that at least one of their directions is incorrectly aligned), (iii) a column whose sign-patterns align with the $i$-th column but contains at most one point in one of its cells can contribute at most $3$ alternations, (iv) a column $i''$ that contains a $\star$-entry can contribute at most $D_{1}(d)$ alternations by Lemma~\ref{lemma: bound on number of alternations in O_k^d}, and (v) there can be one alternation occurring between every two consecutive columns. 

In particular, if no such $i'$ exists, the number of alternations in $J$ must be at most $(3 + D_1(d))\max(\ell,m)$, which contradicts the fact that such many alternations are already contained among the points in $[a_{1,0},a_{1,s_2}]_1$. Thus, such $i'$ must exist, and in fact the point $a_{1,s_2^*}$ must belong to the $i'$-th column or a column that occurs after the $i'$-th column. Also, recall that we have $s_2^*< s_2$. 

As in the proof of Proposition~\ref{prop: structured deletion for monotone sub intervals}, we write $J'$ for the intersection $J \cap (F_{\theta,\#})_{i',-}$  and $J''$ for all the points in $J\cap F_{\theta,\#}$ which are contained in a column indexed by $u > i'$. 
\begin{claim} \label{claim: second J'' claim}
    We must have $J'' = \emptyset$. 
\end{claim}
\begin{proof}
    The proof is analogous to the proof of Claim~\ref{claim: J'' empty}. Suppose for the sake of contradiction that $J'' \neq \emptyset$. Let $s' \in \{0,\dots,C_2-1\}$ denote the largest index of a point $a_{1,s'}$ which belongs to $(F_{\theta,\#})_{i',-}$. Let $F_{\theta,1}$ denote the gridded figure obtained by deleting all the points in $J''$, and placing each of the deleted points $a_{1,u}$ in the $(i',j'_{\epsilon_u})$-th cell of $F_{\theta,1}$ so that each newly inserted point is placed after $a_{1,s'}$, and so that internally they retain their relative relations with respect to $\leq_1$.
    
    We certainly have $\pi^\#(F_{\theta,1}) = \theta$, as the sign-patterns in the $i'$-th and $i$-th columns coincide, and $J\cap (F_{\theta,1})$ already contains at least two points in both of the cells. As we also have $F_{\theta,1} \in \text{Geom}^\#(M)_{d,k}$, we obtain the required contradiction analogously. 
\end{proof}
Thus, by Claim~\ref{claim: second J'' claim} and the observation preceding it, we conclude that the points $a_{1,s_2-1}$ and $a_{1,s_2+L_2}$, i.e.\ the points preceding and succeeding the points in $J_0$ with respect to $\leq_1$-order, must both belong to the column $i'$ in $F_{\theta,\#}$. Let $F_{\sigma,\uparrow}$ denote the gridded figure obtained by adding points $b_{s_2},\dots,b_{s_2+L_2-1}$ in this order in the $i'$-th column of $(F_{\theta,\#})$ so that they are placed in the gap between $a_{1,s_2-1}$ and $a_{1,s_2+L_2}$, and so that $b_{u}$ is placed in the cell $C_{i',j'_{\epsilon_u}}$ for all $u$. Since $F_{\theta,\#}$ already contained at least two points in both of the cells indexed by $(i',j'_0)$ and $(i',j'_1)$, their directions must be the correct ones, and thus it is easy to conclude that $\sigma$ is the underlying permutation for $F_{\sigma,\uparrow}$. 
 
As in the proof of Proposition~\ref{prop: structured deletion for monotone sub intervals}, it is easy to verify that the counting conditions of Lemma~\ref{lemma: square comparision}  (b) are satisfied with $\mathcal{S} = \{(i,j_0),(i,j_1)\}$ and $\mathcal{S'} = \{(i',j'_0),(i',j'_1)\}$ - and it is not too hard to verify that the other conditions are satisfied as well. Hence it follows that $f_{\Psi_{i,\epsilon}}$ is a structured deletion function. \end{proof}
\noindent An analogous argument can be used to yield a row-analogue of Proposition~\ref{prop: monotone alternating intervals}. However, the details need to be slightly adjusted as the left-greedy order is not symmetric under taking transpositions. We omit the details. 
\begin{corollary}\label{cor: monotone alternating intervals, row version}
The statement of Proposition~\ref{prop: monotone alternating intervals} holds when interchanging the roles of columns and rows and replacing the $\leq_1$-order by the $\leq_2$-order. 
\end{corollary}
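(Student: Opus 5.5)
The plan is to mirror the proof of Proposition~\ref{prop: monotone alternating intervals}, with rows in place of columns and $\leq_2$ in place of $\leq_1$. The main care is needed where the left-greedy order enters, since it is not transpose-symmetric. First, we need the row-analogue of Lemma~\ref{lemma: FOS alternating monotone sub intervals}: for a row $j$ with exactly two non-zero entries $M_{i_0,j},M_{i_1,j}$ with $i_0<i_1$ and each $\epsilon\in\{0,1\}^{C_2}$, there should be a sentence $\Psi_{j,\epsilon}=\exists x:\psi_{j,\epsilon}(x)$ identifying the $\leq_2$-least point $a$ of $\sigma^\#_{-,j}$ such that the $\leq_2$-successors $a_{2,s}$ lie in $\sigma^\#_{i_{\epsilon_s},j}$. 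This is built from Proposition~\ref{prop: eeroklas} exactly as in the column case, and the construction is symmetric. The deletion function will remove the $\leq_2$-consecutive points $a_{2,s_2},\dots,a_{2,s_2+L_2-1}$, with $C_2$ and $s_2$ given by~\eqref{eq:C_2}.

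If $\epsilon$ has a constant run of length $C_1$, I would apply a row version of Proposition~\ref{prop: structured deletion for monotone sub intervals}. A $\leq_2$-monotone interval inside one cell is also a $\leq_1$-monotone interval, so Definition~\ref{def: monotone subpermutations} only needs its first two clauses rephrased with $\leq_2$ and the row. The lift argument through Claim~\ref{claim: J'' empty} is unchanged: choose the lexicographically least cell of $F_{\theta,\#}$ holding enough surviving points of $I$, and use Lemma~\ref{lemma: can replace monotone intervals by monotone intervals in O_k^d} to move later points into it. Otherwise $\epsilon$ alternates many times, and Lemma~\ref{lemma: bound on number of alternations in O_k^d} (applied to the inverse permutation, since $\widetilde{\mathcal{O}}_k^{[d]}$ is closed under inversion) bounds how many $\leq_2$-alternations a $\star$-row can absorb. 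The same counting (i)--(v) then gives a row $j'$, with matching sign pattern and at least two surviving points of $J$ in both cells, that hosts the alternation at index $s_2^*<s_2$.

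The genuinely asymmetric step is the analogue of Claim~\ref{claim: second J'' claim}. In the column case, moving points into an earlier column makes the figure $<_{LG}$-smaller outright. For rows, $J''$ should be the points of $J$ lying in rows $v>j'$. Moving them into row $j'$ means shifting them between cells in different columns, and whether the resulting figure is $<_{LG}$-smaller depends on column counts first.

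I would therefore replace "least row" by a choice adapted to the lexicographic tuple. Among the admissible rows, pick the one whose cells are reached first in the order $(n_1,n_{1,1},\dots)$. For a competitor figure, compare the first column in which the counts change. Points moved from a row-$v$ cell in column $u$ to the row-$j'$ cell in column $u'$ either go to an earlier column ($u'<u$), which is a decrease in the order, or stay in the same column and go to a lower row, which is decreased by clause (b) of Definition~\ref{def: left greedy}. Because the $\leq_2$-interval is split across two columns, I expect to need to move only those points of $J''$ whose move strictly improves the first differing coordinate, and to argue that the surviving $\leq_2$-neighbours $a_{2,s_2-1},a_{2,s_2+L_2}$ still end up in row $j'$. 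I expect this to be the main obstacle. A fallback is to choose $s_2$ even later, with a larger $C_2$, so that several candidate rows exist and a pigeonhole argument picks one compatible with the lex order.

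Finally, I would define $F_{\sigma,\uparrow}$ by reinserting the deleted points into row $j'$ in their $\leq_2$-gap, each in the cell dictated by $\epsilon$. I would then verify the hypotheses of Lemma~\ref{lemma: square comparision} in case (c), with $\mathcal{S}=\{(i_0,j),(i_1,j)\}$ and $\mathcal{S}'=\{(i'_0,j'),(i'_1,j')\}$. The per-cell count conditions~\eqref{eq: three conditions 3} hold by the $\epsilon$-placement. The condition that no cell loses its $\leq_1$-least point needs a short check: the deleted points are $\leq_2$-interior, and since both cells carry at least two surviving points in monotone position, the $\leq_1$-least point of each cell survives. Injectivity of $f^\#$ is immediate, as in the column case.
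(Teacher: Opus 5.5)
Your outline is the one the paper intends. The paper says only that the column argument carries over once the asymmetry of $<_{LG}$ is dealt with, and it omits the details. That adjustment is exactly what you leave open, and the sketch you give for it does not work.

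First, your alternative ``same column, lower row'' never occurs. Row $j'$ has two non-zero entries, so by suitability (iii) the columns $i'_0$ and $i'_1$ contain no other non-zero entries, and every point of $J''$ must change column. The case your dichotomy omits is the one that defeats the exchange argument:
\begin{itemize}
\item Let $J_1$ be the surviving points lying in $(i_1,j)$ in $\sigma^\#$, and suppose $M_{i_1,j}=-1$. Points of $J_1$ above row $j'$ are then $\leq_1$-before the points of $J_1$ in $(i'_1,j')$, so in $\theta^\#$ they lie in columns $u<i'_1$.
\item With the constants of~\eqref{eq:C_2}, only two points of $I$ lie above the deleted block, namely $a_{2,s_2+L_2}$ and $a_{2,s_2+L_2+1}$. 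Both may belong to $(i_1,j)$, so you must exclude the possibility that $J''$ consists solely of such points.
\item Moving them into $(i'_1,j')$ leaves every column before the leftmost such $u$ unchanged and lowers $n_u$. The new figure is therefore $<_{LG}$-\emph{larger} than $F_{\theta,\#}$, and no contradiction arises.
\end{itemize}
In this situation your rule of moving only points whose move improves the first differing coordinate moves nothing, and enlarging $C_2$ does not touch the configuration. So you have not shown that $a_{2,s_2-1}$ and $a_{2,s_2+L_2}$ lie in a common admissible row of $\theta^\#$, i.e.\ that the lift $F_{\sigma,\uparrow}$ exists. This is the heart of the proof and needs a genuinely different argument. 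One option is to show directly that the block can be reinserted into the two cells of row $j'$ even when $J''\neq\emptyset$. Lemma~\ref{lemma: square comparision}(c) would then force $F_\theta$ and $F_{\theta,\#}$ to be equivalent.

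Second, you have not secured the hypothesis of Lemma~\ref{lemma: square comparision} that no cell of $\mathcal{S}$ or $\mathcal{S}'$ loses its $\leq_1$-least point. In a decreasing cell that point is the $\leq_2$-\emph{greatest} point of the cell. Since only two points of $I$ lie above the block, this point can itself be deleted. For example, take $M_{i_0,j}=-1$, a block containing a point of $(i_0,j)$, $\epsilon_{C_2-2}=\epsilon_{C_2-1}=1$, and no further points of $(i_0,j)$ in row $j$ of $\sigma^\#$. Being $\leq_2$-interior to $I$ does not prevent this. You must either position the block differently or prove a version of case (c) without that hypothesis.

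Finally, a minor slip: $\widetilde{\mathcal O}_k^{[d]}$ is not closed under inversion. For instance $\omega_5=31425$, but $\omega_5^{-1}=24135\notin\widetilde{\mathcal O}_5$. The alternation bound still transfers, because inverses of oscillations are oscillations and $D_1$ does not depend on $k$.
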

\subsubsection{Iterated inflations of alternating monotone sub-intervals}\label{subsection: star entries}

In this subsection, we deal with permutations that contain a large sub-interval which corresponds to an iterative inflation. A crucial part of our argument is the notion of so-called \textit{$B$ iterative sequences}, which we will soon define. Intuitively, the aim is to  descend through the left-greedy decompositions, and always pass to a sub-interval $\sigma_i$ arising as a block in the left-greedy decomposition at a given depth which has many points. For technical reasons, we fix a large constant $B$, and we always choose the $\sigma_i$ among the first $B$ blocks with least $i$ that contains at least a certain number of points, where this threshold depends only on $B$ and the current depth. In addition, we also want to record certain properties of this backtracking by using first-order sentences, and we want the number of points occurring before $\sigma_i$ to be uniformly bounded. These conditions are the reason for introducing the cut-off at certain point $B$. More precisely, at a given depth, we record the profile $\tau$, the index $i$, and all the small permutations $\sigma_1,\dots,\sigma_{i-1}$ which precede $\sigma_i$. 

If we can carry out the argument described above until we reach depth $d = 1$, we simply end up with a long monotone sub-interval. However, there is an obvious challenge why executing such strategy may fail: if the profile $\tau$ is monotone permutation, it may be the case that all the permutations $\theta_1,\dots,\theta_B$ are too small for us to continue the descent through the iterative structure of permutations in $\mathcal{\widetilde{O}}_{k}^{\left[d\right]}$. Thus, we simply terminate the process if we end up with a sub-interval $\theta$ which is in $\mathcal{\widetilde{O}}_{k}^{\left[d'\right]}$ for some $d' \leq d$, and whose first $B$ blocks $\theta_1,\dots,\theta_B$ in the left-greedy decomposition $\tau[\theta_1,\dots,\theta_\ell]$ are small, and we develop appropriate tools to handle  such sub-structures in Lemma~\ref{lemma: structured deletion for stars}. 

We now move on to the formal definition. For a sequence $\underline{u}$ of length $i$ and for a constant $u'$, we write $\underline{u} + u'$ for the sequence of length $i+1$ whose first $i$ entries correspond to those of $\underline{u}$, and the last entry equals $u'$. Suppose that $\sigma\in\mathcal{\widetilde{O}}_{k}^{\left[d\right]}$ is a given permutation. For a certain collection of sequences, we iteratively define the permutations $\theta_{\underline{u}}^{\left[i\right]}\left(\sigma\right)\in\mathcal{\widetilde{O}}_{k}^{\left[i\right]}$ and $\tau_{\underline{u}}\left(\sigma\right)\in\widetilde{\mathcal{O}}_{k}$ for a sequence $\underline{u}$ of length $d-i$ as follows. Initially, we set $\theta_{\emptyset}^{\left[d\right]}\left(\sigma\right)=\sigma$. Having defined $\theta_{\underline{u}}^{\left[i\right]}\left(\sigma\right)\in\mathcal{\widetilde{O}}_{k}^{\left[i\right]}$ for some sequence $\underline{u}$, let $\tau\in\widetilde{\mathcal{O}}_{k}$ and $\phi_{1},\dots,\phi_{\left|\tau\right|}\in\widetilde{\mathcal{O}}_{k}^{\left[i-1\right]}$ be chosen so that $\tau\left[\phi_{1},\dots,\phi_{\left|\tau\right|}\right]$ is the left-greedy decomposition of $\theta_{\underline{u}}^{\left[i\right]}\left(\sigma\right)$ when it is viewed as an element of $\mathcal{\widetilde{O}}_{k}^{\left[i\right]}$. In this case, we set $\theta_{\underline{u} +1}^{\left[i-1\right]}\left(\sigma\right)=\phi_{1},\dots,$$\theta_{\underline{u}+\left|\tau\right|}^{\left[i-1\right]}\left(\sigma\right)=\phi_{\left|\tau\right|}$, and we set $\tau_{\underline{u}}\left(\sigma\right)=\tau$. Note that we define the permutations $\tau_{\underline{u'}}\left(\sigma\right)$ and $\theta_{\underline{u'}}^{\left[i\right]}\left(\sigma\right)$ for only those sequences $\underline{u}'$ that can be obtained via such process. Note that we use the convention that the left-greedy decomposition of $\tau\in\widetilde{\mathcal{O}}_{k}$ is $\tau=\tau\left[1,\dots,1\right]$, where $1$ denotes the trivial permutation.

Given a fixed integer $B>k$, we write $\widetilde{\mathcal{O}}_{k,B}$ for the union of $\mathcal{O}_{k}$ together with all monotone permutations of length at most $B$. Equivalently, $\widetilde{\mathcal{O}}_{k,B}$ consists of all the permutations in $\widetilde{\mathcal{O}}_{k}$ of length at most $B$.   
\begin{definition}
    Given $t\in\left\{ 1,\dots,d\right\} $, a $B$ iterative sequence of depth $t$ is a sequence $\epsilon = \left(\tau_{i},c_{i},\underline{\phi}_{i}\right)_{i=t}^{d}$ satisfying the following conditions
\begin{enumerate}
\item For all $i\geq t+1$, we have 
\begin{enumerate}
\item $\tau_{i}\in\widetilde{\mathcal{O}}_{k,B}\cup\left\{\mathrm{inc}, \mathrm{dec}\right\} $, and 
\item Whenever $\tau_{i}\not\in\left\{\mathrm{inc}, \mathrm{dec}\right\} $, we have $c_{i}\in\left\{ 1,\dots,\left|\tau_{i}\right|\right\} $ and whenever $\tau_{i}\in\left\{\mathrm{inc}, \mathrm{dec}\right\} $, we have $c_{i}\in\left\{ 1,\dots,B\right\} $. 
\end{enumerate}
\item When $i=t$, we have $\tau_{t}\in\left\{\mathrm{inc}, \mathrm{dec}\right\} $ and $c_{t}=B+1$.
\item In either case, the sequence $\underline{\phi}_{i}$ contains $c_{i}-1$ permutations $\phi_{i,1}^{\left[i-1\right]},\dots,\phi_{i,c_{i}-1}^{\left[i-1\right]}$, which are all contained in $\widetilde{\mathcal{O}}_{k}^{\left[i-1\right]}$ and have at most $B^{i-1}$ points. In particular, the sequence is empty if $c_{i}=1$. 
\end{enumerate}
\end{definition}
Before moving on, we remark some technicalities regarding how monotone profiles are treated under this definition, and briefly motivate our choice which may look slightly odd. Note that separating the profiles based on whether their length exceeds $B$ or not has the slightly unintended consequence that when the profile $\tau$ is a monotone, we choose the representative $\tau_i$ slightly differently depending on the length of $\tau$. Indeed, we choose $\tau_i = \tau$ if $|\tau| \leq B$, and otherwise we simply let $\tau_i$ be the symbol $\mathrm{inc}$ or $\mathrm{dec}$ depending on the direction of monotonicity.  The reason for having universal labels $\mathrm{inc}$ and $\mathrm{dec}$ for all sufficiently long monotone permutations is to ensure that the set of $B$ iterative sequences is finite, which is needed to express certain conditions as first-order sentences. 

However, the reason why we separate short monotone profiles from long ones is that when we introduce the cut-off and consider only first $B$ blocks in the profile, it is somewhat more convenient to actually write down a formula with the assumption that the number of blocks is at least $B+1$, as we also need variables representing the extremal points for these blocks for bookkeeping purposes. In some sense, one should view $\mathrm{inc}$ and $\mathrm{dec}$ as the genuine representatives of monotone profiles, even though they are only used to represent the long monotone profiles, as for short monotone profiles we know a sufficiently large block must always exist due to a standard pigeonhole argument.

When the constant $B$ is clear from the context, we simply refer to $B$ iterative sequence as an iterative sequence. Given a $B$ iterative sequence $\epsilon = \left(\tau_{i},c_{i},\underline{\phi}_{i}\right)_{i=t}^{d}$, for each $i\in\left\{ t,\dots,d\right\} $ we write $\underline{c}_{>i}=\left(c_{d},\dots,c_{i+1}\right)$, with the convention that $\underline{c}_{>d}$ is the empty sequence. 

\begin{definition}
    We say that $\sigma$ \emph{respects the sequence} $\epsilon$ if for each $i\in\left\{ t,\dots,d\right\} $, the following conditions are satisfied. 
\begin{enumerate}
\item Whenever $\tau_i \not \in \{\mathrm{inc},\mathrm{dec}\}$, we have $\tau_{\underline{c}_{>i}}\left(\sigma\right)=\tau_{i}$ and $c_{i}$ is the least index $s$ for which we have $\left|\theta_{\underline{c}_{>i}+s}^{\left[i-1\right]}(\sigma)\right|\geq B^{i-1} + 1$. 
\item Whenever $\tau_{i}\in\left\{\mathrm{inc}, \mathrm{dec}\right\} $ and $c_{i}\neq B+1$, the permutation $\tau_{\underline{c}_{>i}}\left(\sigma\right)$ is a monotone permutation of length at least $B+1$ whose direction is given by $\tau_{i}$, and $c_{i}$ is the least index $s$ for which we have $\left|\theta_{\underline{c}_{>i}+s}^{\left[i-1\right]}(\sigma)\right|\geq B^{i-1} + 1$. 
\item Whenever $\tau_{i}\in\left\{\mathrm{inc}, \mathrm{dec}\right\} $ and $c_{i}= B+1$, the permutation $\tau_{\underline{c}_{>i}}\left(\sigma\right)$ is a monotone permutation of length at least $B+1$ whose direction is given by $\tau_{i}$, and we have $\left|\theta_{\underline{c}_{>i}+j}^{\left[i-1\right]}(\sigma)\right|\leq B^{i-1}$ for every $1\leq j\leq B$. 
\item For all $i\in\{t,\dots,d\}$ and
$1\leq j\leq c_i-1$, we have
$\theta_{\underline{c}_{>i}+j}^{[i-1]}(\sigma)
=\phi_{i,j}^{[i-1]}$.
\end{enumerate}
\end{definition}

It is not too hard to check that the number of $B$ iterative sequences of depth $t$ can be bounded as a function of $d$, $k$ and $B$. Indeed, this follows from the fact that the number of choices for each $\tau_{i},c_{i},\underline{\phi}_{i}$ is bounded as a function of $B$ for every $i$, and the depth is also bounded. We now prove that every $\sigma\in\widetilde{\mathcal{O}}_{k}^{\left[d\right]}$ with $\text{\ensuremath{\left|\sigma\right|}\ensuremath{\ensuremath{\geq B^{d}+1}}}$ respects some sequence $\epsilon$. 
\begin{lemma}\label{lemma: iterative sequences cover all}
Let $B>k$ be an integer. Given $\sigma\in\widetilde{\mathcal{O}}_{k}^{\left[d\right]}$ with $\text{\ensuremath{\left|\sigma\right|}\ensuremath{\ensuremath{\geq B^{d}+1}}}$, there exists a $B$ iterative sequence $\epsilon$ so that $\sigma$ respects $\epsilon$. 
\end{lemma}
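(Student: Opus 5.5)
We construct the sequence $\epsilon$ greedily from the top level down, descending through the iterated left-greedy decompositions of $\sigma$ one level at a time while keeping track of a large block. We maintain the invariant that at stage $i$ (starting with $i=d$ and the empty sequence $\underline{c}_{>d}$), the permutation $\theta^{[i]}_{\underline{c}_{>i}}(\sigma)\in\widetilde{\mathcal{O}}_k^{[i]}$ is defined and has at least $B^{i}+1$ points. For $i=d$ this is exactly the hypothesis $|\sigma|\geq B^d+1$.

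At stage $i$, let $\tau[\phi_1,\dots,\phi_{|\tau|}]$ be the left-greedy decomposition of $\theta:=\theta^{[i]}_{\underline{c}_{>i}}(\sigma)$, so that $\tau=\tau_{\underline{c}_{>i}}(\sigma)$ and $\phi_s=\theta^{[i-1]}_{\underline{c}_{>i}+s}(\sigma)$. We split into three cases.
\begin{enumerate}
\item[(1)] $|\tau|\le B$. Then $\tau\in\widetilde{\mathcal{O}}_{k,B}$, since $\tau\in\widetilde{\mathcal{O}}_k$ and every element of $\widetilde{\mathcal{O}}_k$ of length at most $B$ lies in $\widetilde{\mathcal{O}}_{k,B}$. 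Set $\tau_i=\tau$. By pigeonhole, $|\theta|\ge B^i+1>|\tau|\cdot B^{i-1}$, so some block has at least $B^{i-1}+1$ points. Let $c_i$ be the least such index.
\item[(2)] $|\tau|\ge B+1$ and some $\phi_s$ with $s\le B$ has at least $B^{i-1}+1$ points. Since $|\tau|>B>k$, the permutation $\tau$ is not in $\mathcal{O}_k$, so it must be monotone. Set $\tau_i\in\{\mathrm{inc},\mathrm{dec}\}$ according to its direction, and let $c_i\le B$ be the least such index.
\item[(3)] $|\tau|\ge B+1$ and all of $\phi_1,\dots,\phi_B$ have at most $B^{i-1}$ points. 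Again $\tau$ is monotone. Set $\tau_i\in\{\mathrm{inc},\mathrm{dec}\}$ and $c_i=B+1$, and stop with $t=i$.
\end{enumerate}
In every case, let $\underline{\phi}_i=(\phi_1,\dots,\phi_{c_i-1})$. By the minimality of $c_i$ (or, in case (3), by the case hypothesis), each of these blocks lies in $\widetilde{\mathcal{O}}_k^{[i-1]}$ and has at most $B^{i-1}$ points, as the definition requires. In cases (1) and (2) the new block $\theta^{[i-1]}_{\underline{c}_{>i}+c_i}(\sigma)$ has at least $B^{i-1}+1$ points, so the invariant holds at stage $i-1$ and we continue.

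It remains to check that the process terminates through case (3) at some $t\ge1$ rather than running past depth $1$; this is the main obstacle. Suppose we reach stage $i=1$. Then $\theta\in\widetilde{\mathcal{O}}_k^{[1]}=\widetilde{\mathcal{O}}_k$ has at least $B+1$ points. By the stated convention its left-greedy decomposition is $\theta[1,\dots,1]$, with $|\tau|=|\theta|\ge B+1$ and every block of size $1=B^0$. No block exceeds $B^{0}$, so case (3) applies with $c_1=B+1$ and $t=1$. Here $\theta$ is monotone because $|\theta|>B>k$ excludes $\mathcal{O}_k$, which uses $B>k$ and the fact that elements of $\mathcal{O}_k$ have length at most $k$. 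Hence the process always stops at some $t\in\{1,\dots,d\}$ via case (3).

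Finally, we verify the resulting $\epsilon=(\tau_i,c_i,\underline{\phi}_i)_{i=t}^d$. It is a $B$ iterative sequence of depth $t$ by construction: the conditions on $\tau_i$ and $c_i$ for $i>t$ come from cases (1) and (2), the conditions at $i=t$ from case (3), and the size bounds on $\underline{\phi}_i$ were checked above. Conditions 1 to 4 of \emph{respecting} $\epsilon$ are exactly the case definitions. The only point needing care is that in case (1) a short monotone $\tau$ is recorded as $\tau$ itself rather than as $\mathrm{inc}$ or $\mathrm{dec}$, which matches condition 1.
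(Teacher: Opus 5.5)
Your proposal is correct and follows essentially the same argument as the paper. The paper phrases it as an induction on $d$: the base case $d=1$ uses monotonicity since $|\sigma|>B>k$, and the inductive step splits on whether one of the first $\min(B,|\tau|)$ blocks has at least $B^{d-1}+1$ points, with the counting bound forcing $|\tau|>B$ otherwise. You instead unroll this same case analysis into a top-down iteration with the invariant $|\theta^{[i]}_{\underline{c}_{>i}}(\sigma)|\ge B^i+1$, and your explicit checks (that $|\tau|>B>k$ forces $\tau$ to be monotone, and that the descent must stop via case (3) at depth $1$) fill in small points the paper leaves implicit.
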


\begin{proof}
We prove the statement by induction on $d$. When $d=1$, we have $\left|\sigma\right|\geq B+1>k$, and thus $\sigma$ is a monotone permutation. Choose  $\tau_{1}\in\left\{\mathrm{inc},\mathrm{dec}\right\} $ to match the direction of $\sigma$, $\phi_{1,j}$ to be the trivial permutation for all $1\leq j\leq B$, and $c_{1}=B+1$. Then the iterative sequence $\epsilon$ of depth $1$ consisting of this triple satisfies the required property.

Now suppose that $d\geq2$, $\sigma\in\widetilde{\mathcal{O}}_{k}^{\left[d\right]}$ satisfies $\text{\ensuremath{\left|\sigma\right|}\ensuremath{\ensuremath{\geq B^{d}+1}}}$ and that the statement holds for $d-1$, and let $\tau\left[\theta_{1},\dots,\theta_{\left|\tau\right|}\right]$ be the left-greedy decomposition of $\sigma$. If there exists $i\in\left\{ 1,\dots,\min\left(B,\left|\tau\right|\right)\right\}$ for which we have $\left|\theta_{i}\right|\geq B^{d-1}+1$, we choose minimal such $i$ and set $c_{d}=i$. We also choose $\tau_{d}=\tau$ or $\tau_{d}\in\left\{ \mathrm{inc},\mathrm{dec}\right\} $ to match the monotonicity of $\tau$ depending on whether we have $\left|\tau\right|\leq B$ or $\left|\tau\right|>B$ respectively. Finally, we set $\phi_{d,j}=\theta_{j}$ for every $1\leq j\leq c_{d}-1$. In either case, we may use the inductive hypothesis to construct the remaining part of the sequence $\left(\tau_{i},c_{i},\underline{\phi_{i}}\right)_{i=t}^{d-1}$. 

Otherwise, we must have $\left|\theta_{i}\right|\leq B^{d-1}$ for every $i\in\left\{ 1,\dots,\min\left(B,\left|\tau\right|\right)\right\} $,
and in particular, it follows that 
\[
\sum_{i=1}^{\min\left(B,\left|\tau\right|\right)}\left|\theta_{i}\right|\leq B\cdot B^{d-1}\leq B^{d}<\left|\sigma\right|.
\]
Thus, we must have $\left|\tau\right|>B$. Hence, we may choose $\tau_{d}$ to match the direction of $\tau$, $c_{d}=B+1$ and $\phi_{d,i}=\theta_{i}$ for every $1\leq i\leq B$. Then $\sigma$ respects the sequence $\left(\tau_{i},c_{i},\underline{\phi_{i}}\right)_{i=d}^{d}$, as required. 
\end{proof}
\begin{lemma} \label{lemma: FOS structure for stars}
Let $B > k$ be an integer, let $\left(i',j'\right)$ be chosen so that $M_{i',j'}=\star$, let $\epsilon=\left(\tau_{i},c_{i},\underline{\phi_{i}}\right)_{i=t}^{d}$ be a $B$ iterative sequence, and let $\mathcal{S}=\left\{ \left(u,v\right):u\in\left\{ t,\dots,d\right\} ,v\in\left\{ 1,\dots,\min\left(c_{u},B\right)\right\} \right\} $. Then there exists a first-order sentence 
\[
\Psi_{\epsilon,B,\left(i',j'\right)}=\exists_{\left(u,v\right)\in\mathcal{S}}x_{u,v},y_{u,v}:\psi_{\epsilon,B,\left(i',j'\right)}\left(x_{u,v},y_{u,v}\right)
\]
so that, for every $\sigma\in\mathcal{P}$, we have $\sigma\models\Psi_{\epsilon,B,(i',j')}$ if and only if $|\sigma^\#_{i',j'}|\geq B^d+1$ and $\sigma^\#_{i',j'}$ respects $\epsilon$. Whenever these conditions hold, there is a unique tuple $(a_{u,v},b_{u,v})_{(u,v)\in\mathcal{S}}$ satisfying $\psi_{\epsilon,B,(i',j')}$, and this tuple satisfies the following properties:
\begin{enumerate}
\item For each $\left(u,v\right)\in\mathcal{S}$ with $v\leq c_{u}-1$, the permutation $\theta_{\underline{c}_{>u}+v}^{\left[u-1\right]}\left(\sigma_{\left(i',j'\right)}^{\#}\right)$ (which equals $\phi_{u,v}$) consists of the points in $\left[a_{u,v},b_{u,v}\right]_{1}$.  
\item For each $u\in\{t+1,\dots,d\}$, the permutation
$\theta_{\underline{c}_{>u}+c_u}^{[u-1]}
\left(\sigma^\#_{i',j'}\right)$
(which contains at least $B^{u-1}+1$ points) consists of the
points in $[a_{u,c_u},b_{u,c_u}]_1$.
\end{enumerate}
\end{lemma}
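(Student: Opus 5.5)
We build $\psi_{\epsilon,B,(i',j')}$ by induction on the depth, descending from $u=d$ to $u=t$. The starting point is Proposition~\ref{prop: eeroklas}: with $I$ ranging over the finitely many possible sets of non-empty cells, we obtain a formula that identifies the $\leq_1$- and $\leq_2$-extremal points of $\sigma^\#_{i',j'}$. Since $M_{i',j'}=\star$ is the only non-zero entry in its row and column, $\sigma^\#_{i',j'}$ is an interval of $\sigma$. Points of this cell are therefore exactly those lying $\leq_1$-between its extremal points, and quantifiers can be relativised to it. Thus it suffices to work inside a definable interval $[x,y]_1$ and to express properties of the permutation it induces.

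The core is a uniform family of formulae $\chi_i(x,y)$, $i\le d$, expressing that $[x,y]_1$ is an interval of $\sigma$ whose induced permutation lies in $\widetilde{\mathcal{O}}_k^{[i]}$. Each $\chi_i$ comes with auxiliary formulae $\beta_i(x,y,x',y')$ expressing that $[x',y']_1$ is a block of the left-greedy decomposition of $[x,y]_1$. Membership is defined recursively. The set $[x,y]_1$ is in $\widetilde{\mathcal{O}}_k^{[i]}$ if it can be partitioned into maximal intervals, each in $\widetilde{\mathcal{O}}_k^{[i-1]}$, whose profile lies in $\widetilde{\mathcal{O}}_k$. The profile can be checked first order: either it is one of the finitely many permutations in $\mathcal{O}_k$, handled with boundedly many block variables, or it is monotone, which is a local condition between consecutive blocks.

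Left-greedy minimality is also first order. Blocks are chosen greedily from the left, so $[x',y']_1$ is a block exactly when three conditions hold: it is in $\widetilde{\mathcal{O}}_k^{[i-1]}$, it begins right after the previous block, and it cannot be extended $\leq_1$ while staying an interval in $\widetilde{\mathcal{O}}_k^{[i-1]}$ that is compatible with a decomposition of minimal profile length. Handling minimal profile length properly requires care. I would first establish that the left-greedy decomposition coincides with the one obtained by greedily taking each block maximal from the left, modulo the finitely many non-monotone profiles in $\mathcal{O}_k$, and treat those finitely many profiles by explicit enumeration.

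With these tools, $\psi$ asserts the following for each $u$ from $d$ down to $t$. There are points $a_{u,v},b_{u,v}$ for $v\le\min(c_u,B)$ with $[a_{u,v},b_{u,v}]_1$ the $v$-th consecutive blocks of the decomposition of $[a_{u+1,c_{u+1}},b_{u+1,c_{u+1}}]_1$, starting at its $\leq_1$-least point; at $u=d$ the outer interval is the cell itself. The profile must equal $\tau_u$. When $\tau_u\in\{\mathrm{inc},\mathrm{dec}\}$, it must be monotone of the right direction and have at least $B+1$ blocks, which is witnessed by a further block existing after $b_{u,B}$. For $v<c_u$, the block $[a_{u,v},b_{u,v}]_1$ must induce exactly $\phi_{u,v}$; this is a finite pattern of at most $B^{u-1}$ points, expressible by existential quantification plus a cardinality bound. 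The block $[a_{u,c_u},b_{u,c_u}]_1$ must have at least $B^{u-1}+1$ points, or when $c_t=B+1$ the first $B$ blocks must all have at most $B^{t-1}$ points. Finally, we impose $|\sigma^\#_{i',j'}|\ge B^d+1$ as a counting condition.

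Uniqueness of the tuple follows because every variable is pinned down deterministically: the cell extremes, the consecutive block endpoints of a unique left-greedy decomposition, and the least index $c_u$. Properties 1 and 2 then hold by construction. The main obstacle I expect is step two, namely expressing left-greedy block membership recursively without quantifier depth blowing up beyond a function of $d,k$. I would try to prove that each block is the $\leq_1$-longest prefix lying in $\widetilde{\mathcal{O}}_k^{[i-1]}$ among prefixes that are intervals, which would make the definition depend only on $\chi_{i-1}$.
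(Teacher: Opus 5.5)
Your outer structure matches the paper's. You locate the $\star$-cell via Proposition~\ref{prop: eeroklas}, note that it is an interval of $\sigma$, relativise to it, descend from level $d$ to level $t$, and at each level impose the profile $\tau_u$, the small blocks $\phi_{u,v}$ and the size conditions.

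The gap is the step you flag yourself: you never establish that the left-greedy decomposition is first-order accessible, and the mechanism you sketch is not first-order. Your $\beta_i(x,y,x',y')$ is meant to recognise an \emph{arbitrary} block via ``it begins right after the previous block''. That is a recursion of unbounded length, and no single formula can replace it. For instance, if $k\geq 6$ then $2143\in\mathcal{O}_k$. So for $\sigma=\bigoplus^{N}21$ viewed in $\widetilde{\mathcal{O}}_k^{[2]}$, the left-greedy blocks are runs of $q_0\geq 2$ consecutive copies of $21$, where $q_0$ is the largest $q$ with $\bigoplus^{q}21\in\mathcal{O}_k$. The block starts are then the copies with index $\equiv 1 \pmod{q_0}$, and no first-order formula defines this set for all $N$: copies far from both ends have the same type at any fixed quantifier depth. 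Likewise, ``$[x,y]_1$ can be partitioned into intervals in $\widetilde{\mathcal{O}}_k^{[i-1]}$ with monotone profile'' is, as stated, a second-order condition, so $\chi_i$ is not yet defined either.

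The paper avoids all this by importing Proposition~\ref{prop: eeroklas second} from~\cite{eeroklas}. For each $\tau\in\widetilde{\mathcal{O}}_{k,B}$ this gives a formula fixing the left-greedy profile and the endpoints of all blocks. For $\tau\in\{\mathrm{inc},\mathrm{dec}\}$ it does the same for a monotone profile of length at least $B+1$, fixing the endpoints of the first $B$ blocks. With that in hand, the lemma is just your relativisation and induction plus Lemma~\ref{lemma: modifications}.

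Your route can still be completed, because only boundedly many (at most $B+1$) blocks at each level are ever needed. Your conjectured greedy characterisation is also true, with no exceptional profiles. Suppose $\tau[\theta_1,\dots,\theta_r]$ is left-greedy, and some interval $P$ of $\sigma$ in the block class, starting at the first point of $\theta_l$, is longer than $\theta_l$.
\begin{itemize}
\item Write $P=\theta_l\cup\dots\cup\theta_{j-1}\cup\theta_j'$ with $j>l$ and $\theta_j'$ a non-empty $\leq_1$-prefix of $\theta_j$.
\item If $\theta_j'\neq\theta_j$, then $\theta_j\setminus\theta_j'$ is again an interval of $\sigma$: the value set of $P$ contains that of $\theta_l$, so it meets that of $\theta_j$ in an end segment.
\item Replace $\theta_l,\dots,\theta_j$ by $P$ and $\theta_j\setminus\theta_j'$ (if non-empty). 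The new profile is a subpattern of $\tau$: represent $P$ inside $\theta_{j-1}$, or inside $\theta_j$ if $\theta_j'=\theta_j$. Hence it lies in $\widetilde{\mathcal{O}}_k$.
\item The new decomposition has either fewer blocks, or the same number with lexicographically larger sizes, contradicting left-greediness.
\end{itemize}
Hence the $v$-th block for $v\leq B+1$ is definable from $\chi_{i-1}$ by bounded iteration. The existence of a decomposition with increasing profile can be expressed as: every point lies in a subinterval satisfying $\chi_{i-1}$ such that everything in $[x,y]_1$ to its left lies below it and everything to its right lies above it. The decreasing case is the same with ``below'' and ``above'' swapped.

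With these two facts, or simply by citing Proposition~\ref{prop: eeroklas second}, the rest of your plan goes through. Your worry about quantifier depth is not an issue, since one only needs some formula for each fixed $\epsilon$ and $B$.
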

\noindent Informally, Lemma~\ref{lemma: FOS structure for stars} states the following. By using first-order formulae, it is possible to track the endpoints of the `large' iterative permutations $\theta_{\underline{c_{>i}}}^{\left[i\right]}\left(\sigma_{\left(i',j'\right)}^{\#}\right)$ corresponding to an iterative sequence $\epsilon$, and it is also possible to track the endpoints of the preceding small permutations through the process. In addition, it is possible to track the end-points of the first $B$ permutations in the left-greedy decomposition occurring at the depth $t$ by using a first-order formula, and we use $\theta_{1},\dots,\theta_{B}$ to denote these permutations.

As the proof of Lemma~\ref{lemma: FOS structure for stars} is a rather technical construction, we defer it to the Appendix. 
\begin{lemma}\label{lemma: structured deletion for stars}
Let $L_{3}$ be a positive integer. Then there exists $B$ for which the following holds. Let $\epsilon=\left(\tau_{i},c_{i},\underline{\phi_{i}}\right)_{i=t}^{d}$ be a $B$ iterative sequence of depth $t$. Then there exist a constant $s_{3}$ which is determined by $\epsilon$, and which can be bounded above as a function of $d$, $k$ and $B$, and a structured deletion function $f$ for the first-order sentence $\Psi_{\epsilon,B,\left(i,j\right)}$ which deletes at least $L_{3}$ points, the first one being the $s_{3}$-th least point in the $\sigma_{\left(i,j\right)}^{\#}$ with respect to $\leq_{1}$-order. 
\end{lemma}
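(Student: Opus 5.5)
We follow the template of Propositions~\ref{prop: structured deletion for monotone sub intervals} and~\ref{prop: monotone alternating intervals}: locate a long, rigidly structured block of $\leq_1$-consecutive points inside $\sigma^\#_{i,j}$, delete $L_3$ points from its middle, and verify~\eqref{eq: structured} via Lemma~\ref{lemma: square comparision}(a), using Lemma~\ref{lemma: substitutions inside iteratively inflated alternating sequences} to build the lift $F_{\sigma,\uparrow}$. We take $B$ large relative to $L_3$, $d$, $k$, $\ell m$ and $D_2(d,k)$, say $B\geq \ell m\,(D_2(d,k)+L_3+1)\cdot B_0$ for a suitable padding constant $B_0$.

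By Lemma~\ref{lemma: FOS structure for stars}, $\Psi_{\epsilon,B,(i,j)}$ pins down the nested blocks $\theta^{[u-1]}_{\underline c_{>u}+v}$, the prefix blocks $\phi_{u,v}$, and at depth $t$ a monotone profile (direction $\tau_t$) of length $\geq B+1$ whose first $B$ blocks $\theta_1,\dots,\theta_B\in\widetilde{\mathcal O}_k^{[t-1]}$ each have at most $B^{t-1}$ points. The number of points of $\sigma^\#_{i,j}$ preceding $\theta_1$ in $\leq_1$ is bounded by $\sum_u c_u B^{u-1}\le dB^d$, since it is determined by $\epsilon$. The deleted set consists of the whole blocks $\theta_{s},\dots,\theta_{s+L_3-1}$ for a middle index $s\approx B/2$; these have at least $L_3$ points in total. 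Then $s_3$ is the number of points preceding $\theta_s$, plus one, which depends only on $\epsilon$ because all $\phi_{u,v}$ are recorded. (If $t=1$ the blocks are single points and this reduces to the monotone case.) Injectivity of $f^\#$ is immediate: the deleted blocks sit at a position fixed by $\epsilon$, so $f^\#(\sigma)$ determines $\sigma^\#$. Although the deleted blocks' contents are not recorded, we can refine $\Psi$ by splitting on the finitely many possible contents of $\theta_s,\dots,\theta_{s+L_3-1}$, each of bounded size. Alternatively we note that the bounded-size contents form a finite set and compose $\Psi$ with those finitely many choices, which keeps $N_1$ bounded.

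For~\eqref{eq: structured}, we let $\theta=f(\sigma)$ and take $F_{\theta,\#}$ as in the earlier proofs. The remaining points of the depth-$t$ monotone block $\theta_1\oplus\cdots\oplus\theta_B$ (minus the deleted ones) number at least $B-L_3$ blocks. By pigeonhole, some lexicographically least cell $(i_1,j_1)$ of $F_{\theta,\#}$ receives a long run of them: at least $D_2$ consecutive surviving blocks if it is a $\star$-cell, or at least two points if it is a $\pm1$-cell, in which case the blocks there must be singletons by monotonicity. Exactly as in Claim~\ref{claim: J'' empty}, we show that no surviving points of the structure lie in lexicographically later cells. Otherwise we move them into $(i_1,j_1)$ by applying Lemma~\ref{lemma: substitutions inside iteratively inflated alternating sequences} with $d'=t-1$, $\mu$ the run in that cell and $\mu'$ the run extended by the later blocks, which gives an $<_{LG}$-smaller figure. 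The same lemma, applied with $\mu'$ equal to the run with $\theta_s,\dots,\theta_{s+L_3-1}$ reinserted in their gap, produces $F_{\sigma,\uparrow}\in\mathrm{Geom}^\#(M)_{d,k}$ inducing $\sigma$. Lemma~\ref{lemma: square comparision}(a) then yields $F_\theta\sim F_{\theta,\#}$. The least point of the cell is never deleted, because $s_3>1$.

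The main obstacle is applying Lemma~\ref{lemma: substitutions inside iteratively inflated alternating sequences}, which requires the run of blocks inside $(i_1,j_1)$ of $F_{\theta,\#}$ to be an interval whose \emph{left-greedy} decomposition in $\widetilde{\mathcal O}_k^{[t]}$ is a monotone sum of at least $D_2$ blocks. The blocks of $\sigma$ may merge after deletion, or be re-decomposed inside the $\star$-cell of $\theta^\#$. To handle this, we first argue, as in Claims~\ref{claim: leftgreedy_a_a+1_1}--\ref{claim: leftgreedy_a_a+1_2}, that the left-greedy decomposition of a union of consecutive $\theta_r$'s has at least roughly as many blocks minus one. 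This uses the left-greedy minimality of $\tau_t[\theta_1,\dots]$ in $\sigma$, together with the fact that the deletion occurs in the middle, so the surviving prefix and suffix of blocks remain left-greedy. If this comparison fails we fall back on choosing $B$ even larger, so that a margin of unaffected blocks suffices.
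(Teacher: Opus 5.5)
Your skeleton matches the paper's: delete whole recorded blocks from the middle of the depth-$t$ run $\theta_1,\dots,\theta_B$, read $s_3$ off $\epsilon$, prove $J''=\emptyset$ and build $F_{\sigma,\uparrow}$ with Lemma~\ref{lemma: substitutions inside iteratively inflated alternating sequences}, and finish with Lemma~\ref{lemma: square comparision}(a). The case $t=1$ does reduce to Proposition~\ref{prop: structured deletion for monotone sub intervals}. One small misreading: the contents of $\theta_1,\dots,\theta_B$ \emph{are} recorded. Since $c_t=B+1$, we have $\underline{\phi}_t=(\phi_{t,1},\dots,\phi_{t,B})$, so the number of deleted points is fixed by $\epsilon$ and no refinement of $\Psi_{\epsilon,B,(i,j)}$ is needed.

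The genuine gap is the choice of the target cell when $t\geq 2$. Your pigeonhole rule allows $(i_1,j_1)$ to be a $\pm1$-cell holding two points of the structure, and then neither later step works. Lemma~\ref{lemma: substitutions inside iteratively inflated alternating sequences} only operates inside $\star$-cells. The later blocks $\theta_r\in\widetilde{\mathcal{O}}_k^{[t-1]}$ are in general not monotone, so they can neither be moved into a monotone cell (to prove $J''=\emptyset$) nor reinserted there (to build the lift). Your claim that ``the blocks there must be singletons by monotonicity'' is false: a $\pm1$-cell may hold fragments of blocks or one monotone block. Moreover, for $t\ge2$ two consecutive singleton blocks cannot occur at all, since they would merge into the single block $12$ or $21$.

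What is missing is the paper's left-greedy counting argument. At most $\max(\ell,m)$ blocks straddle two columns. A column without a $\star$-entry fully contains at most three blocks: at most one block straddles its two cells, so four full blocks would put two consecutive blocks inside one monotone cell, where they would form a single monotone block, contradicting the minimality of $|\tau|$. Hence the least column fully containing at least $\max(D_2(d,k)+2,4)$ blocks $\theta_u$ with $u<t_3$ is forced to be a $\star$-column; it exists for $t_3=(5+D_2(d,k))\max(\ell,m)+1$, and the argument runs there.

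Restricting to blocks of index below the deleted gap also settles your ``main obstacle'' with no fallback. These blocks form a consecutive run of the original left-greedy decomposition, so $\tau_{v-u+1}[\theta_u,\dots,\theta_v]$ is itself left-greedy, since any improvement could be substituted back into $\tau[\theta_1,\dots,\theta_{|\tau|}]$. No merging across the gap ever has to be controlled, and ``choosing $B$ even larger'' is not an argument for this step.
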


\begin{proof}
We claim that 
\[B=\max\left(\left(5+D_{2}\left(d,k\right)\right)\max\left(\ell,m\right)+L_{3}+4, \ell m \cdot D_3(d,k)+L_3+1\right)\]
works, but we remark that we made no effort to actually optimize this constant. First assume that $t \geq 2$; in this case we only need the fact that $B \geq \left(5+D_{2}\left(d,k\right)\right)\max\left(\ell,m\right)+L_{3}+4$. Throughout the proof, whenever we say that $p$ is the $t^{th}$ least  point in a cell or a column, we mean that $p$ is the $t^{th}$ least point in a cell or a column with respect to order $\leq_{1}$. Fix a permutation $\sigma \in \cP^{\Psi_{\epsilon,B,\left(i,j\right)}}$. Let $s_0$ denote the position of $a_{t,1}$ in $\sigma^\#_{i,j}$ with respect to the $\leq_1$-order. Since $\epsilon$ specifies all the small blocks preceding the larger block, by backtracking through the iterative structure it is not too hard to check that 
\[
s_0
=
1+\sum_{u=t+1}^{d}\sum_{v=1}^{c_u-1}
|\phi_{u,v}^{[u-1]}|.
\]
In particular, $s_0$ is independent of $\sigma$ (and is
completely determined by $\epsilon$).

Let $\tau\in\widetilde{\mathcal{O}}_{k}$ and $\theta_{i}\in\widetilde{\mathcal{O}}_{k}^{\left[t-1\right]}$ be chosen so that $\tau\left[\theta_{1},\dots,\theta_{\left|\tau\right|}\right]$ is the left-greedy decomposition of $\theta_{\underline{c}_{>t}}^{\left[t\right]}\left(\sigma_{\left(i,j\right)}^{\#}\right)$. Thus, $\tau$ must be a monotone permutation whose direction is given by $\tau_{t}$, and $\theta_{1},\dots,\theta_{B}$ are uniquely determined, as they must coincide with $\phi_{t,1},\dots,\phi_{t,B}$. Let $t_3=(5+D_2(d,k))\max(\ell,m)+1$, and put
\[
s_3=s_0+\sum_{v=1}^{t_3-1}|\phi_{t,v}^{[t-1]}|.
\]
Then the least point in $\theta_{t_3}$ is the $s_3$-th least
point in $\sigma^\#_{i,j}$ with respect to the $\leq_1$-order.
Again, $s_3$ is determined by $\epsilon$, and is bounded above
by a function of $d$, $k$ and $B$.

Let $f$ denote the function which deletes all the points from $\sigma$ that are contained in one of the blocks corresponding to $\theta_{t_{3}},\dots,\theta_{t_{3}+L_{3}-1}$, and note that this corresponds to deleting a sub-interval of $\sigma$ containing the $t$-th least points in $\sigma_{\left(i,j\right)}^{\#}$ for $s_{3}\leq t\leq s_{3}+L'_{3}-1$, where $L'_{3}=\sum_{j=t_{3}}^{t_{3}+L_{3}-1}\left|\theta_{j}\right|\ge L_{3}$. Thus, $L'_3$ is determined by $\underline{\phi_{t}}$, and hence can be bounded by a function of $d$, $k$ and $B$. Let $f^{\#}$ denote the appropriate gridded deletion function. 

As in the proof of Proposition~\ref{prop: structured deletion for monotone sub intervals}, we write $\theta:=f\left(\sigma\right)$, and define the gridded figures $F_{\sigma,\#}$, $F_{\theta}$ and $F_{\theta,\#}$ by choosing $F_{\sigma,\#}\in\sigma^{\#}$, $F_{\theta}\in f^{\#}\left(\sigma\right)$ and $F_{\theta,\#}\in\theta^{\#} = \Pi^\#(f(\sigma))$. Let $i'$ denote the least column in $\theta^{\#}$ that fully contains at least $\max\left(D_{2}(d,k)+2,4\right)$ of the blocks $\theta_u$ with $u<t_{3}$. We now prove that such an index $i'$ indeed exists. 

Suppose for the sake of contradiction that no such $i'$ exists. Note that (i) at most $\max\left(\ell,m\right)$ of $\theta_{u}$ can contain points in two columns. (ii) Each column not containing a $\star$-entry can contain at most $3$ of them completely. Indeed, otherwise as $\tau$ is a monotone permutation, it would follow that there exists $\theta_u$ and $\theta_{u+1}$ so that $\theta_u$ is contained in some monotone cell $C$ and $\theta_{u+1}$ intersects $C$ non-trivially, and thus the direction of $C$ must also coincide with the direction of $\tau$. However, this means that at least one of the points from $\theta_{u+1}$ could be moved to the permutation $\theta_{u}$ contradicting the fact that $\tau[\theta_1,\dots,\theta_{|\tau|}]$ is a left-greedy decomposition. This contradicts the fact that $\tau\left[\theta_{1},\dots,\theta_{\left|\tau\right|}\right]$ is a left-greedy decomposition of the underlying permutation. (iii) Each column containing a $\star$-entry can contain at most $D_{2}\left(d,k\right)+1$ of them. 

Thus the number of $\theta_{u}$'s with $u<t_{3}$ would be at most 
\[\max\left(\ell,m\right) + \max\left(\ell,m\right)\cdot\max\left(3,D_{2}\left(d,k\right)+1\right)<t_{3}-1,\] which is a contradiction. Thus, such $i'$ must exist, and by the observations above, must contain a $\star$-entry. Let $\left(i',j'\right)$ be chosen so that $M_{i',j'}=\star$. In addition, we write $\theta_u,\dots,\theta_v$ for the blocks that are completely contained in the cell $C_{i',j'}$, and we recall that $v-u+1 \geq D_2(d,k)$. Write $\varphi = \tau_{v-u+1}[\theta_u,\dots,\theta_v]$, where $\tau_{v-u+1}$ is a monotone permutation of length $v-u+1$ whose direction coincides with the direction of $\tau$. Then $\tau_{v-u+1}[\theta_u,\dots,\theta_v]$ is the left-greedy decomposition of $\varphi$ - indeed, if a better inflation of blocks were possible, its profile would have to be clearly a monotone permutation whose direction coincides with the direction $\tau$, and thus we could simply replace the appropriate gap in the left-greedy decomposition $\tau[\theta_1,\dots,\theta_{|\tau|}]$ to obtain the desired contradiction. 

Let $I$ denote the set of points contained in $\theta_{1},\dots,\theta_{B}$, let $J_{0}$ denote the set of deleted points, and let $J=I\setminus J_{0}$. We write $J'$ for $J\cap\left(F_{\theta,\#}\right)_{i',j'}$, and $J''$ for the set of points in $J$ that are contained in $\left(F_{\theta,\#}\right)_{\left(u',v'\right)}$ with $u'>i'$. 
\begin{claim}\label{claim: again J'' claim}
    We have $J''=\emptyset$.
\end{claim}
\begin{proof}
Suppose for the sake of contradiction that $J''\neq\emptyset$. Let $I'$ denote the sub-interval of $\left(F_{\theta,\#}\right)_{\left(i',j'\right)}$ corresponding to the points located in the blocks $\theta_u,\dots,\theta_v$. As $v-u+1 \geq D_2(d,k)$ and $\tau_{v-u+1}[\theta_u,\dots,\theta_v]$ is the left-greedy decomposition of $\varphi$, the conditions of Lemma~\ref{lemma: substitutions inside iteratively inflated alternating sequences} are satisfied when $I$ is the interval corresponding to $\varphi$. Define
\[\mu' = \tau'\left[\theta_u,\dots,\theta_{t_3-1},\theta_{t_3+L_3},\dots, \theta_B\right]\]
where $\tau'$ is a monotone permutation of appropriate length whose direction of monotonicity coincides with $\tau$, and let $F_{\theta,1}$ denote the gridded figure obtained by replacing the interval $I$ with the permutation $\mu'$. Note that $F_{\theta,1} \in \text{Geom}^{\#}\left(M\right)_{d,k}$ by Lemma~\ref{lemma: substitutions inside iteratively inflated alternating sequences}. 

Let $F_{\theta,2}$ denote the sub-figure of $F_{\theta,1}$ obtained by deleting the points that belong to the original blocks $\theta_{u+1},\dots,\theta_{t_3-1},\theta_{t_3},\dots,\theta_B$ in $F_{\theta,\#}$. Then $F_{\theta,2} \in \text{Geom}^{\#}\left(M\right)_{d,k}$, and it is also easy to check that $\pi^{\#}\left(F_{\theta,2}\right)=\theta$ and $F_{\theta,2}<_{LG}F_{\theta,\#}$. This gives the required contradiction, and thus we must have $J'' = \emptyset$. 
\end{proof}

Now suppose that $J'' = \emptyset$. Define
\[\mu' = \tau'\left[\theta_u,\dots, \theta_B\right]\]
where $\tau'$ is a monotone permutation of appropriate length whose direction of monotonicity coincides with $\tau$. As in the proof of Claim~\ref{claim: again J'' claim}, we apply Lemma~\ref{lemma: substitutions inside iteratively inflated alternating sequences} to the interval $I'$, which is defined as in the proof of Claim~\ref{claim: again J'' claim}, i.e.\ consists of the blocks $\theta_u,\dots,\theta_v$. Let $F_{\theta,1}$ denote the gridded figure obtained by replacing interval $I'$ with the permutation $\mu'$. As in the proof of Claim~\ref{claim: again J'' claim}, it is easy to check that the conditions of Lemma~\ref{lemma: substitutions inside iteratively inflated alternating sequences} apply, and thus $F_{\theta,1} \in \text{Geom}^{\#}\left(M\right)_{d,k}$. Let $F_{\sigma,\uparrow}$ denote the  subpattern of $F_{\theta,1}$ obtained by deleting the points that belong to the original blocks $\theta_{v+1},\dots,\theta_{t_3-1},\theta_{t_3+L_3},\dots,\theta_B$. Then it is easy to see that $F_{\sigma,\uparrow}$ induces the permutation $\sigma$. 

Since $J'' = \emptyset$, it is easy to check that the gridded figures $F_{\sigma,\#}$, $F_{\theta}$, $F_{\theta,\#}$ and $F_{\sigma,\uparrow}$ satisfy the conditions of Lemma~\ref{lemma: square comparision}, and hence $F_{\theta}$ and $F_{\theta,\#}$ must be equivalent. This completes the proof when $t \geq 2$. 

When $t = 1$, we may follow similar approach - however, we have to be careful with the fact that $i'$ may not necessarily correspond to a column with $\star$. Using the fact that $B \geq \ell m \cdot D_3(d,k)+L_3+1$ and using Proposition~\ref{prop: structured deletion for monotone sub intervals} as appropriate to deal with those columns containing monotone cells, it is not difficult to modify the proof and conclude the result. We omit the details. 
\end{proof}

\subsection{Putting it together}\label{subsection: putting it together}
\begin{proof}[Proof of Theorem~\ref{theorem: technical cover}]
Fix $\varepsilon>0$ and set $\gamma := \frac{1+\sqrt{5}}{2}$. Let $L$ be a positive integer chosen so that $\gamma^{-L} < \varepsilon$. Let $C_1$ be the constant obtained when Proposition~\ref{prop: structured deletion for monotone sub intervals} is applied with $L_1 = L$, let $C_2$ be the constant obtained when Proposition~\ref{prop: monotone alternating intervals}
is applied with $L_2 = L$, and let $B$ be the constant obtained when Lemma~\ref{lemma: structured deletion for stars} is applied with $L_3 = L$. 

Let $\mathcal{C}$ denote the collection of columns which contain at least two non-zero non-star entries, let $\mathcal{R}$ denote the collection of rows which contain at least two non-zero non-star entries, let $\mathcal{I}$ denote the collection of cells that contain $\pm 1$-entry and are the only non-zero entry in their respective row and column, and let $\mathcal{S}$ denote the collection of cells containing $\star$-entry. 

Let $\mathbf{P}$ denote a finite collection of first-order sentences constructed as follows. First, for each $(i,j) \in \mathcal{I}$, we include a copy of $\Psi_{(i,j)} = \exists x: \psi_{i,j}(x)$, where $\psi_{i,j}$ is the formula constructed in Proposition~\ref{prop: isolated monotone FOS} with our choice of $C_1$. Secondly, for each $i \in \mathcal{C}$ and $j \in \mathcal{R}$, and for every $\epsilon \in \{0,1\}^{C_2}$, we include a copy of $\Psi_{i,\epsilon}$ and $\hat{\Psi}_{j,\epsilon}$ in $\mathbf{P}$, where $\Psi_{i,\epsilon}$ is the first-order sentence provided by Lemma~\ref{lemma: FOS alternating monotone sub intervals}, and $\hat{\Psi}_{j,\epsilon}$'s denote the row analogues of these sentences. 

Finally, let $\mathcal{E}$ denote the collection of all $B$ iterative sequences for our choice of $B$. For each $(i,j) \in \mathcal{S}$ and $\epsilon \in \mathcal{E}$, we include a copy of $\Psi_{\epsilon, B, (i,j)}$ in $\mathbf{P}$, and we recall that the number of such first-order sentences is indeed bounded in terms of $d$, $k$ and $B$. 

Let $C = \max(C_1,C_2,B^d+1)$, and let $N_2 = \ell m\cdot C$. Given $n \geq N_2$, we claim that for every $\sigma \in \cP_n$ there exists $\Psi \in \mathbf{P}$ for which we have $\sigma \in \cP^{\Psi}$. 

Since $n \geq N_2$, by the pigeonhole principle there exists $(i,j)$ for which we have $|\sigma^\#_{i,j}| \geq C$. If $(i,j) \in \mathcal{I}$, we clearly have $\sigma \models \Psi_{i,j}$. If the column $i$ or the row $j$ contains two non-zero entries, we certainly have $\sigma \models \Psi_{i,\epsilon}$ or $\sigma \models \hat{\Psi}_{j,\epsilon}$ respectively for some $\epsilon \in \{0,1\}^{C_2}$. Finally, if $(i,j) \in \mathcal{S}$, Lemma~\ref{lemma: iterative sequences cover all} implies that there exists $\epsilon \in \mathcal{E}$ for which we have $\sigma \models \Psi_{\epsilon,B,(i,j)}$.

Thus the collection $\mathbf{P}$ satisfies the condition (ii). By Propositions~\ref{prop: structured deletion for monotone sub intervals}, ~\ref{prop: monotone alternating intervals} and Lemma~\ref{lemma: structured deletion for stars}, for every $\Psi \in \mathbf{P}$ there exists a structured deletion function $f$ deleting at least $L$ entries. Hence Lemma~\ref{lemma: injective} implies that for each $\Psi \in \mathbf{P}$, we have $\lim_{n\rightarrow \infty} \mathbb{P}\left(\sigma_n \models \Psi\right)  \leq \gamma^{-L} < \varepsilon$, which completes the proof. 
\end{proof}

\section*{Acknowledgments}
We thank the referees for very detailed comments which substantially improved the presentation in the manuscript. The manuscript was submitted to a journal in December 2025, and all mathematical ideas and proofs were solely obtained by the authors. Originally, GPT-4 was used to generate Figure~\ref{fig:fig 2}, and during revisions, GPT-6 Astra was used for proofreading purposes and to generate Figure~\ref{fig:inflation-example}.

\bibliographystyle{plain}
\bibliography{permutationsbiblio}
\section*{Appendix}
\subsection{A winning strategy for Duplicator: proof of Lemma~\ref{lemma: infinite model ER game}}
\begin{proof}[Proof of Lemma~\ref{lemma: infinite model ER game}]
By the discussion after Lemma~\ref{lemma: infinite model ER game}, since $\Psi$ has quantifier depth at most $k$, it is enough to show Duplicator has a winning strategy in the $k$-round Ehrenfeucht--Fra\"iss\'e game on $(F^{\infty}, G_{m\times m})$ and $(F', G_{m\times m})$. Duplicator's strategy, described below, will ensure that in every round $i$, the points $p_i\in F^{\infty}$ and $q_i\in F'$ picked in that round both lie in the interior of the same cell of $G_{m\times m}$.

In particular, this means that whenever $p_{j}$ and $p_{j'}$ lie in distinct cells $C$ and $C'$ respectively, the points $q_j$ and $q_{j'}$ will lie in $C$ and $C'$ respectively, and in particular will satisfy exactly the same $\leq_1$ and $\leq_2$ relations (here note it is crucial that $M$ is a permutation matrix, i.e.\ that there is exactly one non-empty cell per row and column). Thus Duplicator can view the global Ehrenfeucht--Fra\"iss\'e game on $F^{\infty}$ and $F'$ as the union of $m$ independent games, one for each of the pairs of $C_{i,\tau(i)}$-restricted point-sets $F^{\infty}_{i,\tau(i)}$ and $F'_{i,\tau(i)}$, $i\in [m]$. Provided that for every $i\in [m]$ Duplicator has a winning strategy in the $k$-round Ehrenfeucht--Fra\"iss\'e game on $F^{\infty}_{i,\tau(i)}$ and $F'_{i,\tau(i)}$, Duplicator then clearly has a winning strategy for the $k$-round Ehrenfeucht--Fra\"iss\'e game on $F^{\infty}$ and $F'$.

We may thus focus our efforts on a single cell $C_{i,\tau(i)}$ and describe Duplicator's winning strategy in that restricted setting, setting $P=F^{\infty}_{i,\tau(i)}$ and $Q=F'_{i,\tau(i)}$. We distinguish two cases.

\textbf{If $i\in [m]\setminus I$ (the easy case)}, then Duplicator's strategy is the obvious one: both $P$ and $Q$ contain precisely $a_i$ points. If Spoiler on their $i$-th move picks the $j$-th point of $P$ in the $\leq_1$-order, then Duplicator responds by picking the $j$-th point of $Q$ in the $\leq_1$-order, and vice-versa. This is clearly seen to be a winning strategy: $p_i\leq_1 p_{i'}$ will hold if and only if $q_i\leq_1 q_{i'}$ holds, so the ordered tuples $(p_1,\ldots p_k)$ and $(q_1, \ldots, q_k)$ satisfy identical $\leq_1$-relations, which in turn implies they satisfy identical $\leq_2$-relations (since $P$, $Q$ are $M$-compatible and $M_{i,\tau(i)}\in\{-1,+1\}$).

\textbf{If $i\in I$ (the hard case)}, then Duplicator's strategy to respond to Spoiler's moves is less obvious: $P$ contains infinitely many points, while $Q$ only contains finitely many points. Thus Duplicator must take some care when replying to Spoiler's move, as they no longer have an `obvious' response of picking the unique matching point. Duplicator's strategy will be instead to ensure that gaps between successively picked points in the two sets $P$ and $Q$ are either equal in size or both large.

To be more precise, after $r\geq 0$ rounds, the players have together selected sets of $r$ points from each of $P$ and $Q$. Let $x_1\leq_1x_2\leq_1\ldots \leq_1 x_r$ and $y_1\leq_1 y_2 \leq_1\ldots  \leq_1 y_r$ be the points picked inside $P$ and $Q$ respectively, listed in the $\leq_1$ order (rather than in the order in which they were picked in the game). These points  give rise to natural partitions of $P$ and $Q$ into $r+1$ consecutive intervals: for $1\leq i\leq r-1$, set $P_i=P_i^{(r)}:=\{x\in P:\  x_{i} \leq_1 x<_1x_{i+1}\}$, and set $P_0=P_0^{(r)}:=\{x\in P: \ x<_1x_1\}$, $P_r=P_r^{(r)}:=\{x\in P: \ x_r\leq_1 x\}$. If $r = 0$, we simply set $P_0 = P_0^{(0)} = P$. We define the intervals $Q_i$, $0\leq i \leq r$, in $Q$ mutatis mutandis. Observe that, crucially for our argument, the key property of our construction of $F^{\infty}$ ensures that every (potentially infinite) interval $P_i$ has both an $\leq_1$-minimum element and an $\leq_1$-maximum element, just as the finite sets $Q_i$ do.

Duplicator will follow an inductive strategy ensuring that for every $r$: $0\leq r\leq k-1$ and every $i$: $0\leq i \leq r$, we have $\min\{\vert P_i\vert, 2^{k-r}\} =\min\{\vert Q_i\vert, 2^{k-r}\}$ --- in other words, corresponding intervals in the partitions of $P$ and $Q$ have either the same size or are both large.

Our assumption guarantees that $\vert Q\vert \geq 2^{k}$, so this is certainly satisfied when $r=0$. Suppose in round $r+1\geq 1$ Spoiler picks a point $p_{r+1}$ inside the interval $P_i$. If $\vert P_i\vert = \vert Q_i\vert  < 2^{k-r}$ and $p_{r+1}$ is the $j$-th least point of $P_i$ in the $\leq_1$-order, then Duplicator answers with the obvious move, namely playing as $q_{r+1}$ the $j$-th least point of $Q_i$ in the $\leq_1$-order.

Otherwise, Duplicator's inductive strategy up to that point implies that $\vert P_i\vert$ and $\vert Q_i\vert$ both have size at least $2^{k-r}$. Spoiler's move splits $P_i$ into two intervals $P'_i=\{x\in P_i: \ x<_1 p_{r+1}\}$ and $P''_i:=\{x\in P_i: \ p_{r+1}\leq_1 x\}$, at least one of which must have size at least $2^{k-r-1}$. Duplicator responds by picking a point $q_{r+1}\in Q_i$ as follows:
\begin{itemize}
    \item if $\vert P'_i\vert =j < 2^{k-r-1}$, then Duplicator plays as $q_{r+1}$ the $(j+1)$-th least point of $Q_i$ in the $\leq_1$-order;
    \item if $\vert P''_i\vert =j < 2^{k-r-1}$, then Duplicator plays as $q_{r+1}$ the $j$-th greatest point of $Q_i$ in the $\leq_1$-order;
    \item otherwise if $\vert P'_i\vert$ and $\vert P''_i\vert$ are both at least $2^{k-r-1}$, then Duplicator plays as $q_{r+1}$ the $2^{k-r-1}$-th greatest point of $Q_i$ in the $\leq_1$-order.
\end{itemize}
The above can be done as long as $r\leq k-1$. Duplicator's move $q_{r+1}$ splits $Q_i$ into two intervals $Q'_i=\{y\in Q_i: \ y<_1 q_{r+1}\}$ and $Q''_i:=\{y\in Q_i: \ q_{r+1}\leq_1 y\}$, and ensures that 
\begin{align*}
    \min\{2^{k-r-1}, \vert P'_i\vert \} =\min\{2^{k-r-1}, \vert Q'_i\vert  \}&& \textrm{ and } &&   \min\{2^{k-r-1}, \vert P''_i\vert \} =\min\{2^{k-r-1}, \vert Q''_i\vert  \}
\end{align*}
hold, as desired, so that our inductive hypothesis is satisfied at the end of round $r+1$ provided $r+1\leq k$. Observe that $P_i$ and $Q_i$ play symmetrical roles in the above (all we are using is the fact that $P_i$ and $Q_i$ are either of the same size or both of size at least $2^{k-r}$). Thus if Spoiler instead picks a point $q_{r+1}\in Q_i$ in round $r+1$, then Duplicator can respond with the same strategy as above, interchanging the roles of $P$ and $Q$.

It is easy to see that this constitutes a winning strategy for Duplicator in the $k$-round Ehrenfeucht--Fra\"iss\'e game on $P=F^{\infty}_{i,\tau(i)}$ and $Q=F'_{i,\tau(i)}$. This in turn implies the full statement of the lemma.
\end{proof}
\subsection{Existence of first-order formulae picking out specific points in left-greedy decompositions of permutations}
In this section, we prove a number of technical results which involve constructing first-order formulae with certain structural properties that are closely related to the left-greedy decomposition. In addition to Proposition~\ref{prop: eeroklas}, we also need the following technical result from \cite{eeroklas}. We remark that their proof is based on describing a result of Albert, Ru{\v{s}}kuc and Vatter~\cite[Proposition 2.4]{albert2015inflations} in terms of first-order formulae. 
\begin{proposition} \label{prop: eeroklas second}
	Let $B, d$ and $k$ be given positive integers. Then
	\begin{enumerate}
	\item for every $\tau \in \widetilde{\mathcal{O}}_{k,B}$, there exists a first-order formula $\psi_{\tau}(x_1,\dots,x_{|\tau|},y_1,\dots,y_{|\tau|})$ for which we have $\sigma \models \psi_{\tau}(a_1,\dots,a_{|\tau|},b_1,\dots,b_{|\tau|})$ if and only if the left-greedy decomposition of $\sigma \in \widetilde{\mathcal{O}}_{k}^{[d]}$ is given by $\sigma = \tau[\theta_1,\dots,\theta_{|\tau|}]$, and each $\theta_i$ consists of precisely those points $p$ which satisfy $p \in [a_i,b_i]_1$. 
	\item  for both $\tau \in \{\mathrm{inc},\mathrm{dec}\}$, there exists a first-order formula $\psi_{\tau}(x_1,\dots,x_{B},y_1,\dots,y_{B})$ for which we have $\sigma \models \psi_{\tau}(a_1,\dots,a_{B},b_1,\dots,b_{B})$ if and only if the left-greedy decomposition of $\sigma \in \widetilde{\mathcal{O}}_{k}^{[d]}$ is given by $\sigma = \tau'[\theta_1,\dots,\theta_{|\tau'|}]$, where $\tau'$ is a monotone permutation of direction $\tau$ of length at least $B+1$, and for each $i \in \{1,\dots,B\}$, the permutation $\theta_i$ consists of precisely those points $p$ which satisfy $p \in [a_i,b_i]_1$. 
	\end{enumerate}
\end{proposition}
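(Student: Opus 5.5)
The plan is to prove both parts together by induction on the depth, after first establishing a stronger auxiliary statement: for every $i\in\{0,\dots,d\}$ there is a first-order formula $\varphi_i(x,y)$ such that $\sigma\models\varphi_i(a,b)$ if and only if $[a,b]_1$ is an interval of $\sigma$ and the subpermutation it induces lies in $\widetilde{\mathcal{O}}_{k}^{[i]}$.

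Two basic definability facts will be used throughout. First, ``$[a,b]_1$ is an interval'' is first-order: it says that no point outside $[a,b]_1$ has $\leq_2$-value strictly between two points of $[a,b]_1$. Second, every formula can be relativised to the points of $[a,b]_1$ by bounding its quantifiers, so any property we can express for a whole permutation can be expressed for the block $[a,b]_1$.

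To build $\varphi_i$ from $\varphi_{i-1}$ (with $\varphi_0$ saying $a=b$), split according to the profile. A profile $\tau\in\mathcal{O}_k$ has length at most $k+4$, and there are only finitely many such $\tau$. For each one we quantify existentially over the $2|\tau|$ endpoints of the blocks. We then require that the blocks are consecutive $\leq_1$-intervals covering $[a,b]_1$, that they are arranged according to $\tau$ (compare any one point of each block under $\leq_2$), and that each satisfies $\varphi_{i-1}$.

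A monotone profile has unboundedly many blocks, so here I plan to use sum components instead. A permutation lies in $12\ldots r[\widetilde{\mathcal{O}}_{k}^{[i-1]}]$ for some $r$ if and only if each of its sum-indecomposable components lies in $\widetilde{\mathcal{O}}_{k}^{[i-1]}$. Indeed, components are subpatterns of blocks, and the components themselves give a valid decomposition. Two points lie in the same sum component exactly when no ``sum cut'' separates them, that is, there is no $z$ such that every point $\leq_1 z$ is $\leq_2$-below every point $>_1 z$. This is first-order, so ``every sum component satisfies $\varphi_{i-1}$'' is expressible, and skew sums are handled symmetrically. Taking the disjunction of all these cases gives $\varphi_i$.

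For part 1, given $\tau\in\widetilde{\mathcal{O}}_{k,B}$, the formula $\psi_\tau$ asserts four things. It asserts that the $a_j,b_j$ describe a $\tau$-decomposition of $\sigma$ whose blocks satisfy $\varphi_{d-1}$. It asserts that no decomposition with a profile of length $<|\tau|$ exists; since $|\tau|\le \max(B,k+4)$, this is a finite disjunction over profiles of bounded length, including monotone ones of length $<|\tau|$, each expressed as above. It asserts that no other decomposition of the same length $|\tau|$ is $<_L$-smaller. This is a finite conjunction: for each profile $\tau'$ of length $|\tau|$ and each $j_0$, no $\tau'$-decomposition with endpoints $a'_j,b'_j$ agrees in block sizes before $j_0$ and has a strictly larger $j_0$-th block. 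Comparing block sizes is first-order, because blocks are $\leq_1$-intervals, so the sizes are compared by comparing endpoints. Finally, it fixes the block endpoints as in the definition of the left-greedy decomposition.

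For part 2, with $\tau\in\{\mathrm{inc},\mathrm{dec}\}$, the formula $\psi_\tau$ first asserts that no decomposition of length $\le B$ with any profile exists. This rules out every $\mathcal{O}_k$-profile, since $B>k$ in the intended applications, as well as short monotone profiles. It then asserts that $\sigma$ is a direct (respectively skew) sum of blocks from $\widetilde{\mathcal{O}}_{k}^{[d-1]}$. Lastly it defines the first $B$ blocks greedily: $a_1$ is the $\leq_1$-minimum, and $b_j$ is the $\leq_1$-largest sum cut $z\ge_1 a_j$ with $\varphi_{d-1}(a_j,z)$, with $a_{j+1}$ the $\leq_1$-successor of $b_j$.

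The step I expect to be the main obstacle is showing that this greedy choice agrees with the $<_L$-minimal decomposition, that is, that maximising each prefix block in turn also minimises the total number of blocks. The argument I intend to use is the standard interval-covering exchange. Because $\widetilde{\mathcal{O}}_{k}^{[d-1]}$ is closed under subpatterns, any suffix of a block is again a legal block. So if some sum decomposition uses fewer blocks, we can push its block boundaries rightwards to the greedy boundaries without increasing the number of blocks, contradicting minimality. Once the two notions are known to coincide, the lexicographic condition in the definition of $<_L$ is automatically satisfied by the greedy blocks, which completes part 2.
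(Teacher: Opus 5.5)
Your argument is correct. It takes a different route from the paper, which does not prove this proposition itself: it imports it from \cite{eeroklas}, remarking only that the proof there renders \cite[Proposition 2.4]{albert2015inflations} in first-order terms.

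You instead work directly from the definitions, in three steps.
\begin{enumerate}
\item You show recursively that membership of an interval in $\widetilde{\mathcal{O}}_{k}^{[i]}$ is first-order definable. Finitely many $\mathcal{O}_k$-profiles are handled by explicit quantification, and monotone profiles through sum and skew components.
\item For profiles of bounded length you encode $<_L$-minimality by brute force.
\item For long monotone profiles you use a greedy-stays-ahead exchange argument. It relies only on closure of $\widetilde{\mathcal{O}}_{k}^{[d-1]}$ under subpatterns, so that any consecutive run of sum components inside a legal block is again legal.
\end{enumerate}
This gives a self-contained, elementary proof tailored to $\widetilde{\mathcal{O}}_k^{[d]}$. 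The cited route instead leans on general structure theory for inflations, which is more conceptual but less transparent here.

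Two small points need tidying.

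First, elements of $\mathcal{O}_k$ have length at most $k$, not $k+4$, since $\omega_k$ and $\varpi_k$ have exactly $k$ points. This bound is what makes ``no decomposition of length $\leq B$'' rule out every $\mathcal{O}_k$-profile when $B>k$. With your stated bound of $k+4$ that step would not follow. Your assumption $B>k$ itself is legitimate, since it is built into the definition of $\widetilde{\mathcal{O}}_{k,B}$.

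Second, in part~2 you should say explicitly why the $<_L$-minimal decomposition must be increasing rather than decreasing. The reason is that a permutation cannot simultaneously be a direct sum and a skew sum of at least two nonempty blocks. Since the minimum length exceeds $B>k$, all minimum-length decompositions are therefore direct sums. That is what justifies comparing your greedy decomposition only against direct-sum decompositions.
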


Having introduced the necessary background from \cite{eeroklas}, we introduce a few technical tools that turn out to be useful. Recall that we say that $y$ is a successor of $x$ if $x <_1 y$ and there exists no $z$ for which we have $x <_1 z <_1 y$.
\begin{lemma}
	There exists a first-order formula $s_1(x,y)$ so that $\sigma \models s_1(a,b)$ if and only if $b$ is the successor of $a$. 
\end{lemma}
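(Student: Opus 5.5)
The formula can be written down directly, since TOTO contains the order $\leq_1$ and the strict order is first-order definable from it. I plan to set
\[
s_1(x,y) := \neg(x = y) \land (x \leq_1 y) \land \forall z\, \Bigl( \bigl( (x \leq_1 z) \land (z \leq_1 y) \bigr) \rightarrow \bigl( (z = x) \lor (z = y) \bigr) \Bigr).
\]
The first two conjuncts express $x <_1 y$. The universally quantified clause says that no $z$ satisfies $x <_1 z <_1 y$.

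To check correctness, first suppose $\sigma \models s_1(a,b)$. Then $a <_1 b$. If some $c$ had $a <_1 c <_1 b$, then $c$ would satisfy $a \leq_1 c \leq_1 b$ while being distinct from both $a$ and $b$, which violates the last conjunct. So $b$ is the successor of $a$.

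Conversely, suppose $b$ is the successor of $a$. Then $a <_1 b$ gives the first two conjuncts. Any $c$ with $a \leq_1 c \leq_1 b$ either equals an endpoint or satisfies $a <_1 c <_1 b$, and the latter is excluded by the definition of successor. This is just unwinding the definitions, so the verification holds.

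There is no real obstacle here. The only point needing care is the paper's convention that the relations are the reflexive orders $\leq_1, \leq_2$ rather than the strict ones. This is why strictness is written explicitly via $\neg(x=y)$, and why the endpoints are excluded by equality inside the quantified clause. The same template gives $s_2$ for $\leq_2$. Iterating it gives the $i$-th successor $a_{1,i}$ for any fixed $i$, by existentially quantifying over the intermediate points.
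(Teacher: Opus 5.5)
Your proposal is correct and is essentially the paper's proof. The paper takes $s_1(x,y) := (x \leq_1 y) \land (x \neq y) \land \forall z\, \bigl((z \leq_1 x) \lor (y \leq_1 z)\bigr)$, and in a linear order its universal clause is equivalent to yours, namely that every $z$ with $x \leq_1 z \leq_1 y$ is one of the endpoints.
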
	
\begin{proof}
	We may take $s_1(x,y)$ to be $(x \leq_1 y) \land (x \neq y) \land (\forall z: (z \leq_1 x) \lor (y \leq_1 z))$. It is easy to check that $s_1$ satisfies the required properties. 
\end{proof}
We now prove a result which roughly speaking states the following. Suppose that $\psi$ is a first-order formula which contains variables $x_1,\dots,x_\ell$ and $y_1,\dots,y_\ell$, so that whenever 
\[\sigma \models \psi(a_1,\dots,a_\ell,b_1,\dots,b_\ell),\]
the points $a_1,\dots,a_\ell$ and $b_1,\dots,b_\ell$ are uniquely determined. Then, it is possible to further refine the formula $\psi$ in a way that the points in $[a_i,b_i]_1$ satisfy further structural properties.
\begin{lemma} \label{lemma: modifications}
	Given a first-order formula $\psi(x_1,\dots,x_\ell,y_1,\dots,y_\ell)$, a  sequence of positive integers $\underline{c}$ of length $\ell$, a sequence $\epsilon \in \{=,\geq\}^\ell$, and a sequence $\underline{\tau}$ of permutations $\tau_1,\dots,\tau_\ell$, there exist first-order formulae 
	\begin{enumerate}
		\item $\psi_{\epsilon, \underline{c}}(x_1,\dots,x_\ell,y_1,\dots,y_\ell)$, for which we have $\sigma \models \psi_{\epsilon ,\underline{c}}(a_1,\dots,a_\ell,b_1,\dots,b_\ell)$ if and only if $\sigma \models \psi(a_1,\dots,a_\ell,b_1,\dots,b_\ell)$, and for each $1 \leq i \leq \ell$ there are either exactly $c_i$ or at least $c_i$ points in $[a_i,b_i]_1$ depending on whether $\epsilon_i$ equals $=$ or $\geq$ respectively
		\item $\psi_{\underline{\tau}}(x_1,\dots,x_\ell,y_1,\dots,y_\ell)$, for which we have $\sigma \models \psi_{\underline{\tau}}(a_1,\dots,a_\ell,b_1,\dots,b_\ell)$ if and only if $\sigma \models \psi(a_1,\dots,a_\ell,b_1,\dots,b_\ell)$, and for each $1 \leq i \leq \ell$ the points in $[a_i,b_i]_1$ form an interval of $\sigma$, and the subpattern induced by this interval coincides with $\tau_i$. 
	\end{enumerate}
\end{lemma}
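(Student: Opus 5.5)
The plan is to build both formulae by conjoining $\psi$ with explicit first-order conditions on the intervals $[a_i,b_i]_1$. Everything will be expressed using only quantifiers relativized to such intervals, the successor formula $s_1$ from the previous lemma, and the atomic relations $\leq_1,\leq_2$. Because the added conditions only restrict the tuple already selected by $\psi$, the equivalence ``$\sigma\models\psi_{\ast}(\underline a,\underline b)$ iff $\sigma\models\psi(\underline a,\underline b)$ and the extra property holds'' is immediate once each conjunct correctly expresses its property.

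For part 1, I would first write, for each $c$, a formula $\mathrm{at\_least}_c(x,y)$ saying that there exist $z_1,\dots,z_c$ with $x\leq_1 z_1$, $z_c\leq_1 y$ and $z_r\leq_1 z_{r+1}$, $z_r\neq z_{r+1}$ for all $r$. This says that $[x,y]_1$ contains at least $c$ points. I would then set $\mathrm{exactly}_c(x,y):=\mathrm{at\_least}_c(x,y)\land\neg\,\mathrm{at\_least}_{c+1}(x,y)$. The formula $\psi_{\epsilon,\underline c}$ is then $\psi$ conjoined, over $i\in[\ell]$, with $\mathrm{exactly}_{c_i}(x_i,y_i)$ or $\mathrm{at\_least}_{c_i}(x_i,y_i)$ according to $\epsilon_i$. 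If $x_i\not\leq_1 y_i$ the interval is empty and these formulae behave correctly, so no extra case is needed (if desired, one can also add $x_i\leq_1 y_i$).

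For part 2, write $n_i=|\tau_i|$. I would take the conjunction of $\psi$ with three conditions for each $i$.
\begin{enumerate}
\item[(a)] $\mathrm{exactly}_{n_i}(x_i,y_i)$, which fixes the size of the interval.
\item[(b)] An interval condition in the second order: there exist $u_i,v_i$ such that every $z$ satisfies $(x_i\leq_1 z\leq_1 y_i)\leftrightarrow(u_i\leq_2 z\leq_2 v_i)$. This says that the $\leq_1$-interval $[x_i,y_i]_1$ coincides with a $\leq_2$-interval, which is exactly the statement that it is an interval of $\sigma$.
\item[(c)] A pattern condition: there exist $z_1,\dots,z_{n_i}$ with $z_1=x_i$, $z_{n_i}=y_i$ and $s_1(z_r,z_{r+1})$ for all $r<n_i$, so that the $z_r$ enumerate the interval in $\leq_1$-order. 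In addition, for every pair $r,r'$ the literal $z_r\leq_2 z_{r'}$ or its negation is imposed, according to whether $\tau_i(r)\leq\tau_i(r')$.
\end{enumerate}
Conditions (a) and (c) together force the induced subpattern to be $\tau_i$, and (b) supplies the interval property. Note that (c) alone already implies the size condition (a), so (a) is redundant but harmless.

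There is no genuine obstacle; the only point requiring care is the verification of the interval condition (b). Any $\leq_1$-interval is a set of consecutive positions, and asking that it also equal a $\leq_2$-interval bounded by two of its own points is precisely the condition that the image of these positions under $\sigma$ is an integer interval. Hence (b) is a faithful first-order translation of ``$[a_i,b_i]_1$ forms an interval of $\sigma$''. Both constructions use a bounded number of quantifiers depending on $\underline c$ and $\underline\tau$, so the resulting expressions are genuine first-order formulae.
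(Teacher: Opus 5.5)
Your proposal is correct and follows essentially the same route as the paper: conjoin $\psi$ with, for each $i$, a chain of $\leq_1$-successors built from $s_1$ that enumerates $[x_i,y_i]_1$. To this you add the $\leq_2$-comparisons dictated by $\tau_i$ and a $\leq_2$-closure condition expressing the interval property. The differences are cosmetic and equivalent: you write ``exactly $c$'' as ``at least $c$ and not at least $c+1$'' instead of a successor chain ending at $y_i$, and you phrase the interval condition with auxiliary points $u_i,v_i$ instead of the block's minimal- and maximal-value points $z_{\tau_i^{-1}(1)}$ and $z_{\tau_i^{-1}(|\tau_i|)}$.
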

\begin{proof}

	Let $\psi_{\epsilon, \underline{c}}(x_1,\dots,x_\ell,y_1,\dots,y_\ell)$ denote the first-order formula given by 
	\begin{align*}
		 \psi (x_1,\dots,x_\ell,y_1,\dots,y_\ell)  \land \bigwedge_{i=1}^\ell \left(\exists z_1,\dots,z_{c_i}: (z_1 = x_i) \land r_i(z_{c_i},y_i) \land \bigwedge_{j = 1}^{c_i - 1}  s_1(z_{j},z_{j+1})\right),
	\end{align*}
    where $r_i(z_{c_i},y_i)$ is $z_{c_i} = y_i$ whenever $\epsilon_i$ equals $=$, and $z_{c_i} \leq_ 1y_i$ whenever $\epsilon_i$ equals $\geq$. It is straightforward to check that $\psi_{\epsilon, \underline{c}}$ satisfies the required properties, we omit the details. 

    In order to construct $\psi_{\underline{\tau}}(x_1,\dots,x_\ell,y_1,\dots,y_\ell)$, write $p_i=\tau_i^{-1}(1)$ and $q_i=\tau_i^{-1}(|\tau_i|)$, and define the auxiliary formulae $\phi_i(x_i,y_i)$ as 
\begin{align*} 
&\exists z_1,\dots,z_{|\tau_i|}:\Bigl[(z_1=x_i)\land(z_{|\tau_i|}=y_i) \land\bigwedge_{j=1}^{|\tau_i|-1}s_1(z_j,z_{j+1})\\ &\quad\land \bigwedge_{j=1}^{|\tau_i|-1} \bigwedge_{h=j+1}^{|\tau_i|} \left(\tau_i(j)\leq\tau_i(h)\iff z_j\leq_2 z_h\right)\\ &\quad\land \left(\forall z: \Bigl(((z_{p_i}\leq_2 z)\land(z\leq_2 z_{q_i})) \implies\bigvee_{r=1}^{|\tau_i|}(z=z_r) \Bigr)\right)\Bigr].
\end{align*}
    Define $\psi_{\underline{\tau}}(x_1,\dots,x_\ell,y_1,\dots,y_\ell)$ to be given by 
	\begin{align*}
		 \psi (x_1,\dots,x_\ell,y_1,\dots,y_\ell)  \land \bigwedge_{i=1}^{\ell} \phi_i(x_i,y_i). 
	\end{align*}
	Note that the first line of $\phi_{i}(x_i,y_i)$ ensures that there are exactly $|\tau_i|$ points in $[x_i,y_i]_1$, the second line implies that the subpermutation induced by these points coincides with $\tau_i$, and the last line implies that these points form a sub-interval. Thus $\psi_{\underline{\tau}}$ satisfies the required properties. 
\end{proof}
We now move on to proving the technical results whose proofs were previously omitted.
\begin{proof}[Proof of Lemma~\ref{lemma: indicator FOSs}]

    Let $I'$ denote the collection of those $i$ for which we either have $i \in I$ or $a_i > 0$, and let $\psi_{I'}$ denote the first-order formula given in Proposition~\ref{prop: eeroklas}. Let $\underline{c}$ be the sequence indexed by $I'$ chosen so that $c_i = a_i$ whenever $i \in I^c \cap I'$ and $c_i = C$ for $i \in I$. Let $\epsilon \in \{=,\geq \}^{I'}$ be the sequence chosen so that $\epsilon_i$ equals $=$ whenever $i \in I'\cap I^c$ and $\epsilon_i$ equals $\geq$ whenever $i \in I$. Then, it is easy to check that the sentence $\Psi = \exists_{u \in \{1,2\}, v\in \{0,1\}, i \in I'} \text{ }x_{u,v,i,\tau(i)}: \psi_{\epsilon, \underline{c}, I'}(x_{u,v,i,\tau(i)})$ satisfies the required properties. 
\end{proof}
\begin{proof}[Proof of Proposition~\ref{prop: isolated monotone FOS}]
	Let $\mathcal{S}$ denote all the possible subsets of the non-zero entries of $M$ that contain $(i,j)$. Define $\phi_{i,j}(x,y)$ to be the first-order formula 
	\begin{align*}
		\bigvee_{I \in \mathcal{S}} \left(\exists_{u \in \{1,2\}, v\in \{0,1\}, (i',j') \in I}\text{ } x_{u,v,i',j'}: \psi_I(x_{u,v,i',j'}) \land (x = x_{1,0,i,j}) \land (y = x_{1,1,i,j})\right),
	\end{align*}
	where the formulae $\psi_{I}$ are given as in the statement of Proposition~\ref{prop: eeroklas}. Thus, by Proposition~\ref{prop: eeroklas}, we have $\sigma \models \phi_{i,j}(a,b)$ if and only if $\sigma^\#_{i,j}$ is non-empty, and its smallest and largest points with respect to order $\leq_1$ are $a$ and $b$ respectively. Thus, we may take $\psi(x)$ to be the formula $\exists y: \phi_{\geq, C_1}(x,y)$, where $\phi_{\geq, C_1}$ is constructed as in the statement of Lemma~\ref{lemma: modifications}. 
\end{proof}
\begin{proof}[Proof of Lemma~\ref{lemma: FOS alternating monotone sub intervals}]
    For technical reasons, we only prove the statement when $\epsilon$ is neither of the two constant sequences. In those cases, the proof is very similar, but small technical modifications are needed. 	Let $\mathcal{S}$ denote all the possible subsets of the non-zero entries of $M$ that contain both $(i,j_0)$ and $(i,j_1)$. 

     Define $\phi_{i}(y_{w,u,v}: w\in \{0,1\}, u\in \{1,2\}, v\in \{0,1\})$ to be the first-order formula given by 
     \begin{align*}
     	\bigvee_{I\in \mathcal{S}}\left(\exists_{u \in \{1,2\}, v\in \{0,1\}, (i',j')\in I} \text{ } x_{u,v,i',j'}: \psi_{I}(x_{u,v,i',j'}) \land \bigwedge_{u \in \{1,2\}, v\in \{0,1\}} \left((y_{0,u,v} = x_{u,v,i,j_0}) \land (y_{1,u,v} = x_{u,v,i,j_1})\right)  \right)
     \end{align*}
    It is easy to check that we have $\sigma \models \phi_{i}(a_{w,u,v}: w\in \{0,1\}, u\in \{1,2\}, v\in \{0,1\})$ if and only if both $\sigma^\#_{i,j_0}$ and $\sigma^\#_{i,j_1}$ are non-empty, and for both of these cells, the $4$ extremal points are given by the constants $a_{w,u,v}$ with $w \in \{0,1\}$ respectively. 
    
    Let $\psi_{\epsilon}(x)$ denote the first-order formula
    \begin{align*}
    		& \exists_{w \in \{0,1\}, u \in \{1,2\}, v\in \{0,1\}} y_{w,u,v} \text{ } \exists z_0, \dots, z_{K-1}: \phi_i(y_{w,u,v}) \\ 	  & \land \left(\left((y_{0,1,0} \leq_1 y_{1,1,0}) \implies (x = y_{0,1,0})\right) \land \left((y_{1,1,0} \leq_1 y_{0,1,0}) \implies (x = y_{1,1,0})\right)\right) \\
    		&\land (z_0 = x) \land \left((z_{K-1} \leq_1 y_{1,1,1}) \lor (z_{K-1} \leq_1 y_{0,1,1})\right) \land \bigwedge_{i=0}^{K-2} s_1(z_i,z_{i+1}) \\
    		& \land \bigwedge_{i=0}^{K-1} \left((y_{\epsilon_i,1,0} \leq_1 z_i) \land (z_i \leq_1 y_{\epsilon_i,1,1}) \land (y_{\epsilon_i,2,0} \leq_2 z_i) \land (z_i \leq_2 y_{\epsilon_i,2,1}) \right)
    \end{align*}
    We claim that $\psi_{\epsilon}(x)$ satisfies the required properties. Indeed, the construction of $\phi_i$ guarantees that whenever $\sigma \models \psi_\epsilon(a)$, then the values of $y_{w,u,v}$ must be chosen so that $\sigma^{\#}_{i,j_s}$ contains precisely those points $p$ which satisfy the conditions $p \in [y_{s,1,0},y_{s,1,1}]_1$ and $p \in [y_{s,2,0}, y_{s,2,1}]_2$. Thus, the second line implies that $a$ must be the least point in $\sigma^\#_{i,-}$ with respect to $\leq_1$. Finally, the second last line implies that $\sigma^\#_{i,-}$ must contain at least $K$ points, and the $K$ least such are represented by the values of the variables $z_0,\dots,z_{K-1}$. Finally, the last line implies that we must have $z_s \in \sigma^\#_{i,j_{\epsilon_s}}$ for all $0 \leq s \leq K-1$, as required. 
    
When $\epsilon$ is one of the two constant sequences, the proof follows similarly, but we have to adapt to the possibility that one of $\sigma^{\#}_{i,j_s}$ might be empty. We omit the details. 
\end{proof}
\begin{proof}[Proof of Lemma~\ref{lemma: FOS structure for stars}]
Given a first-order formula $\varphi$, we use the shorthand $\varphi^{[a,b]_1}$ to denote the formula obtained by restricting all its  bound variables to $[a,b]_1$. Let $\eta_{r,B}$ denote a first-order sentence chosen so that $\sigma \models \eta_{r,B}$ if and only if $|\sigma| \geq B^r+1$. 

Note that it suffices to prove the statement for $\widetilde{\mathcal{O}}_k^{[d]}$. Indeed, the statement for a cell $(i',j')$ indexed by $\star$ then follows by applying Proposition~\ref{prop: eeroklas}, identifying the extremal points of the cell $(i',j')$ by using a first-order sentence, and finally restricting all the variables to be contained between these extremal points.  We prove the statement by induction on $d$; note that the case $t=d$ also covers the base case $d=1$.

Given $\epsilon=(\tau_i,c_i,\underline{\phi_i})_{i=t}^d$, set $r_d=\min(c_d,B)$, and define $h_d=|\tau_d|$ if $\tau_d\notin\{\mathrm{inc},\mathrm{dec}\}$ and $h_d=B$ otherwise. Let $\psi_{\tau_d}$ be the formula provided by Proposition~\ref{prop: eeroklas second}, and define
\[
\chi(x_1,\dots,x_{r_d},y_1,\dots,y_{r_d})
=
\exists_{r_d<j\leq h_d}x_j,y_j:
\psi_{\tau_d}(x_1,\dots,x_{h_d},y_1,\dots,y_{h_d}).
\]
Combining this with the formulae constructed in the proof of Lemma~\ref{lemma: modifications}, we can construct a formula $\chi_{\underline{\phi}_d}(x_1,\dots,x_{r_d},y_1,\dots,y_{r_d})$ which also ensures that for every $1\leq j\leq c_d-1$ the interval $[x_j,y_j]_1$ induces $\phi_{d,j}$. 

If $t=d$, then $c_d=B+1$ and $r_d=B$. We take
\[
\psi_{\epsilon,B}
=
\eta_{d,B}\land
\chi_{\underline{\phi}_d}(x_1,\dots,x_B,y_1,\dots,y_B),
\]
and relabel the variables $x_j,y_j$ as $x_{d,j},y_{d,j}$. The formula $\psi_{\tau_d}$ ensures that the profile is a monotone permutation of length at least $B+1$ and whose direction matches the parameter $\tau_d$, and  Lemma~\ref{lemma: modifications} ensures that the first $B$ blocks match the first $B$ blocks described in $\underline{\phi}_d$. This completes the proof of the case when $t = d$. 

If $t<d$, let $\epsilon'$ be obtained by deleting the $d$-th entry of $\epsilon$, and let $\psi_{\epsilon',B}$ be the formula provided by the inductive  hypothesis. Let $\mathcal{S'}$ denote the appropriate set of indices for $\epsilon'$ as described in the statement of the Lemma. Since $r_d=c_d$, define 
\begin{align*}
\psi_{\epsilon,B}:={}&\eta_{d,B} \land\chi_{\underline{\phi_d}} (x_1,\dots,x_{c_d},y_1,\dots,y_{c_d})\\ &\land \bigl(\psi_{\epsilon',B}\bigr)^{[x_{c_d},y_{c_d}]_1} (x_{u,v},y_{u,v}:(u,v)\in\mathcal{S}')\\ &\land\bigwedge_{(u,v)\in\mathcal{S}'} \left((x_{c_d}\leq_1 x_{u,v}) \land(x_{u,v}\leq_1 y_{u,v}) \land(y_{u,v}\leq_1 y_{c_d})\right).
\end{align*}
We again relabel the variables $x_j,y_j$ as $x_{d,j},y_{d,j}$. The inductive hypothesis ensures that the $c_d$-th block respects $\epsilon'$, and in particular contains at least $B^{d-1}+1$ points.  Furthermore, the construction of the formula ensures that the total number of points is at least $B^d+1$, that the blocks preceding the $c_d$-th block have at most $B^{d-1}$ points in them and they match the given profile $\underline{\phi}_{d,i}$ for each $1 \leq i \leq c_d-1$, and that their endpoints are uniquely determined. This completes the proof. 
\end{proof}
\end{document}